\newcommand{\Ber}{\mathrm{Ber}}
\def\timenow{\@tempcnta\time
  \@tempcntb\@tempcnta
  \divide\@tempcntb60
  \ifnum10>\@tempcntb0\fi\number\@tempcntb
  \multiply\@tempcntb60
  \advance\@tempcnta-\@tempcntb
  :\ifnum10>\@tempcnta0\fi\number\@tempcnta}
\newtheorem{theo}{Theorem}[section]
\newtheorem{prop}[theo]{Proposition}
\newtheorem{lemma}[theo]{Lemma}
\newtheorem{cor}[theo]{Corollary}
\newtheorem{remark}[theo]{Remark}
\newtheorem{ex}{Example}
\def\dd{\textnormal{d}}
\newcommand{\mr}{{\mathrm{m}}}
\newcommand*{\affmark}[1][*]{\textsuperscript{#1}}
\title[Stochastic flows and the $\Lambda$-coalescent with dust]{Stochastic flows and Poisson representations for the block masses of the $\Lambda$-coalescent with dust : moment asymptotics and large deviation estimates}
\author{Gr\'egoire V\'echambre\affmark[1]}
\address{\affmark[1]State Key Laboratory of Mathematical Sciences, Academy of Mathematics and Systems Science, Chinese Academy of Sciences, Beijing 100190, China}
\email{vechambre@amss.ac.cn}
\begin{document} 

\maketitle

\begin{abstract}
We develop a new methodology for the study of the $\Lambda$-coalescent with dust, based on the construction of a stochastic flow of inverses, introduced by Bertoin and Le Gall \cite{BERTOIN2005307}, in which the coalescent is naturally embedded as a nested interval-partition. This framework yields Poisson representations for the ordered block masses $(W_k(t))_{k \geq 1}$ as stochastic integrals with respect to the Poisson random measure governing the flow, enabling the use of stochastic calculus in a setting where it was not previously available. We believe this methodology to be of independent interest and applicable beyond the specific results of this paper. As a first application, we derive precise logarithmic asymptotics for the moments of $W_k(t)$ as $t \to \infty$, which reveal an interesting cutoff phenomenon related to the presence of dust. We then establish a law of large numbers and a large deviation principle for $\log(1-W_1(t))/t$ and a one-sided weak large deviation principle for $\log(W_k(t))/t$, for $k \geq 2$, with explicit rate functions. 
\end{abstract}

{\footnotesize
\noindent{\slshape\bfseries MSC 2020.} 60J25, 60J76, 60J90, 60H05, 60F10, 92D25
	
	\medskip 
	\noindent{\slshape\bfseries Keywords.} $\Lambda$-coalescent; dust; stochastic flow; jump-type SDE; flow of inverses; nested interval-partition; Poisson representation; stochastic calculus; ancestral processes; ranked frequencies; moment asymptotics; large deviation principle; cutoff phenomenon}

\section{Introduction} \label{intro}

For a finite and non-zero measure $\Lambda$ on $[0,1]$, the $\Lambda$-coalescent is a Markovian process $(\Pi_t)_{t\geq 0}$ on partitions of $\mathbb{N}=\{1,2,\ldots\}$ introduced by \cite{pitman1999} and \cite{sagitov1999}. For any $n\geq 1$ let $\Pi^n_t$ denote the restriction of the partition $\Pi_t$ to $\{1,\ldots,n\}$. Then $(\Pi^n_t)_{t\geq 0}$ is a Markov process on a finite state space which has the following dynamics: if there are currently $p$ blocks in the partition, for $k \in \{2,\ldots,p\}$, any $k$ of the blocks merge into one with rate 
\begin{align}
\lambda_{p,k}(\Lambda):=\int_{[0,1]} r^{k-2}(1-r)^{p-k} \Lambda(dr). \label{coalrates}
\end{align}
This and the starting condition $\Pi_0=\{\{1\},\{2\},\ldots\}$ characterize the law of the partition process $(\Pi_t)_{t\geq 0}$. The $\Lambda$-coalescent is an exchangeable coalescent that generalizes the classical Kingman coalescent (which corresponds to the case $\Lambda=\delta_0$) by allowing multiple mergers instead of only binary mergers. It can also be seen as a particular case of coalescents with simultaneous multiple collisions \cite{10.1214/EJP.v5-68}. Background on the $\Lambda$-coalescent can be found in \cite{Berestycki2009,lecturebertoin2010,surveylambdacoal}. 

The $\Lambda$-coalescent is related to the genealogy of several population models \cite{10.1214/aop/1015345761,schweinsberg2003,DURRETT20051628,D08,Huillet2014,10.1214/22-EJP739}, to the genealogy of Continuous State Branching Processes (CSBPs) \cite{ref29surveyb,ref36surveyb,BLGIII,ref19surveyb,ref18surveyb}, to stable Continuous Random Trees (CRTs) \cite{ref20surveyb}, or also to pruning of trees \cite{10.1214/EJP.v10-265,romain2013,romain2015}. One of its most fundamental connections is with the $\Lambda$-Fleming-Viot flow, of which it provides the genealogy \cite{BLGI}. The latter process is valued in probability measures on $[0,1]$ and models an infinite population with constant size, with genotypes indexed by $[0,1]$, and that is subject to random neutral reproductions determined by the measure $\Lambda$. These flows have been introduced in \cite{BLGI} and implicitly in the \textit{lookdown construction} of \cite{donnellykurtz1999} which, in \cite{labbe2014a}, is unified with the construction of \cite{BLGI}. They have been studied by many authors \cite{BERTOIN2005307,ref36surveyb,BLGIII,10.1214/10-AOP629,HENARD20132054,Griffiths2014,10.3150/17-BEJ971} and can be seen as multi-type versions of $\Lambda$-Wright-Fisher diffusions, as the frequency of any hereditary subpopulation of a $\Lambda$-Fleming-Viot flow follows a $\Lambda$-Wright-Fisher diffusion \cite{BERTOIN2005307}. 

For a block $\mathcal{B} \in \Pi_t$, its \textit{mass} (also called \textit{asymptotic frequency}) is defined as $|\mathcal{B}|:=\lim_{n \rightarrow \infty}\sharp (\mathcal{B} \cap \{1,\ldots,n\})/n$. It is shown in \cite[Prop. 2.13]{labbe2014a} that, almost surely, this limit exists for all blocks in $\Pi_t$ at all $t$. A block $\mathcal{B} \in \Pi_t$ is called \textit{massive} if $|\mathcal{B}|>0$. From a $\Lambda$-coalescent process $(\Pi_t)_{t\geq 0}$, one can define the process $(\{ |\mathcal{B}|, \mathcal{B} \in \Pi_t \ \text{s.t.} \ |\mathcal{B}|>0 \})_{t\geq 0}$ of the collection of masses of its massive blocks, which is a process on mass partitions. Looking at one or the other process is equivalent thanks to Kingman's correspondence \cite{Berestycki2009,lecturebertoin2010,lecturepitman2006}. Therefore, understanding the behavior of the collection of masses of blocks is essential to understand the $\Lambda$-coalescent. Unfortunately, the distribution of masses of blocks at a given time $t$ (the \textit{entrance law from dust}) has no known explicit expression, except in the remarkable case of the Bolthausen-Sznitman coalescent \cite{pitman1999,lecturepitman2006,Berestycki2009}, and little is known about the precise behavior of masses of blocks as $t$ goes to infinity. This problem is a focus of the present paper. 

We say that a $\Lambda$-coalescent process $(\Pi_t)_{t\geq 0}$ has \textit{dust} if the cumulative mass of the collection of singletons in $\Pi_t$ is positive. There are four parameter regimes for the $\Lambda$-coalescent \cite{pitman1999,schweinsberg2000} (see also \cite{Berestycki2009,surveylambdacoal}) and, equivalently, for the $\Lambda$-Fleming-Viot flow \cite[Prop. 1.3]{labbe2014a}, namely (i) the case with finitely many massive blocks and dust, (ii) the case with infinitely many massive blocks and dust, (iii) the case with infinitely many massive blocks and no dust, (iv) the case with finitely many massive blocks and no dust. The processes $(\Pi_t)_{t\geq 0}$ and $(\Pi^n)_{n\geq 1}$ have been intensely studied in cases (iii) and (iv) \cite{10.1214/EJP.v10-265,BLGIII,10.1214/ECP.v12-1253,DRMOTA20071404,10.1214/07-AAP476,ref20surveyb,ref19surveyb,ref17surveyb,10.1214/EJP.v17-2378,10.1214/11-AAP827,betacoalgimm,ref18surveyb,10.1214/14-AAP1077,10.1214/13-AOP902,10.1214/19-AAP1462,10.1214/19-EJP354}. We are here interested in the case referred in the literature as \textit{the $\Lambda$-coalescent with dust}, corresponding to the union of cases (i) and (ii) of the above classification. As discussed in Section \ref{assumptions} below, this is equivalent to the measure $\Lambda$ satisfying the assumption \eqref{integassumption}. In the case with dust, interesting properties of the sequence of partition processes $(\Pi^n)_{n\geq 1}$ have been established as $n$ goes to infinity by many authors \cite{mohle2010,ref27ofsurvey,10.3150/10-BEJ312,10.1214/14-AAP1077,romain2013,romain2015,mohle2016,kerstingschweinsbergwakolbinger2018,kerstingwakolbinger2018,mohle2021} (see also \cite{surveylambdacoal,kerstingwakolbinger2020}) but, unlike cases (iii) and (iv), little attention has been given to the process $(\Pi_t)_{t\geq 0}$ of infinite partitions. This is perhaps surprising, as the long time behavior of the masses of blocks turns out to be more subtle in this case: in case (iv), there is a (random) finite time $T>0$ after which a single block occupies all the mass, whereas in cases (i) and (ii) the convergence to this configuration occurs only asymptotically, at a rate that is far from obvious. The case with dust allows one to define models in which massive blocks emerge (in a discontinuous way) from dispersed matter. Moreover, it has been observed recently that this case may be used in models displaying mathematically and biologically interesting behaviors. For example, in \cite{cordhumvech2022}, the authors study a family of $\Lambda$-Wright-Fisher diffusions with frequency dependent selection and environmental effects, where the measure $\Lambda$ satisfies the assumption \eqref{integassumption}, leading to four possible regimes that include in particular a regime of coexistence. The latter had been observed empirically by biologists but could not have been captured by simple mathematical models before. Motivated by this, we believe that a thorough study of the case with dust is in order, even in the classical models of the $\Lambda$-coalescent and $\Lambda$-Fleming-Viot flow. 

In the present paper we study the entrance law from dust at time $t$ of the masses of blocks of a $\Lambda$-coalescent with dust and its asymptotic behavior as $t$ is large. We denote by $W_k(t)$ the mass of the $k^{th}$ largest block at time $t$ (note that $W_k(t_1)$ and $W_k(t_2)$ may correspond to completely unrelated blocks since, at any time, blocks are ordered by non-increasing masses). It is intuitively clear that, as $t$ goes to infinity, one large block occupies a proportion of the mass increasing to $1$ as other massive blocks progressively merge with it, thus $W_1(t) \rightarrow 1$. Meanwhile, as $t$ increases, the $k^{th}$ largest block (for $k\geq 2$) is found among the smaller and smaller remaining blocks of "rare genotypes" whose total mass is less than $1-W_1(t)$, thus $W_k(t) \rightarrow 0$ for $k \geq 2$. One of our goals is to determine how fast these convergences occur in the case with dust (i.e. cases (i) and (ii)), by providing in particular a law of large numbers and a large deviation principle for $\log (1-W_1(t))/t$. As alluded to above, the case with dust leads to specific challenges that require the development of a methodology specific to this case, which we believe to be of independent interest. We note that similar challenges arise in case (iii), but that this case falls outside the scope of the present paper.

A useful point of view on the $\Lambda$-coalescent is as follows. Consider a population subject to a $\Lambda$-Fleming-Viot flow dynamic on $(-\infty,0]$ and divide the population at time $0$ into \textit{$t$-families}, where two individuals belong to the same $t$-family if and only if their ancestor from time $-t$ is common. It can be seen from Bertoin and Le Gall's correspondence \cite{BLGI} that the sizes of $t$-families are the masses of blocks in an associated $\Lambda$-coalescent. The process of the $t$-families is a \textit{nested interval-partition} in the framework of \cite{10.1214/20-AAP1641}, where such an object is constructed and shown to be associated to the $\Lambda$-coalescent. 
In other words, it is perfectly equivalent to study the masses of blocks in a $\Lambda$-coalescent or to study the sizes of $t$-families in a population subject to a $\Lambda$-Fleming-Viot flow dynamic on $(-\infty,0]$ (then, $W_k(t)$ is the size of the $k^{th}$ largest $t$-family). In the present paper, we take the latter point of view and associate the $\Lambda$-coalescent with dust to a nested interval-partition arising naturally from the flow of inverses introduced in \cite{BERTOIN2005307}. Our construction method is different from the one in \cite{10.1214/20-AAP1641} but the underlying objects are the same. This allows us to prove Poisson representations for the masses of blocks in terms of the flow of inverses. The methodology alluded to above consists precisely in exploiting these representations to harness stochastic calculus in the study of the $\Lambda$-coalescent with dust -- an approach that, to the best of our knowledge, has not been pursued before in this setting. The representations enable us to use the power of stochastic calculus to study aspects of the entrance law from dust of the masses of blocks, yielding in particular the asymptotic results for the moments of $W_k(t)$ from which the large deviation principles are derived, and bypassing the classical approximation of $(\Pi_t)_{t\geq 0}$ by $(\Pi^n_t)_{t\geq 0}$ that is standard in the literature. 

Our results display an interesting cutoff phenomenon that is related to the presence of dust. More precisely, the decay rate of $\mathbb{E}[W_k(t)^{\eta}]$ (as $t \to \infty$) is increasing in $\eta$ for fixed $k$, and increasing in $k$ for fixed $\eta$, but in both cases remains constant beyond a certain threshold. These two cutoffs have the following interpretation. New blocks whose masses are proportional to the total mass of the dust regularly emerge from the dust; therefore, the mass of the $k^{th}$ largest block cannot decay faster than the mass of the dust. As a consequence, when $\eta$ is fixed and $k$ increases, the decay rate stabilizes once it reaches the decay rate of the moments of the dust mass. When $k$ is fixed and $\eta$ increases, the decay rate is initially governed by the extreme values of the dust mass distribution, but once $\eta$ exceeds a certain threshold, $\mathbb{E}[W_k(t)^{\eta}]$ becomes dominated by the rare event where the $k$ largest blocks undergo no mergers for a long time. 

Let us mention that a companion paper \cite{vechldp} studies, for the Beta-coalescent, large deviations for the absorption time $\tau_n$ of the coalescent started with $n$ blocks, as $n \to \infty$, via the analysis of Laplace transforms of integral functionals. The present paper addresses a complementary question -- the long-time behavior of the block masses $(W_k(t))_{k \geq 1}$ of the $\Lambda$-coalescent with dust -- via an entirely different framework based on stochastic flows and Poisson representations. 

\subsection{Related models and perspectives}

We now briefly discuss some related models and perspectives.

\textit{Flows of subordinators.} While $\Lambda$-Fleming-Viot flows model constant size populations, flows of subordinators (the flow version of CSBPs) model populations with similar dynamics but non-constant size. Their genealogy has also been studied \cite{ref29surveyb} and, in that context, a similar problem to ours is studied in \cite{foucartmamallein2019,foucartmohle2022} via the inverse flow. 
In contrast to our setting, the independence enjoyed by subpopulations in the non-constant size case leads to a Poisson structure for the $t$-families, making the analysis more tractable. In our case, the ordered block masses arise from an exchangeable structure that results in strong dependencies, and developing tools that can efficiently address this feature is a motivation in itself.

\textit{Evolving coalescent.} A realization of the $\Lambda$-coalescent can be seen as the genealogical structure of a population sampled at a given time. By letting the sampling time evolve forward in time, one obtains a Markov process of genealogical structures called the \textit{evolving coalescent}, see for example \cite{10.1214/EJP.v17-2378,10.1214/EJP.v19-3332,kerstingwakolbinger2020}. Finding the appropriate representation and state space for this Markov process is non-trivial and several approaches have been proposed \cite{treevalued2013,10.1214/18-EJP166,10.1214/20-AAP1641}. In the present paper, the $\Lambda$-coalescent (in the case with dust) is seen as a function of the flow of inverses starting at $0$. Provided that one consistently constructs the flow of inverses starting at all times (along with its Poisson background), this would yield another construction of the evolving coalescent in the case with dust, and the Poisson representation from Theorem \ref{represwktviamut} would naturally extend to the masses of blocks of the evolving coalescent. 

\textit{Infinite allele model.} Consider the $\Lambda$-coalescent restricted to $\{1,\ldots,n\}$, let every block freeze at some rate $\theta>0$, and only allow mergings between unfrozen blocks. This is interpreted biologically as looking at the allelic types of $n$ individuals, with merging events corresponding to coalescents of ancestral lineages and freezing events corresponding to appearances of mutations leading to new allelic types. After letting time go to infinity, the resulting partition is called the \textit{allelic partition} and the sizes of its blocks are called the \textit{allele frequency spectrum}. These objects have attracted a lot of interest \cite{bj/1141136647,ref20surveyb,10.1214/EJP.v13-494,mohle2009,mohle2010,10.1214/13-AIHP546} (see also \cite{Berestycki2009,surveylambdacoal}). We believe that the construction and representation of the present paper could be adapted to include the effect of mutations and study properties of the allele frequency spectrum in the case with dust. 

\subsection{Assumptions and notations} \label{assumptions}

Let $\Lambda$ be a non-zero finite measure on $[0,1]$. In all this paper we assume that 
\begin{align}
\Lambda(\{1\})=0, \ \int_{[0,1)} r^{-1} \Lambda(dr) < \infty, \label{integassumption}
\end{align}
with the convention $1/0=\infty$. If $\Lambda(\{1\})>0$, all blocks merge into a single one at rate $\Lambda(\{1\})$. The condition $\Lambda(\{1\})=0$ is meant to eliminate this degenerate case. Assuming $\Lambda(\{1\})=0$, the condition $\int_{[0,1)} r^{-1} \Lambda(dr)<\infty$ is equivalent to the almost sure presence of dust, i.e. cases (i) and (ii) mentioned above, as was shown by \cite{pitman1999} (see also \cite{Berestycki2009,surveylambdacoal}, and see \cite[Prop. 1.3]{labbe2014a} for the $\Lambda$-Fleming-Viot flow). We also note that \eqref{integassumption} implies $\Lambda(\{0\})=0$ so there is no Kingman component in the coalescent process. 
The case (i) "finitely many massive blocks and dust" is a sub-case of the case with dust. It corresponds to the condition 
\begin{align}
\Lambda(\{1\})=0, \ \int_{[0,1)} r^{-2} \Lambda(dr) < \infty. \label{strongintegassumption}
\end{align}

We define 
\begin{align}
\theta(\Lambda):=\sup \{ \eta \geq 0, \int_{(1/2,1)} (1-r)^{-\eta} \Lambda(dr)<\infty \}. \label{domainboundary}
\end{align}
In some of our results, we will additionally assume that 
\begin{align}
\text{Either} \ \theta(\Lambda)=\infty, \ \text{or} \ 0<\theta(\Lambda)<\infty \ \text{and} \ \int_{(1/2,1)} (1-r)^{-\eta} \Lambda(dr) \underset{\eta \to \theta(\Lambda)}{\longrightarrow} \infty. \label{continuitycondition}
\end{align}
Let us define 
\begin{align}
\phi_S(\lambda) := \int_{(0,1)} ( 1-(1-r)^\lambda ) r^{-2}\Lambda(dr). \label{laplaceexponentlk}
\end{align}
Note that, under assumption \eqref{integassumption}, $\phi_S(\lambda)\in(-\infty,\infty)$ if $\lambda>-\theta(\Lambda)$ and $\phi_S(\lambda)=-\infty$ if $\lambda<-\theta(\Lambda)$. Under assumption \eqref{continuitycondition} this transition is continuous (i.e. $\phi_S(\lambda) \to -\infty$ as $\lambda \to -\theta(\Lambda)$). The function $\phi_S(\cdot)$ is the Laplace exponent of a subordinator $S$, rigorously defined in Section \ref{model3}, which is related to the dust mass and plays a central role in the statement of our results.

\begin{ex}[Beta-coalescent] \label{betacoal}
For any $a,b>0$ we set $\Lambda_{a,b}(dr):=r^{a-1}(1-r)^{b-1}dr$. Then the transitions rates $\lambda_{p,k}(\Lambda_{a,b})$ from \eqref{coalrates} can be expressed as $\lambda_{p,k}(\Lambda_{a,b})=B(a+k-2,b+p-k)$ where $B(\cdot,\cdot)$ is the beta function. The condition \eqref{integassumption} holds true if and only if $a>1$ and the condition \eqref{strongintegassumption} holds true if and only if $a>2$. We note that $\theta(\Lambda_{a,b})=b$ and that $\Lambda_{a,b}$ satisfies the condition \eqref{continuitycondition}. An important case is when $a=2-\alpha, b=\alpha$ for some $\alpha \in (0,2)$. That case is called the $\mathrm{Beta}(2-\alpha,\alpha)$-coalescent. 
\end{ex}

\subsection{Main results and applications} \label{mainresults}

In this section, we state our main results. We work with the process $(W_k(t))_{k \geq 1, t \geq 0}$ of ordered block masses of the $\Lambda$-coalescent with dust as constructed in Section \ref{model2} below. We set $M_k(t):=\sum_{1 \leq j \leq k} W_j(t)$. Our first main result establishes a stochastic integral representation for the masses of blocks $W_k(t)$ in terms of a Poisson random measure. This representation is a cornerstone of our approach and underpins all subsequent results.
\begin{theo} [Stochastic integral representation for $W_k(t)$] \label{intsto}
Assume that \eqref{integassumption} holds true. There exists a Poisson random measure $N(ds,dr,du)$ on $(0,\infty) \times (0,1)^2$ with intensity measure $ds \otimes r^{-2} \Lambda(dr) \otimes du$ and, for any $k \geq 1$, processes $K^k_{t}(r,u)$ and $H^k_{t}(r,u)$ indexed by $(t,r,u) \in [0,\infty) \times (0,1)^2$, progressively measurable in $t$ and constructed in Section \ref{model3} below, such that almost surely, for all $t\geq 0$,
\begin{align}
W_k(t) & = \int_{(0,t] \times (0,1)^2} K^k_{s-}(r,u) N(\dd s, \dd r, \dd u), \label{itoforwk0} \\
M_k(t) & = \int_{(0,t] \times (0,1)^2} H^k_{s-}(r,u) N(\dd s, \dd r, \dd u). \label{itoformk0}
\end{align}
\end{theo}
Thanks to the above result, we are now able to use stochastic calculus to study $W_k(t)$. 
\begin{theo} [Pseudo-generator formula for $W_k(t)$] \label{ito}
Assume that \eqref{integassumption} holds true. For any Lipschitz function $f:[0,1]\rightarrow\mathbb{R}$ and $k \geq 1$, $(t \mapsto \mathbb{E}[f(W_k(t))])$ and $(t \mapsto \mathbb{E}[f(M_k(t))])$ are of class $\mathcal{C}^1$ and for any $t \geq 0$, 
\begin{align}
\frac{d}{dt} \mathbb{E}[f(W_k(t))] & = \int_{(0,1)^2} \mathbb{E} \left [ f \big (W_k(t)+K^k_{t}(r,u) \big ) - f (W_k(t) ) \right ] r^{-2} \Lambda(dr) du. \label{derfwkt} \\
\frac{d}{dt} \mathbb{E}[f(M_k(t))] & = \int_{(0,1)^2} \mathbb{E} \left [ f \big (M_k(t)+H^k_{t}(r,u) \big ) - f (M_k(t) ) \right ] r^{-2} \Lambda(dr) du. \label{derfmkt}
\end{align}
\end{theo}
Theorems \ref{intsto} and \ref{ito} provide insights on the entrance law from dust of the masses of blocks and allow us to study their long time behavior. We now present such results. For $k \geq 1$ let 
\begin{align}
\lambda_k(\Lambda) := \int_{(0,1)} (1-(1-r)^k - kr(1-r)^{k-1}) r^{-2} \Lambda(dr). \label{pushingrates}
\end{align}
Note from \eqref{coalrates} that $\lambda_k(\Lambda)=\sum_{\ell =2}^k \binom{k}{\ell}\lambda_{k,\ell}(\Lambda)$, that is, for any $n\geq k$, $\lambda_k(\Lambda)$ is the total transition rate of $(\Pi^n_t)_{t\geq 0}$ when the partition currently contains $k$ blocks. Also, the sequence $(\lambda_k(\Lambda))_{k\geq 1}$ is increasing, since $(1-(1-r)^k - kr(1-r)^{k-1})$ is the probability for a binomial random variable with parameter $(k,r)$ to be larger or equal to $2$. Finally we note that $\lambda_k(\Lambda) \rightarrow \int_{(0,1)} r^{-2} \Lambda(dr) \in (0,\infty]$ as $k \rightarrow \infty$. Let us define 
\begin{align}
N(\Lambda) := \inf \left \{ k \geq 1 \ \text{s.t.} \ \lambda_k(\Lambda) \geq \phi_S(1) \right \}. \label{defnlambda}
\end{align}
Since $\int_{(0,1)} r^{-2} \Lambda(dr)>\phi_S(1)$, the above shows that $N(\Lambda)$ is finite and, since $\lambda_2(\Lambda)=\Lambda((0,1))<\int_{(0,1)} r^{-1} \Lambda(dr)=\phi_S(1)$, we necessarily have $N(\Lambda) \geq 3$. 
\begin{remark} \label{propnlambd}
Assume that \eqref{integassumption} holds true, then we have $\lambda_k(\Lambda)<(k-1)^2 \lambda_2(\Lambda)$ for $k\geq 3$ and $N(\Lambda) > 2 \vee \sqrt{\phi_S(1)/\lambda_2(\Lambda)}$. This shows in particular that, the more $\Lambda$ has mass around $0$, the larger $N(\Lambda)$ is. Table \ref{tab:Nlambda} illustrates this behavior in the case of the $\mathrm{Beta}(2-\alpha,\alpha)$-coalescent. The proofs of the bounds stated in this remark are given in Appendix \ref{proofestnlbd}. 
\end{remark}

\begin{table}[h]
\centering
\begin{tabular}{|c|c|c|c|c|c|c|c|c|c|}
\hline
$\alpha$ & $0.1$ & $0.2$ & $0.3$ & $0.4$ & $0.5$ & $0.6$ & $0.7$ & $0.8$ & $0.9$ \\
\hline
$N(\Lambda_{2-\alpha,\alpha})$ & $4$ & $4$ & $4$ & $4$ & $5$ & $6$ & $7$ & $8$ & $14$ \\
\hline
\end{tabular}
\caption{Values of $N(\Lambda_{2-\alpha,\alpha})$ for the $\mathrm{Beta}(2-\alpha,\alpha)$-coalescent, for several values of $\alpha \in (0,1)$.}
\label{tab:Nlambda}
\end{table}

The following result is derived from Theorem \ref{ito} and provides the asymptotic behaviors of the moments \footnote{Here and throughout, we use the term 'moment of order $\eta$ of a random variable $Z$' for $\mathbb{E}[Z^\eta]$ with $\eta \in \mathbb{R}$.} of the ordered masses of blocks. The cutoff phenomenon described in the Introduction appears explicitly in the form of the exponential decay rate $\phi_S(\eta) \wedge \lambda_k(\Lambda)$.
\begin{theo} [Long time behavior of moments] \label{longtimebehavth}
Assume that \eqref{integassumption} holds true. 
\begin{itemize}
\item[(i)] For any $\eta \in (-\theta(\Lambda),\infty)$, we have 
\begin{align} 
\frac1{t}\log \mathbb{E}[(1-W_1(t))^{\eta}] & \underset{t \to \infty}{\longrightarrow} -(\phi_S(\eta) \wedge \lambda_2(\Lambda)). \label{lgtimeboundw1t}
\end{align}
If, additionally, \eqref{continuitycondition} holds true, then \eqref{lgtimeboundw1t} extends to the case $\eta \in (-\infty,-\theta(\Lambda)]$ where the right-hand side equals $\infty$. 
\item[(ii)] For $k \in \{2,..., N(\Lambda)-1\}$ and $\eta \in [0,\infty)$, or $k \geq N(\Lambda)$ and $\eta \in [0,1]$, we have 
\begin{align} 
\frac1{t}\log \mathbb{E}[W_k(t)^{\eta}] & \underset{t \to \infty}{\longrightarrow} -(\phi_S(\eta) \wedge \lambda_k(\Lambda)). \label{lgtimeboundwkt}
\end{align}
\end{itemize}
\end{theo}
At first glance, one might worry that Theorem \ref{longtimebehavth}(ii) contradicts the fact that $\sum_{k\geq 1} \mathbb{E}[W_k(t)]\leq 1$. Indeed, the cutoff phenomenon implies that the decay rate of $\mathbb{E}[W_k(t)]$ stabilizes for large $k$, so that infinitely many terms in the sum $\sum_{k \geq 1} \mathbb{E}[W_k(t)]$ share the same exponential decay rate. This is not a contradiction, however, since terms sharing the same exponential decay rate may still differ in their subexponential behavior, leaving room for $\sum_{k\geq 1} \mathbb{E}[W_k(t)]$ to be finite for all $t$. 

When $\eta<0$, the moments $\mathbb{E}[W_k(t)^{\eta}]$ are not always finite under our usual assumptions. For example, if \eqref{strongintegassumption} holds true, we have $\mathbb{P}(W_k(t)=0)>0$ and therefore $\mathbb{E}[W_k(t)^{\eta}]=\infty$. This is why, in Theorem \ref{longtimebehavth}(ii), we only study $\mathbb{E}[W_k(t)^{\eta}]$ when $\eta\geq0$, and this restriction is not merely technical. However, it is a consequence of Proposition \ref{minoyklem2} that, when $\eta \in (-\theta(\Lambda),0)$, 
\begin{align} 
\liminf_{t \to \infty} \frac1{t}\log \mathbb{E}[W_k(t)^{\eta}] \geq -\phi_S(\eta) = -(\phi_S(\eta) \wedge \lambda_k(\Lambda)). \label{caseetaneg}
\end{align}

For Theorem \ref{longtimebehavth}(ii), our argument is based on Theorem \ref{ito} and a conditional Jensen inequality that requires the concavity of $x \mapsto x^{\eta}$, hence the restriction $\eta \in [0,1]$. The case "$\eta>1$ and $k<N(\Lambda)$" is then obtained by monotonicity from the case $\eta=1$. We leave the case "$\eta > 1$ and $k \geq N(\Lambda)$" as an open question. 

\begin{remark}[Bolthausen-Sznitman coalescent] \label{rmkbs}
The Bolthausen-Sznitman coalescent is the $\mathrm{Beta}(2-\alpha,\alpha)$-coalescent with $\alpha=1$. It falls under case (iii) of the classification (see the Introduction). In this case, $(W_k(t))_{k \geq 1}$ follows the Poisson-Dirichlet distribution with parameters $(e^{-t},0)$, see \cite[Thm.~6.2]{Berestycki2009}. This distribution admits a stick-breaking representation: if $(V_k)_{k \geq 1}$ are independent random variables with distribution $B(1-e^{-t},ke^{-t})$ respectively, then the sequence $(P_k)_{k \geq 1}$ defined by $P_1 = V_1$ and $P_k = V_k \prod_{j=1}^{k-1}(1-V_j)$ for $k \geq 2$ has the same distribution as $(W_k(t))_{k \geq 1}$ up to a reordering. Using this representation, one can compute the moments of $P_k$ rather than those of $W_k(t)$, which yields analogues of Theorem \ref{longtimebehavth} for the Bolthausen-Sznitman coalescent. Precisely, for $\eta > 0$,
\begin{align*} 
\mathbb{E}[(1-P_1)^{\eta}] & = \frac{\Gamma(\eta+e^{-t})}{\Gamma(e^{-t})\Gamma(\eta+1)} \underset{t\rightarrow \infty}{\sim} \frac{e^{-t}}{\eta}, \\
\forall k \geq 2, \ \mathbb{E}[P_k^{\eta}] & = \frac{\Gamma(1+\eta-e^{-t})\Gamma(1+(k-1)e^{-t})}{\Gamma(1-e^{-t})\Gamma(1+\eta+(k-1)e^{-t})} \prod_{j=1}^{k-1} \frac{\Gamma(\eta+je^{-t})\Gamma(1+(j-1)e^{-t})}{\Gamma(je^{-t})\Gamma(1+\eta+(j-1)e^{-t})} \\& \underset{t\rightarrow \infty}{\sim} \eta^{-(k-1)}(k-1)! e^{-t(k-1)}. 
\end{align*}
Moreover, it is not difficult to see that $\lambda_{k,\ell}(\Lambda_{1,1})=(\ell-2)! (k-\ell)!/(k-1)!$, so $\lambda_k(\Lambda_{1,1})=k-1$. Furthermore, $\phi_S(\eta)=\infty$ for any $\eta>0$. The asymptotic behaviors obtained above are therefore consistent with those that Theorem \ref{longtimebehavth} would predict. 
\end{remark}

We now assume that \eqref{continuitycondition} holds true (additionally to \eqref{integassumption}) and note that $\phi_S(\eta)=-\infty$ when $\eta \in (-\infty,-\theta(\Lambda)]$. For any $k \geq 2$, we define a function $\mathcal{J}_k: \mathbb{R} \to [0,\infty)$ as the Legendre transform of the convex and non-decreasing function $\eta \mapsto -(\phi_S(-\eta) \wedge \lambda_k(\Lambda))$: 
\begin{align}
\mathcal{J}_k(x):=\sup \{ \eta x + (\phi_S(-\eta) \wedge \lambda_k(\Lambda)); \eta \in \mathbb{R} \}. \label{defiaslegendrezeta}
\end{align}
Note that $\mathcal{J}_k(x)=\infty$ when $x<0$ and $\mathcal{J}_k(x)\in[0,\infty)$ when $x \geq 0$. Let $\gamma_k := \inf \{ \eta \geq 0, \phi_S(\eta)>\lambda_k(\Lambda) \}$. If $\gamma_k=\infty$ (which can occurs if $\Lambda$ satisfies \eqref{strongintegassumption}) then $\mathcal{J}_k(\cdot)$ is just the Legendre transform of $\eta \mapsto -\phi_S(-\eta)$ that we denote by $\mathcal{J}(\cdot)$. If $\gamma_k<\infty$, one can see that $\mathcal{J}_k(x)=\lambda_k(\Lambda)-\gamma_k x$ for $x \in [0,\phi_S'(\gamma_k)]$ and $\mathcal{J}_k(\cdot)$ coincides with $\mathcal{J}(\cdot)$ on $[\phi_S'(\gamma_k),\infty)$. We refer to Figure \ref{represgraphicratefct} for a graphical representation of the functions $\mathcal{J}(\cdot)$ and $\mathcal{J}_2(\cdot)$. As justified in the proof of Corollary \ref{corlgn} (see Section \ref{longtimebehavccl}), $\mathcal{J}_k(\cdot)$ has a single zero at $\phi_S'(0)$, it is decreasing on $[0,\phi_S'(0)]$ and increasing on $[\phi_S'(0),\infty)$. Since $0<\phi_S'(\gamma_k)<\phi_S'(0)$ by strict concavity of $\phi_S(\cdot)$, the zero of $\mathcal{J}_k(\cdot)$ is located after the interval on which this function is affine. 
\begin{figure}
\centering
\includegraphics[scale=0.25]{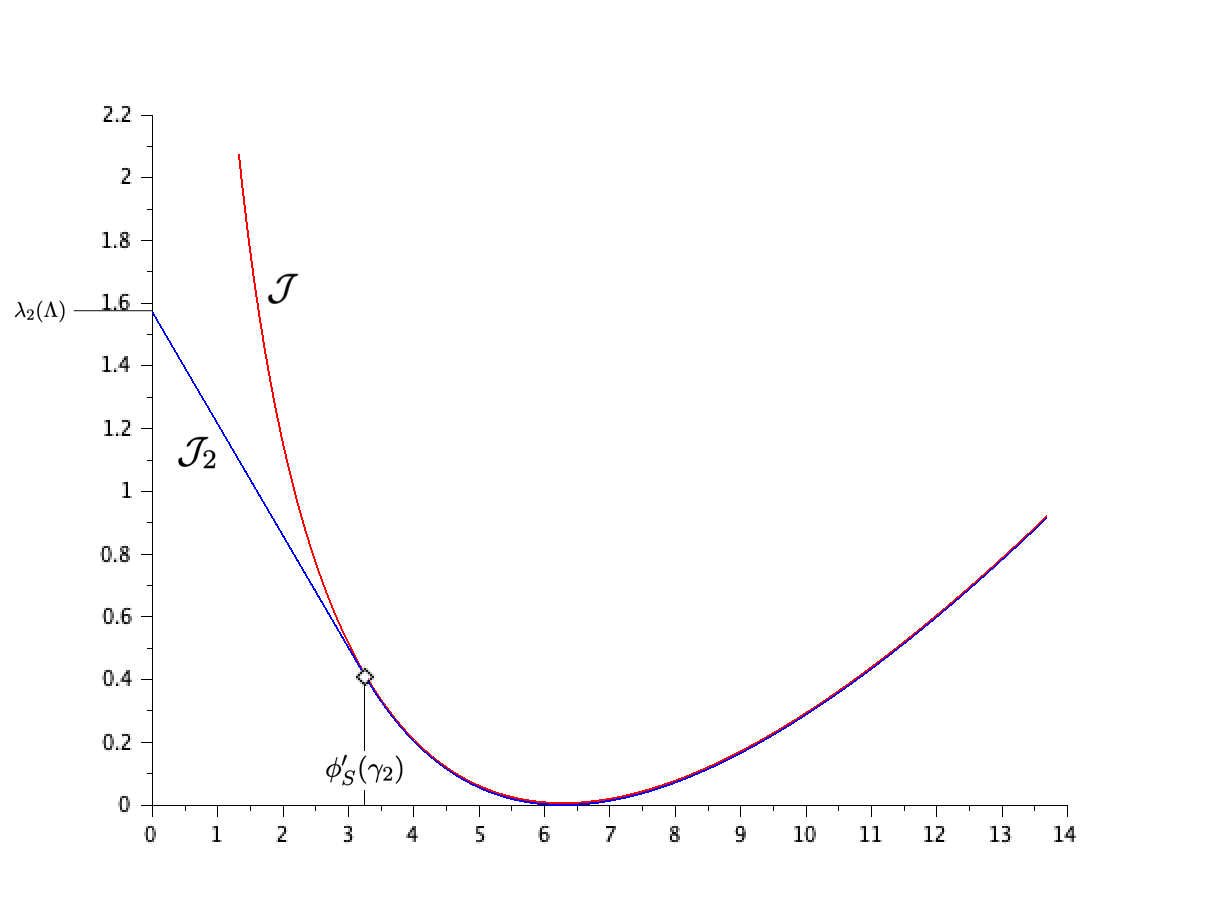} 
\caption{\small{Graphical representation of the functions $\mathcal{J}(\cdot)$ and $\mathcal{J}_2(\cdot)$ when $\Lambda=\Lambda_{3/2,1/2}$. The graph of $\mathcal{J}_2(\cdot)$ is a straight line on $[0,\phi_S'(\gamma_2)]$ and coincides with the graph of $\mathcal{J}(\cdot)$ on $[\phi_S'(\gamma_2),\infty)$.}}
\label{represgraphicratefct}
\end{figure}
Theorem \ref{longtimebehavth} entails the following Large Deviation Principle (LDP) for the mass of the largest block. 
\begin{theo} [LDP for the largest block] \label{thmpgd}
Assume that \eqref{integassumption} and \eqref{continuitycondition} hold true, then $-\log (1-W_1(t))/t$ satisfies a LDP with good rate function $\mathcal{J}_2(\cdot)$ and speed $t$. More precisely, for any closed set $F \subset \mathbb{R}$, 
\begin{align}
\limsup_{t \rightarrow \infty} \frac{1}{t} \log \mathbb{P} \left ( \frac{-\log (1-W_1(t))}{t} \in F \right ) \leq -\inf_{x \in F} \mathcal{J}_2(x), \label{ldpupperbound}
\end{align}
and for any open set $G \subset \mathbb{R}$, 
\begin{align}
\liminf_{t \rightarrow \infty} \frac{1}{t} \log \mathbb{P} \left ( \frac{-\log (1-W_1(t))}{t} \in G \right ) \geq -\inf_{x \in G} \mathcal{J}_2(x). \label{ldplowerbound}
\end{align}
\end{theo}
Note from Cram\'er's theorem (see \cite[Thm. 2.2.3]{dembozeitouni}) that the subordinator $S$ with Laplace exponent $\phi_S(\cdot)$ (see Sections \ref{assumptions} and \ref{model3}) satisfies a LDP with rate function $\mathcal{J}(\cdot)$. Note also that, in a $\Lambda$-coalescent, two given blocks do not merge on a given time interval of length $\ell$ with probability $e^{-\ell \lambda_2(\Lambda)}$. This allows to give the following interpretation of Theorem \ref{thmpgd}. On the one hand, the cheapest strategy to achieve abnormally small or large values of $W_1(t)$ of the form $W_1(t) \approx 1-e^{-xt}$ with $x \geq \phi_S'(\gamma_2)$ is given by large deviations of the subordinator $S$, i.e. by abnormal values for the dust mass. On the other hand, the cheapest strategy to achieve abnormally small values of $W_1(t)$ of the form $W_1(t) \approx 1-e^{-xt}$ with $x \leq \phi_S'(\gamma_2)$ is given by a large deviations type behavior of $S$ during some time, followed with a non-merging of the two largest blocks in the remaining time until $t$. We note that, while these strategies are seen from Theorem \ref{thmpgd} to be the cheapest ones, it is not clear \textit{a priori} that they are and what is the main source of large deviations of the form $W_1(t) \approx 1-e^{-xt}$ for a given value of $x$. While guessing these strategies can allow one to build suitable events that lead to the large deviation lower bound \eqref{ldplowerbound}, one needs another way to prove the large deviation upper bound \eqref{ldpupperbound} that shows the optimality of these strategies. Our proof of Theorem \ref{thmpgd} is based on Theorem \ref{longtimebehavth} (whose proof is mostly analytical and relies on Theorem \ref{ito}) and G\"artner-Ellis Theorem. This yields the upper bound \eqref{ldpupperbound} and a partial version of the lower bound \eqref{ldplowerbound}. In the case of large deviations of the form $W_1(t) \approx 1-e^{-xt}$ with $x \leq \phi_S'(\gamma_2)$, we build events based on the above-mentioned combination strategy to complete the proof of the lower bound \eqref{ldplowerbound}. 

An application of Theorem \ref{thmpgd} is the following law of large numbers for the mass of the largest block. 
\begin{cor} [Law of large numbers] \label{corlgn}
Assume that \eqref{integassumption} and \eqref{continuitycondition} hold true, then we have 
\begin{align}
\frac{-\log (1-W_1(t))}{t} \overset{a.s.}{\underset{t \rightarrow \infty}{\longrightarrow}} \phi_S'(0) =\int_{(0,1)} |\log (1-r)| r^{-2}\Lambda(dr). \label{lgn}
\end{align}
\end{cor}

\begin{remark} \label{corresp}
It is shown in \cite{kerstingwakolbinger2018} (see also \cite{ref27ofsurvey,vechldp}) that the absorption time $\tau_n$ of the process $(\Pi^n_t)_{t\geq 0}$ satisfies $\tau_n / \log n \to 1/\int_{(0,1)} |\log(1-r)| \, r^{-2} \Lambda(dr)$ in probability. The fact that this limit is the reciprocal of the one appearing in \eqref{lgn} is not a coincidence: both results are related to a subordinator $S$ with $\mathbb{E}[S_1]=\int_{(0,1)} |\log(1-r)| \, r^{-2} \Lambda(dr)$ (see Section \ref{model3}), which governs the typical behavior of $-\log(1-W_1(t))$ in our case, and whose inverse evaluated at $\log n$ approximates $\tau_n$ in the case of \cite{kerstingwakolbinger2018}.
\end{remark}

As a consequence of Theorem \ref{longtimebehavth}, we also obtain the following one-sided weak LDP for $W_k(t)$ when $k \in \{2,..., N(\Lambda)-1\}$. 
\begin{theo}[one-sided weak LDP for other blocks] \label{thmpgdwk}
Assume that \eqref{integassumption} and \eqref{continuitycondition} hold true then, for any $k \in \{2,..., N(\Lambda)-1\}$ and any compact set $K \subset (-\infty,\phi_S'(0)]$, 
\begin{align}
\limsup_{t \rightarrow \infty} \frac{1}{t} \log \mathbb{P} \left ( \frac{-\log W_k(t)}{t} \in K \right ) \leq -\inf_{x \in K} \mathcal{J}_k(x), \label{ldpupperboundwk}
\end{align}
and for any open set $G \subset (-\infty,\phi_S'(0))$, 
\begin{align}
\liminf_{t \rightarrow \infty} \frac{1}{t} \log \mathbb{P} \left ( \frac{-\log W_k(t)}{t} \in G \right ) \geq -\inf_{x \in G} \mathcal{J}_k(x). \label{ldplowerboundwk}
\end{align}
\end{theo}
The one-sided and weak nature of the LDP both reflect the same fundamental obstruction: as noted in the discussion above \eqref{caseetaneg}, the moments $\mathbb{E}[W_k(t)^\eta]$ may be infinite for $\eta < 0$ under our assumptions. Removing the one-sided restriction would require finiteness of the moments of order $\eta < 0$ to control large deviations to the left, and upgrading the weak LDP to a full LDP would require finiteness for $\eta$ in a neighborhood of $0$ to establish exponential tightness. In both cases, the examples mentioned above show that a one-sided weak LDP is optimal within our class of models. 

\begin{remark}
Proposition \ref{minoyklem2} and Remark \ref{completelowerboundmkt} allows one to extend Theorem \ref{longtimebehavth}(i) to 
$1-M_k(t)$, for any $k \geq 1$: namely, for any $\eta \in (-\theta(\Lambda),\infty)$,
\begin{align*}
\frac{1}{t}\log \mathbb{E}[(1-M_k(t))^{\eta}] \underset{t \to \infty}{\longrightarrow} -(\phi_S(\eta) \wedge \lambda_{k+1}(\Lambda)).
\end{align*}
Using this, the proof of Theorem \ref{thmpgd} then extends verbatim to yield a full LDP for $-\log(1-M_k(t))/t$, for any $k \in \{2,\ldots,N(\Lambda)-1\}$, upon replacing $W_1(t)$ by $M_k(t)$ and $\lambda_2(\Lambda)$ by $\lambda_{k+1}(\Lambda)$ throughout; the resulting rate function is $\mathcal{J}_{k+1}(\cdot)$. 
\end{remark}

The rest of the paper is organized as follows. Section \ref{secstochflows} introduces the stochastic flow framework, its connection to the $\Lambda$-coalescent with dust, and the Poisson representation of Theorem \ref{represwktviamut}, which underpins all subsequent results. Section \ref{longtimebehav} derives the long time asymptotic results, namely Theorems \ref{longtimebehavth}, \ref{thmpgd} and \ref{thmpgdwk}, and Corollary \ref{corlgn}, taking the results of Sections \ref{secstochflows}, \ref{poisrep} and \ref{itoform} as given. Section \ref{poisrep} proves the Poisson representation of Theorem \ref{represwktviamut} together with the supporting results stated in Section \ref{secstochflows}. Section \ref{itoform} proves the stochastic integral representation of Theorem \ref{intsto} and the pseudo-generator formula of Theorem \ref{ito}. The appendices collect the proofs of the foundational properties of the flow of inverses (Appendix \ref{propflows}), auxiliary results relating the flow to the $\Lambda$-coalescent (Appendix \ref{coaltrajy}), and the proof of Remark \ref{propnlambd} (Appendix \ref{proofestnlbd}).

\section{The stochastic flow framework} \label{secstochflows}

\subsection{$\Lambda$-Fleming-Viot flow and flow of inverses} \label{model}

Let $N(ds,dr,du)$ be a Poisson random measure on $(0,\infty) \times (0,1)^2$ with intensity measure $ds \otimes r^{-2} \Lambda(dr) \otimes du$. $N$ can be seen as a random collection of mass $1$ atoms $(s,r,u) \in (0,\infty) \times (0,1)^2$. We refer to an atom $(s,r,u) \in N$ as \textit{a jump}, to $s$ as its \textit{time component}, and to $r$ and $u$ as respectively its \textit{$r$-component} and its \textit{$u$-component}. We define the set of jumping times by $J_N:=\{s>0, \exists (r,u) \in (0,1)^2 \ \text{s.t.} \ (s,r,u) \in N\}$. For any $t\geq 0$, $\mathcal{F}_{t}$ denotes the sigma-field generated by the random measure $N(\cdot \cap (0,t] \times (0,1)^2)$. 

The $\Lambda$-Fleming-Viot flow is defined as the solution of the following SDE: 
\begin{align}
X_{w}(x) = x + \int_{(0,w] \times (0,1)^2} r \left ( \mathds{1}_{\{u\leq X_{s-}(x)\}}-X_{s-}(x) \right ) N(ds,dr,du), \label{defflowxbysde}
\end{align} 
almost surely for all $x \in [0,1]$ and $w \geq 0$. By \cite[Thm. 4.4]{10.1214/10-AOP629} this SDE defines a unique flow $(X_{w}(x), x \in [0,1], w \geq 0)$ that is called the $\Lambda$-process in \cite{BERTOIN2005307}. 
A jump $(s,r,u) \in N$ has the following interpretation: at time $s$ the individual "located" at $u \in [0,1]$ produces an offspring of size $r$ that replaces an identical amount of individuals chosen uniformly in the population. The quantity $X_{w}(x)$ represents the amount of individuals, in the population at time $w$, whose ancestor from time $0$ lies in $[0,x]$. For any $x \in [0,1]$, the process $(X_{w}(x))_{w\geq 0}$ is the so-called \textit{$\Lambda$-Wright-Fisher diffusion} with initial value $x$. 

In this paper, we are interested in the genealogy of a population that underwent the dynamic \eqref{defflowxbysde} from a very long time until present. The designated tool to study this is the so-called flow of inverses, see \cite{BERTOIN2005307}. Its heuristic definition and interpretation is as follows. We fix $t>0$ and consider a flow $(X_{-t,w}(x), x \in [0,1], w \in [-t,0])$ representing a population undergoing the dynamics \eqref{defflowxbysde} on the time interval $[-t,0]$, $0$ representing present day and $-t$ representing the starting time in the past. Defining a consistent collection $X_{-t,\cdot}(\cdot)$ solving \eqref{defflowxbysde} for all $t \in \mathbb{R}_+$ is non-trivial; following \cite{BLGI}, one can instead consider the dual flow associated with the $\Lambda$-coalescent, which is well-defined and represents a population undergoing the same dynamic. We then denote by $X_{-t,0}^{-1}(\cdot)$ the generalized inverse of the non-decreasing function $X_{-t,0}(\cdot)$. The link with the genealogy is now clear: a subinterval of $[0,1]$ on which $X_{-t,0}^{-1}(\cdot)$ is constant corresponds to a set of individuals, in the population at present time, whose ancestor from time $-t$ in the past is common. We refer to such subinterval as a \textit{$t$-family}. By increasing $t$ we look further in the past and obverse mergers of $t$-families when they get connected by new potential ancestors. 

In \cite{BERTOIN2005307} (see also \cite[Sec. 3.2]{BLGI}), the coalescing flow related to the genealogy of the $\Lambda$-process, called \textit{flow of inverses}, is defined as described above, by taking the generalized inverse in flows of bridges. In our case, we define it from an SDE and then justify that it is equal in law to the one from \cite{BERTOIN2005307}. More precisely, we consider the stochastic flow $(Y_{0,t}(y), y \in [0,1], t \geq 0)$ solving 
\begin{align}
Y_{0,t}(y) = y + \int_{(0,t] \times (0,1)^2} \left ( \mr_{r,u}(Y_{0,s-}(y))-Y_{0,s-}(y) \right ) N(ds,dr,du), \label{defflowybysde}
\end{align} 
almost surely for all $y \in [0,1]$ and $t \geq 0$, where $\mr_{r,u}(z):=\mathsf{Median}\{\frac{z-r}{1-r}, \frac{z}{1-r}, u\}$ for $z\in[0,1]$. 
For a population that underwent the dynamic \eqref{defflowxbysde} on $[-t,0]$, $Y_{0,t}(y)$ represents the position of the ancestor from time $-t$ of an individual "located" at position $y$ at time $0$. In particular, a jump $(s,r,u) \in N$ in \eqref{defflowybysde} has the following interpretation: at time $-s$ a new potential ancestor appears at the location $u$, all individuals from time $-(s-)$ that are located in the interval 
\begin{align}
I_{r,u}:=[u(1-r),u(1-r)+r] \label{defiru}
\end{align} 
adopt this ancestor, so the corresponding ancestral lines coalesce at $u$. It particular, each jump of $N$ results in a merger of families (with a fraction of the dust). It is justified in the following proposition that the stochastic flow solving \eqref{defflowybysde} is well-defined. 
\begin{prop} \label{propexistence}
Assume that \eqref{integassumption} holds true. There exists a unique stochastic flow $(Y_{0,t}(y), y \in [0,1], t \geq 0)$ such that, almost surely, the following properties hold: 
\begin{enumerate}[(i)]
\item \eqref{defflowybysde} holds for all $y \in [0,1]$ and $t\geq 0$;
\item for every $y \in [0,1]$, the trajectory $t \mapsto Y_{0,t}(y)$ is c\`ad-l\`ag; 
\item for every $t \geq 0$, the map $y \mapsto Y_{0,t}(y)$ is non-decreasing and continuous, and $Y_{0,t}(0)=0$, $Y_{0,t}(1)=1$. 
\end{enumerate}
\end{prop}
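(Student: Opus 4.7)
The plan is to construct $Y$ as the almost sure uniform limit of an approximating sequence of flows that use only the ``large'' jumps of $N$, and to use the integrability hypothesis $H(\Lambda)<\infty$ to pass to the limit. A preliminary observation is that, by inspecting the three affine pieces of the median function, $\mr_{r,u}:[0,1]\to[0,1]$ is a non-decreasing, continuous, piecewise-affine map that fixes $0$ and $1$, is $1/(1-r)$-Lipschitz, and satisfies $|\mr_{r,u}(z)-z|\leq r/(1-r)$ uniformly in $u$ and $z$.

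For each $n\geq 1$, let $N^{(n)}$ be the restriction of $N$ to $\{r\geq 1/n\}$, whose intensity $\int_{[1/n,1)}r^{-2}\Lambda(dr)\leq n^{2}\Lambda([0,1])$ is finite, so that $N^{(n)}$ has almost surely finitely many atoms on any compact time interval. I define $Y^{(n)}_{0,t}(y)$ as the piecewise-constant-in-$t$ process starting at $y$ and updated at each atom $(s,r,u)$ of $N^{(n)}$ by $Y^{(n)}_{0,s}(y):=\mr_{r,u}(Y^{(n)}_{0,s-}(y))$. By finite iteration of the maps $\mr_{r,u}$, the process $Y^{(n)}$ satisfies (i)--(iii) with $N^{(n)}$ in place of $N$.

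The core step is to prove that $(Y^{(n)})_{n\geq 1}$ is a.s.\ Cauchy uniformly in $(t,y)\in[0,T]\times[0,1]$, for each $T>0$. Fix $m>n$ and set $D_{t}:=\sup_{y\in[0,1]}|Y^{(m)}_{0,t}(y)-Y^{(n)}_{0,t}(y)|$. At a common atom $(s,r,u)$ with $r\geq 1/n$, the Lipschitz bound yields $D_{s}\leq D_{s-}/(1-r)$; at an atom with $r\in[1/m,1/n)$, only $Y^{(m)}$ jumps, so $D_{s}\leq D_{s-}+r/(1-r)$. Iterating over the finitely many atoms of $N^{(m)}$ in $(0,T]$ gives
$$ D_{T}\;\leq\;C_{n}(\omega)\sum_{\substack{(s,r,u)\in N,\ s\leq T\\ r\in[1/m,1/n)}}\frac{r}{1-r},\qquad C_{n}(\omega):=\prod_{\substack{(s,r,u)\in N,\ s\leq T\\ r\geq 1/n}}\frac{1}{1-r}. $$
Both factors are controlled by the same splitting. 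Restricted to $r\geq 1/2$, the Poisson intensity is finite so only finitely many atoms contribute, yielding an a.s.\ finite amount independent of $n$. Restricted to $r<1/2$, one uses $-\log(1-r)\leq r/(1-r)\leq 2r$, and the expected sums of $-\log(1-r)$ and of $r/(1-r)$ over such atoms with $s\leq T$ are both bounded by $2T\int_{[0,1/2)}r^{-1}\Lambda(dr)\leq 2T\,H(\Lambda)<\infty$ by \eqref{integassumption}. Hence $\sup_{n}C_{n}(\omega)<\infty$ almost surely and the right-hand sum is the tail of an a.s.\ convergent series, tending to $0$ as $n\to\infty$. Therefore $(Y^{(n)})$ converges a.s.\ and uniformly in $(t,y)\in[0,T]\times[0,1]$ to a limit $Y$.

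The rest is routine. The limit $Y$ inherits (ii) and (iii) because uniform limits of continuous, non-decreasing $[0,1]$-valued functions are continuous and non-decreasing (preserving the boundary values), and uniform limits of càd-làg functions on a compact interval are càd-làg. Property (i) follows by passing to the limit in the SDE for $Y^{(n)}$: the discrepancy between the right-hand sides for $Y$ and $Y^{(n)}$ is an integral against $N$ restricted to $\{r<1/n\}$ whose total magnitude is controlled by the same tail sum as above, and so vanishes. Uniqueness is immediate from the Lipschitz estimate: if $\widetilde Y$ is another solution, $\sup_{y}|Y_{0,t}(y)-\widetilde Y_{0,t}(y)|$ is zero at $t=0$ and at each atom is contracted by $1/(1-r)$, hence remains identically zero. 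The main obstacle lies in the Cauchy estimate, because the Lipschitz constant $1/(1-r)$ strictly exceeds $1$ and a naive Gronwall-type product risks exploding; it is precisely the assumption $H(\Lambda)<\infty$ that simultaneously tames the accumulated multiplicative factor $C_{n}$ and the cumulative small-jump variation, and this is what makes the construction go through.
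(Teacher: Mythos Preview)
Your construction is in the same spirit as the paper's---both build $Y$ as a limit of truncated flows $Y^{(n)}$ (the paper writes $Y^\delta$)---but your execution of existence is more direct. The paper first invokes an external result for pathwise well-posedness of the single-trajectory SDE, then proves an $L^1$ bound for $d_T(Y_{0,\cdot}(y),Y^\delta_{0,\cdot}(y))$ on the event where no jumps with $r>\rho$ occur, extracts an almost-surely convergent subsequence on rational starting points via Borel--Cantelli, and finally uses the Lipschitz estimate $d_T(Y^\delta_{0,\cdot}(a),Y^\delta_{0,\cdot}(b))\le e^{S_T}|a-b|$ to extend continuously to all $y\in[0,1]$. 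Your pathwise Cauchy estimate $D_T\le C_n(\omega)\sum_{r\in[1/m,1/n)}r/(1-r)$, uniform in $y$, bypasses the external reference and the subsequence extraction and delivers uniform-in-$(t,y)$ convergence in one stroke; the observation that $\sup_n C_n=e^{S_T}<\infty$ almost surely is precisely the same Lipschitz constant the paper uses. This is a genuine simplification of the existence half. (One small imprecision: the discrepancy in passing to the limit in (i) has, besides the $\{r<1/n\}$ tail you mention, a contribution on $\{r\ge 1/n\}$ where the integrands differ; this second piece is handled by dominated convergence with dominator $r/(1-r)$, which is $N$-integrable on $(0,T]\times(0,1)^2$ almost surely.)

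Your uniqueness argument, however, has a gap. The factor $1/(1-r)>1$ is an expansion, not a contraction; starting from zero this is harmless \emph{provided} one can iterate the jump relation, but when $\int_{(0,1)}r^{-2}\Lambda(dr)=\infty$ the jump times of $N$ are dense in $(0,\infty)$ and there are no ``between atoms'' intervals on which the difference is automatically constant, so the bare assertion ``$D_s\le D_{s-}/(1-r)$ at each atom, $D_0=0$, hence $D\equiv 0$'' is not justified as written. The fix costs nothing beyond what you already proved: for an arbitrary flow $\widetilde Y$ satisfying (i)--(iii), rerun your Cauchy estimate with $\widetilde Y$ in place of $Y^{(m)}$, iterating over the finitely many jumps with $r\ge 1/n$ and bounding the change of $\widetilde Y$ between two such jumps by the small-jump integral. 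This yields
\[
\sup_{t\le T,\ y\in[0,1]}\bigl|\widetilde Y_{0,t}(y)-Y^{(n)}_{0,t}(y)\bigr|\ \le\ e^{S_T}\int_{(0,T]\times(0,1/n)\times(0,1)}\frac{r}{1-r}\,N(ds,dr,du)\ \longrightarrow\ 0,
\]
hence $\widetilde Y=\lim_n Y^{(n)}=Y$. With this adjustment your argument is complete and entirely self-contained, whereas the paper's uniqueness relies on an external pathwise-uniqueness result for the one-point motion.
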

We refer to Appendix \ref{propflows} for the detailed construction of the flow $(Y_{0,t}(y), y \in [0,1], t \geq 0)$ and the proofs of its basic properties. In particular, Proposition \ref{propexistence} is proved in Appendix \ref{propflows0}. Proposition \ref{martpb}, also proved in Appendix \ref{propflows0}, shows that the $p$-point motion of this flow solves the martingale problem satisfied by the $p$-point motion of the flow of inverses of the $\Lambda$-process (see \cite[Thm. 5]{BERTOIN2005307}) and that, in our case, that martingale problem is well posed. This shows that, in our case, the process $(Y_{0,t}(\cdot))_{t\geq 0}$, defined as the solution of \eqref{defflowybysde}, is indeed equal in law to the flow of inverses of the $\Lambda$-process, defined in \cite{BERTOIN2005307}. 

\begin{remark}
That $Y_{0,\cdot}(\cdot)$ is equal in law to the flow of inverses of the $\Lambda$-process implies in particular that, for any $x,y \in [0,1]$ and $t\geq 0$, we have $\mathbb{P}(X_t(x) \geq y)=\mathbb{P}(x \geq Y_{0,t}(y))$. In other words, the one-point motions of the flows \eqref{defflowxbysde} and \eqref{defflowybysde} are Siegmund duals. This last point was already observed in \cite[Thm. 2.5]{cordhumvech2022}. 
\end{remark}

\begin{remark}[A population model for $Y$]
Even if $Y$ is a tool to understand the genealogy of $X$, it also has a population model interpretation of its own. Consider an infinite population that is continuously distributed in $[0,1]$. If $(s,r,u)\in N$, at time $s$ a catastrophe occurs and kills all individuals in the sub-interval $I_{r,u}$. 
After the catastrophe, the remaining individuals in the population reproduce uniformly (preserving their order) such as to instantaneously refill all the interval $[0,1]$. Then, for any $z \in [0,1]$, the descendants at time $s$ of the individuals that occupied $[0,z]$ at time $s-$ can be seen to be $[0,\mr_{r,u}(z)]$. For any $t\geq 0$, the descendants at time $t$ of the individuals that occupied the interval $[0,y]$ at time $0$ is then $[0,Y_{0,t}(y)]$. 
\end{remark}

It is also of interest to start the flows $Y$ at specific times and to compose them. We show in Proposition \ref{propcomposition} from Appendix \ref{shiftedflows} that a countable family of stochastic flows $\{ (Y_{s,t}(y), y \in [0,1], t \geq s), s \in J_N \cup \{0\}\}$ can be defined on the same probability space, such that we have the following composition property: almost surely, for any $s_1,s_2 \in J_N \cup \{0\}$ with $s_1<s_2$, 
\begin{align}
\forall t \geq s_2, \forall y \in [0,1], \ Y_{s_1,t}(y)=Y_{s_2,t}(Y_{s_1,s_2}(y)). \label{compprop}
\end{align} 
Moreover, each flow in this family satisfies a shifted version of \eqref{defflowybysde} (see Proposition \ref{propcomposition}). A consequence of \eqref{defflowybysde} and the composition property \eqref{compprop} is that the following holds almost surely: for any $t>0$ and $x \in [0,1]$, if for some $(s,r,u) \in N$ with $s \in (0,t]$ we have $x \in Y_{0,s-}^{-1}(I_{r,u})$ then 
\begin{align}
Y_{0,s}(x)=\mr_{r,u}(Y_{0,s-}(x))=u \ \text{and} \ Y_{0,t}(x)=Y_{s,t}(Y_{0,s}(x))=Y_{s,t}(u). \label{collage}
\end{align}
In other words, each jumping time $s \in J_N$ yields a merger event for some trajectories of the flow $Y$. The following proposition, proved in Appendix \ref{sectionnocontmergers}, shows the important property that mergers of trajectories of the flow $Y$ cannot occur continuously but only at jumping time $s \in J_N$. 
\begin{prop} \label{nonzerodiff}
Assume that \eqref{integassumption} holds true. Almost surely, two distinct trajectories $Y_{0,t}(y_1)$ and $Y_{0,t}(y_2)$ can only merge at a jumping time $s \in J_N$, and only if $y_1$ and $y_2$ both belong to $Y_{0,s-}^{-1}(I_{r,u})$ for some atom $(s,r,u) \in N$. More precisely,
\begin{align*}
& \mathbb{P} \left ( \forall t>0, \forall y_1 \neq y_2 \in [0,1], \ Y_{0,t}(y_1)\neq Y_{0,t}(y_2) \ \text{or} \ \exists (s,r,u) \in N \ \text{s.t.} \ s\leq t, \ y_1,y_2 \in Y_{0,s-}^{-1}(I_{r,u}) \right ) \\
& \qquad \qquad \qquad \qquad \qquad \qquad \qquad \qquad \qquad \qquad \qquad \qquad \qquad \qquad \qquad \qquad \qquad \qquad \qquad =1. 
\end{align*}
The same property holds with $Y_{0,t}(\cdot)$ replaced by $Y_{0,t-}(\cdot)$ and $s\leq t$ replaced by $s<t$. 
\end{prop}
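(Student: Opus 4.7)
The plan is to analyze pathwise the two-point motion $D_s := Y_{0,s}(q_2) - Y_{0,s}(q_1)$ for fixed rational starting points $q_1 < q_2 \in Q$, show that $D$ can decay to zero only at an explicit merger jump of $N$, and then extend the conclusion to all pairs via the monotonicity and continuity of $y \mapsto Y_{0,t}(y)$ from Proposition \ref{propexistence}(iii).

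\textbf{Case analysis at jumps.} From \eqref{defflowybysde}, $D$ is a non-negative c\`adl\`ag pure-jump process. At a jump $(s,r,u)\in N$, writing $z_i:=Y_{0,s-}(q_i)$, a direct computation from the piecewise-affine form of $\mr_{r,u}$ distinguishes four cases: if $z_1,z_2$ lie on the same side of $I_{r,u}$, then $D_s=D_{s-}/(1-r)\geq D_{s-}$; if they straddle $I_{r,u}$ (necessarily $r<D_{s-}$), then $D_s=(D_{s-}-r)/(1-r)>0$; if exactly one of them lies in $I_{r,u}$, an explicit formula yields $D_s>0$; finally, if both lie in $I_{r,u}$ (necessarily $r\geq D_{s-}$), then $D_s=0$ and $q_1,q_2\in Y_{0,s-}^{-1}(I_{r,u})$, which is precisely the merger condition. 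Hence any jump bringing $D$ to zero is a merger jump, and by the composition property \eqref{compprop}, $D$ then stays at zero forever.

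\textbf{Ruling out continuous coalescence.} Let $\tau:=\inf\{s\geq 0 : D_s=0\}$. It remains to exclude the possibility $D_{\tau-}=0$ with no jump of $N$ at $\tau$, i.e.\ that $D$ decays continuously to zero through accumulated small jumps. Fix $t>0$, let $\tau_\delta:=\inf\{s: D_s\leq\delta\}$, and apply It\^o's formula for purely-discontinuous semi-martingales to $-\log D_s$ on $\{s<\tau_\delta\}$. The jump increment $\log(D_{s-}/D_s)$, combined with the bound $|\mr_{r,u}(z)-z|\leq r$ and the case analysis above, yields a compensator that is controlled in expectation on $[0,t\wedge\tau_\delta]$ by a constant multiple of $t\int_{[0,1)}r^{-1}\Lambda(dr)$, which is finite by \eqref{integassumption}. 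Consequently $\mathbb{E}[-\log D_{t\wedge\tau_\delta}]$ stays bounded as $\delta\downarrow 0$, forcing $D_{\tau-}>0$ almost surely on $\{\tau\leq t\}$. The left-limit version of the proposition follows by applying the same argument to $D_{s-}$ in place of $D_s$.

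\textbf{From rationals to all pairs, and main difficulty.} A countable union of the rational case yields the conclusion almost surely for every pair in $Q$. For general $y_1<y_2\in[0,1]$ with $Y_{0,t}(y_1)=Y_{0,t}(y_2)$, the monotonicity of $y\mapsto Y_{0,s-}(y)$ (so $Y_{0,s-}^{-1}(I_{r,u})$ is a closed interval) combined with continuity in $y$ allows one to identify, via rational sequences $q_1^{(n)}\downarrow y_1,\ q_2^{(n)}\uparrow y_2$, a single jump $(s,r,u)\in N$ whose merger interval contains $[y_1,y_2]$. The main obstacle is the second step: because $\int r^{-2}\Lambda(dr)$ may diverge, $N$ has infinitely many atoms in every time interval, and one must rule out the possibility that their cumulative contractive effect drives $D$ continuously to zero. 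This is exactly where the dust condition \eqref{integassumption} enters in an essential way, through the compensator bound involving the finite integral $\int r^{-1}\Lambda(dr)$.
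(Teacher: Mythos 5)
Your proposal follows essentially the same route as the paper: analyze the two-point motion $\Delta_s=Y_{0,s}(q_1)-Y_{0,s}(q_2)$ for rational starting points, apply a logarithmic Lyapunov argument via It\^o's formula to rule out continuous decay of $\Delta$ to zero (with the dust condition entering through the finiteness of $\int r^{-1}\Lambda(dr)$), and pass from rational pairs to general pairs. The paper implements the log-Lyapunov step with the globally bounded truncation $f_M(x)=-\log(x\vee e^{-M})$ and lets $M\to\infty$ (Lemma \ref{finiteexpectoflog}), which avoids the technical awkwardness of stopping $-\log\Delta_s$ at $\tau_\delta$, where $\Delta$ can overshoot and jump to a value below $\delta$ or to $0$; it also partitions time along the a.s.\ finite set of ``dangerous'' jump times, rather than working globally.

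The place where your sketch has a genuine gap is the final extension from rational to arbitrary pairs. You assert that, via rational approximants $q_1^{(n)}\downarrow y_1$ and $q_2^{(n)}\uparrow y_2$, continuity and monotonicity ``allow one to identify a single jump $(s,r,u)\in N$ whose merger interval contains $[y_1,y_2]$.'' But a priori each pair $(q_1^{(n)},q_2^{(n)})$ could be merged by a different atom of $N$, and continuity alone will not collapse infinitely many candidate jumps into one. To close this you need the a.s.\ finiteness, for a fixed rational $q\in(y_1,y_2)$, of the set of jumps $(s,r,u)$ with $Y_{0,s-}(q)\in I_{r,u}$ (the content of Lemma \ref{ntxfinite}): since each merger interval $Y_{0,s_n-}^{-1}(I_{r_n,u_n})$ contains $q$, only finitely many distinct jumps can appear among the $(s_n,r_n,u_n)$, a subsequence is eventually constant, and passing to the limit along the closed merger interval catches $y_1$ and $y_2$. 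This finiteness lemma is nontrivial (it is the same $\int r^{-1}\Lambda(dr)<\infty$ estimate again) and is not mentioned in your third step, which therefore does not currently close. Once it is added, your argument and the paper's are essentially equivalent.
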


\subsection{From flow of inverses to $\Lambda$-coalescent} \label{model2}

We now explain how the flow $Y$ from Proposition \ref{propexistence} naturally provides a construction of the process of masses of blocks of a $\Lambda$-coalescent with dust. We first introduce some notation that will be used throughout. 
We write $U \sim \mathcal{U}([0,1])$ (resp. $(U_i)_{i\geq 1}\sim \mathcal{U}([0,1])^{\otimes \mathbb{N}}$) if $U$ is a uniform random variable (resp. a sequence of iid uniform random variables) on $(0,1)$. Similarly we write $Z \sim \Ber(r)$ (resp. $(Z_i)_{i\geq 1}\sim\Ber(r)^{\otimes \mathbb{N}}$) if $Z$ is a Bernoulli random variable (resp. a sequence of iid Bernoulli random variables) with parameter $r$, i.e. $\mathbb{P}(Z=1)=1-\mathbb{P}(Z=0)=r$. We also use the following set-theoretic notation: $Q:=[0,1] \cap \mathbb{Q}$, for any set $A \subset [0,1]$ we write $\overline{A}$ and $A^{\mathrm{o}}$ for respectively the closure and interior of $A$, and $A^c := [0,1] \setminus A$. We denote by $\mathcal{B}([0,1])$ the family of Borel sets in $[0,1]$. 

To a realization of the flow $Y$ we first associate a partition process via the sampling procedure of \cite{BLGI}: let $(U_i)_{i\geq 1}\sim \mathcal{U}([0,1])^{\otimes \mathbb{N}}$ that is independent of $N$ (and therefore of $Y$), we define a process $(\pi^Y_t)_{t\geq 0}$ of random partitions of $\mathbb{N}$ by the equivalence relation $i \sim_{\pi^Y_t} j \Leftrightarrow Y_{0,t}(U_i)=Y_{0,t}(U_j)$. The following lemma is in line with \cite[Thm. 1]{BLGI} and relates $Y$ to the $\Lambda$-coalescent. 
\begin{lemma} \label{lawofpartbyblg1and20}
Assume that \eqref{integassumption} holds true. The partition process $(\pi^Y_t)_{t\geq 0}$ is a $\Lambda$-coalescent. 
\end{lemma}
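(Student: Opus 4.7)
The plan is to deduce the lemma from \cite[Thm. 1]{BLGI} once we have identified the flow $Y$ in law with the Bertoin--Le Gall flow of inverses of the $\Lambda$-process. The argument breaks into three steps.

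First, by Proposition \ref{martpb} in Appendix \ref{propflows}, for any $p \geq 1$ and any $y_1,\ldots,y_p \in [0,1]$, the $p$-point motion $(Y_{0,t}(y_1),\ldots,Y_{0,t}(y_p))_{t\geq 0}$ solves the well-posed martingale problem of \cite[Thm. 5]{BERTOIN2005307}. Well-posedness then yields equality in law, as c\`ad-l\`ag processes, between this $p$-point motion and the $p$-point motion of the Bertoin--Le Gall flow of inverses of the $\Lambda$-process.

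Second, since $(U_i)_{i\geq 1}$ is independent of $N$ and thus of $Y$, one conditions on $(U_i)_{i\geq 1}$ and applies the previous step with $y_i = U_i$. For every $n \geq 1$ this shows that $(Y_{0,t}(U_1),\ldots,Y_{0,t}(U_n))_{t \geq 0}$ has the same joint law as its analogue built from the Bertoin--Le Gall flow sampled at the same $(U_i)$. The restricted partition $\pi^Y_t \cap \{1,\ldots,n\}$ is a measurable functional of this tuple, since it simply records which coordinates coincide, so the restricted partition processes coincide in law for every $n$, and by Kolmogorov consistency so do the full partition-valued processes.

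Third, \cite[Thm. 1]{BLGI} identifies the sampling construction applied to the flow of inverses of the $\Lambda$-process as a $\Lambda$-coalescent, so $(\pi^Y_t)_{t \geq 0}$ is a $\Lambda$-coalescent. The starting value $\pi^Y_0 = \{\{1\},\{2\},\ldots\}$ follows from $Y_{0,0} = \mathrm{id}$ and the almost sure pairwise distinctness of the $U_i$'s. The main technical ingredient is the well-posedness of the martingale problem used to identify the laws of the $p$-point motions; everything else is a routine transfer from the flow level to the partition level via the sampling. An alternative self-contained route would compute the jump rates of $(\pi^Y_t \cap \{1,\ldots,n\})_{t \geq 0}$ directly from the SDE \eqref{defflowybysde} and Proposition \ref{nonzerodiff}, but this requires tracking the conditional law of the block positions given the partition structure, which is precisely what \cite[Thm. 1]{BLGI} handles once the $p$-point motions are identified.
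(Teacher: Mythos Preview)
Your proof is correct and follows the route the paper itself signals after Proposition \ref{propexistence}: identify $Y$ in law with the Bertoin--Le Gall flow of inverses via the well-posed martingale problem (Proposition \ref{martpb}), then invoke \cite[Thm.~1]{BLGI}. The transfer from equality of $p$-point motions to equality of partition-valued processes is sound, since the restriction of $\pi^Y_t$ to $\{1,\ldots,n\}$ is a measurable functional of the $n$-tuple $(Y_{0,t}(U_i))_{i\leq n}$ and $(U_i)$ is independent of both flows.

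The paper, however, takes a different and more hands-on route. It first reduces to the equivalent process $(\pi^N_t)_{t\geq 0}$ (Remark \ref{nocontmerging}), then shows directly that the collection $\{(s,r,(Z_{s,i})_{i\geq 1})\}$ of jump times, $r$-components and indicator vectors forms a Poisson random measure with intensity $ds \times (\mathcal{B}(r)^{\times\mathbb{N}}(dz))\,r^{-2}\Lambda(dr)$, from which the $\Lambda$-coalescent property follows by \cite[Cor.~3]{pitman1999}. The key tool is iterated application of \cite[Lem.~2]{BLGI} at each large jump, which shows that the block positions $(U_j(s^\eta_i))_{j\geq 1}$ remain i.i.d.\ uniform and independent of the partition after each step. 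What this buys the paper is Remark \ref{poispint}: transitions of $(\pi^N_t)$ occur exactly at jump times $s\in J_N$, and at such a time blocks participate in the merger independently with probability equal to the $r$-component. This explicit Poisson description is used later, notably in the proof of Lemma \ref{probspeevt}. Your argument does not produce this directly, so if one follows your route, Remark \ref{poispint} would require a separate justification.
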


For all $t \geq 0$ let $m_t$ (resp. $m_{t-}$) be the Stieltjes measure on $[0,1]$ associated to the non-decreasing function $Y_{0,t}(\cdot)$ (resp. $Y_{0,t-}(\cdot)$), i.e. $m_t(A):=\int_{[0,1]} \mathds{1}_A(x) dY_{0,t}(x)$ (resp. $m_{t-}(A):=\int_{[0,1]} \mathds{1}_A(x) dY_{0,t-}(x)$) for $A \in \mathcal{B}([0,1])$. Proposition \ref{propexistence} ensures that, almost surely, for all $t \geq 0$ the measures $m_t$ and $m_{t-}$ are well-defined. It will turn out later that these measures have a simple expression (see Section \ref{calcmeas}). Let $C_t := \mathrm{Supp}(m_t)^c$ (resp. $C_{t-} := \mathrm{Supp}(m_{t-})^c$). As shown in Proposition \ref{poissrepstep1} below, almost surely for all $t>0$, $C_{t-}=\cup_{s \in (0,t)} C_{s}$. Let $(\mathcal{O}_k(t))_{k\geq 1}$ be an enumeration of the open connected components of $C_{t}$ such that $|\mathcal{O}_1(t)|\geq |\mathcal{O}_2(t)| \geq \dots$ \footnote{If several components have the same length, their order is determined by the $U_i'$ of smallest index that they contain, where $(U_i')_{i\geq 1} \sim \mathcal{U}([0,1])^{\otimes \mathbb{N}}$ is independent from everything else.}. We set $W_k(t):=|\mathcal{O}_k(t)|$ and $M_k(t):=\sum_{1 \leq j \leq k} W_j(t)$. Since $Y_{0,t}(\cdot)$ is constant on each $\mathcal{O}_k(t)$, let us denote by $V_k(t)$ the value taken by $Y_{0,t}(\cdot)$ on $\mathcal{O}_k(t)$; we then have $Y_{0,t}^{-1}(\{V_k(t)\})=\overline{\mathcal{O}_k(t)}$. We note that $k_1 \neq k_2 \Rightarrow V_{k_1}(t) \neq V_{k_2}(t)$. We enlarge the probability space by adding a new sequence $(\tilde U_k)_{k \geq 1}$ that is independent from everything else and, if for some $t$, $C_t$ has only finitely many open components $\mathcal{O}_1(t), \ldots, \mathcal{O}_K(t)$ (which occurs if the measure $\Lambda$ satisfies \eqref{strongintegassumption}) then for all $k > K$, we set $\mathcal{O}_k(t):=\emptyset$ (so $W_k(t)=0$) and $V_k(t):=\tilde U_k$. We similarly define $(\mathcal{O}_k(t-), W_k(t-), V_k(t-))_{k\geq 1}$ from $C_{t-}$ instead of $C_{t}$. What we informally called \textit{$t$-families} earlier are the open connected components of $C_{t}$ or, equivalently, the jump intervals of $X_{-t,0}(\cdot)$ (note however that $X_{-t,0}(\cdot)$ is not formally defined in our framework). 

\begin{prop} \label{combrepres}
Assume that \eqref{integassumption} holds true. Almost surely, the following holds 
\begin{itemize}
\item $(C_t)_{t \geq 0}$ is a nested interval-partition in the sense of \cite[Def. 1.3]{10.1214/20-AAP1641}; 
\item $(\pi^Y_t)_{t\geq 0}$ is the partition process obtained from the paintbox based on $(C_t)_{t \geq 0}$ in the sense of \cite[eq. (2)]{10.1214/20-AAP1641}; 
\item For any $k\geq 1$, $t \mapsto W_k(t)$ is c\`ad-l\`ag and $\lim_{s \rightarrow t, s<t}W_k(s)=W_k(t-)$ for all $t>0$; 
\item For any $k\geq 1$, $\{ t \geq 0 \ \text{s.t.} \ W_k(t)\neq W_k(t-) \} \subset J_N$; in particular, for any fixed $T\geq 0$ and $k\geq 1$, $(W_k(t))_{t\geq 0}$ is almost surely continuous at $T$; 
\item $t \mapsto (W_k(t))_{k \geq 1}$ is a c\`ad-l\`ag (for the topology considered in \cite[Prop. 1.1]{lecturebertoin2010}) version of the process of ordered non-zero masses of blocks of a $\Lambda$-coalescent. 
\end{itemize}
\end{prop}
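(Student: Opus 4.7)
The plan is to handle the five bullet points in a natural order: (1) and (2) are essentially structural consequences of Proposition \ref{propexistence} and of the definition of $\pi_t^Y$; (4) and (3) follow from Proposition \ref{nonzerodiff} plus the monotonicity of $C_t$ in $t$; and (5) follows from Lemma \ref{lawofpartbyblg1and20} combined with a strong law of large numbers and Kingman's correspondence.

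I would first fix an almost sure realization on which the conclusions of Propositions \ref{propexistence}, \ref{propcomposition} and \ref{nonzerodiff} hold. For (1): since $y\mapsto Y_{0,t}(y)$ is non-decreasing and continuous with $Y_{0,t}(0)=0$, $Y_{0,t}(1)=1$, the set $C_t=Supp(m_t)^c$ is an open subset of $[0,1]$. For monotonicity $C_s\subset C_t$ when $s\leq t$, I would use the composition property \eqref{compprop}: if $y_1,y_2$ lie in the same connected component of $C_s$ then $Y_{0,s}(y_1)=Y_{0,s}(y_2)$, and \eqref{compprop} with $s_1=0,s_2=s$ gives $Y_{0,t}(y_i)=Y_{s,t}(Y_{0,s}(y_i))$, hence $Y_{0,t}(y_1)=Y_{0,t}(y_2)$ and the two points remain in a common component of $C_t$. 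This also shows every $\mathcal{O}_k(t)$ is a union of some $\mathcal{O}_j(s)$'s together with the dust points in between, which is the nesting condition of Def.~1.3 in \cite{10.1214/20-AAP1641}. Point (2) is then immediate from the definition $i\sim_{\pi_t^Y} j\Leftrightarrow Y_{0,t}(U_i)=Y_{0,t}(U_j)$: because $Y_{0,t}$ is non-decreasing and continuous, $Y_{0,t}(U_i)=Y_{0,t}(U_j)$ iff $U_i,U_j$ lie in the closure of the same component $\mathcal{O}_k(t)$, i.e.\ in the same color-block of the paintbox built from $(\overline{\mathcal{O}_k(t)})_{k\geq 1}$.

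For (4) I would combine both statements of Proposition \ref{nonzerodiff}. Fix $t\notin J_N$ and $y_1\neq y_2$; since no jump of $N$ has time component equal to $t$, the two criteria degenerate to the same condition (existence of $(s,r,u)\in N$ with $s<t$ and $y_1,y_2\in Y_{0,s-}^{-1}(I_{r,u})$), so $Y_{0,t}(y_1)=Y_{0,t}(y_2)\Leftrightarrow Y_{0,t-}(y_1)=Y_{0,t-}(y_2)$. The level-sets of $Y_{0,t}$ and $Y_{0,t-}$ then coincide, giving $C_t=C_{t-}$ and hence $W_k(t)=W_k(t-)$ and $V_k(t)=V_k(t-)$ for every $k$; the inclusion $\{t:W_k(t)\neq W_k(t-)\}\subset J_N$ follows, and since $\mathbb P(T\in J_N)=0$ for any fixed $T\geq 0$ (the intensity of $N$ has no atom in time), the continuity at fixed $T$ is also obtained. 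For (3), the left-limit part is a direct consequence: for any $t>0$, as $s\uparrow t$ the family $C_s$ is non-decreasing and one checks via a jumping-time argument (using that only countably many jumps have occurred before $t$ and each produces an explicit merger via \eqref{collage}) that $\cup_{s<t}C_s$ coincides with $C_{t-}$, so that the lengths of the ranked components of $C_s$ converge to those of $C_{t-}$, i.e.\ $W_k(s)\to W_k(t-)$. Right-continuity is handled analogously, using that $C_t=\cap_{s>t}C_s$, which comes from combining the càdlàg property of $s\mapsto Y_{0,s}(y)$ (Proposition \ref{propexistence}(ii)) with the monotonicity of $C_s$ in $s$.

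For (5) I would rely on Lemma \ref{lawofpartbyblg1and20}: $(\pi_t^Y)_{t\geq 0}$ is a $\Lambda$-coalescent. By part (2), its blocks at time $t$ are in bijection with the closures $\overline{\mathcal{O}_k(t)}$ of the components of $C_t$ (plus dust singletons). Since $(U_i)_{i\geq 1}$ is i.i.d.\ $\mathcal{U}([0,1])$ and $\partial\mathcal{O}_k(t)$ has Lebesgue measure zero, the strong law of large numbers gives that the asymptotic frequency of the block associated with $\mathcal{O}_k(t)$ equals $|\mathcal{O}_k(t)|=W_k(t)$ almost surely. Ranking by non-increasing mass (with the auxiliary sequences $(U_i')$ and $(\tilde U_k)$ doing tie-breaking and padding, exactly as in Kingman's paintbox formalism) therefore identifies $(W_k(t))_{k\geq 1}$ with the ordered non-zero masses of blocks of a $\Lambda$-coalescent; càdlàg regularity in the topology of \cite[Prop.~1.1]{lecturebertoin2010} is inherited from the $\Lambda$-coalescent via Kingman's correspondence, and matches the pointwise càdlàg property proven in (3).

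The main obstacle I expect is the reconciliation in parts (3)--(4) of two a priori different descriptions of the left-limit of the ranked sequence: the one produced by the Stieltjes measure $m_{t-}$ (intrinsic to the flow $Y$) and the one coming from the $\ell^1$-type topology on ranked mass partitions. The inclusion $C_{t-}\supset\cup_{s<t}C_s$ is trivial, but the reverse inclusion requires that no component of $C_{t-}$ can suddenly appear without being the ascending union of pre-existing components, which is precisely what the left-limit half of Proposition \ref{nonzerodiff} provides, so the proof is essentially bookkeeping once that input is in hand.
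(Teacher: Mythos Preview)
Your overall strategy is sound and the argument for point (4) via Proposition \ref{nonzerodiff} is a clean alternative to the paper's route. However, there are two genuine gaps.

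\textbf{Gap in point (1).} First, a minor technical point: the composition property \eqref{compprop} is only established for $s_1,s_2\in J_N\cup\{0\}$, so you cannot invoke it for an arbitrary time $s$. (This is easily fixed by passing through the jump time supplied by Proposition \ref{nonzerodiff}.) More importantly, being a nested interval-partition in the sense of \cite[Def.~1.3]{10.1214/20-AAP1641} requires that $(C_t^c)_{t\geq 0}$ be c\`ad-l\`ag for the Hausdorff distance, and your proposal does not address right-continuity at all here. The paper handles this by first identifying $C_t=D_t$ (Proposition \ref{poissrepstep1}) and then arguing by contradiction: if $d_H(C_t^c,C_{t_n}^c)>\epsilon$ along $t_n\downarrow t$, compactness produces a point $x\in C_t^c$ such that a fixed neighborhood of $x$ lies in $C_{t_{n(i)}}$ for all large $i$; then $Y_{0,t_{n(i)}}$ is constant on that neighborhood, and right-continuity of trajectories (Proposition \ref{propexistence}(ii)) forces $Y_{0,t}$ to be constant there too, contradicting $x\in C_t^c$.

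\textbf{Gap in point (3).} Your right-continuity argument relies on the set identity $C_t=\cap_{s>t}C_s$, but this is not justified and in fact need not hold as an equality of sets: via $C_s=D_s$, a point $x\in\cap_{s>t}D_s\setminus D_t$ would correspond to infinitely many jumps $(s_n',r_n,u_n)\in N$ with $s_n'\downarrow t$ and $x\in Y_{0,s_n'-}^{-1}(I_{r_n,u_n}^{\mathrm o})$; Lemma \ref{ntxfinite} only rules this out for $x\in Q$ or for almost every $x$, not for all $x$. Even granting the identity up to null sets, convergence of ranked component lengths does not follow automatically from a set-theoretic intersection. The paper avoids this entirely: once Hausdorff c\`ad-l\`agness of $(C_t^c)_{t\geq 0}$ is established (point (1)), it invokes the continuity of the map ``interval partition $\mapsto$ ranked length sequence'' from \cite[Prop.~1.2]{lecturebertoin2010}, which immediately gives that $t\mapsto (W_k(t))_{k\geq 1}$ is c\`ad-l\`ag with the correct left limits.

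\textbf{Comparison.} The paper systematically routes through the Poisson representation $C_t=D_t$ (Proposition \ref{poissrepstep1}): monotonicity of $D_t$ is then trivial from \eqref{cadlagsets}, the identity $C_{t-}=\cup_{s<t}C_s$ is immediate, and point (4) reduces to the inclusion $\{D_t\neq D_{t-}\}\subset J_N$. Your approach of working intrinsically with the flow is more conceptual and your argument for (4) is arguably more direct than the paper's, but it leaves the regularity verifications (Hausdorff c\`ad-l\`ag, ranked-sequence convergence) unfinished; the paper's use of Proposition \ref{poissrepstep1} together with \cite[Prop.~1.2]{lecturebertoin2010} is what closes these.
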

Lemma \ref{lawofpartbyblg1and20} and Proposition \ref{combrepres} are rather intuitive but we provide justifications for them in Sections \ref{coaltrajy} and \ref{relationinvcoal} respectively. 
Based on Proposition \ref{combrepres}, $(W_k(t))_{k \geq 1, t \geq 0}$ is distributed as the process of ordered masses of blocks of a $\Lambda$-coalescent; we thus now refer to it as such. That relation can be seen as a restatement of the well-known fact that the jumps sizes of $X_{-t,0}(\cdot)$ are the masses of blocks of a $\Lambda$-coalescent at time $t$, \cite[Sec. 3.3]{BLGI}, \cite[(1.4)]{labbe2014a}. 
The combination of Proposition \ref{combrepres} with Lemma \ref{lawofpartbyblg1and20} also shows that $(C_t)_{t \geq 0}$ is a nested interval-partition associated with a $\Lambda$-coalescent. We thus recover \cite[Prop. 1.11]{10.1214/20-AAP1641}. Our construction is slightly different from the one in \cite{10.1214/20-AAP1641}, as it is based on the flow of inverses $Y$ (itself constructed in Proposition \ref{propexistence}), while their construction is based on nested compositions of $\mathbb{N}$, see \cite[Sec. 3]{10.1214/20-AAP1641}. However, the $\Lambda$-Fleming-Viot flow underlies both constructions \cite[Cor. 3.6]{10.1214/20-AAP1641}. Working with the flow of inverses will allow us to derive a useful Poisson representation in Theorem \ref{represwktviamut}. 

\subsection{From block masses to stochastic integrals} \label{model3}

Let $(S_t)_{t\geq 0}$ be the subordinator defined by 
\begin{align}
S_t := -\int_{(0,t] \times (0,1)^2} \log(1-r) N(ds,dr,du). \label{defsubst}
\end{align}
By \cite[Thm. 19.3]{KenIti1999} we see that $(S_t)_{t\geq 0}$ is well-defined under assumption \eqref{integassumption}. By the L\'evy-Kintchine formula, the Laplace exponent of $(S_t)_{t\geq 0}$ is $\phi_S(\cdot)$, defined in \eqref{laplaceexponentlk}, i.e. for any $\lambda,t\geq 0$ we have $\mathbb{E}[e^{-\lambda S_t}]=e^{-t \phi_S(\lambda)}$. 

It is well-known that, for $\Lambda$ satisfying \eqref{integassumption}, the total mass of the dust of the $\Lambda$-coalescent at time $t$ is given by $e^{-S_t}$ \cite[Prop. 26]{pitman1999}, see also \cite{lecturebertoin2010,ref27ofsurvey,surveylambdacoal}. This classical fact, which can be stated as $\sum_{k\geq 1}W_k(t)=1-e^{-S_t}$, is, not surprisingly, also recovered from our construction, see Corollary \ref{calczonecollage} and Remark \ref{sumwkteq1minusst} from Section \ref{calcmeas}. More importantly, $(S_t)_{t\geq 0}$, plays a role in the Poisson representations from Theorems \ref{intsto} and \ref{represwktviamut}, and the formula from Theorem \ref{ito}. 

For any $t \geq 0$ and $(r,u) \in (0,1)^2$, we introduce indicators tracking which blocks are affected by a potential jump at $(t,r,u)$. Recall that $I_{r,u}$ is defined in \eqref{defiru}. We let $Z_k(t,r,u):=\mathds{1}_{V_k(t) \in I_{r,u}}$  (resp. $Z_k(t-,r,u):=\mathds{1}_{V_k(t-) \in I_{r,u}}$) and $\beta_k(t,r,u) := \sum_{j=1}^kZ_j(t,r,u)$ (resp. $\beta_k(t-,r,u) := \sum_{j=1}^kZ_j(t-,r,u)$). The following lemma, which is a consequence of \cite[Lem. 2]{BLGI} and is justified in Section \ref{relationinvcoal}, shows that the locations $V_k(t)$ of the blocks are uniformly distributed and independent of their masses $W_k(t)$. 
\begin{lemma} \label{lawofvkandzk}
For any fixed $t\geq 0$, $(V_k(t))_{k\geq 1}$ and $(W_k(t))_{k\geq 1}$ are independent, and $(V_k(t))_{k\geq 1}\sim \mathcal{U}([0,1])^{\otimes \mathbb{N}}$. In particular, for any $(t,r,u) \in [0,\infty) \times (0,1)^2$, $(Z_k(t,r,u))_{k\geq 1}$ and $(W_k(t))_{k\geq 1}$ are independent and $(Z_k(t,r,u))_{k\geq 1}\sim\Ber(r)^{\otimes \mathbb{N}}$. 
\end{lemma}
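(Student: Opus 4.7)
The plan is to identify the first statement as a standard property of exchangeable bridges, applied to the generalized inverse of $Y_{0,t}(\cdot)$, and then to deduce the Bernoulli claim by a direct computation.

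For fixed $t\geq 0$, Proposition \ref{propexistence} gives that $y \mapsto Y_{0,t}(y)$ is continuous, non-decreasing, and maps $0$ to $0$ and $1$ to $1$. Its right-continuous generalized inverse $G_t(v):=\inf\{y\in[0,1]:Y_{0,t}(y)>v\}$ is then a non-decreasing c\`adl\`ag function from $[0,1]$ to $[0,1]$ with boundary values $0$ and $1$, whose atomic part is concentrated on the countable set $\{V_k(t)\}_{k\geq 1}$ with jump sizes $\{W_k(t)\}_{k\geq 1}$. In the language of \cite{BLGI}, $G_t$ is a bridge, and the first assertion amounts to saying that $G_t$ is an \emph{exchangeable} bridge; Kallenberg's representation theorem (equivalently, \cite[Lem. 2]{BLGI}) would then give that the ranked jump positions $(V_k(t))_{k\geq 1}$ are iid $\mathcal{U}([0,1])$ and independent of $(W_k(t))_{k\geq 1}$.

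To obtain exchangeability of $G_t$, I would exploit the Poisson random measure construction: the $u$-components of the atoms of $N$ are uniformly distributed on $[0,1]$, independent of the $(s,r)$-components, so merging events are uniformly scattered in $[0,1]$. This symmetry should propagate through the SDE \eqref{defflowybysde} and give that $Y_{0,t}(\cdot)$ is invariant in law under composition with a uniform random cyclic shift of $[0,1]$. Together with Lemma \ref{lawofpartbyblg1and20} (identifying $(\pi^Y_t)$ with a $\Lambda$-coalescent) and the paintbox structure from Proposition \ref{combrepres}, this places $G_t$ in the framework of \cite[Lem. 2]{BLGI}, whose application yields the first part.

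For the second assertion, fix $(r,u)\in (0,1)^2$. The set $I_{r,u} = [u(1-r),u(1-r)+r] \subset [0,1]$ is an interval of length exactly $r$. Since each $Z_k(t,r,u)=\mathds{1}_{V_k(t)\in I_{r,u}}$ is a measurable function of $(V_k(t))_{k\geq 1}$ only, the first part immediately gives both the independence of $(Z_k(t,r,u))_{k\geq 1}$ and $(W_k(t))_{k\geq 1}$ and the fact that the $Z_k(t,r,u)$ are iid Bernoulli with parameter $\mathbb{P}(V_1(t)\in I_{r,u})=r$. The main obstacle lies in the first part, namely in rigorously justifying the exchangeability of $G_t$: the symmetry on the driving noise $N$ is clear, but transferring it to the full solution $Y$ requires handling the dust component (the infinitely many small jumps) with care, which is precisely where the reference \cite[Lem. 2]{BLGI} is most useful.
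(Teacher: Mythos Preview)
Your overall strategy---recognize the generalized inverse $G_t$ of $Y_{0,t}(\cdot)$ as an exchangeable bridge and then read off the independence and uniformity of the ranked jump locations from \cite[Lem.~2]{BLGI} (or Kallenberg's representation)---is exactly the paper's route, and your derivation of the Bernoulli claim from the first part is fine.

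Where you diverge is in how exchangeability is obtained. The paper does not attempt a direct symmetry argument on $N$; instead it invokes the identification established right after Proposition~\ref{propexistence} (via Proposition~\ref{martpb}): the law of $Y_{0,t}(\cdot)$ coincides with that of the flow of inverses of the $\Lambda$-process in \cite{BERTOIN2005307}, hence $G_t$ is, by construction, a bridge in the sense of \cite{BLGI}. Exchangeability is then free. The paper's application of \cite[Lem.~2]{BLGI} is also slightly more explicit than yours: it uses the sampling sequence $(U_i)_{i\geq 1}$ and the partition $\pi^Y_t$, notes that $(W_k(t))_{k\geq 1}$ is a measurable function of $\pi^Y_t$, and that for $i\in A_k$ one has $Y_{0,t}(U_i)=V_k(t)$; \cite[Lem.~2]{BLGI} then gives that $(V_k(t))_{k\geq 1}$ is i.i.d.\ uniform and independent of $\pi^Y_t$, hence of $(W_k(t))_{k\geq 1}$.

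Your proposed alternative---invariance of $Y_{0,t}(\cdot)$ under a uniform cyclic shift of $[0,1]$---has a genuine gap. Cyclic shifts do not preserve the boundary conditions $Y_{0,t}(0)=0$, $Y_{0,t}(1)=1$, so the claimed invariance is not even well-posed for the flow as constructed. Moreover, shift-invariance of jump locations is weaker than the exchangeability needed (i.i.d.\ uniform locations independent of the sizes); one would still have to upgrade it, and the natural way to do so is precisely to pass through the bridge identification you already cite. In short, drop the cyclic-shift heuristic and use the martingale-problem identification from Proposition~\ref{martpb} directly: that is the missing ingredient that makes \cite[Lem.~2]{BLGI} immediately applicable.
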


We now introduce a random measure $\mu_t$ which encodes the masses of the blocks, or equivalently the lengths of the intervals coalesced by $Y$ at time $t$. It is defined for any $t \geq 0$ by 
\begin{align}
\mu_t := \sum_{j\geq 1} W_j(t) \delta_{V_j(t)}. \label{represwktviamut2}
\end{align}
For any $t>0$ we similarly define $\mu_{t-}:=\sum_{j\geq 1} W_j(t-) \delta_{V_j(t-)}$. A measure similar to $\mu_t$ appeared in \cite[eq. (1.4)]{labbe2014a} in the lookdown representation of the $\Lambda$-Fleming-Viot flow, but its distributional properties have not been studied there. The following result provides an almost sure representation of $\mu_t$ in terms of the flow $Y$. 
\begin{theo} \label{represwktviamut}
Assume that \eqref{integassumption} holds true. For any fixed $t\geq 0$ we have almost surely 
\begin{align}
\mu_t = \sum_{(s,r,u) \in N, s \in (0,t]} re^{-S_{s-}} \delta_{Y_{s,t}(u)}. \label{defmut}
\end{align}
Moreover, we have almost surely that, for all $t \in J_N$, 
\begin{align}
\mu_t = \sum_{(s,r,u) \in N, s \in (0,t]} re^{-S_{s-}} \delta_{Y_{s,t}(u)} \ \text{and} \ \mu_{t-} = \sum_{(s,r,u) \in N, s \in (0,t)} re^{-S_{s-}} \delta_{Y_{s,t-}(u)}. \label{represwktviamut1}
\end{align}
\end{theo}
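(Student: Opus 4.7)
The plan is to decompose both sides of \eqref{defmut} jump-by-jump. At each $(s,r,u)\in N$ with $s\in(0,t]$, the map $\mr_{r,u}$ collapses $I_{r,u}$ in the range to the single point $u$; the constancy interval of $Y_{0,s}$ thereby created at $u$ absorbs a quantity $r\,e^{-S_{s-}}$ of freshly coalesced dust, which is then carried by the flow to the position $Y_{s,t}(u)$ at time $t$. Summed over jumps, this should produce exactly the right-hand side of \eqref{defmut}.

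The main preliminary step is a dust structure for $Y_{0,\cdot}$: almost surely, for any fixed $s\geq 0$, $[0,1]\setminus C_s$ has Lebesgue measure $e^{-S_s}$ and, for every Borel $A\subset[0,1]$,
\[
\bigl|Y_{0,s}^{-1}(A)\cap([0,1]\setminus C_s)\bigr|=e^{-S_s}\,|A\setminus\{V_j(s):j\geq 1\}|.
\]
I expect this to be the content of Proposition~\ref{poissrepstep1}. It should follow from the observation that $\mr_{r,u}$ is piecewise affine with slope $1/(1-r)$ on $I_{r,u}^c$, so each jump multiplies the dust stretching factor by $1/(1-r)$; this matches the multiplicative increment of $e^{S_s}$ across the jump, and the identity is preserved along the flow. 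The integrability assumption \eqref{integassumption} is what makes the infinite accumulation of small jumps tractable.

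Applying the dust structure at time $s-$ for each $(s,r,u)\in N$ with $s\leq t$, the ``freshly absorbed dust'' $B(s,r,u):=Y_{0,s-}^{-1}(I_{r,u})\cap([0,1]\setminus C_{s-})$ has Lebesgue measure $r\,e^{-S_{s-}}$. By \eqref{collage}, once a point enters a constancy interval it stays in one thereafter, so the sets $B(s,r,u)$ for distinct jumps are pairwise disjoint; and by Proposition~\ref{nonzerodiff} there is no continuous merging of trajectories, which forces $\bigsqcup_{(s,r,u)\in N,\,s\leq t}B(s,r,u)$ to cover the full constancy set $\bigcup_j\mathcal{O}_j(t)$ of $Y_{0,t}$ up to Lebesgue-null sets. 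Each $B(s,r,u)$ is mapped by $Y_{0,s}$ to the single point $u$, hence by $Y_{0,t}=Y_{s,t}\circ Y_{0,s}$ (from \eqref{compprop}) to $Y_{s,t}(u)$, so it is contained in the constancy interval of $Y_{0,t}$ whose value is $Y_{s,t}(u)$. Grouping the contributions $|B(s,r,u)|\,\delta_{Y_{s,t}(u)}$ then reconstructs $\mu_t=\sum_j W_j(t)\delta_{V_j(t)}$, establishing \eqref{defmut}.

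The most delicate step is controlling the dust structure uniformly across the infinite accumulation of small jumps of $N$. For the second part of the theorem, the same decomposition at a jumping time $t\in J_N$, with the final jump $(t,r_t,u_t)$ included through $Y_{t,t}(u_t)=u_t$, gives the formula for $\mu_t$; running the argument with $Y_{0,t-}$ in place of $Y_{0,t}$ and restricting to $s\in(0,t)$ gives the formula for $\mu_{t-}$. The upgrade from ``almost surely for each fixed $t$'' to ``almost surely for all $t\in J_N$'' is then obtained by a countable-union argument, using that $J_N$ is almost surely countable and invoking a suitable stopping-time version of the fixed-$t$ statement at each jumping time.
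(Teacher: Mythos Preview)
Your approach is essentially the same as the paper's: decompose $C_t$ into the disjoint ``freshly absorbed dust'' sets $B(s,r,u)=Y_{0,s-}^{-1}(I_{r,u}^{\mathrm{o}})\cap D_{s-}^c$, use the dust-measure identity to get $|B(s,r,u)|=re^{-S_{s-}}$, and group by the landing point $Y_{s,t}(u)=V_j(t)$. Two small corrections of attribution: the dust-measure identity is Proposition~\ref{measimgrec} (not Proposition~\ref{poissrepstep1}, which is the identity $C_t=D_t$), and the fact that the $B(s,r,u)$'s cover $\bigcup_j\mathcal{O}_j(t)$ up to a null set needs Lemma~\ref{ntxfinite}/\ref{piecewisecte} (local finiteness of absorbing jumps, so that a.e.\ point has a \emph{first} absorbing jump) rather than Proposition~\ref{nonzerodiff} alone.
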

It is transparent from the Poisson representation of Theorem \ref{represwktviamut} that each jump $(s,r,u) \in N$ represents a merger of existing blocks together with a fraction $r$ of the dust. The representation from Theorem \ref{represwktviamut} yields $W_k(t)=\sum_{(s,r,u) \in N, s \in (0,t]} re^{-S_{s-}} \mathds{1}_{Y_{s,t}(u)=V_k(t)}$. Unfortunately, this expression is not a stochastic integral because the integrand is not progressively measurable. However, Theorem \ref{represwktviamut} is the key ingredient in the 
proof of Theorem \ref{intsto}: by identifying, for each jump $(t,r,u) \in N$, the increment of $W_k(\cdot)$ and $M_k(\cdot)$ at time $t$, it leads to the definitions of $K^k_t(r,u)$ and $H^k_t(r,u)$ below.

For any $t \geq 0$, $r,u \in (0,1)$, and $k\geq 1$ we set 
\begin{align}
H^k_{t}(r,u) := & \mathds{1}_{\beta_k(t,r,u)=0} \left (e^{-S_{t}}r+\sum_{j>k}Z_j(t,r,u) W_j(t)-W_k(t) \right )_+ \nonumber \\
+ & \mathds{1}_{\beta_k(t,r,u)\neq 0} \left (e^{-S_{t}}r+\sum_{j>k}Z_j(t,r,u) W_j(t) \right ) \nonumber \\
+ & \sum_{j>k}(1-Z_j(t,r,u)) W_j(t) \mathds{1}_{\sum_{i=1}^j(1-Z_i(t,r,u)) \leq k-1}. \label{prepCF}
\end{align}
For any $t \geq 0$, $r,u \in (0,1)$, and $k\geq 2$ we set 
\begin{align}
K^k_{t}(r,u) := & \mathds{1}_{\beta_k(t,r,u)=0} \nonumber \\
& \times \left ( \mathsf{Median} \left \{ W_{k-1}(t), W_k(t), e^{-S_{t}}r+\sum_{j>k}Z_j(t,r,u) W_j(t) \right \} - W_k(t)\right ) \nonumber \\
+ & \mathds{1}_{\beta_k(t,r,u)=1, Z_k(t,r,u)=1} \nonumber \\
& \times \left (\mathsf{Min} \left \{ W_k(t) + e^{-S_{t}}r+\sum_{j>k}Z_j(t,r,u) W_j(t), W_{k-1}(t) \right \} - W_k(t) \right ) \nonumber \\
+ & \mathds{1}_{\beta_k(t,r,u)\geq 2} \left ( \sum_{j>k}(1-Z_j(t,r,u)) W_j(t) \mathds{1}_{\sum_{i=1}^j(1-Z_i(t,r,u)) = k-1} -W_k(t)\right ). \label{prepCFlb}
\end{align}
For $k=1$ we set $K^1_{t}(r,u) := H^1_{t}(r,u)$. We similarly define $H^k_{t-}(r,u)$ and $K^k_{t-}(r,u)$. As stated in Theorem \ref{intsto}, the quantities $H^k_{t-}(r,u)$ and $K^k_{t-}(r,u)$ represent the increment of respectively $M_k(\cdot)$ and $W_k(\cdot)$ at $t$ if $(t,r,u)$ is a jump. Having defined these quantities at times $t$ that are not necessarily jumping times makes separation of randomness possible, for example in Theorem \ref{ito}. 

\section{Long time behavior} \label{longtimebehav}

In this section, we prove the asymptotic results for the moments of the block masses (Theorem \ref{longtimebehavth}), the large deviation principles (Theorems \ref{thmpgd} and \ref{thmpgdwk}), and the law of large numbers (Corollary \ref{corlgn}), taking the representations of Theorems \ref{intsto} and \ref{represwktviamut} and the pseudo-generator formula of Theorem \ref{ito} as given. The proofs of the latter results are deferred to Sections \ref{poisrep} and \ref{itoform}, as they require more involved technical arguments related to the flow of inverses. Recall that we always assume that \eqref{integassumption} holds true. 
For $k \geq 1$ let 
\begin{align*}
\phi_k(\eta) & := \int_{(0,1)} (1-(1-r)^k) ( 1-(1-r)^\eta ) r^{-2} \Lambda(dr), \\ 
\psi_k(\eta) & := \int_{(0,1)} (1-r)^\eta(1-(1-r)^k - kr(1-r)^{k-1}) r^{-2} \Lambda(dr). 
\end{align*}
These coefficients will appear below in some differential inequalities related to the functions $t \mapsto \mathbb{E} [ (1-M_k(t))^{\eta}]$. Note that, for any $\eta \in (0,\infty)$ (resp. $\eta \in (-\theta(\Lambda),0)$), $\phi_k(\eta)$ increases (resp. decreases) to $\phi_S(\eta)$ as $k\to\infty$. 

The proof of Theorem \ref{longtimebehavth} requires establishing upper and lower bounds. When $\eta \in [0,1]$, the lower bounds 
follow from showing that both $e^{-t\phi_S(\eta)}$ and $e^{-t\lambda_k(\Lambda)}$ are lower bounds (up to a multiplicative constant) for $\mathbb{E}[W_k(t)^\eta]$. The upper bounds are more involved and require distinguishing the cases $\phi_S(\eta)>\lambda_k(\Lambda)$ and $\phi_S(\eta)\leq\lambda_k(\Lambda)$. The proofs of the upper bounds are provided in Section \ref{longtimeupperbound} and based on Theorem \ref{ito}. The proofs of the lower bounds are provided in Sections \ref{longtimeupperbound} and \ref{longtimelowerboundspe} and based, depending on the case, on Theorem \ref{ito} or on Theorem \ref{represwktviamut}. 

\subsection{Bounds based on Theorem \ref{ito}} \label{longtimeupperbound}

In this subsection we apply Theorem \ref{ito} to derive some bounds required for the proof of Theorem \ref{longtimebehavth}. The following lemma provides a bound for $\mathbb{E} [ (1-M_k(t))^{\eta}]$ that is almost optimal for large values of $k$. 
\begin{lemma} \label{minoyklem}
For any $\eta \in [0,1]$ (resp. $\eta \in (-\theta(\Lambda),0)$), $k \geq 1$ and $t \geq 0$ we have 
\begin{align}
\mathbb{E} [ (1-M_k(t))^{\eta}] \leq e^{-t \phi_k(\eta)} \ \text{(resp.} \ \geq e^{-t \phi_k(\eta)}\text{)}. \label{minoyk}
\end{align}
\end{lemma}

\begin{proof}
From the definition of $H^k_{\cdot}(\cdot,\cdot)$ in \eqref{prepCF} we have almost surely 
\begin{align}
H^k_t(r,u) & \geq (1-\mathds{1}_{Z_1(t,r,u)=\cdots=Z_k(t,r,u)=0}) \left (e^{-S_t}r+\sum_{j>k}Z_j(t,r,u) W_j(t) \right ) \nonumber \\
& + (1-Z_{k+1}(t,r,u)) W_{k+1}(t) \mathds{1}_{\sum_{i=1}^{k+1}(1-Z_i(t,r,u)) \leq k-1} \nonumber \\
& = \mathds{1}_{\beta_k(t,r,u)\geq 1} \left (e^{-S_t}r+\sum_{j>k}Z_j(t,r,u) W_j(t) \right ) + \mathds{1}_{Z_{k+1}(t,r,u)=0, \beta_k(t,r,u)\geq 2} W_{k+1}(t) \label{boundhkt1} \\
& \geq \mathds{1}_{\beta_k(t,r,u)\geq 1} \left (e^{-S_t}r+\sum_{j>k}Z_j(t,r,u) W_j(t) \right ) \label{boundhkt2}. 
\end{align}
The intermediary bound \eqref{boundhkt1} will come useful later on. We now bound the quantity that appears in the expression of $\frac{d}{dt} \mathbb{E}[(1-M_k(t))^{\eta}]$ provided by Theorem \ref{ito}, starting with the case $\eta \in [0,1]$. Using the bound \eqref{boundhkt2}, recalling from Lemma \ref{lawofvkandzk} and Remark \ref{zkindepst} that $(Z_j(t,r,u))_{j\geq 1}$ and $\beta_k(t,r,u)$ are independent of $(W_j(t))_{j\geq 1}$ and $S_t$ and that $(Z_j(t,r,u))_{j\geq 1}\sim\Ber(r)^{\otimes \mathbb{N}}$, using Jensen inequality, and recalling from Remark \ref{sumwkteq1minusst} that $e^{-S_t}+\sum_{j>k} W_j(t)=1-M_k(t)$ we get 
\begin{align}
& \mathbb{E} \left [ \left (1-M_k(t)-H^k_t(r,u) \right )^{\eta} - (1-M_k(t))^{\eta} \right ] \nonumber \\
\leq & \mathbb{E} \left [ \mathds{1}_{\beta_k(t,r,u)\geq 1} \left ( (1-M_k(t)-e^{-S_t}r-\sum_{j>k}Z_j(t,r,u) W_j(t) )^{\eta} - (1-M_k(t))^{\eta} \right ) \right ] \nonumber \\
= & (1-(1-r)^k) \mathbb{E} \left [ \mathbb{E} \left [ (1-M_k(t)-e^{-S_t}r-\sum_{j>k}Z_j(t,r,u) W_j(t) )^{\eta} \big | M_k(t),S_t \right ] - (1-M_k(t))^{\eta} \right ] \nonumber \\
\leq & (1-(1-r)^k) \mathbb{E} \left [ \mathbb{E} \left [ 1-M_k(t)-e^{-S_t}r-\sum_{j>k}Z_j(t,r,u) W_j(t) \big | M_k(t),S_t \right ]^{\eta} - (1-M_k(t))^{\eta} \right ] \nonumber \\
= & (1-(1-r)^k) \mathbb{E} \left [ ( 1-M_k(t)-e^{-S_t}r-r\sum_{j>k} W_j(t) )^{\eta} - (1-M_k(t))^{\eta} \right ] \nonumber \\
= & - ( 1-(1-r)^\eta ) (1-(1-r)^k) \mathbb{E} \left [ (1-M_k(t))^{\eta} \right ]. \label{calcjensen}
\end{align}
Now, applying \eqref{derfmkt} from Theorem \ref{ito} with $f(x):=(1-x)^{\eta}$, plugging in the above bound, and using the definition of $\phi_k(\eta)$, we obtain 
\begin{align}
\frac{d}{dt} \mathbb{E}[(1-M_k(t))^{\eta}] \leq - \phi_k(\eta) \mathbb{E}[(1-M_k(t))^{\eta}]. \label{ineqdiffsimple}
\end{align}
This yields $\frac{d}{dt} \log \mathbb{E}[(1-M_k(t))^{\eta}] \leq -\phi_k(\eta)$ and \eqref{minoyk} follows in the case $\eta \in [0,1]$. 

We note that the two inequalities in \eqref{calcjensen} came from the fact that the function $x \mapsto x^{\eta}$ is non-decreasing and concave when $\eta \in [0,1]$. In the case $\eta \in (-\theta(\Lambda),0)$, that function is non-increasing and convex, so the calculation \eqref{calcjensen} can be performed again, with all inequalities being reversed. We thus get that \eqref{ineqdiffsimple} holds true with "$\leq$" replaced by "$\geq$" and \eqref{minoyk} follows in the case $\eta \in (-\theta(\Lambda),0)$. 
\end{proof}

The following lemma allows one to transform the task of proving a bound for $\mathbb{E} [ (1-M_k(t))^{\eta}]$ into the task of proving a bound for $\mathbb{E} [ (1-M_{k+1}(t))^{\eta}]$. 
\begin{lemma} \label{minoyklembis}
For any $\eta \in [0,1]$ (resp. $\eta \in (-\theta(\Lambda),0)$), $k \geq 1$ and $t \geq 0$ we have 
\begin{align}
\frac{d}{dt} \mathbb{E}[(1-M_k(t))^{\eta}] \leq \ \text{(resp.} \ \geq \text{)} - \lambda_{k+1}(\Lambda) \mathbb{E}[(1-M_k(t))^{\eta}] + \psi_{k+1}(\eta) \mathbb{E}[(1-M_{k+1}(t))^{\eta}]. \label{minoykbis}
\end{align}
\end{lemma}

\begin{proof}
We first note that the bound \eqref{boundhkt1} can be re-written as follows. 
\begin{align}
H^k_t(r,u) & \geq (\mathds{1}_{\beta_k(t,r,u)=1,Z_{k+1}(t,r,u)=1} + \mathds{1}_{\beta_k(t,r,u)\geq 2}) \left (W_{k+1}(t) + e^{-S_t}r+\sum_{j>k+1}Z_j(t,r,u) W_j(t) \right ) \nonumber \\ 
& + \mathds{1}_{\beta_k(t,r,u)=1,Z_{k+1}(t,r,u)=0} \left (e^{-S_t}r+\sum_{j>k+1}Z_j(t,r,u) W_j(t) \right ) \label{boundhkt3}. 
\end{align}
We start with the case $\eta \in [0,1]$. The idea is to proceed as in the calculation \eqref{calcjensen} from the proof of Lemma \ref{minoyklem} but to use the bound \eqref{boundhkt3} instead of \eqref{boundhkt2}. We obtain 
\begin{align*}
& \mathbb{E} \left [ \left (1-M_k(t)-H^k_t(r,u) \right )^{\eta} - (1-M_k(t))^{\eta} \right ] \\
\leq & \mathbb{E} \left [ (\mathds{1}_{\beta_k(t,r,u)=1,Z_{k+1}(t,r,u)=1} + \mathds{1}_{\beta_k(t,r,u)\geq 2}) (1-M_{k+1}(t)-e^{-S_t}r-\sum_{j>k+1}Z_j(t,r,u) W_j(t) )^{\eta} \right ] \\
& \qquad + \mathbb{E} \left [ \mathds{1}_{\beta_k(t,r,u)= 1,Z_{k+1}(t,r,u)=0} (1-M_k(t)-e^{-S_t}r-\sum_{j>k+1}Z_j(t,r,u) W_j(t) )^{\eta} \right ] \\
& \qquad \qquad - \mathbb{E} \left [ \mathds{1}_{\beta_k(t,r,u)\geq 1} (1-M_k(t))^{\eta} \right ] \\
\leq & (kr^2(1-r)^{k-1}+1-(1-r)^k-kr(1-r)^{k-1}) \\
& \qquad \times \mathbb{E} \left [ \mathbb{E} \left [ 1-M_{k+1}(t)-e^{-S_t}r-\sum_{j>k+1}Z_j(t,r,u) W_j(t) \big | M_k(t),S_t \right ]^{\eta} \right ] \\
& \qquad \qquad + kr(1-r)^{k} \mathbb{E} \left [ (1-M_k(t))^{\eta} \right ] - (1-(1-r)^k) \mathbb{E} \left [ (1-M_k(t))^{\eta} \right ] \\
= & (1-(1-r)^{k}-kr(1-r)^{k}) (1-r)^{\eta} \mathbb{E} \left [ (1-M_{k+1}(t))^{\eta} \right ] \\
& \qquad - (1-(1-r)^{k}-kr(1-r)^{k}) \mathbb{E} \left [ (1-M_k(t))^{\eta} \right ] \\
= & (1-(1-r)^{k+1}-(k+1)r(1-r)^{k}) \left ( (1-r)^{\eta} \mathbb{E} \left [ (1-M_{k+1}(t))^{\eta} \right ] - \mathbb{E} \left [ (1-M_k(t))^{\eta} \right ] \right ). 
\end{align*}
Now, applying \eqref{derfmkt} from Theorem \ref{ito} with $f(x):=(1-x)^{\eta}$, plugging in the above bound, and using the definition of $\lambda_{k+1}(\Lambda)$ and $\psi_{k+1}(\eta)$, we get \eqref{minoykbis}. 

Again, the two inequalities in the above calculation came from the fact that the function $x \mapsto x^{\eta}$ is non-decreasing and concave when $\eta \in [0,1]$. In the case $\eta \in (-\theta(\Lambda),0)$, that function is non-increasing and convex, so the calculation can be performed again with all inequalities being reversed, and \eqref{minoykbis} follows in the case $\eta \in (-\theta(\Lambda),0)$. 
\end{proof}

The following result provides optimal bounds for $\mathbb{E} [ (1-M_k(t))^{\eta}]$. The idea is to use Lemma \ref{minoyklembis} iteratively until we reach indices for which the bound from Lemma \ref{minoyklem} becomes sharp. 
\begin{prop} \label{minoyklem2}
For any $\eta \in [0,1]$ and $k \geq 1$, 
\begin{itemize}
\item[(i)] If $\phi_S(\eta)>\lambda_{k+1}(\Lambda)$ then there is $C>0$ such that for any $t \geq 0$ we have 
\begin{align}
\mathbb{E} [ (1-M_k(t))^{\eta}] \leq C e^{-t \lambda_{k+1}(\Lambda)}. \label{minoykopt}
\end{align}
\item[(ii)] If $\phi_S(\eta) \leq \lambda_{k+1}(\Lambda)$ then for any $\epsilon>0$, there is $C(\epsilon)>0$ such that for any $t \geq 0$ we have 
\begin{align}
e^{-t \phi_S(\eta)} \leq \mathbb{E} [ (1-M_k(t))^{\eta}] \leq C(\epsilon) e^{-t (\phi_S(\eta)-\epsilon)}. \label{minoykoptpostcr}
\end{align}
\end{itemize}
For any $\eta \in (-\theta(\Lambda),0)$, $k \geq 1$, and $\epsilon>0$, there is $C(\epsilon)>0$ such that for any $t \geq 0$ we have 
\begin{align}
C(\epsilon) e^{-t (\phi_S(\eta)+\epsilon)} \leq \mathbb{E} [ (1-M_k(t))^{\eta}] \leq e^{-t \phi_S(\eta)}. \label{minoykoptpostcretaneg}
\end{align}
\end{prop}

\begin{proof}
Let us fix $\eta \in (0,1]$ and $k\geq 1$ (the case $\eta=0$ is obvious). We set $f_k(s) := e^{s \lambda_{k+1}(\Lambda)}\mathbb{E} [ (1-M_k(s))^{\eta}]$. We have $f_k(0)=1$ and, using Lemma \ref{minoyklembis}, we get 
\begin{align*}
\frac{d}{ds} f_k(s) & = \lambda_{k+1}(\Lambda) e^{s \lambda_{k+1}(\Lambda)}\mathbb{E} [ (1-M_k(s))^{\eta}] +  e^{s \lambda_{k+1}(\Lambda)} \frac{d}{ds} \mathbb{E}[(1-M_k(s))^{\eta}] \\
& \leq \psi_{k+1}(\eta) e^{s \lambda_{k+1}(\Lambda)} \mathbb{E}[(1-M_{k+1}(s))^{\eta}]. 
\end{align*}
Integrating this inequality on $[0,t]$ and multiplying both sides by $e^{-t\lambda_{k+1}(\Lambda)}$ we get, for $t \geq 0$, 
\begin{align}
\mathbb{E} [ (1-M_k(t))^{\eta}] \leq e^{-t\lambda_{k+1}(\Lambda)} \left ( 1 + \psi_{k+1}(\eta) \int_0^t e^{s \lambda_{k+1}(\Lambda)}\mathbb{E}[(1-M_{k+1}(s))^{\eta}]ds \right ). \label{optminoderyk}
\end{align}
Iterating \eqref{optminoderyk} we get that for any $n \geq 1$ and $t \geq 0$, $e^{t\lambda_{k+1}(\Lambda)}\mathbb{E} [ (1-M_k(t))^{\eta}]$ is smaller than 
\begin{align}
 & 1 + \sum_{j=1}^{n-1} \left ( \prod_{i=1}^j \psi_{k+i}(\eta) \right ) \int_{[0,t]^j} \left ( \prod_{i=1}^j e^{s_i(\lambda_{k+i}(\Lambda)-\lambda_{k+1+i}(\Lambda))} \right ) \mathds{1}_{s_j\leq \cdots \leq s_1} ds_1 ... ds_j \nonumber \\
+ & \left ( \prod_{i=1}^n \psi_{k+i}(\eta) \right ) \int_{[0,t]^n} \left ( \prod_{i=1}^{n-1} e^{s_i(\lambda_{k+i}(\Lambda)-\lambda_{k+1+i}(\Lambda))} \right ) e^{s_n \lambda_{k+n}(\Lambda)} \mathbb{E}[(1-M_{k+n}(s_n))^{\eta}] \mathds{1}_{s_n\leq \cdots \leq s_1} ds_1 ... ds_n, \label{optminoderykbis}
\end{align}
with the conventions $\sum_{j=1}^{0}\cdots=0$ and $\prod_{i=1}^0 \cdots=1$. Since $\int_{[0,t]^j} ( \ldots ) \mathds{1}_{s_j\leq \cdots \leq s_1} ds_1 ... ds_j \leq \int_{[0,\infty)^j} ( \ldots ) ds_1 ... ds_j$ we have 
\begin{align}
\int_{[0,t]^j} \left ( \prod_{i=1}^j e^{s_i (\lambda_{k+i}(\Lambda)-\lambda_{k+1+i}(\Lambda))} \right ) \mathds{1}_{s_j\leq \cdots \leq s_1} ds_1 ... ds_j \leq \prod_{i=1}^j \frac1{\lambda_{k+1+i}(\Lambda)-\lambda_{k+i}(\Lambda)}. \label{optminoderykterm1}
\end{align}

Let us assume that $\phi_S(\eta) \leq \lambda_{k+1}(\Lambda)$ and fix $\epsilon>0$. We fix $n$ large enough so that $\phi_S(\eta)-\epsilon<\phi_{k+n}(\eta)$. We thus have $\phi_S(\eta)-\epsilon<\phi_{k+n}(\eta)<\phi_S(\eta)\leq \lambda_{k+1}(\Lambda)$. Using Lemma \ref{minoyklem} and integrating the variables one by one we get that the second integral in \eqref{optminoderykbis} is smaller than 
\begin{align}
& \int_{[0,t]^{n-1}} \left ( \prod_{i=1}^{n-1} e^{s_i(\lambda_{k+i}(\Lambda)-\lambda_{k+1+i}(\Lambda))} \right ) \frac{e^{s_{n-1}(\lambda_{k+n}(\Lambda)-\phi_{k+n}(\eta))}}{\lambda_{k+n}(\Lambda)-\phi_{k+n}(\eta)} \mathds{1}_{s_{n-1}\leq \cdots \leq s_1} ds_1 ... ds_{n-1} \nonumber \\
\leq & \left ( \prod_{i=1}^n \frac1{\lambda_{k+i}(\Lambda)-\phi_{k+n}(\eta)} \right ) e^{t(\lambda_{k+1}(\Lambda)-\phi_{k+n}(\eta))} \leq  e^{t\lambda_{k+1}(\Lambda)} \left ( \prod_{i=1}^n \frac1{\lambda_{k+i}(\Lambda)-\phi_{k+n}(\eta)} \right ) e^{-t(\phi_S(\eta)-\epsilon)}. \label{optminoderykterm2postcr}
\end{align}
Combining \eqref{optminoderykterm1} and \eqref{optminoderykterm2postcr} with \eqref{optminoderykbis} we get that the upper bound in \eqref{minoykoptpostcr} holds for some choice of $C(\epsilon)>0$. For the lower bound in \eqref{minoykoptpostcr}, just note from Remark \ref{sumwkteq1minusst} that $e^{-S_t} \leq 1-M_k(t)$ so $\mathbb{E}[(1-M_k(t))^{\eta}] \geq \mathbb{E}[e^{-\eta S_t}]=e^{-t\phi_S(\eta)}$. 

Let us now assume that $\phi_S(\eta)>\lambda_{k+1}(\Lambda)$. For $n$ large enough we have $\lambda_{k+1}(\Lambda)<\phi_{k+n}(\eta)<\phi_S(\eta)$ and $\phi_{k+n}(\eta)<\lambda_{k+n}(\Lambda)$. Note also that, for $n$ large enough, $\phi_{k+n}(\eta)$ does not coincide with any coefficient $\lambda_j(\Lambda)$. We assume that $n$ is chosen such that the above requirements are satisfied. Let $m:=\min \{ j \leq n, \lambda_{k+j}(\Lambda)>\phi_{k+n}(\eta)\}$. Note that $m \geq 2$ and $\lambda_{k+m-1}(\Lambda)<\phi_{k+n}(\eta)<\lambda_{k+m}(\Lambda)$. Proceeding as in \eqref{optminoderykterm2postcr} for the variables $s_n,\ldots,s_m$ and as in \eqref{optminoderykterm1} for the variables $s_{m-1},\ldots,s_1$ (where, by convention, the product $\prod_{i=1}^{m-2}(\cdots)$ and the integral $\int_{[0,t]^{m-2}}(\cdots)$ equal $1$ when $m=2$), we get that the second integral in \eqref{optminoderykbis} is smaller than 
\begin{align}
\left ( \prod_{i=m}^n \frac1{\lambda_{k+i}(\Lambda)-\phi_{k+n}(\eta)} \right ) \times \frac{1}{\phi_{k+n}(\eta)-\lambda_{k+m-1}(\Lambda)} \times \left ( \prod_{i=1}^{m-2} \frac1{\lambda_{k+1+i}(\Lambda)-\lambda_{k+i}(\Lambda)} \right ). \label{optminoderykterm2}
\end{align}
Combining \eqref{optminoderykterm1} and \eqref{optminoderykterm2} with \eqref{optminoderykbis} we get that \eqref{minoykopt} holds for some choice of $C>0$. 

Let us fix $\eta \in (-\theta(\Lambda),0)$ and $k\geq 1$. Since $e^{-S_t} \leq 1-M_k(t)$ (see Remark \ref{sumwkteq1minusst}) we have $\mathbb{E}[(1-M_k(t))^{\eta}] \leq \mathbb{E}[e^{-\eta S_t}]=e^{-t\phi_S(\eta)}$, yielding the upper bound in \eqref{minoykoptpostcretaneg}. We now fix $\epsilon>0$ such that $\phi_S(\eta)+\epsilon<0$. Using Lemma \ref{minoyklembis} and that $\mathbb{E}[(1-M_k(t))^{\eta}] \leq \mathbb{E}[(1-M_{k+1}(t))^{\eta}]$ we get 
\begin{align*}
\frac{d}{dt} \mathbb{E}[(1-M_k(t))^{\eta}] \geq (\psi_{k+1}(\eta)- \lambda_{k+1}(\Lambda)) \mathbb{E}[(1-M_{k+1}(t))^{\eta}]. 
\end{align*}
We note that $\psi_{k+1}(\eta) > \lambda_{k+1}(\Lambda)$ since $\eta<0$. Integrating the above inequality we get 
\begin{align}
\mathbb{E} [ (1-M_k(t))^{\eta}] \geq (\psi_{k+1}(\eta)- \lambda_{k+1}(\Lambda)) \int_0^t \mathbb{E}[(1-M_{k+1}(s))^{\eta}]ds. \label{optminoderykneg}
\end{align}
Iterating \eqref{optminoderykneg} we get that for any $n \geq 1$ and $t \geq 0$, 
\begin{align}
\mathbb{E} [ (1-M_k(t))^{\eta}] \geq \left ( \prod_{i=1}^n (\psi_{k+i}(\eta)- \lambda_{k+i}(\Lambda)) \right ) \int_{[0,t]^n} \mathbb{E}[(1-M_{k+n}(s_n))^{\eta}] \mathds{1}_{s_n\leq \cdots \leq s_1} ds_1 ... ds_n. \label{optminoderyknegiter}
\end{align}
We fix $n$ large enough so that $\phi_{k+n}(\eta)<\phi_S(\eta)+\epsilon$. We thus have $\phi_S(\eta)<\phi_{k+n}(\eta)<\phi_S(\eta)+\epsilon<0$. Using Lemma \ref{minoyklem} and integrating the variables one by one in \eqref{optminoderyknegiter} we get 
\begin{align*}
\mathbb{E} [ (1-M_k(t))^{\eta}] & \geq \left ( \prod_{i=1}^n (\psi_{k+i}(\eta)- \lambda_{k+i}(\Lambda)) \right ) \int_{[0,t]^{n-1}} \frac{e^{-s_{n-1}\phi_{k+n}(\eta)}-1}{-\phi_{k+n}(\eta)} \mathds{1}_{s_{n-1}\leq \cdots \leq s_1} ds_1 ... ds_{n-1} \nonumber \\
& = \left ( \prod_{i=1}^n (\psi_{k+i}(\eta)- \lambda_{k+i}(\Lambda)) \right ) \times \left ( \frac{e^{-t\phi_{k+n}(\eta)}}{(-\phi_{k+n}(\eta))^n} - \sum_{i=0}^{n-1} \frac{t^i}{i!(-\phi_{k+n}(\eta))^{n-i}} \right ). 
\end{align*}
We thus get that for any choice of $C(\epsilon)$ such that $0<C(\epsilon)<(-\phi_{k+n}(\eta))^{-n}\prod_{i=1}^n (\psi_{k+i}(\eta)- \lambda_{k+i}(\Lambda))$, \eqref{minoykoptpostcretaneg} holds true for large $t$. Decreasing $C(\epsilon)$ if necessary we obtain that the lower bound in \eqref{minoykoptpostcretaneg} holds true for all $t\geq 0$. 
\end{proof}

We now derive a bound in the other direction, as an easy consequence of Theorem \ref{ito}. 
\begin{lemma} \label{minozklem}
For any $\eta \geq 0$ and $k \geq 2$ there is $C>0$ such that for any $t \geq 1$, 
\begin{align}
\mathbb{E}[W_k(t)^\eta] \geq C e^{-\lambda_k(\Lambda) t}. \label{minozk}
\end{align}
\end{lemma}

\begin{proof}
From the definition of $K^k_{\cdot}(\cdot,\cdot)$ in \eqref{prepCFlb} we have almost surely $K^k_t(r,u) \geq - \mathds{1}_{\beta_k(t,r,u)\geq 2} W_k(t)$. Applying \eqref{derfwkt} from Theorem \ref{ito} with $f(x):=x^{\eta}$, recalling from Lemma \ref{lawofvkandzk} that $\beta_k(t,r,u)$ is independent of $(W_j(t))_{j\geq 1}$ and that $\beta_k(t,r,u)$ follows the binomial distribution with parameter $(k,r)$, and using the definition of $\lambda_k(\Lambda)$, we obtain 
\begin{align*}
\frac{d}{dt} \mathbb{E}[W_k(t)^\eta] & \geq \int_{(0,1)^2} \mathbb{E} \left [ (W_k(t)- \mathds{1}_{\beta_k(t,r,u)\geq 2} W_k(t) )^{\eta} - W_k(t)^{\eta} \right ] r^{-2} \Lambda(dr) du \\
& = -\int_{(0,1)} \mathbb{E}[\mathds{1}_{\beta_k(t,r,u)\geq 2} W_k(t)^\eta] r^{-2} \Lambda(dr)du \\
& = -\mathbb{E}[W_k(t)^\eta] \int_{(0,1)} (1-(1-r)^k - kr(1-r)^{k-1}) r^{-2} \Lambda(dr) = -\lambda_k(\Lambda) \mathbb{E}[W_k(t)^\eta]. 
\end{align*}
Therefore $\frac{d}{dt} \log \mathbb{E}[W_k(t)^\eta] \geq -\lambda_k(\Lambda)$ so, for any $t\geq 1$, $\mathbb{E}[W_k(t)^\eta] \geq \mathbb{E}[W_k(1)^\eta]e^{-\lambda_k(\Lambda) (t-1)}$. We have clearly $\mathbb{P}(W_k(1)>0)>0$ so $\mathbb{E}[W_k(1)^\eta]>0$. We thus get \eqref{minozk}. 
\end{proof}

\begin{remark}
Alternatively, Lemma \ref{minozklem} is derived as follows. Let $B^k_{1,t}$ be the event where the $k$ blocks with mass $W_1(1),\ldots,W_k(1)$ at time $1$ do not merge on the time interval $(1,t]$. Since these blocks form a $\Lambda$-coalescent by Proposition \ref{combrepres} we have that, on $\{W_k(1)>0\}$, $\mathbb{P}(B^k_{1,t}|\mathcal{F}_{1})=e^{-(t-1)\lambda_k(\Lambda)}$. Moreover, on $\{W_k(1)>0\} \cap B^k_{1,t}$, we have $W_k(t) \geq W_k(1)$. Therefore, using conditioning with respect to $\mathcal{F}_{1}$, we get $\mathbb{E}[W_k(t)^\eta] \geq e^{-(t-1)\lambda_k(\Lambda)}\mathbb{E}[W_k(1)^\eta]$. A similar but slightly more elaborate argument will be used below to justify Proposition \ref{longtimelowerboundspeprop}, and a combination of the two arguments will be used to prove the large deviation lower bound \eqref{ldplowerbound} in Section \ref{longtimebehavccl}. 
\end{remark}

\begin{remark} \label{completelowerboundmkt}
For any $k \geq 1$ we have $W_{k+1}(t) \leq 1-M_k(t)$ so $\mathbb{E} [ (1-M_k(t))^{\eta}] \geq \mathbb{E}[W_{k+1}(t)^{\eta}]$ when $\eta>0$. Combining this with Lemma \ref{minozklem} allows to complete \eqref{minoykopt} with a lower bound of the form $\mathbb{E} [ (1-M_k(t))^{\eta}] \geq c e^{-t \lambda_{k+1}(\Lambda)}$, for some constant $c$ and all $t\geq 1$. 
\end{remark}

\subsection{A bound based on Theorem \ref{represwktviamut}} \label{longtimelowerboundspe}

In this subsection we establish a lower bound for $\mathbb{E}[W_k(t)^\eta]$ that complements those of Section \ref{longtimeupperbound} and that is based on the Poisson representation of Theorem \ref{represwktviamut}. 
\begin{prop} \label{longtimelowerboundspeprop}
For any $\eta \geq 0$ and $k\geq 2$, there is $C>0$ such that for any $t\geq 1$ we have 
\begin{align}
\mathbb{E} [W_k(t)^{\eta}] \geq C e^{-t\phi_S(\eta)}. \label{longtimelowerboundspeprop0}
\end{align}
\end{prop}
The idea of the proof is relatively simple and consists in using the fact that each jump of $N$ gives rise to a block that is at least proportional to the size of the dust at the jump time (see Theorem \ref{represwktviamut}). Therefore, if $N$ has $k$ large jumps on an interval $[t-1,t]$, if the resulting $k$ blocks do not merge within time $t$, and if the size of the dust does not change too much on $[t-1,t]$, then there are at least $k$ blocks at time $t$ that are at least proportional to the size of the dust, and so is $W_k(t)$. We now make this heuristic rigorous. Let $t_1,t_2 \geq 0$ with $t_1<t_2$, $k \geq 2$, $\epsilon \in (0,1)$ and $\alpha \in (0,\infty]$. We set 
\begin{align*}
E^{\epsilon}_{t_1,t_2}:= \{ (s,r,u) \in N \ \text{s.t.} \ s \in (t_1,t_2], \ r>\epsilon \}, \qquad M^{\epsilon}_{t_1,t_2}:= \sharp E^{\epsilon}_{t_1,t_2}. 
\end{align*}
For $i \in \{1,\dots,M^{\epsilon}_{t_1,t_2}\}$, let $(s_i,r_i,u_i)$ be the $i^{th}$ element of $E^{\epsilon}_{t_1,t_2}$, where the ordering is such that $s_1<s_2<\ldots<s_{M^{\epsilon}_{t_1,t_2}}$. Let 
\begin{align}
\mathcal{E}(t_1,t_2,k,\epsilon,\alpha) := \left \{ M^{\epsilon}_{t_1,t_2}= k, S_{t_2}-S_{t_1} \leq \alpha, \forall i \neq j \in \{1,\ldots,k\}, Y_{s_i,t_2}(u_i) \neq Y_{s_j,t_2}(u_j) \right \}. \label{defspeevinter}
\end{align}
Note that the event $\mathcal{E}(t_1,t_2,k,\epsilon,\alpha)$ is independent from the sigma-field $\mathcal{F}_{t_1}$ from Section \ref{model}. 

\begin{lemma} \label{minoonspeevt}
Let $t_1,t_2 \geq 0$ with $t_1<t_2$, $k \geq 2$, $\epsilon \in (0,1)$ and $\alpha >0$. On the event $\mathcal{E}(t_1,t_2,k,\epsilon,\alpha)$ we have almost surely $W_k(t_2)\geq \epsilon e^{-\alpha} e^{-S_{t_1}}$. 
\end{lemma}

\begin{proof}
Assume we are on the event $\mathcal{E}(t_1,t_2,k,\epsilon,\alpha)$ and on the probability one event where \eqref{defmut} holds true at $t=t_2$ (see Theorem \ref{represwktviamut}). Note from \eqref{defmut} that for any $i \in \{1,\dots,k\}$, 
\begin{align*}
\mu_{t_2}(\{Y_{s_i,t_2}(u_i)\}) \geq r_i e^{-S_{s_i}} > \epsilon e^{-(S_{t_2} - S_{t_1})} e^{-S_{t_1}} \geq \epsilon e^{-\alpha} e^{-S_{t_1}}. 
\end{align*}
This shows that, for each $i \in \{1,\dots,k\}$, $Y_{s_i,t_2}(u_i)$ is an atom of $\mu_{t_2}$ with weight larger than $\epsilon e^{-\alpha} e^{-S_{t_1}}$, and, since $Y_{s_i,t_2}(u_i)\neq Y_{s_j,t_2}(u_j)$ for $i,j \in \{1,\dots,k\}$ with $i\neq j$, these atoms are pairwise distinct. Therefore $\mu_{t_2}$ has at least $k$ atoms with weight larger than $\epsilon e^{-\alpha} e^{-S_{t_1}}$. Since, by \eqref{represwktviamut2}, $W_k(t_2)$ is the $k^{th}$ largest weight of atoms of $\mu_{t_2}$, we get $W_k(t_2)\geq \epsilon e^{-\alpha} e^{-S_{t_1}}$. 
\end{proof}

\begin{lemma} \label{probspeevt}
Let $w>0$, $k \geq 2$, $\epsilon \in (0,\max \mathrm{Supp} \Lambda)$. For all $\alpha>0$ large there is $c(w,k,\epsilon,\alpha)>0$ such that for all $t\geq 0$ we have almost surely $\mathbb{P}(\mathcal{E}(t,t+w,k,\epsilon,\alpha)|\mathcal{F}_t) \geq c(w,k,\epsilon,\alpha)$. 
\end{lemma}
The proof of this lemma relies on properties that are established in Appendix \ref{coaltrajy} and is therefore deferred to that appendix (see proof of Lemma \ref{probspeevt} in Appendix \ref{coaltrajysec}). 

\begin{proof} [Proof of Proposition \ref{longtimelowerboundspeprop}]
Fix $\eta \geq 0$, $k \geq 2$ and $\epsilon \in (0,\max \mathrm{Supp} \Lambda)$. According to Lemma \ref{probspeevt} there is $\alpha>0$ and a constant $c>0$ such that for any $t\geq 1$ we have $\mathbb{P}(\mathcal{E}(t-1,t,k,\epsilon,\alpha)|\mathcal{F}_{t-1}) \geq c$. According to Lemma \ref{minoonspeevt} we have $W_k(t)\geq \epsilon e^{-\alpha} e^{-S_{t-1}}$ almost surely on $\mathcal{E}(t-1,t,k,\epsilon,\alpha)$. We thus get that for any $t\geq 1$, 
\begin{align*}
\mathbb{E} [W_k(t)^{\eta}] \geq \epsilon^\eta e^{-\alpha \eta} \mathbb{E} [e^{-\eta S_{t-1}} \mathds{1}_{\mathcal{E}(t-1,t,k,\epsilon,\alpha)}] = \epsilon^\eta e^{-\alpha \eta} \mathbb{E} [e^{-\eta S_{t-1}} \mathbb{P}(\mathcal{E}(t-1,t,k,\epsilon,\alpha)|\mathcal{F}_{t-1})], 
\end{align*}
Combining with Lemma \ref{probspeevt} and the definition of $\phi_S(\cdot)$ in Section \ref{model3} we get 
\begin{align*}
\mathbb{E} [W_k(t)^{\eta}] \geq c \epsilon^\eta e^{-\alpha \eta} \mathbb{E} [e^{-\eta S_{t-1}}] = c \epsilon^\eta e^{-\alpha \eta} e^{-(t-1)\phi_S(\eta)}, 
\end{align*}
which yields \eqref{longtimelowerboundspeprop0}. 
\end{proof}

\subsection{Conclusion: Proof of the main results} \label{longtimebehavccl}

In this subsection we prove Theorem \ref{longtimebehavth}, Theorem \ref{thmpgd}, Corollary \ref{corlgn}, and Theorem \ref{thmpgdwk}. 

\begin{proof} [Proof of Theorem \ref{longtimebehavth}]
When $\eta \in [0,1]$, \eqref{lgtimeboundw1t} follows directly from the combination of Proposition \ref{minoyklem2} with Remark \ref{completelowerboundmkt}. Then recall that for any $k \geq 2$ we have $W_k(t) \leq 1-M_{k-1}(t)$ so, for $\eta \in [0,1]$, 
\begin{align*}
\mathbb{E}[W_k(t)^{\eta}] \leq \mathbb{E} [ (1-M_{k-1}(t))^{\eta}]. 
\end{align*}
Combining this with Propositions \ref{minoyklem2} and \ref{longtimelowerboundspeprop}, and Lemma \ref{minozklem}, we obtain that \eqref{lgtimeboundwkt} holds true when $k \geq 2$ and $\eta \in [0,1]$. 

Then, for $\eta>1$ and $k \in \{2,..., N(\Lambda)-1\}$, we have $\phi_S(1)>\lambda_{k}(\Lambda)$ and 
\begin{align*}
\mathbb{E}[W_k(t)^{\eta}] & \leq \mathbb{E}[W_k(t)] \leq \mathbb{E} [1-M_{k-1}(t)], \\
\mathbb{E}[W_2(t)^{\eta}] & \leq \mathbb{E}[(1-W_1(t))^{\eta}] \leq \mathbb{E}[1-W_1(t)] = \mathbb{E} [1-M_{1}(t)]. 
\end{align*}
Combining these with \eqref{minoykopt} from Proposition \ref{minoyklem2} and Lemma \ref{minozklem} we obtain that \eqref{lgtimeboundw1t} holds true when $\eta>1$ and that \eqref{lgtimeboundwkt} holds true when $k \in \{2,..., N(\Lambda)-1\}$ and $\eta>1$. 

When $\eta \in (-\theta(\Lambda),0)$, we see from \eqref{minoykoptpostcretaneg} in Proposition \ref{minoyklem2} that $\frac1{t}\log \mathbb{E}[(1-W_1(t))^{\eta}]$ converges to $-\phi_S(\eta)$ as $t \to \infty$. Since $\phi_S(\eta)<0$ when $\eta \in (-\theta(\Lambda),0)$, we obtain that \eqref{lgtimeboundw1t} holds true in this case as well. Finally, if $\eta \in (-\infty,-\theta(\Lambda)]$, we fix $\epsilon>0$ and note that $\mathbb{E}[(1-W_1(t))^{\eta}] \geq \mathbb{E}[(1-W_1(t))^{-\theta(\Lambda)+\epsilon}]$ so, by \eqref{lgtimeboundw1t} applied at $-\theta(\Lambda)+\epsilon$, we get 
\begin{align} 
\liminf_{t \to \infty} \frac1{t}\log \mathbb{E}[(1-W_1(t))^{\eta}] \geq -(\phi_S(-\theta(\Lambda)+\epsilon) \wedge \lambda_2(\Lambda)). \label{caseinfinite}
\end{align}
When \eqref{continuitycondition} holds true, then the right-hand side converges to $\infty$ as $\epsilon \to 0$ so \eqref{lgtimeboundw1t} also holds true when $\eta \in (-\infty,-\theta(\Lambda)]$. 
\end{proof}

We now prove Theorem \ref{thmpgd} using Theorem \ref{longtimebehavth}, G\"artner-Ellis Theorem, and a construction of suitable events. 
\begin{proof} [Proof of Theorem \ref{thmpgd}]
Assume that \eqref{integassumption} and \eqref{continuitycondition} hold true. By Theorem \ref{longtimebehavth} we get that, for any $\eta \in \mathbb{R}$,  
\begin{align}
\frac1{t}\log \mathbb{E}[e^{-\eta \log (1-W_1(t))}] & \underset{t \to \infty}{\longrightarrow} -(\phi_S(-\eta) \wedge \lambda_2(\Lambda)). \label{asymptfunctional02psik}
\end{align}
Recall that the function in the right-hand side of \eqref{asymptfunctional02psik} is finite on the domain $\eta \in (-\infty,\theta(\Lambda))$ and, since $\theta(\Lambda)>0$ by assumption \eqref{continuitycondition}, that interval contains the origin. Therefore the family $(-\log (1-W_1(t))/t)_{t>0}$ satisfies Assumption 2.3.2 of \cite{dembozeitouni}. Then, the upper bound in G\"artner-Ellis Theorem (see \cite[Thm. 2.3.6(a)]{dembozeitouni}) yields directly the upper bound \eqref{ldpupperbound}. 

The lower bound in G\"artner-Ellis Theorem (see \cite[Thm. 2.3.6(b)]{dembozeitouni}) yields that, for any open set $G \subset \mathbb{R}$, 
\begin{align}
\liminf_{t \rightarrow \infty} \frac{1}{t} \log \mathbb{P} \left ( \frac{-\log (1-W_1(t))}{t} \in G \right ) \geq -\inf_{x \in G \cap \mathcal{E}xp} \mathcal{J}_2(x), \label{ldplowerboundlvlkpartial}
\end{align}
where $\mathcal{E}xp$ is the set of \textit{exposed points} of $\mathcal{J}_2(\cdot)$ (see \cite[Def. 2.3.3]{dembozeitouni} for a definition), whose \textit{exposing hyperplane} belongs to $(-\infty,\theta(\Lambda))$. Let $\gamma_2 := \inf \{ \eta \geq 0, \phi_S(\eta)>\lambda_2(\Lambda) \}$, as in the discussion after \eqref{defiaslegendrezeta}. Then $\eta \mapsto -(\phi_S(-\eta) \wedge \lambda_2(\Lambda))$ is $\mathcal{C}^1$ on $(-\gamma_2,\theta(\Lambda))$ with increasing derivative $\phi_S'(-\cdot)$. That derivative has limit $\phi_S'(\gamma_2)$ and $\infty$ at respectively $-\gamma_2$ and $\theta(\Lambda)$. Indeed, if the limit at $\theta(\Lambda)$ was finite, then $\phi_S(-\cdot)$ would also have a finite limit at $\theta(\Lambda)$, which would contradict \eqref{continuitycondition}. Therefore, for any $x \in (\phi_S'(\gamma_2),\infty)$ there is $\eta \in (-\gamma_2,\theta(\Lambda))$ such that $\phi_S'(-\eta)=x$. By \cite[Lem. 2.3.9]{dembozeitouni} we deduce that $(\phi_S'(\gamma_2),\infty) \subset \mathcal{E}xp$. 

To prove \eqref{ldplowerbound} we only need to show that, for any open set $G \subset \mathbb{R}$ and $\epsilon>0$ we have 
\begin{align}
\liminf_{t \rightarrow \infty} \frac{1}{t} \log \mathbb{P} \left ( \frac{-\log (1-W_1(t))}{t} \in G \right ) \geq -\inf_{x \in G} \mathcal{J}_2(x) - \epsilon. \label{ldplowerboundlvlkepsilon}
\end{align}
We thus fix an open set $G \subset \mathbb{R}$ and $\epsilon>0$. If $G \subset (-\infty,0)$ there is nothing to prove so we assume that $G \cap [0,\infty) \neq \emptyset$. If there is $z \in (\phi_S'(\gamma_2),\infty) \cap G$ such that 
\begin{align}
\mathcal{J}_2(z) < \inf_{x \in G} \mathcal{J}_2(x)+\epsilon, \label{approxinf}
\end{align}
then \eqref{ldplowerboundlvlkepsilon} follows from \eqref{ldplowerboundlvlkpartial} since $(\phi_S'(\gamma_2),\infty) \subset \mathcal{E}xp$. Recall from the discussion after \eqref{defiaslegendrezeta} that $\mathcal{J}_2(\cdot)$ is continuous on $[0,\phi_S'(\gamma_2)]$. Therefore, if there is no $z \in (\phi_S'(\gamma_2),\infty) \cap G$ satisfying \eqref{approxinf}, then there is $z \in (0,\phi_S'(\gamma_2)) \cap G$ satisfying \eqref{approxinf}. Let us fix $\delta >0$ such that $(z-\delta,z+\delta)\subset (0,\phi_S'(\gamma_2)) \cap G$ and $(1+\delta/2z)\phi_S'(\gamma_2)<\phi_S'(0)$. This is possible since $\phi_S'(\gamma_2)<\phi_S'(0)$, as $\phi_S'(\cdot)$ is decreasing. We set $t_0:=tz/\phi_S'(\gamma_2) \in (0,t)$ and define the event 
\begin{align*}
A_t := \{ S_{t_0} < (1+\delta/2z) t_0 \phi_S'(\gamma_2) \} \cap \mathcal{E}(t_0,t_0+1,2,h,\alpha) \cap B_{t_0+1,t}, 
\end{align*}
where $\mathcal{E}(\cdot,\cdot,\cdot,\cdot,\cdot)$ is as in \eqref{defspeevinter}, $h \in (0,\max \mathrm{Supp} \Lambda)$, $\alpha$ is provided by Lemma \ref{probspeevt} so that $c(1,2,h,\alpha)>0$, and $B_{t_1,t_2}$ refers to the event where the two blocks with mass $W_1(t_1)$ and $W_2(t_1)$ at time $t_1$ do not merge on the time interval $(t_1,t_2]$. Since these blocks form a $\Lambda$-coalescent by Proposition \ref{combrepres}, they merge at rate $\lambda_2(\Lambda)$ (see the discussion after \eqref{pushingrates}). We thus have $\mathbb{P}(B_{t_1,t_2}|\mathcal{F}_{t_1})=e^{-(t_2-t_1)\lambda_2(\Lambda)}$. By Cram\'er's theorem (see \cite[Thm. 2.2.3]{dembozeitouni}) the family $(S_t/t)_{t>0}$ satisfies a LDP with rate function $\mathcal{J}(\cdot)$, the Legendre transform of $\eta \mapsto -\phi_S(-\eta)$. We thus get 
\begin{align}
\liminf_{t \rightarrow \infty} \frac{1}{t_0} \log \mathbb{P} (S_{t_0} < (1+\delta/2z) t_0 \phi_S'(\gamma_2)) \geq -\mathcal{J}(\phi_S'(\gamma_2)). \label{cramersub}
\end{align}
Conditioning with respect to $\mathcal{F}_{t_0+1}$ and then to $\mathcal{F}_{t_0}$ and using the above estimates and Lemma \ref{probspeevt} we get 
\begin{align*}
\mathbb{P} (A_t) & \geq c(1,2,h,\alpha) e^{-(t-t_0-1)\lambda_2(\Lambda)} \mathbb{P} (S_{t_0} < (1+\delta/2z) t_0 \phi_S'(\gamma_2)). 
\end{align*}
Combining this with \eqref{cramersub} we get 
\begin{align}
\liminf_{t \rightarrow \infty} \frac1{t} \log \mathbb{P} (A_t) & \geq -(1-z/\phi_S'(\gamma_2))\lambda_2(\Lambda) - (z/\phi_S'(\gamma_2)) \mathcal{J}(\phi_S'(\gamma_2)) = -\mathcal{J}_2(z). \label{probapat}
\end{align}
For the last equality we have used that $\mathcal{J}_2(z)$ is affine on $[0,\phi_S'(\gamma_2)]$, taking values $\lambda_2(\Lambda)$ and $\mathcal{J}(\phi_S'(\gamma_2))$ at respectively $0$ and $\phi_S'(\gamma_2)$ (see the discussion after \eqref{defiaslegendrezeta}). We note that, on $B_{t_1,t_2}$ we have almost surely $W_2(t_2)\geq W_2(t_1)$. Combining with Lemma \ref{minoonspeevt} we obtain that, on $A_t$, we have almost surely 
\begin{align*}
1-W_1(t) \geq W_2(t) \geq W_2(t_0+1) \geq h e^{-\alpha} e^{-S_{t_0}} \geq h e^{-\alpha} e^{-(1+\delta/2z) t_0 \phi_S'(\gamma_2)} = h e^{-\alpha} e^{-(z+\delta/2) t}, 
\end{align*}
and the above is larger than $e^{-(z+\delta) t}$ for all (deterministically) large $t$. We get that, for large $t$, 
\begin{align}
\mathbb{P} \left ( \frac{-\log (1-W_1(t))}{t} < z+\delta \right ) \geq \mathbb{P} (A_t). \label{ldplowerboundlarge}
\end{align}
Using the upper bound \eqref{ldpupperbound} and that $\mathcal{J}_2(\cdot)$ is decreasing on $(0,\phi_S'(\gamma_2))$ (see the discussion after \eqref{defiaslegendrezeta}) we get that for large $t$, 
\begin{align}
\mathbb{P} \left ( \frac{-\log (1-W_1(t))}{t} \in [0,z-\delta] \right ) \leq e^{-t \mathcal{J}_2(z-\delta/2)} << e^{-t \mathcal{J}_2(z)}. \label{ldplowerboundsmall}
\end{align}
Combining \eqref{ldplowerboundlarge} and \eqref{ldplowerboundsmall} we obtain that, for all large $t$, 
\begin{align*}
\mathbb{P} \left ( \frac{-\log (1-W_1(t))}{t} \in G \right ) & \geq \mathbb{P} \left ( \frac{-\log (1-W_1(t))}{t} \in (z-\delta,z+\delta) \right ) \\
& = \mathbb{P} \left ( \frac{-\log (1-W_1(t))}{t} < (z+\delta) t \right ) \\
& \qquad \qquad - \mathbb{P} \left ( \frac{-\log (1-W_1(t))}{t} \in [0,z-\delta] \right ) \\
& \geq \mathbb{P} (A_t) - e^{-t \mathcal{J}_2(z-\delta/2)}. 
\end{align*}
Using \eqref{probapat} we get that the $\liminf$ in \eqref{ldplowerboundlvlkepsilon} is larger than $-\mathcal{J}_2(z)$. Since $z$ has been chosen so that it satisfies \eqref{approxinf}, we deduce that \eqref{ldplowerboundlvlkepsilon} holds true, concluding the proof. 
\end{proof}

\begin{proof} [Proof of Corollary \ref{corlgn}]
We see from \eqref{defiaslegendrezeta} that $\mathcal{J}_k(x)\leq 0$ if and only if $\phi_S(\eta) \wedge \lambda_k(\Lambda)\leq \eta x$ for all $\eta \in \mathbb{R}$ and, since $\phi_S(0)=0$ and $\phi_S(\cdot) \wedge \lambda_k(\Lambda)$ is concave, this occurs only for $x=\phi_S'(0)$. Therefore, $\mathcal{J}_k(x)>0$ for all $x \neq \phi_S'(0)$. As a Legendre transform, the function $\mathcal{J}_k(\cdot)$ is convex (see for example \cite[Lem. 2.2.5(a)]{dembozeitouni}). Therefore, $\mathcal{J}_k(\cdot)$ is decreasing and positive on $(-\infty,\phi_S'(0))$ and increasing and positive on $(\phi_S'(0),\infty)$. Indeed, convexity of $\mathcal{J}_k(\cdot)$ and uniqueness of $\phi_S'(0)$ as a zero together rule out any non-negative (resp. non-positive) increment before (resp. after) $\phi_S'(0)$. For any $\epsilon>0$, setting $F_{\epsilon}:=(-\infty,\phi_S'(0)-\epsilon] \cup [\phi_S'(0)+\epsilon,\infty)$ we have $c(\epsilon):=\frac1{2}\inf_{x \in F_{\epsilon}} \mathcal{J}_2(x)>0$ so, applying \eqref{ldpupperbound} with $F=F_{\epsilon}$ we get that for all large $t$, 
\begin{align*}
\mathbb{P} \left ( \left | \frac{-\log (1-W_1(t))}{t} - \phi_S'(0) \right | >\epsilon \right ) \leq e^{-tc(\epsilon)}. 
\end{align*}
By the Borel-Cantelli lemma we deduce that the a.s. convergence \eqref{lgn} holds true for $t$ restricted to positive integers. Since $t \mapsto W_1(t)$ is almost surely increasing \eqref{lgn} easily follows. 
\end{proof}

We now prove Theorem \ref{thmpgdwk} using Theorem \ref{longtimebehavth} and the construction of suitable events from the proof of Theorem \ref{thmpgd}. 
\begin{proof} [Proof of Theorem \ref{thmpgdwk}]
Assume that \eqref{integassumption} holds true. For $\eta \in \mathbb{R}$ let us define 
\begin{align*}
\ell_k(\eta) := \limsup_{t \to \infty} \frac1{t}\log \mathbb{E}[e^{-\eta \log W_k(t)}] \in [-\infty,\infty]. 
\end{align*}
By \cite[Thm. 4.5.3]{dembozeitouni}, $\ell_k(\cdot)$ is a convex function and, if $\mathcal{L}_k$ denotes its Legendre transform (i.e. $\mathcal{L}_k(x):=\sup \{ \eta x -\ell_k(\eta); \eta \in \mathbb{R} \}$), then the upper bound \eqref{ldpupperboundwk} holds true with $\mathcal{J}_k(\cdot)$ replaced by $\mathcal{L}_k(\cdot)$. To prove \eqref{ldpupperboundwk}, we are thus left to prove that, for any $x \in (-\infty,\phi_S'(0)]$, we have $\mathcal{L}_k(x)=\mathcal{J}_k(x)$. By Theorem \ref{longtimebehavth} we have that $\ell_k(\cdot)=-(\phi_S(-\cdot) \wedge \lambda_k(\Lambda))$ on $(-\infty,0]$. This ensures that $\mathcal{L}_k(x)=\mathcal{J}_k(x)$ for all values of $x$ such that the suprema in the definitions of $\mathcal{L}_k(x)$ and $\mathcal{J}_k(x)$ are both approached via sequences of $\eta\in (-\infty,0]$. In particular, $\mathcal{L}_k(x)=\mathcal{J}_k(x)=\infty$ when $x<0$. For $x\in [0,\phi_S'(0))$, the supremum in the definition \eqref{defiaslegendrezeta} of $\mathcal{J}_k(\cdot)$ is reached at some $\eta_x<0$. Indeed, if the supremum can be approached via a sequence of $\eta\in (-\infty,-\gamma_k]$ then $\eta_x=-\gamma_k<0$, otherwise the supremum is reached on $\eta_x\in (-\gamma_k,\theta(\Lambda))$ such that $x=\phi_S'(-\eta_x)$ and, since $\phi_S'(-\eta_x)=x<\phi_S'(0)$ and $\phi_S'(\cdot)$ is decreasing, we have $\eta_x<0$. As a consequence, for any $\eta \in (\eta_x,0]$ we have $(\ell_k(\eta)-\ell_k(\eta_x))/(\eta-\eta_x)\geq x$. By convexity of $\ell_k(\cdot)$, the left-hand side is non-decreasing in $\eta$ so this inequality extends to all $\eta \geq 0$. Therefore the supremum in the definition of $\mathcal{L}_k(\cdot)$ is also reached at $\eta_x<0$ and we deduce that $\mathcal{L}_k(x)=\mathcal{J}_k(x)$. For $x=\phi_S'(0)$ we have $\mathcal{J}_k(x)=0$ and, as a Legendre transform, $\mathcal{L}_k(\cdot)$ is lower semicontinuous at $x$ (see \cite[Proof of Lem. 2.2.5]{dembozeitouni}) so $0 \leq \mathcal{L}_k(x) \leq \liminf_{z \to x-} \mathcal{L}_k(z)=\liminf_{z \to x-} \mathcal{J}_k(z)=0$ (indeed, $\mathcal{J}_k(\cdot)$ is convex and finite on $[0,\infty)$, and therefore continuous on $(0,\infty)$) thus $\mathcal{L}_k(x)=\mathcal{J}_k(x)$. This completes the proof of the upper bound \eqref{ldpupperboundwk}. 

To prove the lower bound \eqref{ldplowerboundwk} we only need to show that, for any open set $G \subset (-\infty,\phi_S'(0))$ and $\epsilon>0$ we have 
\begin{align}
\liminf_{t \rightarrow \infty} \frac{1}{t} \log \mathbb{P} \left ( \frac{-\log W_k(t)}{t} \in G \right ) \geq -\inf_{x \in G} \mathcal{J}_k(x) - \epsilon. \label{ldplowerboundlvlkepsilonwk}
\end{align}
We thus fix an open set $G \subset (-\infty,\phi_S'(0))$ and $\epsilon>0$. If $G \subset (-\infty,0)$ there is nothing to prove so we assume that $G \cap [0,\phi_S'(0)) \neq \emptyset$. As in the proof of Theorem \ref{thmpgd}, either there is $z \in (\phi_S'(\gamma_k),\phi_S'(0)) \cap G$ satisfying 
\begin{align}
\mathcal{J}_k(z) < \inf_{x \in G} \mathcal{J}_k(x)+\epsilon, \label{approxinfwk}
\end{align}
or there is $z \in (0,\phi_S'(\gamma_k)) \cap G$ satisfying \eqref{approxinfwk}. In the second case we can proceed exactly as in the proof of Theorem \ref{thmpgd}, replacing $\gamma_2$ by $\gamma_k$ and the event $A_t$ there by 
\begin{align*}
\tilde A_t := \{ S_{t_0} < (1+\delta/2z) t_0 \phi_S'(\gamma_k) \} \cap \mathcal{E}(t_0,t_0+1,k,h,\alpha) \cap B^k_{t_0+1,t}, 
\end{align*}
where $\mathcal{E}(\cdot,\cdot,\cdot,\cdot,\cdot)$ is as in \eqref{defspeevinter}, $h \in (0,\max \mathrm{Supp} \Lambda)$, $\alpha$ is provided by Lemma \ref{probspeevt} so that $c(1,k,h,\alpha)>0$, and $B^k_{t_1,t_2}$ refers to the event where the $k$ blocks with mass $W_1(t_1), \ldots, W_k(t_1)$ at time $t_1$ do not merge on the time interval $(t_1,t_2]$. This adaptation yields \eqref{ldplowerboundlvlkepsilonwk}. 

We are now left with the case where there is $z \in (\phi_S'(\gamma_k),\phi_S'(0)) \cap G$ satisfying \eqref{approxinfwk}. Then we fix $\delta >0$ such that $(z-\delta,z+\delta)\subset (\phi_S'(\gamma_k),\phi_S'(0)) \cap G$ and define the event 
\begin{align*}
\hat A_t := \{ S_{t-1} < (z+\delta/2) (t-1) \} \cap \mathcal{E}(t-1,t,k,h,\alpha), 
\end{align*}
where $\mathcal{E}(\cdot,\cdot,\cdot,\cdot,\cdot)$ is as in \eqref{defspeevinter}, $h \in (0,\max \mathrm{Supp} \Lambda)$, $\alpha$ is provided by Lemma \ref{probspeevt} so that $c(1,k,h,\alpha)>0$. Recall from the proof of Theorem \ref{thmpgd} that $(S_t/t)_{t>0}$ satisfies a LDP with rate function $\mathcal{J}(\cdot)$. We thus get 
\begin{align}
\liminf_{t \rightarrow \infty} \frac{1}{t} \log \mathbb{P} (S_{t-1}<(z+\delta/2) (t-1)) \geq -\mathcal{J}(z)=-\mathcal{J}_k(z), \label{cramersubwk}
\end{align}
where we have used that $\mathcal{J}_k(\cdot)$ coincides with $\mathcal{J}(\cdot)$ on $[\phi_S'(\gamma_k),\infty)$ (see the discussion after \eqref{defiaslegendrezeta}). Conditioning with respect to $\mathcal{F}_{t_0}$ and using the above estimates and Lemma \ref{probspeevt} we get 
\begin{align*}
\mathbb{P} (\hat A_t) & \geq c(1,k,h,\alpha) \mathbb{P} (S_{t-1}<(z+\delta/2) (t-1)). 
\end{align*}
Combining this with \eqref{cramersub} we get 
\begin{align}
\liminf_{t \rightarrow \infty} \frac1{t} \log \mathbb{P} (\hat A_t) & \geq -\mathcal{J}_k(z). \label{probapatwk}
\end{align}
By Lemma \ref{minoonspeevt} we obtain that, on $\hat A_t$ (when $t \geq 1$), we have almost surely 
\begin{align*}
W_k(t) \geq h e^{-\alpha} e^{-S_{t-1}} \geq h e^{-\alpha} e^{-(z+\delta/2) (t-1)} = h e^{-\alpha+(z+\delta/2)} e^{-(z+\delta/2) t}, 
\end{align*}
and the above is larger than $e^{-(z+\delta) t}$ for all (deterministically) large $t$. We get that, for large $t$, 
\begin{align}
\mathbb{P} \left ( \frac{-\log (W_k(t))}{t} < z+\delta \right ) \geq \mathbb{P} (\hat A_t). \label{ldplowerboundlargewk}
\end{align}
Using the upper bound \eqref{ldpupperboundwk} and that $\mathcal{J}_k(\cdot)$ is decreasing on $[0,\phi_S'(0)]$ (see the discussion after \eqref{defiaslegendrezeta}) we get that for large $t$, 
\begin{align}
\mathbb{P} \left ( \frac{-\log (W_k(t))}{t} \in [0,z-\delta] \right ) \leq e^{-t \mathcal{J}_k(z-\delta/2)} << e^{-t \mathcal{J}_k(z)}. \label{ldplowerboundsmallwk}
\end{align}
Then we can conclude as in the end of the proof of Theorem \ref{thmpgd}, using \eqref{ldplowerboundlargewk}, \eqref{probapatwk} and \eqref{ldplowerboundsmallwk}, that the $\liminf$ in \eqref{ldplowerboundlvlkepsilonwk} is larger than $-\mathcal{J}_k(z)$. Since $z$ has been chosen so that it satisfies \eqref{approxinfwk}, we deduce that \eqref{ldplowerboundlvlkepsilonwk} holds true, concluding the proof. 
\end{proof}

\section{From the flow to a Poisson representation of block masses} \label{poisrep}

Recall that we always assume that \eqref{integassumption} holds true. 

\subsection{Explicit representation of the $t$-families} \label{poisrep1ststep}

In this subsection we prove Proposition \ref{poissrepstep1}, which is an important step toward the Poisson representation of Theorem \ref{represwktviamut}. It identifies the sets $C_t$ of Section \ref{model2} -- whose connected components are the $t$-families -- with the sets $D_t$ defined explicitly below in terms of the jumps of $N$. For any $t>0$ we set 
\begin{align}
D_t := \cup_{(s,r,u) \in N, s \in (0,t]} Y_{0,s-}^{-1}(I_{r,u}^{\mathrm{o}}), \qquad D_{t-} := \cup_{s \in (0,t)} D_{s} = \cup_{(s,r,u) \in N, s \in (0,t)} Y_{0,s-}^{-1}(I_{r,u}^{\mathrm{o}}), \label{cadlagsets}
\end{align} 
where $I_{r,u}$ is defined in \eqref{defiru}. The combination of Proposition \ref{nonzerodiff} with Lemma \ref{avoidboundaries2} (see Appendix \ref{poisrep1ststepappend}) yields the following lemma. 
\begin{lemma} \label{nonzerodiffrmk2}
Almost surely, if $Y_{0,t}(y_a)=Y_{0,t}(y_b)$ (resp. $Y_{0,t-}(y_a)=Y_{0,t-}(y_b)$) for some $t>0$ and $y_a,y_b \in [0,1]$ such that $y_a < y_b$, then $(y_a,y_b) \subset D_t$ (resp. $(y_a,y_b) \subset D_{t-}$). 
\end{lemma}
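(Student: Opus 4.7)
The plan is to combine Proposition \ref{nonzerodiff} with Lemma \ref{avoidboundaries2} directly, using only the monotonicity of $y \mapsto Y_{0,s-}(y)$ to control the preimage. I would work throughout on the intersection of the two probability one events provided by those results.

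First, suppose $y_a < y_b$ and $Y_{0,t}(y_a) = Y_{0,t}(y_b)$. Proposition \ref{nonzerodiff} then furnishes some jump $(s,r,u) \in N$ with $s \leq t$ such that both $y_a$ and $y_b$ belong to $Y_{0,s-}^{-1}(I_{r,u})$. Since $y \mapsto Y_{0,s-}(y)$ is non-decreasing (by property (iii) of Proposition \ref{propexistence}) and $I_{r,u}$ is an interval, the preimage $Y_{0,s-}^{-1}(I_{r,u})$ is itself an interval of $[0,1]$; hence it contains $[y_a,y_b]$.

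Next, a simple convexity argument shows that for any interval $J \subset [0,1]$ containing $[y_a,y_b]$ with $y_a<y_b$, we have $(y_a,y_b) \subset J^{\mathrm{o}}$ (the endpoints $y_a,y_b$ need not lie in $J^{\mathrm{o}}$, but any strictly interior point certainly does). Applied to $J = Y_{0,s-}^{-1}(I_{r,u})$, this gives
\begin{equation*}
(y_a,y_b) \subset \bigl(Y_{0,s-}^{-1}(I_{r,u})\bigr)^{\mathrm{o}}.
\end{equation*}
By Lemma \ref{avoidboundaries2}, $(Y_{0,s-}^{-1}(I_{r,u}))^{\mathrm{o}} = Y_{0,s-}^{-1}(I_{r,u}^{\mathrm{o}})$, and the latter set is, by the definition \eqref{cadlagsets} of $D_t$ and the fact that $s \leq t$, contained in $D_t$. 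This yields $(y_a,y_b) \subset D_t$.

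The $t-$ version is obtained by repeating the argument verbatim using the second statement of Proposition \ref{nonzerodiff} (which gives $s<t$ with $y_a,y_b \in Y_{0,s-}^{-1}(I_{r,u})$) and replacing $D_t$ by $D_{t-}$ in the final inclusion. I do not anticipate any real obstacle: the statement is essentially a bookkeeping corollary of the two earlier results, with the only subtlety being the passage from ``endpoints in the closed preimage'' to ``the open interval lies in the open preimage'', which is precisely what Lemma \ref{avoidboundaries2} is designed to handle.
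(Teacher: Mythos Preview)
Your proof is correct and follows exactly the approach the paper intends: the paper merely states that the lemma follows from combining Proposition \ref{nonzerodiff} with Lemma \ref{avoidboundaries2}, and your argument spells this out cleanly, including the one point that needs a word of justification (that $[y_a,y_b]$ lies in the closed preimage, hence $(y_a,y_b)$ in its interior).
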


\begin{prop} \label{poissrepstep1}
We have almost surely that $C_t=D_t$ and $C_{t-}=D_{t-}$ for all $t\geq 0$. In particular $C_{t-}=\cup_{s \in (0,t)} C_{s}$. 
\end{prop}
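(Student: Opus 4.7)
The plan is to show both inclusions $D_t \subset C_t$ and $C_t \subset D_t$ almost surely for all $t \geq 0$, and then to deduce $C_{t-} = D_{t-}$ and the ``in particular'' claim from monotonicity.

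The inclusion $D_t \subset C_t$ is the easy direction: if $y \in D_t$ has a witnessing jump $(s, r, u) \in N$ with $s \leq t$ and $Y_{0, s-}(y) \in I_{r, u}^{\mathrm{o}}$, the continuity of $Y_{0, s-}(\cdot)$ from Proposition \ref{propexistence} provides an open neighborhood $(a, b) \ni y$ with $Y_{0, s-}((a, b)) \subset I_{r, u}$. By \eqref{collage} and the composition property \eqref{compprop}, every point of $(a, b)$ collapses to $u$ at time $s$ and to the same value $Y_{s, t}(u)$ at time $t$, so $Y_{0, t}$ is constant on $(a, b)$ and $y \in C_t$.

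For the reverse inclusion $C_t \subset D_t$, the key ingredient is a compensation-formula computation: for any fixed $y_0 \in [0, 1]$ and $t > 0$,
\begin{equation*}
\mathbb{E}\Bigl[\sum_{(s, r, u) \in N,\, s \leq t} \mathds{1}_{Y_{0, s-}(y_0) \in \partial I_{r, u}}\Bigr] = \int_0^t \mathbb{E}\Bigl[\int_{(0, 1)^2} \mathds{1}_{\{Y_{0, s-}(y_0) \in \partial I_{r, u}\}}\, r^{-2} \Lambda(dr)\, du\Bigr] ds = 0,
\end{equation*}
since for any fixed $r$ and value $v$, the set $\{u \in (0, 1) : v \in \partial I_{r, u}\}$ has at most two elements and thus zero Lebesgue measure. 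Hence almost surely no jump $(s, r, u) \in N$ with $s \leq t$ sends $Y_{0, s-}(y_0)$ to $\partial I_{r, u}$. If moreover $y_0 \in C_t$, I would pick rationals $q_1 < y_0 < q_2$ inside the maximal open constancy interval of $Y_{0, t}$ containing $y_0$; Lemma \ref{nonzerodiffrationals} supplies a jump $(s, r, u)$ with $q_1, q_2 \in Y_{0, s-}^{-1}(I_{r, u})$, and monotonicity of $Y_{0, s-}(\cdot)$ forces $Y_{0, s-}(y_0) \in I_{r, u}$; combined with the boundary exclusion, $Y_{0, s-}(y_0) \in I_{r, u}^{\mathrm{o}}$, so $y_0 \in D_t$.

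Upgrading this pointwise-in-$y_0$ almost-sure statement to a statement holding uniformly over all $y \in C_t$ is the core technical point. I would do this by analyzing the first coalescence jump $(\tau, r, u)$ of each maximal open constancy interval $(\alpha, \beta)$ of $Y_{0, t}$: maximality of $(\alpha, \beta)$ at time $t$ forces $Y_{0, \tau-}(\alpha) = u(1-r)$ and $Y_{0, \tau-}(\beta) = u(1-r) + r$, so $Y_{0, \tau-}$ maps $[\alpha, \beta]$ monotonically onto $I_{r, u}$. For $y \in (\alpha, \beta)$ with $Y_{0, \tau-}(y) \in I_{r, u}^{\mathrm{o}}$, the jump $(\tau, r, u)$ directly witnesses $y \in D_t$; points with $Y_{0, \tau-}(y) \in \partial I_{r, u}$ belong to an earlier (sub-)constancy interval of $Y_{0, \tau-}$, to which one applies the same analysis at an earlier time. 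The induction terminates thanks to the pointwise compensation fact combined with the countability of constancy intervals at each time. Finally, $C_{t-} = D_{t-}$ follows from $C_s = D_s$ for $s < t$, together with $D_{t-} = \bigcup_{s < t} D_s$ (by \eqref{cadlagsets}) and $C_{t-} = \bigcup_{s < t} C_s$ (via the same rational-straddling argument using the $s < t$ variant of Proposition \ref{nonzerodiff}); the ``in particular'' statement is then automatic. The hardest part will be the upgrade from fixed-point to uniform boundary exclusion in $C_t \subset D_t$.
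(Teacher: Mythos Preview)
Your easy inclusion $D_t\subset C_t$ and the overall strategy for $C_t\subset D_t$ (boundary exclusion for trajectories, then straddle $y_0$ by rationals and invoke Proposition~\ref{nonzerodiff}/Lemma~\ref{nonzerodiffrationals}) are exactly what the paper does. The difference is entirely in how the ``upgrade'' is handled, and your proposed upgrade has a genuine gap.

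Your claim that for the ``first coalescence jump $(\tau,r,u)$ of the maximal constancy interval $(\alpha,\beta)$'' one has $Y_{0,\tau-}(\alpha)=u(1-r)$ and $Y_{0,\tau-}(\beta)=u(1-r)+r$ is unjustified and in general false. Maximality of $(\alpha,\beta)$ is a property at time $t$; it says nothing about the values $Y_{0,\tau-}(\alpha)$, $Y_{0,\tau-}(\beta)$ at the earlier time $\tau$. Even if $(\tau,r,u)$ is the last jump that enlarges the block to $(\alpha,\beta)$, the endpoints $\alpha,\beta$ may already lie in nontrivial constancy intervals of $Y_{0,\tau-}$, so $Y_{0,\tau-}$ need not be a bijection from $[\alpha,\beta]$ onto $I_{r,u}$, and its values at the endpoints need not hit $\partial I_{r,u}$. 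The induction you describe then has no well-founded termination: under \eqref{integassumption} but not \eqref{strongintegassumption} there are infinitely many small jumps, so ``apply the same analysis at an earlier time'' does not obviously bottom out.

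The fix is much simpler than your induction and is essentially what the paper does (Lemmas~\ref{avoidboundaries} and~\ref{avoidboundaries2}). Your own compensation-formula computation, applied to every $y\in Q$ and intersected over the countable set $Q$, gives: almost surely, for all $(s,r,u)\in N$ and all $y\in Q$, $Y_{0,s-}(y)\notin\partial I_{r,u}$. From this one deduces that almost surely, for every jump $(s,r,u)$,
\[
Y_{0,s-}^{-1}(I_{r,u}^{\mathrm o})=\bigl(Y_{0,s-}^{-1}(I_{r,u})\bigr)^{\mathrm o},
\]
since both sides are open subintervals of $[0,1]$, the left is contained in the right, and any nonempty open subinterval of the difference would contain a rational $q$ with $Y_{0,s-}(q)\in\partial I_{r,u}$. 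Now the upgrade is trivial: for arbitrary $y_0\in C_t$, pick rationals $q_1<y_0<q_2$ in the constancy interval; Lemma~\ref{nonzerodiffrationals} yields $(s,r,u)$ with $q_1,q_2\in Y_{0,s-}^{-1}(I_{r,u})$, hence $y_0\in(q_1,q_2)\subset\bigl(Y_{0,s-}^{-1}(I_{r,u})\bigr)^{\mathrm o}=Y_{0,s-}^{-1}(I_{r,u}^{\mathrm o})\subset D_t$. No induction is needed. (The paper packages this as Lemma~\ref{nonzerodiffrmk2} and then argues the contrapositive $D_t^c\subset C_t^c$.)

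For $C_{t-}=D_{t-}$, your route via $C_{t-}=\bigcup_{s<t}C_s$ is circular unless you already know the $C_s$ are nested, which you only get from $C_s=D_s$. It is cleaner, as the paper does, to simply rerun the same argument with $Y_{0,t-}$ in place of $Y_{0,t}$ (using the $t-$ variant of Proposition~\ref{nonzerodiff}); the ``in particular'' then drops out of the definition of $D_{t-}$ in \eqref{cadlagsets}.
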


\begin{proof}
Note from \eqref{collage} that, almost surely, for any $(s,r,u) \in N$ and $t \geq s$, $Y_{0,t}(\cdot)$ is constant on $Y_{0,s-}^{-1}(I_{r,u}^{\mathrm{o}})$. Therefore, $m_t(Y_{0,s-}^{-1}(I_{r,u}^{\mathrm{o}}))=0$ and $Y_{0,s-}^{-1}(I_{r,u}^{\mathrm{o}}) \subset C_t$. We thus get that almost surely, for all $t\geq 0$, $D_t \subset C_t$. 

We now assume that we are on the probability one events provided by Proposition \ref{nonzerodiff} and Lemma \ref{nonzerodiffrmk2}, fix $t \geq 0$ and prove that $C_t \subset D_t$. Note that $\{0,1\} \subset D_t^c$ and Proposition \ref{nonzerodiff} implies that $\{0,1\} \subset C_t^c$. Now let $x \in D_t^c \cap (0,1)$ and $\epsilon \in (0,x \wedge (1-x))$. Since $(x-\epsilon,x+\epsilon) \nsubseteq D_t$, by Lemma \ref{nonzerodiffrmk2} we have $Y_{0,t}(x+\epsilon)>Y_{0,t}(x-\epsilon)$ so $m_t([x-\epsilon,x+\epsilon])>0$. Since this is satisfied for any $x \in D_t^c \cap (0,1)$ and small $\epsilon >0$ we get that $D_t^c \cap (0,1) \subset \mathrm{Supp}(m_t)=C_t^c$ so $C_t \subset D_t$. This concludes the proof that $C_t=D_t$ almost surely for all $t\geq 0$, and the proof for $C_{t-}=D_{t-}$ follows by the same argument. 
\end{proof}

\subsection{From flow of inverses to $\Lambda$-coalescent: proofs} \label{relationinvcoal}

In this subsection we prove Proposition \ref{combrepres} and Lemma \ref{lawofvkandzk}, which together establish that the construction of Section \ref{model2} indeed yields the process of ordered block masses of a $\Lambda$-coalescent. We recall that $(U_i)_{i\geq 1}$ and $\pi^Y_t$ are defined in Section \ref{model2}. 

\begin{proof}[Proof of Proposition \ref{combrepres}]

\textit{First point.} Let us fix a realization in the probability one events from Propositions \ref{propexistence} and \ref{poissrepstep1}. Thanks to Proposition \ref{poissrepstep1}, we only need to prove the claim for $(D_t)_{t\geq 0}$. For this, we need to verify that almost surely $(D_t)_{t\geq 0}$ is non-decreasing and $(D_t^c)_{t\geq 0}$ is c\`ad-l\`ag for the Hausdorff distance $d_H(\cdot,\cdot)$. The non-decreasing property follows from the definition of $(D_t)_{t\geq 0}$ in \eqref{cadlagsets}. The existence of left limits for $(D_t^c)_{t\geq 0}$ in the $d_H(\cdot,\cdot)$ topology follows from the non-decreasing property of $(D_t)_{t\geq 0}$ and \cite[Prop. 2.5.6]{fractal}. Note that these left limits are equal to the sets $D_{t-}^c$ from \eqref{cadlagsets}. We now prove right continuity by contradiction. Suppose there are $t \geq 0$, $\epsilon>0$ and a sequence $(t_n)_{n \geq 1}$ decreasing to $t$ such that $d_H(D_t^c,D_{t_n}^c)>\epsilon$ for all $n \geq 1$, then for each $n \geq 1$ there is $x_n \in D_t^c$ such that $\text{dist}(x_n,D_{t_n}^c)>\epsilon$. By compactness of $D_t^c$, there is a subsequence $(x_{n(i)})_{i \geq 1}$ converging to some $x \in D_t^c=C_t^c$. If $x \in (0,1)$, let $\tilde \epsilon \in (0,\epsilon)$ be such that $(x-\tilde \epsilon,x+\tilde \epsilon) \subset (0,1)$. There is $i_0 \geq 1$ such that $|x_{n(i)}-x|<\epsilon-\tilde \epsilon$ for all $i \geq i_0$. We get that $i \geq i_0 \Rightarrow d(x,D_{t_{n(i)}}^c)>\tilde \epsilon \Rightarrow (x-\tilde \epsilon,x+\tilde \epsilon) \subset D_{t_{n(i)}}=C_{t_{n(i)}}$. In particular, $Y_{0,t_{n(i)}}(x+\tilde \epsilon/2)-Y_{0,t_{n(i)}}(x-\tilde \epsilon/2)=0$ for all $i \geq i_0$. By Proposition \ref{propexistence}(ii) we get $Y_{0,t}(x+\tilde \epsilon/2)-Y_{0,t}(x-\tilde \epsilon/2)=0$, which contradicts $x \in C_t^c$. The case $x \in \{0,1\}$ is treated similarly. This ends the proof of the first point. 

\textit{Second point.} Let $\mathcal{S}$ be as defined in Lemma \ref{diffcountable}. By that lemma, the event $\mathcal{E} := \{ \{ U_i, i \geq 1 \} \cap \mathcal{S} = \emptyset \}$ has probability one. Let us consider a realization in this event and in the probability one events from Propositions \ref{propexistence}, \ref{poissrepstep1} and \ref{nonzerodiff}. Recall that, by definition of $(\pi^Y_t)_{t\geq 0}$ in Section \ref{model2}, $i \sim_{\pi^Y_t} j$ for some $t$ if and only if $Y_{0,t}(U_i)=Y_{0,t}(U_j)$. By Proposition \ref{nonzerodiff} and the definition of $\mathcal{E}$, this implies that $U_i,U_j \in Y^{-1}_{0,s-}(I_{r,u}^{\mathrm{o}})$ for some $(s,r,u) \in N$ with $s \in (0,t]$. All sets $Y^{-1}_{0,s-}(I_{r,u}^{\mathrm{o}})$ are open intervals by Proposition \ref{propexistence} so $U_i$ and $U_j$ lie in the same open connected component of $D_t$ and therefore of $C_t$ by Proposition \ref{poissrepstep1}. Reciprocally, if $U_i$ and $U_j$ lie in the same open connected component of $C_t$ then $m_t([U_i,U_j])=0$ by definition of $C_t$ in Section \ref{model2} so $Y_{0,t}(U_i)=Y_{0,t}(U_j)$ and $i \sim_{\pi^Y_t} j$. This concludes the proof of the second point. 

\textit{Third point.} The first point of the proposition and its proof (together with Proposition \ref{poissrepstep1}) imply that, almost surely, $(C_t)_{t\geq 0}$ is c\`ad-l\`ag for the topology on interval partitions considered in \cite[Sec. 1.1.2]{lecturebertoin2010} and that the left limit at any $t>0$ is given by $C_{t-}$. By \cite[Prop. 1.2]{lecturebertoin2010}, the function that maps $C_t$ (resp. $C_{t-}$) to the sequence $(W_k(t))_{k \geq 1}$ (resp. $(W_k(t-))_{k \geq 1}$) is continuous. This yields the third point. 

\textit{Fourth point.} The combination of the proof of the third point with Proposition \ref{poissrepstep1} shows that, almost surely, we have $\{ t \geq 0 \ \text{s.t.} \ W_k(t)\neq W_k(t-) \} \subset \{ t \geq 0 \ \text{s.t.} \ C_t \neq C_{t-} \} = \{ t \geq 0 \ \text{s.t.} \ D_t \neq D_{t-} \} \subset J_N$, where the last inclusion is a consequence of \eqref{cadlagsets}. This yields the fourth point. 

\textit{Fifth point.} The combination of the third point with \cite[Prop. 1.1]{lecturebertoin2010} shows that $t \mapsto (W_k(t))_{k \geq 1}$ is c\`ad-l\`ag. By the second point, \cite[Prop 1.3]{lecturebertoin2010}, and the definition of $(W_k(t))_{k \geq 1}$ in Section \ref{model2}, we get that, for any $t \geq 0$, the partition $\pi^Y_t$ almost surely possesses asymptotic frequencies and the ordered non-zero masses of its blocks are given by $(W_k(t))_{k \geq 1}$. Since $(\pi^Y_t)_{t\geq 0}$ is a $\Lambda$-coalescent by Lemma \ref{lawofpartbyblg1and20} the fifth point follows. 
\end{proof}

\begin{proof}[Proof of Lemma \ref{lawofvkandzk}]
By the second point of Proposition \ref{combrepres}, the non-singleton blocks of $\pi^Y_t$ are given by $(A_k)_{k\geq 1}$ where $A_k:=\{i \geq 1, U_i \in \mathcal{O}_k(t)\}$. By the law of large numbers, $\lim_{n \rightarrow \infty} \sharp (A_k \cap [\![ 1,n ]\!])/n=P(U_1 \in \mathcal{O}_k(t))=W_k(t)$. In particular, $(W_k(t))_{k\geq 1}$ is a function of the partition $\pi^Y_t$. 
Moreover, for any $k$ such that $A_k \neq \emptyset$ and any $i \in A_k$ we have $Y_{0,t}(U_i)=V_k(t)$. If $A_k = \emptyset$ then $V_k(t)=\tilde U_k$ (see Section \ref{model2}). By the discussion after Proposition \ref{propexistence}, $Y_{0,t}(\cdot)$ is equal in law to the inverse of a bridge $B$. From the definition of $\pi^Y_t$ and the above, we see that $B$, $\pi^Y_t$ and $(V_k(t))_{k\geq 1}$ are as the bridge, the partition and a subfamily to the sequence considered in \cite[Lem. 2]{BLGI} (the subfamily associated to non-singleton blocks of $\pi^Y_t$). By that lemma, $(V_k(t))_{k\geq 1}$ and $\pi^Y_t$ are independent and $(V_k(t))_{k\geq 1}\sim \mathcal{U}([0,1])^{\otimes \mathbb{N}}$. All the claims of the lemma follow. 
\end{proof}

The following remark extends Lemma \ref{lawofvkandzk} to jump times of $N$, and will be used in the proof of Theorem \ref{intsto}.
\begin{remark} \label{atjumptime}
For $\eta\in(0,1)$, let $(s^{\eta}_i,r^{\eta}_i,u^{\eta}_i)_{i\geq 1}$ be the enumeration of $\{(s,r,u) \in N, r>\eta\}$ by increasing time components. For any $i \geq 1$, the argument in the proof of Lemma \ref{lawofvkandzk} can be applied at $s^{\eta}_i$ (instead of a fixed $t$) and shows that, conditionally on $s^{\eta}_i$, $(V_k(s^{\eta}_i))_{k\geq 1}$ is independent of $(W_k(s^{\eta}_i))_{k\geq 1}$ and $(V_k(s^{\eta}_i))_{k\geq 1}\sim \mathcal{U}([0,1])^{\otimes \mathbb{N}}$. 
 \end{remark}

\subsection{The flow as a measure-valued process} \label{calcmeas}

In this subsection we study the flow $Y$ as a measure-valued process. The main result, Proposition \ref{calcmeasimgrecipflow}, shows that the Stieltjes measure $m_t$ (see Section \ref{model2}) is proportional to the Lebesgue measure on $D_t^c$, with proportionality constant $e^{S_t}$. This yields Proposition \ref{measimgrec}, a key ingredient in the proof of Theorem \ref{represwktviamut}. Before stating the main result, we introduce a subordinator $(L_t)_{t\geq 0}$ defined by 
\begin{align}
L_t := \int_{(0,t] \times (0,1)^2} r N(ds,dr,du). \label{defsublt}
\end{align}
By \cite[Thm. 19.3]{KenIti1999} and \eqref{integassumption} we see that $(L_t)_{t\geq 0}$ is well-defined. Recall the subordinator $(S_t)_{t\geq 0}$ defined in \eqref{defsubst}. By It{\^o}'s formula (see e.g. \cite[Thm. II.5.1]{ikedawatanabe}) we have almost surely, 
\begin{align}
\forall t \geq 0, \ e^{-S_t} & = 1 - \int_{(0,t] \times (0,1)^2} e^{-S_{s-}} r N(ds,dr,du) = 1 - \int_0^t e^{-S_{s-}} d L_s. \label{sdeexps}
\end{align}
Finally, recall that $\mathcal{B}([0,1])$ denotes the family of Borel sets in $[0,1]$. 
\begin{prop} \label{calcmeasimgrecipflow}
We have, 
\begin{align*}
\mathbb{P} \left ( \forall t \geq 0, \forall A \in \mathcal{B}([0,1]) \ \text{s.t.} \ A \subset D_t^c, \ m_t(A)=|A| e^{S_t} \right ) & = 1, \\
\mathbb{P} \left ( \forall t \geq 0, \forall A \in \mathcal{B}([0,1]) \ \text{s.t.} \ A \subset D_{t-}^c, \ m_{t-}(A)=|A| e^{S_{t-}} \right ) & = 1. 
\end{align*}
\end{prop}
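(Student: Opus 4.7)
The plan is to reduce the first statement to a scalar identity and prove it by combining an It\^o-type calculation with a truncation-and-induction argument. Since $m_t$ vanishes on the open set $D_t$ (each of whose connected components is an interval of constancy of $Y_{0,t}$), the $\pi$-system of intervals $[0,y]$ reduces the first claim to the identity
\[ Y_{0,t}(y) \;=\; e^{S_t}\,|[0,y] \cap D_t^c| \qquad \forall\, y \in [0,1],\; t \geq 0, \; \text{a.s.}, \]
because $m_t([0,y] \cap D_t^c) = m_t([0,y]) = Y_{0,t}(y)$. By right-continuity in $t$ from Proposition \ref{combrepres}, continuity in $y$ from Proposition \ref{propexistence}, and the countability of $\mathbb{Q}$, it suffices to verify this identity for fixed rational $(t,y)$. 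The second (dashed) statement follows identically using left limits.

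The first substantive step is an It\^o computation. Applying the product rule to $e^{-S_t}Y_{0,t}(y)$, using \eqref{defflowybysde}, \eqref{sdeexps} together with the pointwise identity $(1-r)\mr_{r,u}(z) - z = -|I_{r,u} \cap [0,z]|$ (readily checked by splitting according to the three affine regions of $\mr_{r,u}$), yields
\[ e^{-S_t}Y_{0,t}(y) \;=\; y \;-\; \int_{(0,t] \times (0,1)^2} e^{-S_{s-}}\,|I_{r,u} \cap [0, Y_{0,s-}(y)]|\,N(ds,dr,du). \]
Next, for each $\epsilon > 0$, I consider the truncated Poisson measure $N_\epsilon := N|_{\{r > \epsilon\}}$, which has only finitely many atoms in any bounded time window, and build the associated truncated flow $Y^\epsilon$, subordinator $S^\epsilon$, and absorbed set $D^\epsilon_t$ following \eqref{defflowybysde}, \eqref{defsubst}, \eqref{cadlagsets}. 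The truncated analogue $Y^\epsilon_{0,t}(y) = e^{S^\epsilon_t}|[0,y] \cap (D^\epsilon_t)^c|$ is proved by induction on the finitely many jumps of $N_\epsilon$ in $(0,t]$. At each jump $(s,r,u)$, the three cases $Y^\epsilon_{0,s-}(y) < (1-r)u$, $Y^\epsilon_{0,s-}(y) \in I_{r,u}$ and $Y^\epsilon_{0,s-}(y) > (1-r)u + r$ are treated separately; the Stiltjes change-of-variable formula applied to $Y^\epsilon_{0,s-}$ combined with the inductive hypothesis yields in particular $|(Y^\epsilon_{0,s-})^{-1}(I_{r,u}^{\mathrm{o}}) \cap (D^\epsilon_{s-})^c| = re^{-S^\epsilon_{s-}}$, after which the induction closes by direct computation.

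Finally, I pass to the limit $\epsilon \to 0$. The convergences $S^\epsilon_t \to S_t$ and $Y^\epsilon_{0,t}(y) \to Y_{0,t}(y)$ almost surely are standard consequences of SDE stability. The hard part will be to establish $|[0,y] \cap (D^\epsilon_t)^c| \to |[0,y] \cap D_t^c|$, for which I plan to prove that $\mathbf{1}_{D^\epsilon_t}(y) \to \mathbf{1}_{D_t}(y)$ for a.e.\ $y$ and then invoke dominated convergence. For the direction "$y \in D_t \Rightarrow y \in D^\epsilon_t$ eventually", the first absorbing jump $(s^*,r^*,u^*) \in N$ of $y$ satisfies $r^* > \epsilon$ once $\epsilon$ is small, and then SDE stability combined with the openness of $I_{r^*,u^*}^{\mathrm{o}}$ gives $Y^\epsilon_{0,s^*-}(y) \in I_{r^*,u^*}^{\mathrm{o}}$, hence $y \in D^\epsilon_t$. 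For the converse, an extension of Lemma \ref{avoidboundaries} to a.e.\ $y$ (using that the preimage of any non-flat value of $Y_{0,s-}$ is a single point) ensures that for a.e.\ $y \notin D_t$ the trajectory $s \mapsto Y_{0,s-}(y)$ stays at positive distance from every $\partial I_{r,u}$ along jumps $(s,r,u) \in N$ with $s\leq t$, so that $Y_{0,s-}(y) \notin I_{r,u}^{\mathrm{o}}$ survives perturbation to $Y^\epsilon_{0,s-}(y)$ for small $\epsilon$. The $m_{t-}$ statement follows from the same scheme using left limits in $t$.
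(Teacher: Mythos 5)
Your first half coincides with the paper's: the integration-by-parts identity
\begin{align*}
e^{-S_t}Y_{0,t}(y) \;=\; y-\int_{(0,t]\times(0,1)^2}e^{-S_{s-}}\Bigl(\int_0^{Y_{0,s-}(y)}\mathds{1}_{z\in I_{r,u}^{\mathrm{o}}}\,dz\Bigr)N(ds,dr,du)
\end{align*}
is exactly the paper's display \eqref{ipp} (via Lemma \ref{dermruymsy}), and the reduction of the proposition to the scalar identity $Y_{0,t}(y)=e^{S_t}|[0,y]\cap D_t^c|$ is legitimate. Your truncated induction for $Y^\epsilon$ is also sound. The divergence, and the problem, is in the final step. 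The paper never passes to a limit of absorbed sets: it tests the identity against smooth $\varphi$, obtains an equality of two \emph{positive} measures in \eqref{egposmeas}, and reads off the claim from the fact that the second measure does not charge $D_t^c$. You instead need $|[0,y]\cap(D^\epsilon_t)^c|\to|[0,y]\cap D_t^c|$, which you propose to get from pointwise convergence $\mathds{1}_{D^\epsilon_t}(z)\to\mathds{1}_{D_t}(z)$ for a.e.\ $z$.

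The direction $z\notin D_t\Rightarrow z\notin D^\epsilon_t$ for small $\epsilon$ is a genuine gap. For a single fixed jump $(s,r,u)$ your perturbation argument works: generically $Y_{0,s-}(z)$ lies at positive distance $d$ from the closed interval $I_{r,u}$, and $\sup_{s\le t}|Y^\epsilon_{0,s-}(z)-Y_{0,s-}(z)|<d$ eventually. But for each $\epsilon$ you must control \emph{all} jumps with $r>\epsilon$ and $s\le t$ simultaneously, and as $\epsilon\downarrow0$ new jumps with small $r$ keep entering whose safety margins $d$ are themselves of order at most $r$ (indeed, a compensation-formula computation shows that jumps with $0<\mathrm{dist}(Y_{0,s-}(z),I_{r,u})\le\delta r$ occur at rate proportional to $\delta$, so the margins genuinely scale like $r$). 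You would therefore need the flow perturbation to be $o(\epsilon)$, whereas Lemma \ref{boundflowlem} only gives it of order $K(\epsilon)=\int_{(0,\epsilon]}(1-r)^{-1}r^{-1}\Lambda(dr)$; for example for $\Lambda_{a,b}$ with $a\in(1,2)$ one has $K(\epsilon)\asymp\epsilon^{a-1}\gg\epsilon$, so jumps with $r$ just above $\epsilon$ can be spuriously absorbed by $Y^\epsilon$ and the pointwise convergence of indicators is not established. Replacing pointwise convergence by $\mathbb{E}[|D^\epsilon_t\triangle D_t|]\to0$ runs into the same quantitative obstruction (a sum over the $\asymp\int_{r>\epsilon}r^{-2}\Lambda(dr)$ retained jumps of endpoint shifts of size $\asymp K(\epsilon)$), and the shortcut $|D_t|=1-e^{-S_t}$ is unavailable to you because it is Corollary \ref{calczonecollage}, a consequence of the very proposition being proved. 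You should either supply a proof of the set convergence that beats this scaling, or switch to the paper's test-function argument, which closes the loop without any convergence of the sets $D^\epsilon_t$.
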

The proof proceeds in three steps: we first establish an identity \eqref{ipp} for $y \in Q=[0,1] \cap \mathbb{Q}$ via integration by parts, then extend it to all $y \in [0,1]$ by density, and finally use it to identify $m_t$ on $D_t^c$ via a test function argument.
\begin{proof}
Fix $y \in Q$. Using integration by parts (see e.g. \cite[Thm. 4.4.13]{Apple09}), \eqref{sdeexps} and \eqref{defflowybysde}, we get that, almost surely for all $t \geq 0$, 
\begin{align}
e^{-S_t} Y_{0,t}(y) -y = & - \int_{(0,t] \times (0,1)^2} e^{-S_{s-}} r Y_{0,s-}(y) N(ds,dr,du) \nonumber \\
& + \int_{(0,t] \times (0,1)^2} e^{-S_{s-}} \left ( \mr_{r,u}(Y_{0,s-}(y))-Y_{0,s-}(y) \right ) N(ds,dr,du) \nonumber \\
& - \int_{(0,t] \times (0,1)^2} e^{-S_{s-}} r \left ( \mr_{r,u}(Y_{0,s-}(y))-Y_{0,s-}(y) \right ) N(ds,dr,du) \nonumber \\
= & \int_{(0,t] \times (0,1)^2} e^{-S_{s-}} \left ( (1-r)\mr_{r,u}(Y_{0,s-}(y))-Y_{0,s-}(y) \right ) N(ds,dr,du) \nonumber \\
= & - \int_{(0,t] \times (0,1)^2} e^{-S_{s-}} \left ( \int_0^{Y_{0,s-}(y)} \mathds{1}_{z\in I_{r,u}^{\mathrm{o}}} dz \right ) N(ds,dr,du), \label{ipp}
\end{align}
where we have used Lemma \ref{dermruymsy} from Appendix \ref{usefulestimsde} for the last equality. 

Let us now consider a realization in the probability one events given by Proposition \ref{propexistence} and Remark \ref{contfrom01tocadlag}, and in the probability one event where $(S_t)_{t \geq 0}$ and $(L_t)_{t \geq 0}$ are well-defined and c\`ad-l\`ag, where \eqref{sdeexps} holds true, and where \eqref{ipp} holds true for all $y \in Q$ and $t \geq 0$. Note that for any $y \in [0,1]$ and $(s,r,u) \in N$ we have 
\begin{align}
0 \leq e^{-S_{s-}} \int_0^{Y_{0,s-}(y)} \mathds{1}_{z\in I_{r,u}^{\mathrm{o}}} dz \leq r \label{majointegrand}
\end{align}
For any $t \geq 0$, $y \in [0,1]$ and $(y_n)_{n\geq 1}$ in $Q$ converging to $y$, Remark \ref{contfrom01tocadlag} and \eqref{majointegrand} allow to apply dominated convergence in the right-hand side of \eqref{ipp} (applied at $y_n$) while Proposition \ref{propexistence} yields convergence of the left-hand side. We thus get that, on the above probability one events, \eqref{ipp} holds true for all $y \in [0,1]$ and $t \geq 0$. 

We still fix a realization in the above mentioned events. Let $\varphi \in \mathcal{C}^{\infty}([0,1])$. Using the definition of $m_t$ and integration by parts for Stieltjes integrals we get that for all $t \geq 0$, 
\begin{align*}
e^{-S_t} \int_0^1 \varphi(x) m_t(dx) & = e^{-S_t} \int_0^1 \varphi(x) dY_{0,t}(x) = e^{-S_t} \varphi(1) - \int_0^1 \varphi'(x) e^{-S_t} Y_{0,t}(x) dx. 
\end{align*}
Plugging \eqref{ipp} into the above we get 
\begin{align}
e^{-S_t} \int_0^1 \varphi(x) m_t(dx) & = e^{-S_t} \varphi(1) - \int_0^1 \varphi'(x) x dx \nonumber \\
& + \int_0^1 \int_{(0,t] \times (0,1)^2} \varphi'(x) e^{-S_{s-}} \left ( \int_0^{Y_{0,s-}(x)} \mathds{1}_{z\in I_{r,u}^{\mathrm{o}}} dz \right ) N(ds,dr,du) dx. \label{calmtipp}
\end{align}
Using \eqref{majointegrand} we get 
\begin{align*}
& \int_0^1 \int_{(0,t] \times (0,1)^2} \left | \varphi'(x) e^{-S_{s-}} \left ( \int_0^{Y_{0,s-}(x)} \mathds{1}_{z\in I_{r,u}^{\mathrm{o}}} dz \right ) \right | N(ds,dr,du) dx \leq L_t \int_0^1 \left | \varphi'(x) \right | dx < \infty, 
\end{align*}
where $(L_t)_{t\geq 0}$ is defined in \eqref{defsublt}. We can thus use Fubini's theorem for the last term in \eqref{calmtipp}. Using that along with integration by parts we get that the last term in \eqref{calmtipp} equals 
\begin{align*}
&\int_{(0,t] \times (0,1)^2} e^{-S_{s-}} \left ( \int_0^1 \varphi'(x) \left ( \int_0^{Y_{0,s-}(x)} \mathds{1}_{z\in I_{r,u}^{\mathrm{o}}} dz \right ) dx \right ) N(ds,dr,du) \\
= &\int_{(0,t] \times (0,1)^2} e^{-S_{s-}} \left ( \varphi(1)r - \int_0^1 \varphi(x) \mathds{1}_{Y_{0,s-}(x)\in I_{r,u}^{\mathrm{o}}} dY_{0,s-}(x) \right ) N(ds,dr,du) \\
= & \varphi(1) (1-e^{-S_t}) - \int_{(0,t] \times (0,1)^2} e^{-S_{s-}} \left ( \int_0^1 \varphi(x) \mathds{1}_{Y_{0,s-}(x)\in I_{r,u}^{\mathrm{o}}} dY_{0,s-}(x) \right ) N(ds,dr,du) \\
\end{align*}
where we have used \eqref{sdeexps}. Plugging this in \eqref{calmtipp} and using that $\varphi(1)-\int_0^1 \varphi'(x) x dx=\int_0^1 \varphi(x) dx$ we get 
\begin{align}
& e^{-S_t} \int_0^1 \varphi(x) m_t(dx) + \int_{(0,t] \times (0,1)^2} e^{-S_{s-}} \left ( \int_0^1 \varphi(x) \mathds{1}_{Y_{0,s-}(x)\in I_{r,u}^{\mathrm{o}}} dY_{0,s-}(x) \right ) N(ds,dr,du) \nonumber \\
& \qquad \qquad \qquad \qquad \qquad \qquad \qquad \qquad \qquad \qquad \qquad \qquad \qquad \qquad \qquad = \int_0^1 \varphi(x) dx. \label{egposmeas}
\end{align}
Each of the two sides of \eqref{egposmeas} is the integral of $\varphi$ with respect to a finite positive measure. Since the above holds for all $t \geq 0$ and $\varphi \in \mathcal{C}^{\infty}([0,1])$, we get that the underlying positive measures on $[0,1]$ are equal for all $t \geq 0$. It remains to restrict this equality of measures to $D_t^c$. Note that for all $(s,r,u)\in N$ with $s \in (0,t]$ and $x \in D_t^c$ we have $x\notin Y_{0,s-}^{-1}(I_{r,u}^{\mathrm{o}})$ so the second measure in the left-hand side of \eqref{egposmeas} does not charge $D_t^c$. Using this together with the equality of measures implied by \eqref{egposmeas} we get that, for any $t \geq 0$ and any Borel set $A \subset D_t^c$, $e^{-S_t} m_t(A) = |A|$. The same reasoning works with $t$ replaced by $t-$ so the result follows. 
\end{proof}

An important consequence of Proposition \ref{calcmeasimgrecipflow} is the following: 
\begin{prop} \label{measimgrec}
We have, 
\begin{align}
\mathbb{P} \left ( \forall t \geq 0, \forall A \in \mathcal{B}([0,1]), \ |Y_{0,t}^{-1}(A) \cap D_t^c| = e^{-S_t} |A|, |Y_{0,t-}^{-1}(A) \cap D_{t-}^c| = e^{-S_{t-}} |A| \right ) = 1. \label{measimgrec0}
\end{align}
\end{prop}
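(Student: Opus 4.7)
The plan is to deduce Proposition \ref{measimgrec} from Proposition \ref{calcmeasimgrecipflow} by a short pushforward argument for the Stieltjes measure $m_t$. The central deterministic ingredient is the classical identity that if $Y:[0,1]\to[0,1]$ is continuous and non-decreasing with $Y(0)=0$ and $Y(1)=1$, then $m_Y$ is a Borel probability measure on $[0,1]$ whose pushforward under $Y$ is Lebesgue measure, and consequently $m_Y(Y^{-1}(A))=|A|$ for every Borel $A\subset[0,1]$. I will justify this by taking $X$ with law $m_Y$ and, for $u\in[0,1]$, letting $x^*_u:=\sup Y^{-1}(\{u\})$; continuity of $Y$ gives $Y(x^*_u)=u$, the closed-interval structure of the level sets yields $\{Y(X)\leq u\}=\{X\leq x^*_u\}$, and hence $P(Y(X)\leq u)=m_Y([0,x^*_u])=Y(x^*_u)=u$, so $Y(X)$ is uniform on $[0,1]$.

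The second ingredient is that $m_t(D_t)=0$. This follows because, by \eqref{cadlagsets}, $D_t$ is a union of open intervals of the form $Y_{0,s-}^{-1}(I_{r,u}^{\mathrm{o}})$ with $(s,r,u)\in N$ and $s\leq t$; on each such interval, the composition property \eqref{collage} forces $Y_{0,t}$ to be constant, so $m_t$ charges none of them. Hence $m_t$ is concentrated on $D_t^c$, and the same reasoning with $Y_{0,t-}$ gives $m_{t-}(D_{t-})=0$.

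With these two ingredients I will work on the intersection of the probability-one events supplied by Propositions \ref{propexistence}, \ref{poissrepstep1}, and \ref{calcmeasimgrecipflow} -- each stated for all $t\geq 0$ -- so that for every $t$ the map $Y_{0,t}$ is a continuous non-decreasing surjection onto $[0,1]$ and $m_t(A)=|A|e^{S_t}$ for every Borel $A\subset D_t^c$. For an arbitrary Borel $A\subset[0,1]$, setting $B:=Y_{0,t}^{-1}(A)\cap D_t^c \subset D_t^c$, I will run the chain
\begin{align*}
|Y_{0,t}^{-1}(A)\cap D_t^c| = |B| = e^{-S_t}m_t(B) = e^{-S_t}m_t(Y_{0,t}^{-1}(A)) = e^{-S_t}|A|,
\end{align*}
whose three non-trivial equalities come respectively from Proposition \ref{calcmeasimgrecipflow} applied to $B$, from $m_t(D_t)=0$, and from the deterministic change-of-variables identity. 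The same computation with $Y_{0,t-}$, $m_{t-}$, $D_{t-}$, and $S_{t-}$ handles the left-limit statement. I do not anticipate a serious obstacle: every random ingredient is already packaged by the preceding propositions, and the only thing to verify is the pushforward identity $m_t(Y_{0,t}^{-1}(A))=|A|$, which is a pathwise consequence of $Y_{0,t}$ being a continuous non-decreasing surjection and therefore holds simultaneously for all $t\geq 0$ on the event where Proposition \ref{propexistence} holds.
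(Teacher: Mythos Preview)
Your proof is correct and follows essentially the same approach as the paper: both use Proposition \ref{calcmeasimgrecipflow} to convert $|B|$ into $e^{-S_t}m_t(B)$, then use $m_t(D_t)=0$ to drop the intersection with $D_t^c$, and finally invoke the change-of-variables/pushforward identity $m_t(Y_{0,t}^{-1}(A))=|A|$. The only cosmetic difference is that the paper obtains $m_t(D_t)=0$ by citing Proposition \ref{poissrepstep1} (so $D_t=C_t=Supp(m_t)^c$), whereas you argue it directly from \eqref{collage}; both are valid.
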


\begin{proof}
We fix a realization in the probability one events given by Propositions \ref{poissrepstep1} and \ref{calcmeasimgrecipflow}. Let $t \geq 0$ and $A \subset [0,1]$ be a Borel set. Note that $m_t(D_t)=0$ for all $t \geq 0$ by Proposition \ref{poissrepstep1} and the definition of $C_t$. Using Proposition \ref{calcmeasimgrecipflow}, $m_t(D_t)=0$, the definition of $m_t$ and the change of variable $y=Y_{0,t}(x)$, we get 
\begin{align*}
|Y_{0,t}^{-1}(A) \cap D_t^c| & = e^{-S_t} m_t(Y_{0,t}^{-1}(A) \cap D_t^c) = e^{-S_t} m_t(Y_{0,t}^{-1}(A)) \\
& = e^{-S_t} \int_0^1 \mathds{1}_{Y_{0,t}(x) \in A} dY_{0,t}(x) = e^{-S_t} \int_0^1 \mathds{1}_{y \in A} dy = e^{-S_t} |A|. 
\end{align*}
The same reasoning works with $t$ replaced by $t-$ so the result follows. 
\end{proof}

Applying Proposition \ref{measimgrec} with $A=[0,1]$, together with Proposition \ref{poissrepstep1}, we obtain the following corollary. 

\begin{cor} \label{calczonecollage}
We have almost surely $|C_t|=|D_t|=1-e^{-S_t}$ and $|C_{t-}|=|D_{t-}|=1-e^{-S_{t-}}$ for all $t \geq 0$. 
\end{cor}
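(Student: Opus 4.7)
The proof is essentially a one-line deduction from the two cited results, so my plan is to just unfold the application cleanly.

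First, I would invoke Proposition \ref{measimgrec} with the specific Borel set $A=[0,1]$. Since $Y_{0,t}(\cdot)$ takes values in $[0,1]$ (guaranteed by Proposition \ref{propexistence}(iii)), we have $Y_{0,t}^{-1}([0,1])=[0,1]$ and similarly $Y_{0,t-}^{-1}([0,1])=[0,1]$. On the probability one event produced by Proposition \ref{measimgrec}, this yields at once that for all $t \geq 0$,
\begin{align*}
|D_t^c| = |Y_{0,t}^{-1}([0,1]) \cap D_t^c| = e^{-S_t}, \qquad |D_{t-}^c| = |Y_{0,t-}^{-1}([0,1]) \cap D_{t-}^c| = e^{-S_{t-}},
\end{align*}
and hence $|D_t|=1-e^{-S_t}$ and $|D_{t-}|=1-e^{-S_{t-}}$.

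Second, I would intersect the above probability one event with the one from Proposition \ref{poissrepstep1}, on which $C_t=D_t$ and $C_{t-}=D_{t-}$ for all $t \geq 0$. On this intersection (still a probability one event) the claimed identities $|C_t|=|D_t|=1-e^{-S_t}$ and $|C_{t-}|=|D_{t-}|=1-e^{-S_{t-}}$ hold simultaneously for all $t \geq 0$, which is exactly the statement.

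There is really no obstacle here: both ingredients are established, and the only thing to check is that the quantifier "for all $t \geq 0$" comes out uniformly, which it does because both cited propositions already provide a single probability one event valid for all $t$ simultaneously. The substantive work was done in Propositions \ref{poissrepstep1} and \ref{measimgrec}.
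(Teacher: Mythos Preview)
Your proof is correct and follows exactly the same approach as the paper: apply Proposition \ref{measimgrec} with $A=[0,1]$ to get $|D_t^c|=e^{-S_t}$ and $|D_{t-}^c|=e^{-S_{t-}}$, then invoke Proposition \ref{poissrepstep1} for the identification $C_t=D_t$, $C_{t-}=D_{t-}$.
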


\begin{remark} \label{sumwkteq1minusst}
Corollary \ref{calczonecollage} allows in particular to recover the classical fact, mentioned in Section \ref{model3}, that almost surely 
$\sum_{k\geq 1}W_k(t)=1-e^{-S_t}$ for all $t \geq 0$.
\end{remark}

\begin{remark} \label{zkindepst}
Combining Remark \ref{sumwkteq1minusst} with Lemma \ref{lawofvkandzk}, we deduce that for any $t \geq 0$ the sequence $(Z_k(t,r,u))_{k\geq 1}$ is independent of $S_t$.
\end{remark}

\subsection{Poisson representation: Proof of Theorem \ref{represwktviamut}}

In this subsection we prove Theorem \ref{represwktviamut}. The key idea is to group the jumps of $N$ according to which connected component of $C_t$ they contribute to and then to use Proposition \ref{measimgrec} to compute the total weight contributed by each group.

\begin{proof}[Proof of Theorem \ref{represwktviamut}]
We only prove \eqref{defmut} as the proof of \eqref{represwktviamut1} is similar (and uses that $N$ has countably many jumps). We know from Proposition \ref{poissrepstep1} that $C_t=D_t=\cup_{(s,r,u) \in N, s \in (0,t]} Y_{0,s-}^{-1}(I_{r,u}^{\mathrm{o}})$. However, the open intervals appearing in this countable union are not disjoint as two such intervals can be included in one-another. Let us assume that we are on the probability one events from \eqref{collage} and Propositions \ref{propexistence} and \ref{poissrepstep1}. In order to separate the open connected components of $C_t$ we define an equivalence relation $\simeq$ on $\{ (s,r,u) \in N \ \text{s.t.} \ s \in (0,t] \}$ by writing $(s_1,r_1,u_1)\simeq(s_2,r_2,u_2)$ if and only if $Y_{0,s_1-}^{-1}(I_{r_1,u_1}^{\mathrm{o}})$ and $Y_{0,s_2-}^{-1}(I_{r_2,u_2}^{\mathrm{o}})$ are in the same open connected component of $D_t=C_t$. 

For $j\geq 1$ such that $\mathcal{O}_j(t) \neq \emptyset$ let us denote by $\mathcal{C}_j(t)$ the equivalence class for $\simeq$ that is associated to the open connected component $\mathcal{O}_j(t)$ of $C_t=D_t$ (and let $\mathcal{C}_j(t):=\emptyset$ otherwise). We see from \eqref{collage} that $V_j(t)$, the value taken by $Y_{0,t}(\cdot)$ on $\mathcal{O}_j(t)$, is given by $Y_{s,t}(u)$ for any choice of $(s,r,u) \in \mathcal{C}_j(t)$. Let us denote by $\nu_t$ the measure defined by the right-hand side of \eqref{defmut}. We thus get 
\begin{align}
\nu_t = \sum_{j\geq 1} \left ( \sum_{(s,r,u) \in \mathcal{C}_j(t)} re^{-S_{s-}} \right ) \delta_{V_j(t)}. \label{represwktviamut4}
\end{align}

We are thus left to prove that $\sum_{(s,r,u) \in \mathcal{C}_j(t)} re^{-S_{s-}}=W_j(t)$ for all $j \geq 1$. The case $\mathcal{O}_j(t) = \emptyset$ is trivial so we only consider $j \geq 1$ such that $\mathcal{O}_j(t) \neq \emptyset$. For this we further assume that we are on the probability one event from Lemma \ref{piecewisecte} (see Appendix \ref{poisrep1ststepappend}), denote by $\mathcal{U}\subset [0,1]$ the set of measure one produced by Lemma \ref{piecewisecte}, and show that 
\begin{align}
\mathcal{U} \cap \left ( \cup_{(s,r,u)\in \mathcal{C}_j(t)} Y_{0,s-}^{-1}(I_{r,u}^{\mathrm{o}}) \right ) \subset \cup_{(s,r,u)\in \mathcal{C}_j(t)} \left ( Y_{0,s-}^{-1}(I_{r,u}^{\mathrm{o}}) \cap D_{s-}^c \right ) \subset \cup_{(s,r,u)\in \mathcal{C}_j(t)} Y_{0,s-}^{-1}(I_{r,u}^{\mathrm{o}}). \label{represwktviamut5}
\end{align}
The second inclusion of \eqref{represwktviamut5} is trivial so we only prove the first one. The idea is to show that each $z \in \mathcal{U}$ that belongs to some $Y_{0,s-}^{-1}(I_{r,u}^{\mathrm{o}})$ with $(s,r,u) \in \mathcal{C}_j(t)$ must in fact belong to $Y_{0,\hat s-}^{-1}(I_{\hat r,\hat u}^{\mathrm{o}}) \cap D_{\hat s-}^c$ for the earliest jump $(\hat s, \hat r, \hat u)$ that affects $z$. Let $z$ belong to the set in the left-hand side of \eqref{represwktviamut5}. By definition there exists $(\tilde s,\tilde r,\tilde u)\in \mathcal{C}_j(t)$ such that $z\in Y_{0,\tilde s-}^{-1}(I_{\tilde r,\tilde u}^{\mathrm{o}})$. By Lemma \ref{piecewisecte}, there is $(\hat s,\hat r,\hat u)\in N$ such that $\hat s \in (0,\tilde s]$ is the smallest increase time of the process $(J_{s}(z))_{s \geq 0}$. By Lemma \ref{piecewisecte} we have $z\in Y_{0,\hat s-}^{-1}(I_{\hat r,\hat u}^{\mathrm{o}})$ and $z \notin Y^{-1}_{0,s-}(I_{r,u}^{\mathrm{o}})$ for any $(s,r,u) \in N$ such that $s\in(0,\hat s)$ so $z \in D_{\hat s-}^c$. We thus get $z\in Y_{0,\hat s-}^{-1}(I_{\hat r,\hat u}^{\mathrm{o}}) \cap D_{\hat s-}^c$. The intersection $Y_{0,\tilde s-}^{-1}(I_{\tilde r,\tilde u}^{\mathrm{o}})\cap Y_{0,\hat s-}^{-1}(I_{\hat r,\hat u}^{\mathrm{o}})$ is non-empty as it contains $z$ so $(\tilde s,\tilde r,\tilde u)\simeq(\hat s,\hat r,\hat u)$. In particular $(\hat s,\hat r,\hat u)\in \mathcal{C}_j(t)$ so $z \in \cup_{(s,r,u)\in \mathcal{C}_j(t)} (Y_{0,s-}^{-1}(I_{r,u}^{\mathrm{o}}) \cap D_{s-}^c)$. This proves \eqref{represwktviamut5}. 
Then, for $(s_1,r_1,u_1), (s_2,r_2,u_2)\in \mathcal{C}_j(t)$ with $s_1<s_2$, we have 
\begin{align*}
Y_{0,s_1-}^{-1}(I_{r_1,u_1}^{\mathrm{o}}) \cap D_{s_1-}^c \subset Y_{0,s_1-}^{-1}(I_{r_1,u_1}^{\mathrm{o}}) \subset D_{s_1} \subset D_{s_2-} \subset \left ( Y_{0,s_2-}^{-1}(I_{r_2,u_2}^{\mathrm{o}}) \cap D_{s_2-}^c \right )^c. 
\end{align*}
Therefore, for any $(s_1,r_1,u_1), (s_2,r_2,u_2)\in \mathcal{C}_j(t)$, 
\begin{align}
s_1 \neq s_2 \Rightarrow \left ( Y_{0,s_1-}^{-1}(I_{r_1,u_1}^{\mathrm{o}}) \cap D_{s_1-}^c \right ) \cap \left ( Y_{0,s_2-}^{-1}(I_{r_2,u_2}^{\mathrm{o}}) \cap D_{s_2-}^c \right ) = \emptyset. \label{represwktviamut6}
\end{align}

Let us now further assume that we are on the probability one event from Proposition \ref{measimgrec}. By definition of $\mathcal{C}_j(t)$ we have $\mathcal{O}_j(t)=\cup_{(s,r,u)\in \mathcal{C}_j(t)} Y_{0,s-}^{-1}(I_{r,u}^{\mathrm{o}})$ so, using \eqref{represwktviamut5}, \eqref{represwktviamut6}, and Proposition \ref{measimgrec} we get that, for our fixed realization of the process, 
\begin{align}
W_j(t) & = |\mathcal{O}_j(t)| = \left | \cup_{(s,r,u)\in \mathcal{C}_j(t)} Y_{0,s-}^{-1}(I_{r,u}^{\mathrm{o}}) \right | = \left | \cup_{(s,r,u)\in \mathcal{C}_j(t)} \left ( Y_{0,s-}^{-1}(I_{r,u}^{\mathrm{o}}) \cap D_{s-}^c \right ) \right | \nonumber \\
& = \sum_{(s,r,u) \in \mathcal{C}_j(t)} \left | Y_{0,s-}^{-1}(I_{r,u}^{\mathrm{o}}) \cap D_{s-}^c \right | = \sum_{(s,r,u) \in \mathcal{C}_j(t)} e^{-S_{s-}} |I_{r,u}^{\mathrm{o}}| = \sum_{(s,r,u) \in \mathcal{C}_j(t)} e^{-S_{s-}} r. \label{represwktviamut7}
\end{align}
Then, the combination of \eqref{represwktviamut4} with \eqref{represwktviamut7} and \eqref{represwktviamut2} yields \eqref{defmut}, which concludes the proof. 
\end{proof}

\section{Stochastic integral representation for $W_k(t)$} \label{itoform}

In this section we prove Theorems \ref{intsto} and \ref{ito}. A key step is Lemma \ref{wksfromwksms}, which identifies the increment of $W_k(\cdot)$ at each jump of $N$ explicitly. Theorem \ref{intsto} is then proved by showing that the contribution of small jumps vanishes, and Theorem \ref{ito} by applying It{\^o}'s formula to Theorem \ref{intsto}. Recall that we always assume \eqref{integassumption}. 

\subsection{First step: behavior of $W_k(t)$ at a jump} \label{behavjump}

We start with the following key lemma. 
\begin{lemma} \label{wksfromwksms}
We have almost surely that, for any $(s,r,u)\in N$, $W_k(s)=W_k(s-)+K^k_{s-}(r,u)$, $M_k(s)=M_k(s-)+H^k_{s-}(r,u)$, and $H^k_{s-}(r,u)=\sum_{j=1}^k K^j_{s-}(r,u)$, where $K^k_{\cdot}(\cdot,\cdot)$ and $H^k_{\cdot}(\cdot,\cdot)$ are defined in \eqref{prepCFlb} and \eqref{prepCF}. 
\end{lemma}

\begin{proof}
Let $(s,r,u)\in N$. Combining \eqref{represwktviamut2} with \eqref{represwktviamut1} from Theorem \ref{represwktviamut} and using \eqref{defflowybysdetspsauts} (see Appendix \ref{shiftedflows}) and the definition of $Z_j(s-,r,u)$ in Section \ref{model3} we get $\mu_{s-} = \sum_{j\geq 1} W_j(s-) \delta_{V_j(s-)}$ and 
\begin{align}
\sum_{j\geq 1} W_j(s) \delta_{V_j(s)} & = \mu_{s} = re^{-S_{s-}} \delta_{u} + \sum_{(\tilde s, \tilde r, \tilde u) \in N, \tilde s \in (0,s)} \tilde re^{-S_{\tilde s-}} \delta_{\mr_{r,u}(Y_{\tilde s,s-}(\tilde u))} \nonumber \\ 
& = re^{-S_{s-}} \delta_{u} + \sum_{j\geq 1} W_j(s-) \delta_{\mr_{r,u}(V_j(s-))} \nonumber \\ 
& = \sum_{j\geq 1, Z_j(s-,r,u)=0} W_j(s-) \delta_{\mr_{r,u}(V_j(s-))} + \left ( e^{-S_{s-}}r + \sum_{j\geq 1} Z_j(s-,r,u) W_j(s-) \right ) \delta_{u}. \label{exprmus}
\end{align}
Note that all the Dirac measures appearing in the above expression are distinct, since the $V_j(s-)$ are distinct and $\mr_{r,u}$ is injective on $I_{r,u}^c$, where $I_{r,u}$ is defined in \eqref{defiru}. 

\textit{We first assume that we are on the event $\{\beta_k(s-,r,u)=0\}$.} We thus have $Z_j(s-,r,u)=0$ for all $j\in\{1,\ldots,k\}$ so all terms $W_j(s-) \delta_{\mr_{r,u}(V_j(s-))}$ for $j\in\{1,\ldots,k\}$ appear in the sum $\sum_{j\geq 1, Z_j(s-,r,u)=0}\cdots$ from \eqref{exprmus}. We distinguish three sub-cases according to the size of the factor of $\delta_u$ in \eqref{exprmus} relative to $W_k(s-)$ and $W_{k-1}(s-)$. If that factor is in $(0,W_k(s-))$ then the $k$ largest factors appearing in \eqref{exprmus} are $W_1(s-),\ldots,W_k(s-)$. We thus have $W_k(s)-W_k(s-)=0$ and $M_k(s)-M_k(s-)=0$ in that case, which agrees with the expressions of $K^k_{s-}(r,u)$ and $H^k_{s-}(r,u)$ (see \eqref{prepCFlb} and \eqref{prepCF}). If the factor of $\delta_u$ in \eqref{exprmus} is in $[W_k(s-),W_{k-1}(s-))$ then this factor is the $k^{th}$ largest factor appearing in \eqref{exprmus} (while the first $k-1$ are $W_1(s-),\ldots,W_{k-1}(s-)$). In this case we thus have $W_k(s)-W_k(s-)=M_k(s)-M_k(s-)=(\text{factor of } \delta_u)-W_k(s-)$, which agrees with the expressions of $K^k_{s-}(r,u)$ and $H^k_{s-}(r,u)$. Finally, if the factor of $\delta_u$ in \eqref{exprmus} is in $[W_{k-1}(s-),1]$ then this factor is one of the $(k-1)^{th}$ largest factor appearing in \eqref{exprmus} and the $k^{th}$ is $W_{k-1}(s-)$. In this case we thus have $W_k(s)-W_k(s-)=W_{k-1}(s-)-W_k(s-)$ and that $M_k(s)-M_k(s-)$ is as in the previous case. This agrees with the expressions of $K^k_{s-}(r,u)$ and $H^k_{s-}(r,u)$. 

\textit{We now assume that we are on the event $\{\beta_k(s-,r,u)\neq 0\}$ and study $M_k(s)$.} In this case a number equal to $\beta_k(s-,r,u)$ of the terms $W_j(s-)$, for $j\in\{1,\ldots,k\}$, appears in the factor of $\delta_u$ in \eqref{exprmus} so this factor is one of the $k$ largest factors appearing in \eqref{exprmus}. $M_k(s)$ is thus equal to the factor of $\delta_u$ in \eqref{exprmus} plus the sum of the $k-\beta_k(s-,r,u)$ terms $W_j(s-)$, for $j\in\{1,\ldots,k\}$ such that $Z_j(s-,r,u)=0$, plus the $\beta_k(s-,r,u)-1$ largest terms $W_j(s-)$, for $j>k$ such that $Z_j(s-,r,u)=0$. This agrees with the expression of $M_k(s-)+H^k_{s-}(r,u)$ and completes the proof of $M_k(s)=M_k(s-)+H^k_{s-}(r,u)$. 

\textit{We now assume that we are on the event $\{\beta_k(s-,r,u)=1\}$ and study $W_k(s)$.} Let us denote by $J$ the unique $j\in\{1,\ldots,k\}$ that is such that  $Z_j(s-,r,u)=1$. If $J\neq k$ then the $k-1$ largest factors appearing in \eqref{exprmus} are the factor of $\delta_u$ in \eqref{exprmus} and the terms $W_j(s-)$ for $j\in\{1,\ldots,k-1\}\setminus\{J\}$ and $W_k(s-)$ is the $k^{th}$ largest factor appearing in \eqref{exprmus} so $W_k(s)=W_k(s-)$. If $J=k$ then the factor of $\delta_u$ in \eqref{exprmus} contains $W_k(s-)$ but no other $W_j(s-)$ for $j\in\{1,\ldots,k-1\}$. If the factor of $\delta_u$ in \eqref{exprmus} is smaller than $W_{k-1}(s-)$ then it is the $k^{th}$ largest factor appearing in \eqref{exprmus} and is thus equal to $W_k(s)$, if it is larger than $W_{k-1}(s-)$ then $W_{k-1}(s-)$ is the $k^{th}$ largest factor appearing in \eqref{exprmus} so $W_k(s)=W_{k-1}(s-)$. In all cases the obtained expression of $W_k(s)$ agrees with the expression of $W_k(s-)+K^k_{s-}(r,u)$. 

\textit{We now assume that we are on the event $\{\beta_k(s-,r,u)\geq 2\}$ and study $W_k(s)$.} In this case a number equal to $\beta_k(s-,r,u)\geq 2$ of the terms $W_j(s-)$, for $j\in\{1,\ldots,k\}$, appears in the factor of $\delta_u$ in \eqref{exprmus} so this factor is one of the $k-1$ largest factors appearing in \eqref{exprmus}. The $k-\beta_k(s-,r,u)$ terms $W_j(s-)$, for $j\in\{1,\ldots,k\}$, such that $Z_j(s-,r,u)=0$ are all part of the $k-1$ largest factors appearing in \eqref{exprmus}, and so are the $\beta_k(s-,r,u)-2$ largest terms $W_j(s-)$, for $j>k$, such that $Z_j(s-,r,u)=0$. $W_k(s)$ is thus equal to the $\beta_k(s-,r,u)-1$ largest terms $W_j(s-)$, for $j>k$, such that $Z_j(s-,r,u)=0$. This agrees with the expression of $W_k(s-)+K^k_{s-}(r,u)$ and completes the proof of $W_k(s)=W_k(s-)+K^k_{s-}(r,u)$. 

Finally, the third claim $H^k_{s-}(r,u)=\sum_{j=1}^k K^k_{s-}(r,u)$ is a consequence of the first two claims and of the fact that $M_k(\cdot)=\sum_{1 \leq j \leq k} W_j(\cdot)$ by definition. 
\end{proof}

\begin{remark} \label{h=sumknormaltime}
The proof of Lemma \ref{wksfromwksms} only uses the pathwise structure of $N$ and $Y$. Therefore, fixing $t \geq 0$ and $(r,u) \in (0,1)^2$, and adding artificially the extra jump $(t,r,u)$ to the process, we see that, for all $t \geq 0$ and $(r,u) \in (0,1)^2$, we have almost surely $H^k_{t}(r,u)=\sum_{j=1}^k K^j_{t}(r,u)$. 
\end{remark}

\subsection{Stochastic integral representation for $W_k(t)$: Proof of Theorem \ref{intsto}}

In this subsection we prove Theorem \ref{intsto}. We start with two preparatory lemmas establishing integrability conditions on the integrand $K^k_{s-}(r,u)$.
\begin{lemma} \label{itoprep}
For any $k \geq 1$ there is a constant $C_k$ such that for any $\eta\in(0,1]$, and $t \geq 0$, 
\begin{align}
& \mathbb{E} \left [ \int_{(0,t] \times (0,\eta] \times (0,1)} \left | K^k_{s-}(r,u) \right | N(\dd s, \dd r, \dd u) \right ] \leq t C_k \int_{(0,\eta]} r^{-1} \Lambda(dr)<\infty. \label{prepsmalljumps}
\end{align}
In particular we have almost surely for any $k \geq 1$, $\eta\in(0,1]$ and $t\geq 0$, 
\begin{align}
\int_{(0,t] \times (0,\eta] \times (0,1)} \left | K^k_{s-}(r,u) \right | N(\dd s, \dd r, \dd u) < \infty. \label{itoforwkprep}
\end{align}
\end{lemma}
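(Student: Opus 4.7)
\textbf{Proof plan for Lemma \ref{itoprep}.} The plan is to first obtain a pointwise bound on $|K^k_{s}(r,u)|$ by inspecting the three cases in the definition \eqref{prepCFlb}, then convert the stochastic integral into a deterministic integral via the compensation formula, and finally use Lemma \ref{lawofvkandzk} to evaluate the $u$-average of the resulting expectation.

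First, I would derive case by case the pointwise bound
\begin{align*}
|K^k_{s}(r,u)| \leq e^{-S_s} r + \sum_{j>k} Z_j(s,r,u) W_j(s) + W_k(s) \mathds{1}_{\beta_k(s,r,u) \geq 2}.
\end{align*}
In case $\beta_k(s,r,u)=0$, $K^k_{s}(r,u)$ is a median minus $W_k(s)$, which lies in $[0, e^{-S_s} r + \sum_{j>k} Z_j W_j]$ (the upper bound is the ``$X$'' appearing in the median). In case $\beta_k(s,r,u)=1$ with $Z_k(s,r,u)=1$, $K^k_s(r,u) = \min(W_k + X, W_{k-1}) - W_k \in [0, X]$. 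In case $\beta_k(s,r,u)\geq 2$, the positive part of the formula is bounded by $W_k(s)$, so $|K^k_s(r,u)| \leq W_k(s)$. Combining yields the display. The same reasoning (with the third indicator term in $H^1$ being identically zero) handles $k=1$ where $K^1 = H^1$.

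Next, I would apply the compensation formula to obtain
\begin{align*}
\mathbb{E}\Bigl[\int_{(0,t] \times (0,\eta] \times (0,1)} |K^k_{s-}(r,u)| N(\dd s, \dd r, \dd u)\Bigr]
= \int_0^t \!\!\int_{(0,\eta]}\!\! \int_0^1 \mathbb{E}[|K^k_{s-}(r,u)|] \, du \, r^{-2} \Lambda(dr) \, ds.
\end{align*}
By the fourth point of Proposition \ref{combrepres}, for each fixed $s\geq 0$ we have $K^k_{s-}(r,u)=K^k_s(r,u)$ almost surely, so I can replace $K^k_{s-}$ by $K^k_s$ in the right-hand side. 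Then by Lemma \ref{lawofvkandzk}, $(V_j(s))_{j\geq 1}$ is i.i.d.\ uniform on $[0,1]$ and independent of $(W_j(s))_{j\geq 1}$. Using Fubini,
\begin{align*}
\int_0^1 \mathbb{E}[Z_j(s,r,u) W_j(s)] du = \mathbb{E}[W_j(s)] \int_0^1 \mathbb{P}(V \in I_{r,u}) du = r \, \mathbb{E}[W_j(s)],
\end{align*}
since $|I_{r,u}|=r$ for every $u$, so $\int_0^1 \mathbb{P}(V\in I_{r,u})du = r$ by Fubini. Summing over $j>k$ (and using $\sum_j W_j(s) \leq 1$) gives an $r$ bound for the cross term. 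The first term contributes $r\mathbb{E}[e^{-S_s}]\leq r$. For the third term, by independence of $(V_j)$ and $(W_j)$ and the union bound,
\begin{align*}
\int_0^1 \mathbb{E}[W_k(s)\mathds{1}_{\beta_k(s,r,u)\geq 2}] du \leq \int_0^1 \mathbb{P}(\beta_k(s,r,u)\geq 2) du \leq \binom{k}{2} r^2 \leq \binom{k}{2} r.
\end{align*}
Combining, $\int_0^1 \mathbb{E}[|K^k_s(r,u)|] du \leq C_k \, r$ for some $C_k$ depending only on $k$.

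Plugging back, the triple integral is bounded by $t C_k \int_{(0,\eta]} r^{-1} \Lambda(dr)$, which is finite by assumption \eqref{integassumption}. This proves \eqref{prepsmalljumps}. The almost sure bound \eqref{itoforwkprep} follows from the finiteness of the expectation for each fixed $(t,\eta)$ combined with the monotonicity of the integrand in $t$ and $\eta$ (so one can take $t$ and $\eta$ in a countable dense set, then extend by monotonicity). No step presents a real obstacle; the only delicate point is ensuring the case analysis of \eqref{prepCFlb} is exhaustive and that the above pointwise bound absorbs the ``missing'' case $\beta_k=1$, $Z_k=0$, where $K^k_s(r,u)=0$, which is trivial.
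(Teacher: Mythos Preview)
Your proof is correct and follows essentially the same approach as the paper: bound $|K^k_s(r,u)|$ pointwise by $e^{-S_s}r+\sum_{j>k}Z_j(s,r,u)W_j(s)$ plus an indicator of $\{\beta_k\ge 2\}$, apply the compensation formula, and use Lemma \ref{lawofvkandzk} (the $Z_j$'s are Bernoulli$(r)$ independent of the $W_j$'s) to obtain $\mathbb{E}[|K^k_s(r,u)|]\le C_k r$. The only cosmetic differences are that you keep the sharper factor $W_k(s)$ in front of $\mathds{1}_{\beta_k\ge 2}$ (the paper simply uses $\mathds{1}_{\beta_k\ge 2}$), you bound $\mathbb{P}(\beta_k\ge 2)$ by $\binom{k}{2}r^2$ via a union bound (the paper writes the exact binomial expression $1-(1-r)^k-kr(1-r)^{k-1}$), and you integrate over $u$ where the paper notes directly that the bound is uniform in $u$ since $|I_{r,u}|=r$ for all $u$.
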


\begin{proof}
We fix $k \geq 1$ and denote by $A^{\eta}_{f,k}(t)$ the quantity in \eqref{itoforwkprep}. \eqref{itoforwkprep} follows easily from \eqref{prepsmalljumps} and from the fact that $A^{\eta}_{f,k}(t)$ is almost surely non-decreasing in $t$ and $\eta$, so we only prove \eqref{prepsmalljumps}. By the compensation formula we get 
\begin{align}
\mathbb{E} \left [ A^{\eta}_{f,k}(t) \right ] = \int_{(0,t]} \left ( \int_{(0,\eta] \times (0,1)} \mathbb{E} \left [ \left | K^k_{s}(r,u) \right | \right ] r^{-2} \Lambda(dr) du \right ) ds. \label{espafk}
\end{align}
Using the expression of $K^k_{\cdot}(\cdot,\cdot)$ in \eqref{prepCFlb} we can see that 
\begin{align*}
\left | K^k_{s}(r,u) \right | \leq \left ( e^{-S_{s}}r+\sum_{j>k}Z_j(s,r,u) W_j(s) \right ) + \mathds{1}_{\beta_k(s,r,u)\geq 2}, 
\end{align*}
so, taking expectation and using Lemma \ref{lawofvkandzk} and Remark \ref{sumwkteq1minusst}, we get 
\begin{align*}
\mathbb{E} \left [ \left | K^k_{s}(r,u) \right | \right ] \leq r \mathbb{E} \left [ 1-\sum_{j=1}^k W_j(s) \right ] + (1-(1-r)^k-kr(1-r)^{k-1}) \leq C_k r, 
\end{align*}
for some constant $C_k>0$. Plugging the above into \eqref{espafk} we obtain \eqref{prepsmalljumps}, where the finiteness of this upper bound comes from \eqref{integassumption}. This concludes the proof. 
\end{proof}

\begin{lemma} \label{prepsmalljumpsbis}
For a Lipschitz function $f:[0,1]\rightarrow\mathbb{R}$, an integer $k\geq 1$, $t\geq 0$ and $\eta \in (0,1]$ we have 
\begin{align}
& \mathbb{E} \left [ \int_{(0,\eta] \times (0,1)} \left | f \left (W_k(t)+K^k_{t}(r,u) \right ) - f \left (W_k(t) \right ) \right | r^{-2} \Lambda(dr) du \right ] \leq C_{f,k} \int_{(0,\eta]} r^{-1} \Lambda(dr), \label{prepsmalljumpsbiseqwkt} \\
& \mathbb{E} \left [ \int_{(0,\eta] \times (0,1)} \left | f \left (M_k(t)+H^k_{t}(r,u) \right ) - f \left (M_k(t) \right ) \right | r^{-2} \Lambda(dr) du \right ] \leq \tilde C_{f,k} \int_{(0,\eta]} r^{-1} \Lambda(dr), \label{prepsmalljumpsbiseqmkt}
\end{align}
for some constant $C_{f,k}>0$ and $\tilde C_{f,k} := \sum_{j=1}^k C_{f,j}$. 
\end{lemma}
The proof uses Remark \ref{h=sumknormaltime} and the same argument as for Lemma \ref{itoprep} so we omit it. 

\begin{proof}[Proof of Theorem \ref{intsto}]
Let us fix $t \geq 0$ and $k \geq 1$. The set $\{ (s,r,u) \in N \ \text{s.t.} \ r>\eta \}$ is almost surely discrete for all $\eta \in (0,1)$. Let us enumerate its elements by $(s^{\eta}_i,r^{\eta}_i,u^{\eta}_i)$ where $s^{\eta}_1<s^{\eta}_2<\ldots$ and for convenience set $s^{\eta}_0:=0$. The strategy is as follows. By Lemma \ref{wksfromwksms}, the increments of $W_k$ at the large jump times $s^\eta_i$ are given by $K^k_{s^\eta_i-}(r^\eta_i, u^\eta_i)$. It thus remains to show that the cumulative variation of $W_k(\cdot)$ \textit{between} consecutive large jumps, which accounts for the sole contribution of small jumps ($r \leq \eta$) and which we denote by
\begin{align*}
\Sigma_{\eta}(t) := \sum_{i \geq 1} \left (W_k((s^{\eta}_{i+1}\wedge t)-) - W_k(s^{\eta}_i) \right ) \mathds{1}_{s^{\eta}_i<t},
\end{align*}
is negligible as $\eta \to 0$. 

\textit{Step 1: Control of $\mathbb{E}[|\Sigma_\eta(t)|]$.} For $w \geq 0$, we define 
\begin{align*}
D^{s^{\eta}_i}_{s^{\eta}_i+w}:=\cup_{(s,r,u) \in N, s \in (s^{\eta}_i,s^{\eta}_i+w]} Y_{s^{\eta}_i,s-}^{-1}(I_{r,u}^{\mathrm{o}}), 
\end{align*}
which is the analogue of the sets $D_t$ from \eqref{cadlagsets}, with the flow $Y_{0,\cdot}(\cdot)$ replaced by the shifted flow $Y_{s^{\eta}_i,s^{\eta}_i+\cdot}(\cdot)$ from Appendix \ref{shiftedflows}. We thus get  
\begin{align}
(D^{s^{\eta}_i}_{s^{\eta}_i+w}, S_{s^{\eta}_i+w}-S_{s^{\eta}_i})_{w \geq 0} \overset{(d)}{=} (D_{w}, S_{w})_{w \geq 0}, \qquad (D^{s^{\eta}_i}_{s^{\eta}_i+w})_{w \geq 0} \perp\!\!\!\perp \mathcal{F}_{s^{\eta}_i}. \label{markovdt}
\end{align}
By Corollary \ref{calczonecollage} we deduce that, on $\{ s^{\eta}_i<t \}$, we have almost surely 
\begin{align}
|D^{s^{\eta}_i}_{(s^{\eta}_{i+1} \wedge t)-}| = 1-e^{-(S_{(s^{\eta}_{i+1} \wedge t)-}-S_{s^{\eta}_i})} & \leq S_{(s^{\eta}_{i+1} \wedge t)-}-S_{s^{\eta}_i} \nonumber \\
& = \int_{(s^{\eta}_i,s^{\eta}_{i+1} \wedge t) \times (0,\eta] \times (0,1)} |\log(1-r)| N(ds,dr,du). \label{estimdsi}
\end{align}
To bound $\Sigma_\eta(t)$, we distinguish whether the $k$ largest blocks from time $s^\eta_i$ are affected by a small jump in the interval $(s^\eta_i, s^\eta_{i+1}\wedge t)$ or not. Note from \eqref{collage} applied to the flow $Y_{s^\eta_i, s^\eta_i+\cdot}(\cdot)$ that such a jump $(s,r,u)\in N$ affects the $j^{th}$ block if and only if $V_j(s^\eta_i) \in D^{s^{\eta}_i}_{(s^{\eta}_{i+1}\wedge t)-}$. We thus define 
\begin{align}
\mathcal{E}_i(\eta):= \{ s^{\eta}_i\geq t \} \cup \{ s^{\eta}_i<t,\ V_1(s^{\eta}_i),\ldots, V_k(s^{\eta}_i) \notin D^{s^{\eta}_i}_{(s^{\eta}_{i+1}\wedge t)-} \},
\end{align}
which is the event that none of these blocks is affected by a small jump before the next large jump. In the following, we bound $\sum_{i\geq 1}\mathbb{P}(\mathcal{E}_i(\eta)^c)$ and control $W_k((s^\eta_{i+1}\wedge t)-) - W_k(s^\eta_i)$ on $\mathcal{E}_i(\eta)$. 

\textit{Substep 1a: Bounding $\sum_{i\geq 1}\mathbb{P}(\mathcal{E}_i(\eta)^c)$.} Using \eqref{markovdt}, Remark \ref{atjumptime}, and \eqref{estimdsi} we get 
\begin{align}
& \sum_{i \geq 1} \mathbb{P}(\mathcal{E}_i(\eta)^c) \leq \sum_{i \geq 1} \sum_{j=1}^k \mathbb{P} \left ( s^{\eta}_i<t, V_j(s^{\eta}_i) \in D^{s^{\eta}_i}_{(s^{\eta}_{i+1}\wedge t)-} \right ) = \sum_{i \geq 1} k \mathbb{E} \left [ |D^{s^{\eta}_i}_{(s^{\eta}_{i+1}\wedge t)-}| \mathds{1}_{s^{\eta}_i<t} \right ] \nonumber \\
\leq & k \mathbb{E} \left [ \int_{(0,t] \times (0,\eta] \times (0,1)} |\log(1-r)| N(ds,dr,du) \right ] = k t \int_{(0,\eta]} |\log(1-r)| r^{-2}\Lambda(dr). \label{majosumpeieta}
\end{align}

\textit{Substep 1b: Representation of $\mu_{(s^{\eta}_{i+1}\wedge t)-}$.} Let $E_i(j):= \{ (s,r,u) \in N \ \text{s.t.} \ s \in (0,s^{\eta}_i], Y_{s,s^{\eta}_i}(u)=V_j(s^{\eta}_i) \}$. By \eqref{represwktviamut1} we have $W_j(s^{\eta}_i)=\sum_{(s,r,u) \in E_i(j)} re^{-S_{s-}}$. Using Theorem \ref{represwktviamut} and \eqref{compprop} we get that, on $\{ s^{\eta}_i<t \}$, $\mu_{(s^{\eta}_{i+1}\wedge t)-}$ equals 
\begin{align*}
 & \sum_{(s,r,u) \in N, s \in (0,s^{\eta}_{i+1}\wedge t)} re^{-S_{s-}} \delta_{Y_{s,(s^{\eta}_{i+1}\wedge t)-}(u)} \\
= & \sum_{(s,r,u) \in N, s \in (s^{\eta}_i,s^{\eta}_{i+1}\wedge t)} re^{-S_{s-}} \delta_{Y_{s,(s^{\eta}_{i+1}\wedge t)-}(u)} + \sum_{j \geq 1} \sum_{(s,r,u) \in E_i(j)} re^{-S_{s-}} \delta_{Y_{s^{\eta}_i,(s^{\eta}_{i+1}\wedge t)-}(Y_{s,s^{\eta}_i}(u))} \\
= & \sum_{(s,r,u) \in N, s \in (s^{\eta}_i,s^{\eta}_{i+1}\wedge t)} re^{-S_{s-}} \delta_{Y_{s,(s^{\eta}_{i+1}\wedge t)-}(u)} + \sum_{j \geq 1} W_j(s^{\eta}_i) \delta_{Y_{s^{\eta}_i,(s^{\eta}_{i+1}\wedge t)-}(V_j(s^{\eta}_i))}. 
\end{align*}
Let $\mathcal{U}:=(\mathcal{S})^c$ (where $\mathcal{S}$ is defined in Lemma \ref{diffcountable}) and note that $|\mathcal{U}|=1$ by Lemma \ref{diffcountable}. For $(s,r,u) \in N$ such that $s \in (s^{\eta}_i,s^{\eta}_{i+1}\wedge t)$ we choose $x_s \in Y_{0,s-}^{-1}(I_{r,u}^{\mathrm{o}}) \cap D_{s-}^c \cap \mathcal{U}$. By Proposition \ref{measimgrec} the latter set is non-empty so such a choice of $x_s$ exists. Note from \eqref{collage} that $Y_{s,(s^{\eta}_{i+1}\wedge t)-}(u)=Y_{0,(s^{\eta}_{i+1}\wedge t)-}(x_s)$. Then, for any $j \geq 1$ such that $W_j(s^{\eta}_i)>0$ we choose $x_j \in \mathcal{O}_j(s^{\eta}_i) \cap \mathbb{Q}$ and note that $V_j(s^{\eta}_i)=Y_{0,s^{\eta}_i}(x_j)$. By this and \eqref{compprop} we get that, on $\{ s^{\eta}_i<t \}$, we have almost surely 
\begin{align}
\mu_{(s^{\eta}_{i+1}\wedge t)-} = \sum_{(s,r,u) \in N, s \in (s^{\eta}_i,s^{\eta}_{i+1}\wedge t)} re^{-S_{s-}} \delta_{Y_{0,(s^{\eta}_{i+1}\wedge t)-}(x_s)} + \sum_{j \geq 1} W_j(s^{\eta}_i) \delta_{Y_{0,(s^{\eta}_{i+1}\wedge t)-}(x_j)}. \label{exprmusiplus1}
\end{align}

\textit{Substep 1c: The $k$ largest blocks from time $s^{\eta}_i$ are stable on $\mathcal{E}_i(\eta)$.} On $\{ s^{\eta}_i<t \}$, let $\mathcal{L}_i:=\{ j \geq 1, W_j(s^{\eta}_i)>0 \}$ and $\mathcal{J}_i(s^{\eta}_{i+1}\wedge t):= \mathcal{L}_i \cap \{ j \geq 1, V_j(s^{\eta}_i) \notin D^{s^{\eta}_i}_{(s^{\eta}_{i+1} \wedge t)-} \}$. We note that on $\mathcal{E}_i(\eta) \cap \{ s^{\eta}_i<t \}$ we have $\{1,\ldots,k\} \subset \mathcal{J}_i(s^{\eta}_{i+1}\wedge t) \cup \mathcal{L}_i^c$. We now justify that, on $\{ s^{\eta}_i<t \}$, for any $j \in \mathcal{J}_i(s^{\eta}_{i+1}\wedge t)$, the atom of $\mu_{(s^{\eta}_{i+1}\wedge t)-}$ at $Y_{0,(s^{\eta}_{i+1}\wedge t)-}(x_j)$ is of size exactly $W_j(s^{\eta}_i)$. Since the values $V_j(s^{\eta}_i)$'s are distinct, we get from Remark \ref{nonzerodiffrmk1} and Lemma \ref{diffcountable} that, for any $j \in \mathcal{J}_i(s^{\eta}_{i+1}\wedge t)$ and $\ell \in \mathcal{L}_i \setminus \{j\}$, if we have $Y_{0,(s^{\eta}_{i+1}\wedge t)-}(x_{j})=Y_{0,(s^{\eta}_{i+1}\wedge t)-}(x_{\ell})$, then there is $(s,r,u) \in N$ such that $s\in (s^{\eta}_i,s^{\eta}_{i+1}\wedge t)$ and $x_{j}, x_{\ell} \in Y_{0,s-}^{-1}(I_{r,u}^{\mathrm{o}})$. This implies that $Y_{s^{\eta}_i,s-}(V_{j}(s^{\eta}_i))=Y_{0,s-}(x_{j}) \in I_{r,u}^{\mathrm{o}}$ so $V_{j}(s^{\eta}_i) \in Y_{s^{\eta}_i,s-}^{-1}(I_{r,u}^{\mathrm{o}})$, contradicting $j \in \mathcal{J}_i(s^{\eta}_{i+1}\wedge t)$. Similarly, by Proposition \ref{nonzerodiff} and Lemma and \ref{diffcountable}, if there is $(s,r,u) \in N$ with $s \in (s^{\eta}_i,s^{\eta}_{i+1}\wedge t)$ and $j \in \mathcal{J}_i(s^{\eta}_{i+1}\wedge t)$ such that $Y_{0,(s^{\eta}_{i+1}\wedge t)-}(x_s)=Y_{0,(s^{\eta}_{i+1}\wedge t)-}(x_{j})$, then there is $(\tilde s, \tilde r, \tilde u) \in N$ such that $\tilde s\in (0,s^{\eta}_{i+1}\wedge t)$ and $x_s, x_j \in Y_{0,\tilde s-}^{-1}(I_{\tilde r, \tilde u}^{\mathrm{o}})$. Since $x_s \in D_{s-}^c$ we have necessarily $\tilde s\in (s,s^{\eta}_{i+1}\wedge t) \subset (s^{\eta}_i,s^{\eta}_{i+1}\wedge t)$ and $Y_{s^{\eta}_i,\tilde s-}(V_j(s^{\eta}_i))=Y_{0,\tilde s-}(x_j) \in I_{\tilde r, \tilde u}^{\mathrm{o}}$ so $V_{j}(s^{\eta}_i) \in Y_{s^{\eta}_i,\tilde s-}^{-1}(I_{\tilde r,\tilde u}^{\mathrm{o}})$. Again, this contradicts $j \in \mathcal{J}_i(s^{\eta}_{i+1}\wedge t)$. 

\textit{Substep 1d: Bounding $W_k((s^\eta_{i+1}\wedge t)-)$ on $\mathcal{E}_i(\eta)$.} The Substep 1c and \eqref{exprmusiplus1} yield that, on $\mathcal{E}_i(\eta) \cap \{ s^{\eta}_i<t \} \cap \{ W_k(s^{\eta}_i)>0 \}$, $\mu_{(s^{\eta}_{i+1}\wedge t)-}$ has, for each $j \in \{1,\ldots,k\}$, an atom at $Y_{0,(s^{\eta}_{i+1}\wedge t)-}(x_j)$ with weight $W_j(s^{\eta}_i)$. In particular, $\mathcal{E}_i(\eta) \cap \{ s^{\eta}_i<t \} \subset \{W_k((s^{\eta}_{i+1}\wedge t)-) \geq W_k(s^{\eta}_i) \}$. On $\mathcal{E}_i(\eta) \cap \{ s^{\eta}_i<t \} \cap \{ W_k(s^{\eta}_i)>0 \}$, if additionally to these $k$ atoms, $\mu_{(s^{\eta}_{i+1}\wedge t)-}$ has another atom at a point $a \in [0,1]$ with weight strictly larger than $W_k(s^{\eta}_i)$, then the Substep 1c shows that for all $j \in \mathcal{L}_i$ such that $Y_{0,(s^{\eta}_{i+1}\wedge t)-}(x_j)=a$ we have $j \notin \mathcal{J}_i(s^{\eta}_{i+1}\wedge t)$ so $V_j(s^{\eta}_i) \in D^{s^{\eta}_i}_{(s^{\eta}_{i+1} \wedge t)-}$. Combining with \eqref{represwktviamut2} and \eqref{exprmusiplus1} we get that, on $\mathcal{E}_i(\eta) \cap \{ s^{\eta}_i<t \} \cap \{ W_k((s^{\eta}_{i+1}\wedge t)-)>W_k(s^{\eta}_i) > 0 \}$, we have almost surely 
\begin{align}
W_k((s^{\eta}_{i+1}\wedge t)-) \leq \int_{(s^{\eta}_i,s^{\eta}_{i+1} \wedge t) \times (0,\eta] \times (0,1)} r N(ds,dr,du) + \sum_{j>k} W_j(s^{\eta}_i) \mathds{1}_{V_j(s^{\eta}_i) \in D^{s^{\eta}_i}_{(s^{\eta}_{i+1} \wedge t)-}}. \label{majowksiplus1}
\end{align}
On $\{ s^{\eta}_i<t \} \cap \{ W_k(s^{\eta}_i)=0 \}$, we see from \eqref{exprmusiplus1} that we have almost surely 
\begin{align}
W_k((s^{\eta}_{i+1}\wedge t)-) \leq \int_{(s^{\eta}_i,s^{\eta}_{i+1} \wedge t) \times (0,\eta] \times (0,1)} r N(ds,dr,du). \label{majowksiplus2}
\end{align}

\textit{Conclusion of Step 1.} Using that \eqref{majowksiplus1} holds true on $\mathcal{E}_i(\eta) \cap \{ s^{\eta}_i<t \} \cap \{ W_k((s^{\eta}_{i+1}\wedge t)-)>W_k(s^{\eta}_i)>0 \}$ and that \eqref{majowksiplus2} holds true on $\{ s^{\eta}_i<t \} \cap \{ W_k(s^{\eta}_i)=0 \}$, we get that almost surely, 
\begin{align*}
|\Sigma_{\eta}(t)| \leq & \sum_{i \geq 0} |W_k((s^{\eta}_{i+1}\wedge t)-) - W_k(s^{\eta}_i)| \mathds{1}_{s^{\eta}_i<t} \\
\leq & \sum_{i \geq 1} \mathds{1}_{\mathcal{E}_i(\eta)^c} + \int_{(0,t) \times (0,\eta] \times (0,1)} r N(ds,dr,du) + \sum_{i \geq 1} \mathds{1}_{s^{\eta}_i<t} \sum_{j>k} W_j(s^{\eta}_i) \mathds{1}_{V_j(s^{\eta}_i) \in D^{s^{\eta}_i}_{(s^{\eta}_{i+1} \wedge t)-}}. 
\end{align*}
Taking the expectation and using the compensation formula, \eqref{markovdt}, Remark \ref{atjumptime}, \eqref{majosumpeieta}, \eqref{estimdsi} and that $\sum_{j>k} W_j(s^{\eta}_i) \leq 1$ almost surely, we get 
\begin{align}
\mathbb{E} [ |\Sigma_{\eta}(t)| ] \leq & \sum_{i \geq 1} \mathbb{P}(\mathcal{E}_i(\eta)^c) + t \int_{(0,\eta]} r^{-1}\Lambda(dr) + \sum_{i \geq 1} \sum_{j>k} \mathbb{E}[W_j(s^{\eta}_i) \mathds{1}_{s^{\eta}_i<t} |D^{s^{\eta}_i}_{(s^{\eta}_{i+1} \wedge t)-}| ] \nonumber \\
\leq & 3 k t \int_{(0,\eta]} |\log(1-r)| r^{-2}\Lambda(dr) \underset{\eta \rightarrow 0}{\longrightarrow} 0, \label{esigmacv0}
\end{align}
where the convergence toward $0$ comes from \eqref{integassumption}. 

\textit{Step 2: Identification of $W_k(t)$ with $B_{f,k}(t)$.} We denote by $B_{f,k}(t)$ the right-hand side of \eqref{itoforwk0}. Lemma \ref{itoprep} shows that $B_{f,k}(t)$ is well-defined almost surely. We note that, almost surely, $W_k(t)=W_k(t-)$ and $B_{f,k}(t)=B_{f,k}(t-)$ by respectively Proposition \ref{combrepres} and the fact that $t \notin J_N$ almost surely. Using this and Lemma \ref{wksfromwksms} we get that almost surely, 
\begin{align*}
W_k(t) & = W_k(t-) = \sum_{i \geq 1} \left ( W_k(s^{\eta}_i) - W_k(s^{\eta}_i-) \right ) \mathds{1}_{s^{\eta}_i<t} + \Sigma_{\eta}(t) \\
& = \sum_{i \geq 1} K^k_{s^{\eta}_i-}(r^{\eta}_i,u^{\eta}_i) \mathds{1}_{s^{\eta}_i<t} + \Sigma_{\eta}(t) = B_{f,k}(t-) + \Sigma_{\eta}(t) - I_{\eta}(t-) = B_{f,k}(t) + \Sigma_{\eta}(t) - I_{\eta}(t), 
\end{align*}
where we have set $I_{\eta}(t):=\int_{(0,t] \times (0,\eta] \times (0,1)} K^k_{s-}(r,u) N(\dd s, \dd r, \dd u)$. By \eqref{prepsmalljumps} from Lemma \ref{itoprep} we get $\mathbb{E} [ |I_{\eta}(t)| ] \rightarrow 0$ as $\eta \rightarrow 0$. Combining with \eqref{esigmacv0} we get that $\Sigma_{\eta}(t) - I_{\eta}(t)$ converges to $0$ in probability as $\eta$ goes to $0$ so $W_k(t-)=B_{f,k}(t)$ almost surely. This proves that, for every fixed $t \in [0,\infty) \cap \mathbb{Q}$, \eqref{itoforwk0} holds almost surely. Finally, since both sides of \eqref{itoforwk0} are c\`ad-l\`ag in $t$ almost surely --- by Proposition \ref{combrepres} for the left-hand side and by properties of Poisson integrals for the right-hand side --- \eqref{itoforwk0} holds almost surely for all $t \geq 0$ simultaneously. Then, recalling that $M_k(t)=\sum_{1 \leq j \leq k} W_j(t)$ by definition, applying \eqref{itoforwk0} to $W_1(t),\ldots,W_k(t)$ and summing, and using that $H^k_{s-}(r,u)=\sum_{1 \leq j \leq k} K^k_{s-}(r,u)$ by Lemma \ref{wksfromwksms}, we obtain \eqref{itoformk0}. 
\end{proof}

\subsection{Pseudo-generator formula for $W_k(t)$: Proof of Theorem \ref{ito}}
In this subsection we prove Theorem \ref{ito} by applying It{\^o}'s formula to the stochastic integral representation of Theorem \ref{intsto}.
\begin{proof}[Proof of Theorem \ref{ito}]
We apply It\^o's formula to \eqref{itoforwk0}, take expectations via the compensation formula, and differentiate. The main step is to show that the resulting integrand $G^f_0(\cdot)$ is continuous, which we achieve by approximating it uniformly by truncated versions $G^f_\epsilon(\cdot)$.

Let $f$ be as in the statement of the theorem and $k \geq 1$. Since $W_k(t)$ is given by an uncompensated Poisson stochastic integral \eqref{itoforwk0}, It\^o's formula from \cite[Thm. II.5.1]{ikedawatanabe} extends to Lipschitz functions. Applying it to $f$ and \eqref{itoforwk0}, we get that, almost surely, for all $t\geq 0$, 
\begin{align}
f(W_k(t)) = f(0) + \int_{(0,t] \times (0,1)^2} \left ( f (W_k(s-)+K^k_{s-}(r,u) ) - f (W_k(s-) ) \right ) N(\dd s, \dd r, \dd u). \label{itoforwk}
\end{align}
We note from Lemma \ref{prepsmalljumpsbis} that 
\begin{align}
& \mathbb{E} \left [ \int_{(0,t] \times (0,1)^2} \left | f (W_k(s-)+K^k_{s-}(r,u) ) - f (W_k(s-) ) \right | N(\dd s, \dd r, \dd u) \right ] <\infty. \label{intitowelldef}
\end{align}
We then take the expectation on both sides of \eqref{itoforwk}. Thanks to \eqref{intitowelldef} we can apply the compensation formula for the right-hand side. 
Taking the obtained expression at $t_1$ and $t_2$, and taking the difference, we obtain that for any $k \geq 1$ and $t_2>t_1 \geq 0$, 
\begin{align}
& \mathbb{E}[f(W_k(t_2))]-\mathbb{E}[f(W_k(t_1))] \nonumber \\ = & \int_{(t_1,t_2]} \mathbb{E} \left [ \int_{(0,1)^2} \left ( f (W_k(s)+K^k_{s}(r,u) ) - f (W_k(s) ) \right ) r^{-2} \Lambda(dr) du \right ] ds = \int_{(t_1,t_2]} G^f_{0}(s) ds, \label{compformf}
\end{align}
where $G^f_{0}(s)$ denotes the expectation in the above integral. We need to show that $G^f_{0}(\cdot)$ is continuous. For this we introduce the truncated version
\begin{align*}
G^f_{\epsilon}(s) := \mathbb{E} \left[ \int_{(\epsilon,1)\times (0,1)} \left( f(W_k(s)+K^k_{s}(r,u)) - f(W_k(s)) \right) r^{-2} \Lambda(dr)\, du \right], \quad \epsilon \in (0,1),
\end{align*}
which avoids the singularity near $r=0$ and is therefore easier to handle. We fix $s \geq 0$ and show that $G^f_\epsilon(\cdot)$ is continuous at $s$. Note from Lemma \ref{lawofvkandzk} and Remark \ref{zkindepst} that $(Z_j(s,r,u))_{j\geq 1}$ is independent of $(W_j(s))_{j\geq 1}$ and $S_s$, and that $(Z_j(s,r,u))_{j\geq 1}\sim\Ber(r)^{\otimes \mathbb{N}}$. Using the definition of $K^k_{s}(r,u)$ in \eqref{prepCFlb} together with this we get that, if $k\geq 2$, $G^f_{\epsilon}(s)$ equals 
\begin{align*}
& \int_{(\epsilon,1)} (1-r)^k \mathbb{E} \left [ f \left ( \mathsf{Median} \left \{ W_{k-1}(s), W_k(s), e^{-S_{s}}r+\sum_{j>k}Z_j W_j(s) \right \} \right ) - f(W_k(s)) \right ] r^{-2} \Lambda(dr) \\
+ & \int_{(\epsilon,1)} r(1-r)^{k-1} \mathbb{E} \left [ f \left ( \mathsf{Min} \left \{ W_k(s) + e^{-S_{s}}r+\sum_{j>k}Z_j W_j(s), W_{k-1}(s) \right \} \right ) - f(W_k(s)) \right ] r^{-2} \Lambda(dr) \\
+ & \sum_{\ell=2}^k \binom{k}{\ell} \int_{(\epsilon,1)} r^{\ell}(1-r)^{k-\ell} \mathbb{E} \left [ f \left ( \sum_{j>k} (1-Z_j)W_j(s)\mathds{1}_{\sum_{i=k+1}^j(1-Z_i)=\ell-1} \right ) - f(W_k(s)) \right ] r^{-2} \Lambda(dr), 
\end{align*}
where $(Z_j)_{j\geq 1}$ is independent of $(W_j(s))_{j\geq 1}$ and $S_s$, and $(Z_j)_{j\geq 1}\sim\Ber(r)^{\otimes \mathbb{N}}$. To apply dominated convergence we now check that the arguments of $f$ are almost surely continuous at $s$:
\begin{itemize}
\item $S$ is almost surely continuous at $s$ since it is a L\'evy process;
\item each $W_j(\cdot)$ is almost surely continuous at $s$ by Proposition 
\ref{combrepres}; 
\item By Remark \ref{sumwkteq1minusst} we have almost surely $e^{-S_t}+\sum_{k\geq 1}W_k(t)=1$ for all $t \geq 0$ and each of the terms of this sum are continuous at $t=s$ by the two previous points. By a classical argument we deduce that the term $\sum_{j > k} Z_j W_j(\cdot)$ and the other similar term are almost surely continuous at $s$. 
\end{itemize}
The arguments of $f$ are therefore almost surely continuous at $s$ for any fixed $r \in (\epsilon, 1)$. Since all terms are bounded by 
$2\|f\|_\infty$, dominated convergence gives that $G^f_\epsilon(\cdot)$ is continuous at $s$. The case $k=1$ follows the same argument, using $K^1_s(r,u) = H^1_s(r,u)$ and \eqref{prepCF} in place of \eqref{prepCFlb}. For all $s\geq 0$ we have 
\begin{align*}
|G^f_0(s) - G^f_{\epsilon}(s)| & \leq \int_{(0,\epsilon) \times (0,1)} \mathbb{E} \left [ \left | f (W_k(s)+K^k_{s}(r,u) ) - f \left (W_k(s) \right ) \right | \right ] r^{-2} \Lambda(dr) \times du \\
& \leq C_{f,k} \int_{(0,\epsilon]} r^{-1} \Lambda(dr) \underset{\epsilon \rightarrow 0}{\longrightarrow} 0, 
\end{align*}
where we have used \eqref{prepsmalljumpsbiseqwkt} from Lemma \ref{prepsmalljumpsbis} for the last inequality and \eqref{integassumption} for the convergence toward $0$. Therefore $G^f_{\epsilon}(\cdot)$ converges uniformly to $G^f_0(\cdot)$ as $\epsilon$ goes to $0$. Since all functions $G^f_{\epsilon}(\cdot)$ are continuous, we get that $G^f_0(\cdot)$ is continuous. Combining this continuity with \eqref{compformf} we get that $(t \mapsto \mathbb{E}[f(W_k(t))])$ is of class $\mathcal{C}^1$ and that \eqref{derfwkt} holds true. The proof of \eqref{derfmkt} follows the same argument, with $M_k(t)$ and $H^k_{s-}(r,u)$ playing the roles of $W_k(t)$ and $K^k_{s-}(r,u)$ respectively, and with \eqref{itoformk0}, \eqref{prepCF}, and \eqref{prepsmalljumpsbiseqmkt} in place of \eqref{itoforwk0}, \eqref{prepCFlb}, and \eqref{prepsmalljumpsbiseqwkt}. 
\end{proof}

\appendix

\section{The flow of inverses: existence, shifted flows, and trajectory mergers} \label{propflows}

In this appendix we assume that \eqref{integassumption} holds true and establish the foundational properties of the flow of inverses $Y$. Section \ref{usefulestimsde} collects preliminary estimates. Section \ref{propflows0} proves Propositions \ref{propexistence} and \ref{martpb} on the existence, uniqueness, and law of $Y$. Section \ref{shiftedflows} defines the shifted flows $(Y_{s,s+\cdot}(\cdot))_{s \in J_N}$ and establishes the composition property (Proposition \ref{propcomposition}). Section \ref{sectionnocontmergers} proves that distinct trajectories of $Y$ can only merge at jumping times of $N$ (Proposition \ref{nonzerodiff}).

\subsection{Preliminary estimates} \label{usefulestimsde}

We collect here two estimates on the function $\mr_{r,u}(\cdot)$ appearing in the SDE \eqref{defflowybysde}. Lemma \ref{estimmryu} is used in the construction of the flow from Subsection \ref{propflows0}, and Lemma \ref{dermruymsy} is used in the proof of 
Proposition \ref{calcmeasimgrecipflow} from Section \ref{calcmeas}. 

\begin{lemma} \label{estimmryu}
For any $r, u \in (0,1)$ and $z \in [0,1]$ we have 
\begin{align}
\left | \mr_{r,u}(z)-z \right | \leq \frac{r}{1-r}. \label{smallneutral}
\end{align}
For any $r \in (0,1)$ and $1 \geq z_1 \geq z_2 \geq 0$ we have 
\begin{align}
\int_0^1 |\mr_{r,u}(z_1)-\mr_{r,u}(z_2) - z_1+z_2| du & \leq \frac{4r}{(1-r)^{2}}|z_1-z_2|. \label{largeneutral}
\end{align}
\end{lemma}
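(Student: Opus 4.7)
For the first inequality, I would set $a := (z-r)/(1-r)$ and $b := z/(1-r)$, so that $\mr_{r,u}(z) = \mathsf{Median}\{a,b,u\} \in [a,b]$ for any $u$. A direct computation gives $a - z = -r(1-z)/(1-r)$ and $b-z = rz/(1-r)$, both of absolute value at most $r/(1-r)$ on $z \in [0,1]$, from which \eqref{smallneutral} follows.

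The second inequality is subtler: a crude pointwise bound gives $|\mr_{r,u}(z_1) - \mr_{r,u}(z_2) - (z_1-z_2)| = O(|z_1-z_2|/(1-r))$, which is missing the crucial factor of $r$ after integration in $u$. The plan is to exploit cancellation by viewing $\mr_{r,u}$ as a function of $z$ (rather than of $u$, as one might first try). A short case analysis of the median shows that, for fixed $u \in (0,1)$, the map $z \mapsto \mr_{r,u}(z)$ is continuous and piecewise affine:
\[
\mr_{r,u}(z) = \begin{cases} z/(1-r) & \text{if } 0 \leq z \leq u(1-r), \\ u & \text{if } z \in I_{r,u}, \\ (z-r)/(1-r) & \text{if } u(1-r)+r \leq z \leq 1, \end{cases}
\]
so that $\partial_z \mr_{r,u}(z) = (1-r)^{-1} \mathds{1}_{z \notin I_{r,u}}$ almost everywhere.

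With this formula the fundamental theorem of calculus gives, for $1 \geq z_1 \geq z_2 \geq 0$, the representation
\[
\mr_{r,u}(z_1) - \mr_{r,u}(z_2) - (z_1-z_2) \;=\; \int_{z_2}^{z_1} \frac{r - \mathds{1}_{z \in I_{r,u}}}{1-r} \, dz,
\]
where I use the identity $\mathds{1}_{z \notin I_{r,u}} - (1-r) = r - \mathds{1}_{z \in I_{r,u}}$. Taking absolute values, applying Fubini to swap the integrations in $u$ and $z$, and bounding
\[
\int_0^1 |r - \mathds{1}_{z \in I_{r,u}}| \, du \;\leq\; r + (1-r)\cdot \frac{r}{1-r} \;=\; 2r
\]
(using the same elementary estimate $\int_0^1 \mathds{1}_{z \in I_{r,u}} du \leq r/(1-r)$ that appears in \eqref{majoprobayinfusion}) will produce the bound $2r(z_1-z_2)/(1-r)$. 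This is in fact stronger than the announced $4r(z_1-z_2)/(1-r)^2$, since $1-r \leq 2$ on $(0,1)$.

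The only nontrivial step is the piecewise-affine representation of $\mr_{r,u}(\cdot)$ as a function of $z$; once it is established, the appearance of the factor of $r$ through the indicator of $I_{r,u}$ is transparent and the remaining computation is routine.
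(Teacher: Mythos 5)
Your proof is correct, and for \eqref{largeneutral} it takes a genuinely different route from the paper. The paper proves both estimates by a direct case analysis on the position of $u$ relative to the four breakpoints $\frac{z_i-r}{1-r},\frac{z_i}{1-r}$, computes $\mr_{r,u}(z_1)-\mr_{r,u}(z_2)$ in each of six cases, and only then integrates the resulting indicator bounds in $u$; the measures of the exceptional $u$-sets supply the factor of $r$. You instead freeze $u$ and exploit the piecewise-affine structure in $z$: the representation $\partial_z\mr_{r,u}(z)=(1-r)^{-1}\mathds{1}_{z\notin I_{r,u}}$ a.e. (valid since $\mr_{r,u}(\cdot)$ is continuous and piecewise affine, hence absolutely continuous), the algebraic identity $\mathds{1}_{z\notin I_{r,u}}-(1-r)=r-\mathds{1}_{z\in I_{r,u}}$, and Tonelli. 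This makes the origin of the factor $r$ transparent, avoids the case bookkeeping entirely, and yields the sharper constant $\frac{2r}{1-r}|z_1-z_2|$, which indeed dominates into the stated $\frac{4r}{(1-r)^2}|z_1-z_2|$ since $2(1-r)\leq 4$. Your use of $\int_0^1\mathds{1}_{z\in I_{r,u}}\,du\leq \frac{r}{1-r}$ is also fine for all $r\in(0,1)$ (the restriction $r\leq 1/2$ in \eqref{majoprobayinfusion} is only needed for the further bound by $2r$, which you do not use). The argument for \eqref{smallneutral} is essentially the paper's case analysis, phrased more compactly via $\mr_{r,u}(z)\in[\frac{z-r}{1-r},\frac{z}{1-r}]$.
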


\begin{proof}
Let $r, u \in (0,1)$ and $z \in [0,1]$ and let us study $|\mr_{r,u}(z)-z|$. If $u \leq \frac{z-r}{1-r}$ then $|\mr_{r,u}(z)-z|=\frac{r}{1-r}(1-z)\leq \frac{r}{1-r}$. If $\frac{z-r}{1-r} \leq u \leq \frac{z}{1-r}$ then $|\mr_{r,u}(z)-z|=|u-z|\leq (z/(1-r)-z)\vee(z-(z-r)/(1-r))\leq \frac{r}{1-r}$. If $\frac{z}{1-r} \leq u$ then $|\mr_{r,u}(z)-z|=\frac{r}{1-r}z\leq \frac{r}{1-r}$. We thus get \eqref{smallneutral} in all cases. 

Let now $r, u \in (0,1)$ and $1 \geq z_1 \geq z_2 \geq 0$. We study $\mr_{r,u}(z_1)-\mr_{r,u}(z_2)$ by distinguishing the position of $u$ relative to $\frac{z_2-r}{1-r}$, $\frac{z_1-r}{1-r}$, $\frac{z_2}{1-r}$, $\frac{z_1}{1-r}$, as well as the relative positions of $\frac{z_1-r}{1-r}$ and $\frac{z_2}{1-r}$. 
\begin{enumerate}
\item[(i)] If $u \leq \frac{z_2-r}{1-r}$ or $\frac{z_1}{1-r} \leq u$ then 
\[ \mr_{r,u}(z_1)-\mr_{r,u}(z_2) = \frac{z_1-z_2}{1-r}. \]
\item[(ii)] If $\frac{z_2-r}{1-r} \leq u \leq \frac{z_1-r}{1-r} \wedge \frac{z_2}{1-r}$ then $\mr_{r,u}(z_1)-\mr_{r,u}(z_2) = \frac{z_1-r}{1-r} - u$ so 
\[ \frac{z_1-z_2}{1-r} - \frac{r}{1-r} \leq \mr_{r,u}(z_1)-\mr_{r,u}(z_2) \leq \frac{z_1-r}{1-r} - \frac{z_2-r}{1-r} = \frac{z_1-z_2}{1-r}. \]
\item[(iii)] If $\frac{z_1-r}{1-r} \leq u \leq \frac{z_2}{1-r}$ then 
\[ \mr_{r,u}(z_1)-\mr_{r,u}(z_2) = u - u = 0. \]
\item[(iv)] If $\frac{z_2}{1-r} \leq u \leq \frac{z_1-r}{1-r}$ then 
\[ \mr_{r,u}(z_1)-\mr_{r,u}(z_2) = \frac{z_1-r}{1-r} - \frac{z_2}{1-r} = \frac{z_1-z_2}{1-r} - \frac{r}{1-r}. \]
\item[(v)] If $\frac{z_1-r}{1-r} \vee \frac{z_2}{1-r} \leq u \leq \frac{z_1}{1-r}$ then $\mr_{r,u}(z_1)-\mr_{r,u}(z_2) = u - \frac{z_2}{1-r}$ so 
\[ \frac{z_1-z_2}{1-r} - \frac{r}{1-r} \leq \mr_{r,u}(z_1)-\mr_{r,u}(z_2) \leq \frac{z_1}{1-r} - \frac{z_2}{1-r} = \frac{z_1-z_2}{1-r}. \]
\end{enumerate}
In conclusion we get 
\begin{align*}
|\mr_{r,u}(z_1)-\mr_{r,u}(z_2) - z_1+z_2| & \leq \frac{r}{1-r} |z_1-z_2| + \frac{r}{1-r} \mathds{1}_{u \in [\frac{z_2-r}{1-r}, \frac{z_1-r}{1-r}] \cup [\frac{z_2}{1-r}, \frac{z_1}{1-r}]} \nonumber \\
& + |z_1-z_2| \mathds{1}_{u \in [\frac{z_1-r}{1-r}, \frac{z_2}{1-r}]}. 
\end{align*}
Integrating with respect to $u$ we get \eqref{largeneutral}. 
\end{proof}

\begin{lemma} \label{dermruymsy}
For $y \in [0,1]$, $(1-r)\mr_{r,u}(y)-y = - \int_0^y \mathds{1}_{z\in I_{r,u}^{\mathrm{o}}} dz$, where $I_{r,u}$ is defined in \eqref{defiru}. 
\end{lemma}

\begin{proof}
Using the definition of $\mr_{r,u}(\cdot)$ we get 
\begin{align*}
(1-r)\mr_{r,u}(y)-y & = y \mathds{1}_{y\leq(1-r)u} + (1-r)u\mathds{1}_{y\in I_{r,u}^{\mathrm{o}}} + (y-r) \mathds{1}_{y \geq (1-r)u+r}-y \nonumber \\
& = \left ( (1-r)u-y \right ) \mathds{1}_{y\in I_{r,u}^{\mathrm{o}}} - r \mathds{1}_{y \geq (1-r)u+r} = - \int_0^y \mathds{1}_{z\in I_{r,u}^{\mathrm{o}}} dz. 
\end{align*}
\end{proof}

\subsection{Construction of the flow of inverses and proof of Propositions \ref{propexistence} and \ref{martpb}} \label{propflows0}
The proof of Proposition \ref{propexistence} proceeds in several steps. We first introduce the approximating flow $Y^{\delta}_{0,\cdot}(\cdot)$ (which has finitely many jumps) and show its Lipschitz regularity with respect to the initial condition in Lemma \ref{propydelta}. We then construct the one-point motion of the target flow in Lemma \ref{existofsol} (allowing us to define $Y_{0,\cdot}(y)$ for $y \in Q:=[0,1] \cap \mathbb{Q}$) and show in Lemmas \ref{boundflowlem} and \ref{cvyepstoyentpsdesauts0} that it can be approximated by the one point motion of $Y^{\delta}_{0,\cdot}(\cdot)$. Then we pass to the limit in the bound from Lemma \ref{propydelta}, yielding the uniform continuity of $y \mapsto Y_{0,\cdot}(y)$ on $Q$, allowing us to extend the flow to all of $[0,1]$ by density. Finally, we show that the resulting flow solves \eqref{defflowybysde}. In Proposition \ref{martpb} we identifies the law of that flow with that of the flow of inverses of the $\Lambda$-process. 

We denote by $D([0,T])$ (resp. $D([0,\infty))$) the space of c\`ad-l\`ag functions from $[0,T]$ (resp. $[0,\infty)$) to $\mathbb{R}$. We sometimes use the metrics $d_T$ and $d_{\infty}$ on $D([0,T])$ and $D([0,\infty))$defined by 
\begin{align}
d_T(f,g):= 1 \wedge \sup_{t \in [0,T]} |f(t)-g(t)|, \ d_{\infty}(f,g):=\int_0^{\infty} e^{-T} d_T(f_{|[0,T]},g_{|[0,T]}) dT. \label{defdistunif}
\end{align}
We note that $(D([0,T]),d_T)$ (resp. $(D([0,\infty)),d_{\infty})$) is a complete metric space. Moreover, the topology induced by $d_T$ (resp. $d_{\infty}$) is stronger than the usual Skorokhod topology. 

For $\delta \in (0,1)$, we consider the flow $(Y^{\delta}_{0,t}(y), y \in [0,1], t \geq 0)$ defined by 
\begin{align}
Y^{\delta}_{0,t}(y) = y + \int_{(0,t] \times (\delta,1) \times (0,1)} \left ( \mr_{r,u}(Y^{\delta}_{0,s-}(y))-Y^{\delta}_{0,s-}(y) \right ) N(ds,dr,du), \label{defflowybysdedelta}
\end{align} 
for all $y\in [0,1]$ and $t\geq 0$. Comparing with \eqref{defflowybysde} we see that the flow $Y^{\delta}_{0,\cdot}(\cdot)$ is obtained similarly as $Y_{0,\cdot}(\cdot)$, but by keeping only the jumps of $N$ with a $r$-component larger than $\delta$. However, since such jumps occur at finite rate, the flow $Y^{\delta}_{0,\cdot}(\cdot)$ is much simpler. It can be defined as follows. Let $\delta \in (0,1)$ and $(s_k,r_k,u_k)_{k\geq 1}$ be the enumeration of the discrete set $\{(s,r,u) \in N, r>\delta\}$ with increasing time components. We set $Y^{\delta}_{0,t}(\cdot):=\mr_{r_n,u_n} \circ \ldots \circ \mr_{r_1,u_1}$ if $t \in [s_n,s_{n+1})$ and $Y^{\delta}_{0,t}(\cdot)$ to be the identity function if $t<s_1$. It is easy to see by induction that, for any $\delta \in (0,1)$, the flow such defined is the unique solution of \eqref{defflowybysdedelta}. 
\begin{lemma} \label{propydelta}
We have almost surely that, for all $T>0$, $\delta \in (0,1)$, and $a,b \in [0,1]$, 
\begin{align}
d_T(Y^{\delta}_{0,\cdot}(a), Y^{\delta}_{0,\cdot}(b)) \leq e^{S_T} |a-b|. \label{estcontydelta}
\end{align}
\end{lemma}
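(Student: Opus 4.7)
The plan is to exploit the explicit representation of $Y^{\delta}$ as a finite composition of maps $\mr_{r,u}$ and to control the effect of each such map on pairs of points. I work on the probability-one event where $N$ is a Poisson point measure with the prescribed intensity and $(S_t)_{t \geq 0}$ is well-defined and finite on compacts.

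First, I would observe that since $\{(s,r,u) \in N : r > \delta\}$ is almost surely discrete, the trajectory $t \mapsto Y^{\delta}_{0,t}(y)$ is piecewise constant between the ordered jump times $s_1 < s_2 < \ldots$, with $Y^{\delta}_{0,t}(\cdot) = \mr_{r_n,u_n} \circ \cdots \circ \mr_{r_1,u_1}$ on $[s_n, s_{n+1})$ and $Y^{\delta}_{0,t}(\cdot)$ the identity on $[0,s_1)$. Consequently, the supremum of $|Y^{\delta}_{0,t}(a) - Y^{\delta}_{0,t}(b)|$ over $t \in [0,T]$ is attained at one of the times $s_n \leq T$ or on the last sub-interval.

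Second, the key technical step is a one-jump Lipschitz estimate: for any $r, u \in (0,1)$ and $z_1 \geq z_2$ in $[0,1]$,
\begin{align}
0 \leq \mr_{r,u}(z_1) - \mr_{r,u}(z_2) \leq \frac{z_1-z_2}{1-r}. \label{onejumplip}
\end{align}
This is already implicit in the case analysis carried out in the proof of Lemma \ref{estimmryu}: in each of the six subcases treated there, one reads off that $\mr_{r,u}(z_1) - \mr_{r,u}(z_2)$ is nonnegative and bounded above by $(z_1-z_2)/(1-r)$. I would simply record this as a sharper by-product of that case analysis. (Indeed, this is what makes \eqref{largeneutral} an estimate on a small remainder rather than on the full increment.)

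Third, I would iterate \eqref{onejumplip}. By induction on $n$, for $t \in [s_n, s_{n+1})$,
\begin{align*}
|Y^{\delta}_{0,t}(a) - Y^{\delta}_{0,t}(b)| \leq |a-b| \prod_{k : s_k \leq t} \frac{1}{1-r_k}.
\end{align*}
Rewriting the product as $\exp\bigl(\sum_{k : s_k \leq t} -\log(1-r_k)\bigr)$, and noting that this sum is bounded above by $S_t \leq S_T$, since the subordinator $(S_t)$ from \eqref{defsubst} sums $-\log(1-r)$ over \emph{all} atoms of $N$ up to time $t$ (not only those with $r > \delta$), we conclude that $|Y^{\delta}_{0,t}(a) - Y^{\delta}_{0,t}(b)| \leq e^{S_T}|a-b|$ for all $t \in [0,T]$. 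Since the right-hand side is independent of $t$, $\delta$, $a$, and $b$, taking the supremum over $t \in [0,T]$ and the minimum with $1$ yields \eqref{estcontydelta}. The pathwise nature of the construction makes this hold simultaneously for every $T$, $\delta$, and $a,b$, so there is no real obstacle; the only mild point of care is the bookkeeping at jump times, which is routine.
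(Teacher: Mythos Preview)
Your proof is correct and follows essentially the same approach as the paper: both argue that each map $\mr_{r,u}$ is Lipschitz with constant $1/(1-r)$, so the composition $Y^{\delta}_{0,t}(\cdot)$ is Lipschitz with constant $\prod_{k:\,s_k\leq t}(1-r_k)^{-1}\leq e^{S_t}\leq e^{S_T}$. The paper states the Lipschitz property of $\mr_{r,u}$ directly, whereas you recover it from the case analysis of Lemma~\ref{estimmryu}; otherwise the arguments coincide.
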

In the proof of Proposition \ref{propexistence}, we will show that we can take the limit as $\delta \to 0$ in this estimate, thus yielding uniform continuity for the target flow $Y_{0,\cdot}(y)$ with respect to $y$. 
\begin{proof}
Each function $\mr_{r,u}(\cdot)$ is Lipschitz continuous with Lipschitz constant $1/(1-r)$. We thus get that, for $t \in [s_n,s_{n+1})$ (resp. for $t<s_1$), $Y^{\delta}_{0,t}(\cdot)$ is Lipschitz continuous with Lipschitz constant $\prod_{k=1}^n 1/(1-r_k)\leq e^{S_t}$ (resp. $1 \leq e^{S_t}$). This proves \eqref{estcontydelta}. 
\end{proof}

In order to prove the existence of a unique stochastic flow satisfying \eqref{defflowybysde}, we first turn our attention to a simpler SDE. If there exist a flow $(Y_{0,t}(y), y \in [0,1], t \geq 0)$ satisfying \eqref{defflowybysde}, then a single trajectory $Y_{0,\cdot}(y)$ (also called \textit{one-point motion}) is solution of the SDE 
\begin{align}
Y_t = y + \int_{(0,t] \times (0,1)^2} \left ( \mr_{r,u}(Y_{s-})-Y_{s-} \right ) N(ds,dr,du), \ t\geq 0. \label{sdesingletraj}
\end{align}
We say that a process $(Y_t)_{t\geq 0}$ satisfying \eqref{sdesingletraj} almost surely for all $t\geq 0$ is a solution of \eqref{sdesingletraj} with initial value $y$. We call it a \textit{strong solution} if it is c\`ad-l\`ag and adapted to the filtration $(\mathcal{F}_{t})_{t\geq 0}$ (defined a little before \eqref{defflowxbysde}). We say that \textit{pathwise uniqueness} holds if any two solutions with same initial values are almost surely equal for all $t\geq 0$. The following lemma lays out some facts about SDE \eqref{sdesingletraj} and follows from \cite[Prop. B.5]{cordhumvech2022}. 
\begin{lemma} \label{existofsol}
For any $y\in[0,1]$, there exists a pathwise unique strong solution $(Y_t)_{t\geq 0}$ of \eqref{sdesingletraj} with initial value $y$. Moreover it satisfies $Y_t\in [0,1]$ for all $t\geq 0$. If $0\leq y_1\leq y_2\leq 1$ and $Y^1$ and $Y^2$ are the solutions of \eqref{sdesingletraj} with initial values $y_1$ and $y_2$ respectively, then $\mathbb{P}(Y^1_t\leq Y^2_t\ \text{for all }t\geq 0)=1$. 
\end{lemma}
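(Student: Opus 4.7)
My plan is to establish existence by approximation, then deduce uniqueness and monotonicity from standard arguments. For the approximation, I would consider the truncated flow $Y^{\delta,y}_{0,\cdot}$ defined by \eqref{defflowybysdedelta}, which is a well-defined deterministic composition of the non-decreasing, $(1-r)^{-1}$-Lipschitz maps $\mr_{r_k,u_k}$ applied at the discrete (in $s$) atoms of $N$ with $r$-component larger than $\delta$. Since each map preserves $[0,1]$, we immediately get $Y^{\delta,y}_{0,t}\in[0,1]$ for all $t$. To obtain a candidate solution, I would show the family $(Y^{\delta,y}_{0,\cdot})_{\delta\in(0,1)}$ is Cauchy as $\delta\downarrow 0$ in $L^1$ of supremum on compacts. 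The difference $Y^{\delta_1,y}_{0,\cdot}-Y^{\delta_2,y}_{0,\cdot}$ (for $\delta_1<\delta_2$) is driven by two sources: the jumps of $N$ with $r\in(\delta_1,\delta_2]$, each contributing at most $r/(1-r)$ to the variation by \eqref{smallneutral}, and the propagation of this perturbation through later jumps with $r>\delta_2$, which expands by a factor at most $\prod(1-r)^{-1}=e^{S_{T}-S^{\delta_2}_T}$ (cf.\ the proof of Lemma \ref{propydelta}). Since $\mathbb{E}\bigl[\int_{(0,T]\times(\delta_1,\delta_2]\times(0,1)}\tfrac{r}{1-r}N(ds,dr,du)\bigr]=T\int_{(\delta_1,\delta_2]}(1-r)^{-1}\Lambda(dr)\to 0$ as $\delta_1,\delta_2\downarrow 0$ by \eqref{integassumption}, combined with the a.s.\ finiteness of $S_T$, a Gronwall-type estimate would deliver the Cauchy property. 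The limit $Y^y$ would then satisfy \eqref{sdesingletraj} almost surely for all $t\geq 0$ (passing to the limit in the SDE after a dominated convergence argument using \eqref{smallneutral} together with \eqref{integassumption}), would be adapted to $(\mathcal{F}_t)$, and would inherit $Y^y_t\in[0,1]$.

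For pathwise uniqueness, I would consider two strong solutions $Y$ and $\tilde Y$ with the same initial value $y$. Setting $D_t:=Y_t-\tilde Y_t$, the SDE yields
\begin{align*}
D_t=\int_{(0,t]\times(0,1)^2}\bigl[\mr_{r,u}(Y_{s-})-\mr_{r,u}(\tilde Y_{s-})-D_{s-}\bigr]N(ds,dr,du).
\end{align*}
Using \eqref{largeneutral} after conditioning on the sign of $D_{s-}$ (the integrand's $du$-integral is bounded by $\tfrac{4r}{(1-r)^2}|D_{s-}|$), the compensation formula gives an inequality $\mathbb{E}|D_t|\leq C\int_0^t\mathbb{E}|D_s|ds$ on any interval where the compensator can be integrated. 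Gronwall then forces $D\equiv 0$, i.e.\ pathwise uniqueness.

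Monotonicity follows almost for free from the construction: since each $\mr_{r,u}(\cdot)$ is non-decreasing, $y\mapsto Y^{\delta,y}_{0,t}$ is non-decreasing for every fixed realization of $N$ and every $\delta$, and this property is preserved in the a.s.\ limit as $\delta\downarrow 0$. Combined with pathwise uniqueness, this identifies the limiting process as the unique strong solution and yields the comparison $\mathbb{P}(Y^1_t\leq Y^2_t \text{ for all }t\geq 0)=1$ when $y_1\leq y_2$.

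The main obstacle I anticipate is controlling the behavior of the integrands near $r=1$ in the uniqueness step, since the estimate \eqref{largeneutral} carries a factor $r/(1-r)^2$ which is not obviously integrable against $r^{-2}\Lambda(dr)$ near $r=1$ under \eqref{integassumption} alone. This suggests splitting the integral into contributions from $r\in(0,1-\epsilon]$ (handled by Gronwall) and $r\in(1-\epsilon,1)$ (handled by a separate argument using that such jumps essentially reset the process to $u$, so the comparison of $Y$ and $\tilde Y$ becomes trivial at such jump times), as is presumably carried out in \cite[Prop. B.5]{cordhumvech2022}.
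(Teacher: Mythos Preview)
The paper does not actually prove this lemma: it simply states that the result ``follows from \cite[Prop.~B.5]{cordhumvech2022}'' and moves on. Your proposal is therefore not competing with a proof in the paper but attempting to reconstruct one, and your sketch is reasonable and broadly correct.

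It is worth noting, though, that your existence argument via the truncated flows $Y^{\delta}$ closely mirrors what the paper does \emph{after} this lemma: Lemma~\ref{boundflowlem} and Lemma~\ref{cvyepstoyentpsdesauts0} carry out precisely the Cauchy-in-$\delta$ estimate you describe, but they do so to build the full flow $Y_{0,\cdot}(\cdot)$ \emph{assuming} the one-point motion from Lemma~\ref{existofsol} already exists. In other words, the paper's logical order is (i) invoke the cited reference for the one-point SDE, then (ii) approximate to get the flow; your proposal effectively merges these by using the same approximation already at the one-point level. That is a legitimate alternative route and avoids the external citation, at the cost of having to redo the Gronwall bookkeeping for uniqueness yourself.

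On the technical point you flag: you are right that the factor $(1-r)^{-2}$ from \eqref{largeneutral} is not controlled by \eqref{integassumption} alone. The paper sidesteps this in Lemma~\ref{boundflowlem} by inserting the indicator $\mathds{1}_{N((0,T]\times(\rho,1)\times(0,1))=0}$ and working on the event of no large jumps, which is exactly the ``split at $r=1-\epsilon$'' strategy you anticipate. So your diagnosis of the obstacle and its resolution is accurate.
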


The following lemma allows to control approximations of $Y$ (starting from $y\in[0,1]$) by $Y^{\delta}_{0,\cdot}(y)$. 
\begin{lemma} \label{boundflowlem}
Let $y\in[0,1]$ and $(Y_t)_{t\geq 0}$ be the unique strong solution of \eqref{sdesingletraj} with initial value $y$. Let also $Y^{\delta}_t:=Y^{\delta}_{0,t}(y)$. For any $T>0$, $\delta \in (0,1/2)$, and $\rho \in [1/2,1)$, we have 
\begin{align}
& \mathbb{E} \left [ d_T(Y_{\cdot}, Y^{\delta}_{\cdot}) \mathds{1}_{N((0,T]\times(\rho,1)\times (0,1))=0} \right ] \leq C_T(\rho) \times K(\delta), \label{boundflow}
\end{align}
where $K(\delta) := \int_{(0,\delta]} (1-r)^{-1} r^{-1} \Lambda(\dd r) < \infty$ and $C_T(\rho) := \frac{e^{4T\int_{(0,\rho]}(1-r)^{-2}r^{-1}\Lambda(\dd r)}-1}{4\int_{(0,\rho]}(1-r)^{-2}r^{-1}\Lambda(\dd r)} < \infty$. 
\end{lemma}
Note that $K(\delta) \to 0$ as $\delta \to 0$ by \eqref{integassumption}. 
\begin{proof}
We fix $y, T, \delta, \rho$ as in the statement of the lemma. Using \eqref{sdesingletraj} and \eqref{defflowybysdedelta} we get 
\begin{align*}
Y_t - Y^{\delta}_t = & \int_{(0,t] \times (0,\delta] \times (0,1)} \big(\mr_{r,u}(Y_{s-}) - Y_{s-}\big)N(\dd s, \dd r, \dd u) \nonumber \\
+ & \int_{(0,t] \times (\delta,1) \times (0,1)} \big(\mr_{r,u}(Y_{s-})-\mr_{r,u}(Y^{\delta}_{s-}) - Y_{s-}+Y^{\delta}_{s-}\big)N(\dd s, \dd r, \dd u), 
\end{align*}
for $t \in [0,T]$. We get that almost surely, 
\begin{align}
\sup_{t \in [0,T]} |Y_t - Y^{\delta}_t| \leq & \int_{(0,T] \times (0,\delta] \times (0,1)} \big|\mr_{r,u}(Y_{s-}) - Y_{s-}\big|N(\dd s, \dd r, \dd u) \label{sdediff2} \\
+ & \int_{(0,T] \times (\delta,1) \times (0,1)} \big|\mr_{r,u}(Y_{s-})-\mr_{r,u}(Y^{\delta}_{s-}) - Y_{s-}+Y^{\delta}_{s-}\big|N(\dd s, \dd r, \dd u). \nonumber
\end{align}
To handle the large jumps separately from the small ones, we introduce truncated versions $(\tilde Y_t)_{t\geq 0}$ and $(\tilde Y^{\delta}_t)_{t\geq 0}$ defined as $(Y_t)_{t\geq 0}$ and $(Y^{\delta}_t)_{t\geq 0}$ but where $(0,1)^2$ and $(\delta,1)\times(0,1)$ from \eqref{sdesingletraj} and \eqref{defflowybysdedelta} are replaced by respectively $(0,\rho)\times(0,1)$ and $(\delta,\rho)\times(0,1)$. The key observation is that, on the event $\{N((0,T]\times(\rho,1)\times(0,1))=0\}$, the processes $Y$ and $Y^\delta$ coincide with their truncated versions. More precisely, for any measurable function $f:[0,1]^2\times(0,1)^2\to\mathbb{R}_+$, 
\begin{align}
& \mathbb{E} \left [ \mathds{1}_{N((0,T]\times(\rho,1)\times (0,1))=0} \int_{(0,T] \times (\delta,1) \times (0,1)} f(Y_{s-},Y^{\delta}_{s-},r,u) N(\dd s, \dd r, \dd u) \right ] \label{indcompoform} \\
= & \mathbb{E} \left [ \mathds{1}_{N((0,T]\times(\rho,1)\times (0,1))=0} \int_{(0,T] \times (\delta,\rho] \times (0,1)} f(\tilde Y_{s-},\tilde Y^{\delta}_{s-},r,u) N(\dd s, \dd r, \dd u) \right ] \nonumber \\
= & e^{-T \int_{(\rho,1)} r^{-2} \Lambda(\dd r)} \int_0^T \mathbb{E} \left [ \int_{(\delta,\rho] \times (0,1)} f(\tilde Y_{s},\tilde Y^{\delta}_{s},r,u) r^{-2} \Lambda(\dd r) du \right ] ds \nonumber \\
\leq & \int_0^T \mathbb{E} \left [ \mathds{1}_{N((0,s]\times(\rho,1)\times (0,1))=0} \int_{(\delta,\rho] \times (0,1)} f(Y_{s},Y^{\delta}_{s},r,u) r^{-2} \Lambda(\dd r) du \right ] ds, \nonumber
\end{align}
where we have used that $\int_{(0,T] \times (\delta,\rho] \times (0,1)} f(\tilde Y_{0,s-}(a),\tilde Y^{\delta}_{0,s-}(b),r,u) N(\dd s, \dd r, \dd u)$ is a measurable function of $N((0,T] \times (\delta,\rho] \times (0,1) \cap \cdot)$, which is independent of $N((0,T]\times(\rho,1)\times (0,1) \cap \cdot)$, and the compensation formula. 
Using \eqref{smallneutral} and \eqref{largeneutral} from Lemma \ref{estimmryu} we get that, 
\begin{align}
\int_{(0,\delta] \times (0,1)} \left | \mr_{r,u}(Y_{s}) - Y_{s} \right | r^{-2} \Lambda(\dd r) du & \leq \int_{(0,\delta]} \frac{\Lambda(\dd r)}{(1-r)r} \label{evodiff2traj}, \\
\int_{(\delta,\rho] \times (0,1)} \left | \mr_{r,u}(Y_{s})-\mr_{r,u}(Y^{\delta}_{s}) - Y_{s}+Y^{\delta}_{s} \right | r^{-2} \Lambda(\dd r) du & \leq 4 \left ( \int_{(\delta,\rho]} \frac{\Lambda(\dd r)}{(1-r)^{2}r} \right ) |Y_{s} - Y^{\delta}_{s}|. \label{evodiff2trajbis}
\end{align}
Multiplying each term in \eqref{sdediff2} by $\mathds{1}_{N((0,T]\times(\rho,1)\times (0,1))=0}$, taking the expectation and using \eqref{indcompoform} (and the compensation formula for the term $\int_{(0,T] \times (0,\delta] \times (0,1)}\ldots$) and \eqref{evodiff2traj}-\eqref{evodiff2trajbis}, we get that the left-hand side of \eqref{boundflow} is smaller than 
\begin{align*}
T\int_{(0,\delta]} \frac{\Lambda(\dd r)}{(1-r)r} + 4 \left ( \int_{(0,\rho]} \frac{\Lambda(\dd r)}{(1-r)^{2}r} \right ) \times \int_0^T \mathbb{E} \left [ \left ( \sup_{t \in [0,s]} |Y_{t} - Y^{\delta}_{t}| \right ) \mathds{1}_{N((0,s]\times(\rho,1)\times (0,1))=0} \right ] ds. 
\end{align*}
We then get \eqref{boundflow} using Gronwall's lemma. 
\end{proof}

Recall that $Q:=[0,1] \cap \mathbb{Q}$. By Lemma \ref{existofsol} a flow $(Y_{0,t}(y), y \in Q, t \geq 0)$ can be defined which satisfies \eqref{defflowybysde} (with "for all $y\in [0,1]$" replaced by "for all $y\in Q$"), is c\`ad-l\`ag in $t$, is non-decreasing in $y$, and $Y_{0,t}(0)=0$, $Y_{0,t}(1)=1$. In order to extend this flow to $[0,1] \times [0,\infty)$, we need the following lemma that builds on Lemma \ref{boundflowlem} and shows that the flow of $Y_{0,\cdot}(\cdot)$ can be approximated by $Y^{\delta}_{0,\cdot}(\cdot)$. 
\begin{lemma} \label{cvyepstoyentpsdesauts0}
There is a decreasing sequence $(\delta_n)_{n\geq 1}$ in $(0,1/2)$ such that for any $T>0$ we have almost surely that for all $y \in Q$, $d_T(Y_{0,\cdot}(y), Y^{\delta_n}_{0,\cdot}(y))\rightarrow 0$ as $n \rightarrow \infty$. 
\end{lemma}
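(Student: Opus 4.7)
The plan is to combine the quantitative bound from Lemma~\ref{boundflowlem} with a conditioning on the absence of large jumps to obtain convergence in probability of $d_T(Y_{0,\cdot}(y), Y^{\delta}_{0,\cdot}(y))$ to $0$ as $\delta \to 0$, and then to extract a single subsequence $(\delta_n)$ that works simultaneously for all $(y,T) \in Q \times \mathbb{N}$ by a standard diagonal argument. Monotonicity of $d_T$ in $T$ will then extend the statement to all real $T>0$.

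First, for fixed $T>0$, $\rho \in [1/2,1)$, $\epsilon>0$, $y \in [0,1]$ and $\delta \in (0,1/2)$, Markov's inequality and Lemma~\ref{boundflowlem} yield
\begin{align*}
\mathbb{P}\bigl(d_T(Y_{0,\cdot}(y),Y^{\delta}_{0,\cdot}(y))>\epsilon\bigr)
\leq \mathbb{P}\bigl(N((0,T]\times(\rho,1)\times(0,1))\neq 0\bigr) + \frac{C_T(\rho)\,K(\delta)}{\epsilon}.
\end{align*}
Under \eqref{integassumption}, one has $K(\delta)\leq 2\int_{(0,\delta]} r^{-1}\Lambda(\mathrm{d} r)\to 0$ as $\delta\to 0$ by dominated convergence. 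Moreover, $\mathbb{P}(N((0,T]\times(\rho,1)\times(0,1))\neq 0) = 1-\exp(-T\int_{(\rho,1)} r^{-2}\Lambda(\mathrm{d} r))$, and since $\Lambda(\{1\})=0$ and $\Lambda$ is finite one has $\int_{(\rho,1)} r^{-2}\Lambda(\mathrm{d} r)\leq \rho^{-2}\Lambda((\rho,1))\to 0$ as $\rho\to 1$. Given $\epsilon'>0$, I would first choose $\rho$ close enough to $1$ so that the first term is below $\epsilon'/2$, then let $\delta\to 0$ so that the second term (where $C_T(\rho)$ is a fixed finite constant) falls below $\epsilon'/2$. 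This delivers convergence in probability, uniformly in $y\in[0,1]$.

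Next, enumerate $Q\times \mathbb{N}=\{(y_k,T_k)\}_{k\geq 1}$. Starting from an arbitrary sequence $\delta_n^{(0)}\downarrow 0$, the convergence in probability above lets me successively extract subsequences $\delta_n^{(k)}$ of $\delta_n^{(k-1)}$ along which $d_{T_k}(Y_{0,\cdot}(y_k),Y^{\delta_n^{(k)}}_{0,\cdot}(y_k))\to 0$ almost surely. Setting $\delta_n := \delta_n^{(n)}$, the tail $(\delta_n)_{n\geq k}$ is a subsequence of $(\delta_n^{(k)})$ for every $k$, so intersecting countably many probability-one events produces a single event of full measure on which $d_{T_k}(Y_{0,\cdot}(y_k),Y^{\delta_n}_{0,\cdot}(y_k))\to 0$ for every $k$. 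Finally, I would use that $T\mapsto d_T(f,g)$ is non-decreasing (directly from \eqref{defdistunif}): given $T>0$ and $y\in Q$, pick any integer $T'\geq T$ and conclude $d_T\leq d_{T'}\to 0$.

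The core work is entirely contained in Lemma~\ref{boundflowlem}; beyond that, the only delicate point is the independent tuning of the two parameters $\rho$ and $\delta$, which requires that the $\rho\to 1$ limit be taken \emph{before} $\delta\to 0$ so that the potentially very large constant $C_T(\rho)$ is frozen while $K(\delta)$ is sent to $0$. The diagonal extraction is then a routine assembly, which I expect to present no serious obstacle.
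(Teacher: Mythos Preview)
Your proof is correct. The paper takes a slightly different and more direct route: instead of first establishing convergence in probability and then performing a diagonal extraction over $Q\times\mathbb{N}$, it chooses from the outset a sequence $(\delta_n)$ with $K(\delta_n)\le 2^{-n}$. For any fixed $T,y,\rho,\epsilon$, Markov's inequality then yields
\[
\mathbb{P}\bigl(d_T(Y_{0,\cdot}(y),Y^{\delta_n}_{0,\cdot}(y))>\epsilon,\ N((0,T]\times(\rho,1)\times(0,1))=0\bigr)\le 2^{-n}C_T(\rho)/\epsilon,
\]
which is summable in $n$, so Borel--Cantelli gives almost-sure convergence directly on the event $\{N((0,T]\times(\rho,1)\times(0,1))=0\}$; a countable intersection over rational $\rho$ and $\epsilon$ then covers the whole probability space, and one finishes by intersecting over $y\in Q$. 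The point is that making $K(\delta_n)$ summable renders the diagonal extraction unnecessary: the same sequence automatically works for every $(T,y,\rho,\epsilon)$. Your approach buys a cleaner conceptual separation between the analytic estimate (convergence in probability, with the two-parameter $\rho,\delta$ tuning made explicit) and the almost-sure upgrade, at the cost of the extra extraction machinery.
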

\begin{proof}
By \eqref{integassumption} and the definition of $K(\delta)$ in Lemma \ref{boundflowlem} we have $K(\delta)\rightarrow0$ as $\delta \rightarrow 0$. We can thus choose a decreasing sequence $(\delta_n)_{n\geq 1}$ such that for any $n\geq 1$ we have $K(\delta_n) \leq 2^{-n}$. Fix $T>0$ and $y \in Q$. Applying Lemma \ref{boundflowlem} we get that for any $\rho \in [1/2,1) \cap \mathbb{Q}$, $n\geq 1$, 
\begin{align*}
\mathbb{E} \left [ d_T(Y_{0,\cdot}(y), Y^{\delta_n}_{0,\cdot}(y)) \mathds{1}_{N((0,T]\times(\rho,1)\times (0,1))=0} \right ] \leq 2^{-n} C_T(\rho). 
\end{align*}
Combining with Markov inequality we get that for any $\rho \in [1/2,1) \cap \mathbb{Q}$, $\epsilon \in (0,1) \cap \mathbb{Q}$, and $n\geq 1$, 
\begin{align*}
& \mathbb{P} \left (d_T(Y_{0,\cdot}(y), Y^{\delta_n}_{0,\cdot}(y)) > \epsilon, N((0,T]\times(\rho,1)\times (0,1))=0 \right ) \leq 2^{-n} C_T(\rho)/\epsilon. 
\end{align*}
By the Borel-Cantelli lemma we get that, on $\{N((0,T]\times(\rho,1)\times (0,1))=0\}$, we have almost surely $d_T(Y_{0,\cdot}(y), Y^{\delta_n}_{0,\cdot}(y)) \leq \epsilon$ for all large $n$. Since this is true for all $\rho \in [1/2,1) \cap \mathbb{Q}$ and $\epsilon \in (0,1) \cap \mathbb{Q}$, and since $\bigcup_{\rho \in [1/2,1)\cap\mathbb{Q}} \{N((0,T]\times(\rho,1)\times (0,1))=0\}$ has probability one, we get that $d_T(Y_{0,\cdot}(y), Y^{\delta_n}_{0,\cdot}(y))$ converges almost surely to $0$ as $n$ goes to infinity. Since $Q$ is countable the result follows. 
\end{proof}

\begin{proof}[Proof of Proposition \ref{propexistence}]
If two such flows exist, they almost surely coincide on all $(y,t)\in Q \times [0,\infty)$ by Lemma \ref{existofsol} and then on all $(y,t)\in [0,1] \times [0,\infty)$ by continuity with respect to $y$. This proves uniqueness. 
We now prove existence. We consider the flow $(Y_{0,t}(y), y \in Q, t \geq 0)$ as defined before Lemma \ref{cvyepstoyentpsdesauts0}. By that lemma, there is a decreasing sequence $(\delta_n)_{n\geq 1}$ in $(0,1)$ such that for any $T>0$ we have almost surely that for all $y \in Q$, $d_T(Y_{0,\cdot}(y), Y^{\delta_n}_{0,\cdot}(y))$ converges to $0$ as $n$ goes to infinity. Combining with Lemma \ref{propydelta} we get that, almost surely, 
\begin{align}
\forall T \in (0,\infty) \cap \mathbb{Q}, \forall a,b \in Q, \ d_T(Y_{0,\cdot}(a), Y_{0,\cdot}(b)) \leq e^{S_T} |a-b|. \label{estcontydyad}
\end{align}
Since $(D([0,\infty)),d_\infty)$ is a complete metric space and \eqref{estcontydyad} shows that $y \mapsto Y_{0,\cdot}(y)$ is almost surely uniformly continuous from $(Q, |\cdot|)$ to $(D([0,\infty)),d_\infty)$, there is a probability one event on which this map extends uniquely to a continuous map from $[0,1]$ to $(D([0,\infty)),d_\infty)$. For any fixed realization of this event, we can thus define $Y_{0,\cdot}(y)$ for all $y\in [0,1]$ by extension and obtain a flow satisfying (ii),(iii). To show that it satisfies (i) we consider $y \in [0,1]$ and $(y_n)_{n\geq 1}$ in $Q$ converging to $y$. Then, for each $n\geq 1$, $Y_{0,\cdot}(y_n)$ satisfies \eqref{sdesingletraj}, i.e. 
\begin{align}
Y_{0,t}(y_n) = y_n + \int_{(0,t] \times (0,1)^2} \left ( \mr_{r,u}(Y_{0,s-}(y_n))-Y_{0,s-}(y_n) \right ) N(ds,dr,du), \ t\geq 0. \label{eqflowapprox}
\end{align} 
Since $Y_{0,\cdot}(y_n)$ converges to $Y_{0,\cdot}(y)$ in $(D([0,\infty)),d_{\infty})$, the left-hand side of \eqref{eqflowapprox} converges to $Y_{0,t}(y)$ while the integrand in the right-hand side converges to $\mr_{r,u}(Y_{0,s-}(y))-Y_{0,s-}(y)$. By \eqref{smallneutral} from Lemma \ref{estimmryu}, the absolute value of the integrand is bounded by $r/(1-r)$, which is integrable with respect to $N$ almost surely by \eqref{integassumption}. Therefore, dominated convergence applies and the right-hand side of \eqref{eqflowapprox} converges to the right-hand side of \eqref{defflowybysde}. Thus the flow indeed satisfies (i). 
\end{proof}

\begin{remark} \label{contfrom01tocadlag}
The above proof shows that, for the flow $(Y_{0,t}(y), y \in [0,1], t \geq 0)$ from Proposition \ref{propexistence}, we have almost surely that $y \mapsto Y_{0,\cdot}(y)$ is continuous from $[0,1]$ to $(D([0,\infty)),d_{\infty})$. 
\end{remark}

The following proposition allows to identify (in law) the flow $(Y_{0,t}(y), y \in [0,1], t \geq 0)$ with the flow of inverses of the $\Lambda$-process (see \eqref{defflowxbysde}). 
\begin{prop} \label{martpb}
For any $p\geq 1$ and $y_1,\cdots,y_p \in [0,1]$ with $y_1 \leq \cdots \leq y_p$, $(Y_{0,t}(y_1),\cdots,Y_{0,t}(y_p))_{t\geq 0}$ is solution to the martingale problem from \cite[Thm. 5]{BERTOIN2005307} and, under the assumption \eqref{integassumption}, this martingale problem is well posed. 
\end{prop}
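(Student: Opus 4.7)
My plan is to verify the martingale property directly from the SDE \eqref{defflowybysde} and to establish well-posedness by coupling any candidate solution to a Poisson random measure and invoking pathwise uniqueness of \eqref{sdesingletraj}. Set $Y^{(p)}_t:=(Y_{0,t}(y_1),\ldots,Y_{0,t}(y_p))$. Since each coordinate satisfies \eqref{defflowybysde}, $Y^{(p)}$ is a pure jump process whose jumps are carried by $J_N$: at $s\in J_N$ with associated $(r,u)$, all coordinates are updated simultaneously through $\mr_{r,u}$. Consequently, for any test function $f$ in the domain of the generator $\mathcal{L}$ of \cite[Thm. 5]{BERTOIN2005307}, one has the pathwise identity
\begin{align*}
f(Y^{(p)}_t) - f(y_1,\ldots,y_p) = \int_{(0,t] \times (0,1)^2} \left [ f(\mr_{r,u}(Y_{0,s-}(y_1)),\ldots,\mr_{r,u}(Y_{0,s-}(y_p))) - f(Y^{(p)}_{s-}) \right ] N(\dd s, \dd r, \dd u).
\end{align*}
The compensation formula applied to this Poisson integral will produce the desired drift term $\int_0^t \mathcal{L}f(Y^{(p)}_s) \dd s$, provided its integrand is sufficiently integrable against $\dd s \otimes r^{-2}\Lambda(\dd r) \otimes \dd u$.

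For this integrability I would invoke Lemma \ref{estimmryu}: for Lipschitz $f$ the integrand is bounded by $p \|f\|_{\mathrm{Lip}} \cdot r/(1-r)$, and if the Bertoin--Le Gall generator is formulated with a first-order Taylor correction then a $\mathcal{C}^2$-bound on $f$ refines this to order $r^2/(1-r)^2$ on the remainder while the linear correction is compensated directly. In either case \eqref{integassumption} yields integrability on $(0,t] \times (0,1)^2$, and the compensation produces exactly $\int_0^t \mathcal{L}f(Y^{(p)}_s) \dd s$, showing that $Y^{(p)}$ solves the martingale problem of \cite[Thm. 5]{BERTOIN2005307}.

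For well-posedness, let $\tilde Y^{(p)}$ be any solution of the martingale problem starting from $(y_1,\ldots,y_p)$. The explicit product structure of $\mathcal{L}$ shows that the compensator of the jump measure of $\tilde Y^{(p)}$ factors as $\dd s \otimes r^{-2}\Lambda(\dd r) \otimes \dd u$ after parametrizing each jump by the pair $(r,u)$ responsible for it; by a standard jump-measure representation argument, one can then enlarge the probability space to obtain a Poisson random measure $\tilde N$ with intensity $\dd s \otimes r^{-2}\Lambda(\dd r) \otimes \dd u$ such that $\tilde Y^{(p)}$ satisfies \eqref{defflowybysde} coordinatewise with $\tilde N$ in place of $N$. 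Pathwise uniqueness for the scalar SDE \eqref{sdesingletraj}, guaranteed by Lemma \ref{existofsol}, then forces each $\tilde Y_{0,t}(y_i)$ to be a measurable functional of $\tilde N$ and $y_i$, so $\tilde Y^{(p)}$ has the same law as $Y^{(p)}$. The main obstacle is the lifting step: showing that the compensator of the jump measure of $\tilde Y^{(p)}$ does factor in the claimed product form, which requires exploiting the injectivity of $(r,u)\mapsto \mr_{r,u}$ on the state space to reconstruct the pair $(r,u)$ from the observed jumps of $\tilde Y^{(p)}$.
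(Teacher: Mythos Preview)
Your argument for the first part---that $Y^{(p)}$ solves the martingale problem---is correct and matches the paper: the paper simply says ``applying It\^o's formula'' where you spell out the Poisson integral identity and the compensation step, but the content is the same.

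For well-posedness, your overall strategy (reduce to pathwise uniqueness of the coordinatewise SDE) coincides with the paper's, but your proposed ``lifting step'' is a genuine obstacle that you have not resolved. The problem is worse than you indicate: the map $(r,u)\mapsto(\mr_{r,u}(z_1),\ldots,\mr_{r,u}(z_p))$ is \emph{not} injective in general---for instance once several coordinates have coalesced to a common value $z$, a jump with $z\notin I_{r,u}$ moves them all to $z/(1-r)$ or $(z-r)/(1-r)$, which determines $r$ but leaves $u$ ranging over a nontrivial interval. So you cannot reconstruct the driving Poisson measure from the jumps of $\tilde Y^{(p)}$ alone; additional external randomization would be required, and carrying that through carefully is nontrivial.

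The paper bypasses this entirely by quoting a general result of Kurtz (\cite[Thm.~2.3]{Kurtz2011}) stating that any solution of the martingale problem is automatically a weak solution of the corresponding SDE \eqref{sdepmotion}; one then notes that \eqref{sdepmotion} decouples into $p$ copies of \eqref{sdesingletraj}, so pathwise uniqueness from Lemma~\ref{existofsol} gives weak uniqueness (via the standard ``pathwise uniqueness $\Rightarrow$ weak uniqueness'' principle), hence well-posedness of the martingale problem. This route avoids reconstructing $(r,u)$ altogether.
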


\begin{proof}
By \eqref{defflowybysde}, the $\mathbb{R}^p$-valued process $(Y_{0,t}(y_1),\cdots,Y_{0,t}(y_p))_{t\geq 0}$ satisfies the SDE 
\begin{align}
Z^j_t = y_j + \int_{(0,t] \times (0,1)^2} \left ( \mr_{r,u}(Z^j_{s-})-Z^j_{s-} \right ) N(ds,dr,du), \ j \in \{ 1,\ldots,p \}, t \geq 0. \label{sdepmotion}
\end{align} 
Applying It{\^o}'s formula (see e.g. \cite[Thm. II.5.1]{ikedawatanabe}) we get that this process solves the martingale problem. 
Note that "$(Z^1_t,\ldots,Z^p_t)_{t\geq 0}$ solves the SDE \eqref{sdepmotion} with initial value $(y_1,\ldots,y_p)$" is equivalent to "for each $i \in \{1,\ldots,p\}$, $(Z^i_t)_{t\geq 0}$ solves \eqref{sdesingletraj} with initial value $y_i$. We deduce from Lemma \ref{existofsol} that there exists a pathwise unique strong solution of \eqref{sdepmotion} with initial value $(y_1,\ldots,y_p)$. Moreover it satisfies $(Z^1_t,\ldots,Z^p_t)\in [0,1]^p$ for all $t\geq 0$. By \cite[Thm. 2.3]{Kurtz2011}, every solution to the martingale problem is a weak solution to the SDE \eqref{sdepmotion}. Since pathwise uniqueness implies weak uniqueness (see e.g. \cite[Thm. 1]{BLP15}), the martingale problem is well posed. This completes the proof. 
\end{proof}

\subsection{Shifted flows and composition property} \label{shiftedflows}

In this subsection we define the flows \\ $(Y_{s,t}(y), y \in [0,1], t \geq s)$ started at jumping times $s \in J_N$ and establish the composition property \eqref{compprop} (Proposition \ref{propcomposition}). These shifted flows are used throughout the paper, in particular in the proof of Theorem \ref{intsto}. Our argument is similar to \cite[Thm. 4.5]{10.1214/10-AOP629}. 

If $\tau$ is a stopping time with respect to the filtration $(\mathcal{F}_{t})_{t\geq 0}$ we consider stochastic flows $(Y_{\tau,t}(y), y \in [0,1], t \geq \tau)$ that satisfies 
\begin{align}
Y_{\tau,t}(y) = y + \int_{(\tau,t] \times (0,1)^2} \left ( \mr_{r,u}(Y_{\tau,s-}(y))-Y_{\tau,s-}(y) \right ) N(ds,dr,du), \label{defflowybysdetspsauts}
\end{align} 
almost surely for all $y \in [0,1]$ and $t \geq \tau$. This flow is well-defined almost surely by Proposition \ref{propexistence} applied to the shifted measure $N(\tau+ds,dr,du)$. Moreover the flow $(Y_{\tau,\tau+t}(y), y \in [0,1], t \geq 0)$ is equal in law to $(Y_{0,t}(y), y \in [0,1], t \geq 0)$ from Proposition \ref{propexistence} and independent of $\mathcal{F}_{\tau}$ by the strong Markov property applied to the Poisson random measure $N(ds,dr,du)$. 

For $\eta\in(0,1/2)$, let $(s^{\eta}_i,r^{\eta}_i,u^{\eta}_i)_{i\geq 1}$ be the enumeration of $\{(s,r,u) \in N, r>\eta\}$ such that $s^{\eta}_1<s^{\eta}_2<\ldots$ and for convenience we set $s^{\eta}_0:=0$. Note that for any $j\geq 1$, $s^{\eta}_j$ is a stopping time. We can thus define the countable collection of flows $\{(Y_{s^{\eta}_j,t}(y), y \in [0,1], t \geq s^{\eta}_j), \eta\in(0,1/2) \cap \mathbb{Q}, j \geq 1\}$ on the same probability space. 

To establish the composition property, we fix $\eta \in (0,1/2)\cap\mathbb{Q}$, $0 \leq i < j$, and define $Z^{\eta,i,j}_{s^{\eta}_i,t}(\cdot):=Y_{s^{\eta}_i,t}(\cdot)$ for $t \in [s^{\eta}_i,s^{\eta}_j)$ and $Z^{\eta,i,j}_{s^{\eta}_i,t}(\cdot):=Y_{s^{\eta}_j,t}(Y_{s^{\eta}_i,s^{\eta}_j}(\cdot))$ for $t \in [s^{\eta}_j,\infty)$. It is not difficult to see that this flow satisfies \eqref{defflowybysdetspsauts} with $\tau=s^{\eta}_i$ and properties (ii) and (iii) from Proposition \ref{propexistence} (for $t \geq s^{\eta}_i$ instead of $t\geq 0$). By uniqueness from Proposition \ref{propexistence} we get that $Z^{\eta,i,j}_{s^{\eta}_i,\cdot}(\cdot)=Y_{s^{\eta}_i,\cdot}(\cdot)$. This yields the composition property $Y_{s^{\eta}_i,t}(\cdot)=Y_{s^{\eta}_j,t}(Y_{s^{\eta}_i,s^{\eta}_j}(\cdot))$ for all $t \geq s^{\eta}_j$. 

Since any pair $s_1 < s_2$ in $J_N \cup \{0\}$ satisfies $s_1 = s^\eta_i$ and $s_2 = s^\eta_j$ for some $\eta\in(0,1/2) \cap \mathbb{Q}$ and $i<j$, the composition property extends to all pairs in $J_N \cup \{0\}$. Thus, we have proved the following proposition.

\begin{prop} \label{propcomposition}
One can define a countable family of stochastic flows $\{ (Y_{s,t}(y), y \in [0,1], t \geq s), s \in J_N \cup \{0\}\}$ such that, almost surely, each flow $(Y_{s,t}(y), y \in [0,1], t \geq s)$ from this family satisfies the following properties: 
\begin{enumerate}[(i)]
\item \eqref{defflowybysdetspsauts} holds with $\tau=s$, for all $y \in [0,1]$ and $t \geq s$;
\item for every $y \in [0,1]$, the trajectory $t \mapsto Y_{s,t}(y)$ is c\`ad-l\`ag; 
\item for every $t \geq s$, the map $y \mapsto Y_{s,t}(y)$ is non-decreasing and continuous, and $Y_{s,t}(0)=0$, $Y_{s,t}(1)=1$. 
\end{enumerate}
Moreover, almost surely, for any $s_1,s_2 \in J_N \cup \{0\}$ with $s_1<s_2$, \eqref{compprop} holds true. 
\end{prop}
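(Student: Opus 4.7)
The plan is to build the family by invoking Proposition \ref{propexistence} at each stopping time, then deduce the composition property from pathwise uniqueness.

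First I would enumerate, for each $\eta \in (0,1/2) \cap \mathbb{Q}$, the jumps of $N$ with $r$-component larger than $\eta$. By \eqref{integassumption} this is almost surely a discrete set, so one obtains an ordered sequence of stopping times $(s^\eta_j)_{j \geq 1}$ with respect to $(\mathcal{F}_t)_{t \geq 0}$. Every element of $J_N$ appears as $s^\eta_j$ for some $\eta \in (0,1/2) \cap \mathbb{Q}$ and $j \geq 1$, so we cover the countable index set $J_N \cup \{0\}$ using countably many stopping times. For each such stopping time $\tau = s^\eta_j$, the strong Markov property of the Poisson random measure $N$ tells us that $N(\tau + \cdot, \cdot, \cdot)$ is a Poisson random measure with the same intensity as $N$, independent of $\mathcal{F}_\tau$. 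Applying Proposition \ref{propexistence} to this shifted measure, one obtains a unique stochastic flow satisfying \eqref{defflowybysdetspsauts} along with the c\`ad-l\`ag and monotonicity properties (i)--(iii), on a probability one event. Taking a countable intersection over $\eta \in (0,1/2) \cap \mathbb{Q}$ and $j \geq 1$, we get a single probability one event on which all the flows $Y_{s,\cdot}(\cdot)$ are well-defined and satisfy (i)--(iii). We also set $Y_{0,\cdot}(\cdot)$ to be the flow from Proposition \ref{propexistence}.

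To establish the composition property, I would fix $\eta \in (0,1/2) \cap \mathbb{Q}$ and indices $0 \leq i < j$ (treating $s^\eta_0 := 0$) and introduce the candidate flow
\begin{align*}
Z^{\eta,i,j}_{s^\eta_i, t}(y) := \begin{cases} Y_{s^\eta_i, t}(y) & \text{if } t \in [s^\eta_i, s^\eta_j), \\ Y_{s^\eta_j, t}(Y_{s^\eta_i, s^\eta_j}(y)) & \text{if } t \geq s^\eta_j. \end{cases}
\end{align*}
On the probability one event above, $Z^{\eta,i,j}$ clearly inherits properties (ii) and (iii) from $Y_{s^\eta_i,\cdot}$ and $Y_{s^\eta_j,\cdot}$ (note that at $t = s^\eta_j$ we use right-continuity of $Y_{s^\eta_i,\cdot}$ at $s^\eta_j$ and the initial value $Y_{s^\eta_j, s^\eta_j}(\cdot) = \mathrm{Id}$; continuity in $y$ follows from composition of continuous non-decreasing functions that both map $0$ to $0$ and $1$ to $1$). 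The key point is to check the SDE: for $t \in [s^\eta_i, s^\eta_j)$ this is immediate from the SDE for $Y_{s^\eta_i,\cdot}$, while for $t \geq s^\eta_j$ one decomposes the integral over $(s^\eta_i, t]$ into $(s^\eta_i, s^\eta_j]$ and $(s^\eta_j, t]$: on the first piece one uses the SDE for $Y_{s^\eta_i,\cdot}$ at time $s^\eta_j$ plus the fact that the jump at $s^\eta_j$ is applied to $Y_{s^\eta_i, s^\eta_j-}(y)$ via the map $\mathrm{m}_{r^\eta_j, u^\eta_j}$, and on the second piece one uses the SDE for $Y_{s^\eta_j, \cdot}$ starting from $Y_{s^\eta_j, s^\eta_j}(Y_{s^\eta_i, s^\eta_j}(y)) = Y_{s^\eta_i, s^\eta_j}(y)$. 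After this bookkeeping, $Z^{\eta,i,j}_{s^\eta_i,\cdot}(\cdot)$ satisfies \eqref{defflowybysdetspsauts} with $\tau = s^\eta_i$ and properties (ii)--(iii). By the uniqueness statement of Proposition \ref{propexistence} (applied at $\tau = s^\eta_i$), we conclude $Z^{\eta,i,j}_{s^\eta_i,\cdot}(\cdot) = Y_{s^\eta_i,\cdot}(\cdot)$ on a probability one event, which gives \eqref{compprop} for the pair $(s^\eta_i, s^\eta_j)$.

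Finally, any pair $s_1 < s_2$ in $J_N \cup \{0\}$ can be captured by choosing a rational $\eta > 0$ smaller than both associated $r$-components (with the convention $s^\eta_0 = 0$), so the composition property extends to all such pairs on a countable intersection of probability one events. The main subtlety, in my view, is the careful handling of the jump at $s^\eta_j$ itself when verifying the shifted SDE for $Z^{\eta,i,j}$: one has to track c\`ad-l\`ag conventions (the left limit in the integrand versus the right-continuous evaluation of the composed flow) so that the contribution of the atom at $s^\eta_j$ is counted exactly once in the decomposition $(s^\eta_i, t] = (s^\eta_i, s^\eta_j] \cup (s^\eta_j, t]$; everything else reduces to putting the ingredients already furnished by Proposition \ref{propexistence} together.
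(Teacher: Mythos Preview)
Your proposal is correct and follows essentially the same approach as the paper: enumerate $J_N$ via the countable family of stopping times $s^\eta_j$, apply Proposition \ref{propexistence} to the shifted Poisson measure at each stopping time, define the concatenated flow $Z^{\eta,i,j}$, verify it satisfies the shifted SDE together with properties (ii)--(iii), and invoke uniqueness. The paper is terser about the SDE verification for $Z^{\eta,i,j}$ (it only says ``it is not difficult to see''), whereas you spell out the decomposition $(s^\eta_i,t]=(s^\eta_i,s^\eta_j]\cup(s^\eta_j,t]$ and the c\`ad-l\`ag bookkeeping at $s^\eta_j$, but the argument is the same.
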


\subsection{No continuous mergers: Proof of Proposition \ref{nonzerodiff}} \label{sectionnocontmergers}

In this subsection we prove Proposition \ref{nonzerodiff} which states that distinct trajectories of the flow can only merge at jumping times of $N$. The proof proceeds in several steps. Lemma \ref{ntxfinite} shows that, on any bounded time interval, each trajectory is involved in only finitely many mergers caused by jumps of $N$. Lemma \ref{finiteexpectoflog} then shows that, between two consecutive mergers caused by jumps, distinct trajectories cannot merge. Together, these two lemmas imply Lemma \ref{nonzerodiffrationals}, which establishes the result for trajectories starting from $Q:=[0,1] \cap \mathbb{Q}$. The full result, is then deduced. For $y \in [0,1]$ and $0 \leq t_1 \leq t_2$, let 
\begin{align*}
N_{t_1,t_2}(y) := \{ (s,r,u) \in N \ \text{s.t.} \ s \in (t_1,t_2], \ Y_{0,s-}(y) \in I_{r,u} \}
\end{align*}
denote the set of jumps of $N$ that cause a merger for the trajectory $Y_{0,\cdot}(y)$ on $(t_1,t_2]$.
\begin{lemma} \label{ntxfinite}
Almost surely, $N_{t_1,t_2}(y)$ is finite for all $y \in Q$ and $0 \leq t_1 \leq t_2 < \infty$. Moreover, almost surely, for almost every $z \in [0,1]$, $N_{t_1,t_2}(z)$ is finite for all $0 \leq t_1 \leq t_2 < \infty$.
\end{lemma}

\begin{proof}
First note that for any $y\in [0,1]$, and $r \leq 1/2$, 
\begin{align}
\int_{(0,1)} \mathds{1}_{y \in I_{r,u}} du = \left | [\frac{y-r}{1-r}, \frac{y}{1-r} ] \cap [0,1] \right | \leq \frac{r}{1-r} \leq 2r. \label{majoprobayinfusion}
\end{align}
Fix $y\in [0,1]$ and let $\tilde N_{t_1,t_2}(y):=\{ (s,r,u) \in N_{t_1,t_2}(y) \ \text{s.t.} \ r \leq 1/2 \}$. By the compensation formula and \eqref{majoprobayinfusion} we get 
\begin{align}
& \mathbb{E} \left [ \sharp \tilde N_{t_1,t_2}(y) \right ] = \mathbb{E} \left [ \sum_{(s,r,u) \in N, s \in (t_1,t_2], r \leq 1/2} \mathds{1}_{Y_{0,s-}(y) \in I_{r,u}} \right ] \nonumber \\
= & \int_{t_1}^{t_2} \mathbb{E} \left [ \int_{(0,1/2]} \left ( \int_{(0,1)} \mathds{1}_{Y_{0,s}(y) \in I_{r,u}} du \right ) r^{-2} \Lambda(dr) \right ] ds \leq 2(t_2-t_1)\int_{(0,1/2]}r^{-1} \Lambda(dr)<\infty, \label{finitentt}
\end{align}
where the finiteness comes from \eqref{integassumption}. The combination of \eqref{finitentt} with the monotonicity of $\tilde N_{t_1,t_2}(y)$ with respect to the time interval $(t_1,t_2]$ show that $\tilde N_{t_1,t_2}(y)$ is almost surely finite for all $t_1,t_2 \geq 0$. Clearly $N_{t_1,t_2}(y) \setminus \tilde N_{t_1,t_2}(y) \subset \{ (s,r,u) \in N \ \text{s.t.} \ s \in (t_1,t_2], r > 1/2 \}$, which is also almost surely finite for all $t_1,t_2 \geq 0$. The first statement is thus proved for a fixed $y\in [0,1]$ and, since $Q$ is countable, the statement follows. Then, using Fubini's theorem and \eqref{finitentt}, 
\begin{align*}
\mathbb{E} \left [ \int_{[0,1]} \sharp \tilde N_{t_1,t_2}(z) dz \right ] = \int_{[0,1]} \mathbb{E} \left [ \sharp \tilde N_{t_1,t_2}(z) \right ] dz \leq 2(t_2-t_1)\int_{(0,1/2)}r^{-1} \Lambda(dr)<\infty. 
\end{align*}
We thus get that $\int_{[0,1]} \sharp \tilde N_{t_1,t_2}(z) dz$ is almost surely finite so, for almost every $z\in[0,1]$, $\tilde N_{t_1,t_2}(z)$ is finite. By the monotonicity of $\tilde N_{t_1,t_2}(y)$ with respect to the time interval $(t_1,t_2]$ we get that the second statement of the lemma holds for $\tilde N_{t_1,t_2}(z)$. Then, $N_{t_1,t_2}(z) \setminus \tilde N_{t_1,t_2}(z) \subset \{ (s,r,u) \in N \ \text{s.t.} \ s \in (t_1,t_2], r > 1/2 \}$, which does not depend on $z$ and is also almost surely finite for all $t_1,t_2 \geq 0$. This concludes the proof of the second statement of the lemma for $N_{t_1,t_2}(z)$. 
\end{proof}

By Lemma \ref{ntxfinite}, on a probability one event, $N_{0,\infty}(y)$ is discrete for all $y \in Q$. On this event, for any $y_1, y_2 \in Q$, we define $(T_k(y_1,y_2))_{k \geq 1}$ as the increasing enumeration of the time components of $N_{0,\infty}(y_1) \cup N_{0,\infty}(y_2) \cup \{ (s,r,u) \in N \ \text{s.t.} \ r \geq 1/2 \}$, and set $T_0(y_1,y_2) := 0$ for convenience. This sequence thus enumerates all times at which at least one of the two trajectories $Y_{0,\cdot}(y_1)$, $Y_{0,\cdot}(y_2)$ is potentially involved in a merger caused by a jump of $N$. The following two lemmas show that trajectories starting from distinct points in $Q$ cannot merge outside $\{ T_k(y_1,y_2), k \geq 1 \}$. The first is a real-analytic estimate needed in the proof of the second.
\begin{lemma} \label{estimfm}
For any $M>0$ let $f_{M}:[0,1] \rightarrow \mathbb{R}$ be defined by $f_{M}(x):=-\log(x\vee e^{-M})$. Then there is a constant $C$ independent of $M$ such that for any $a \in (0,1)$ and $r \in (0,a \wedge (1/2))$, 
\begin{align}
\frac{a-r}{1-r} \left (f_{M} \left ( \frac{a-r}{1-r} \right )-f_{M}(a) \right ) \leq Cr. \label{majoincrementofdelta2}
\end{align}
\end{lemma}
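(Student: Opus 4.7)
The estimate asks for a uniform (in $M$) bound on $b(f_M(b)-f_M(a))$ where $b := (a-r)/(1-r) \in (0,a)$, with $f_M(x) = -\log(x\vee e^{-M}) = \min(-\log x,\,M)$. The plan is to reduce the truncated function $f_M$ to the plain logarithm by a simple monotonicity argument, and then exploit the elementary inequality $\log(1+x)\leq x$.

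\textbf{Step 1 (remove the truncation).} I would first establish the key inequality
\[
f_M(b)-f_M(a) \leq \log(a/b),
\]
valid for all $0<b\leq a\leq 1$ and all $M>0$, with constant independent of $M$. This is a short case analysis on the position of $-\log a$ and $-\log b$ relative to $M$: if $-\log a\geq M$ then $-\log b\geq M$ too and the left side vanishes; if $-\log a<M$ and $-\log b<M$, the left side is exactly $\log(a/b)$; and if $-\log a<M\leq -\log b$, then the left side is $M+\log a\leq -\log b+\log a=\log(a/b)$.

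\textbf{Step 2 (estimate with the logarithm).} Plugging in $b=(a-r)/(1-r)$, the bound from Step 1 gives
\[
\frac{a-r}{1-r}\bigl(f_M(b)-f_M(a)\bigr) \;\leq\; \frac{a-r}{1-r}\log\!\left(\frac{a(1-r)}{a-r}\right) \;=\; \frac{a-r}{1-r}\log\!\left(1+\frac{r(1-a)}{a-r}\right).
\]
Applying $\log(1+x)\leq x$ for $x\geq 0$ (here the argument is nonnegative since $r<a$), the right-hand side is at most $r(1-a)/(1-r)\leq r/(1-r)\leq 2r$, using $r\leq 1/2$ for the final step. This yields the claim with $C=2$.

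\textbf{What might go wrong.} There is essentially no obstacle: the lemma is a one-variable elementary estimate. The only thing that requires a moment's care is checking that Step 1 is indeed uniform in $M$; this is handled by the three-case split above. The constant $C=2$ comes out cleanly from the constraints $r\leq a$ and $r\leq 1/2$.
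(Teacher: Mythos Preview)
Your proof is correct. Both you and the paper begin by reducing the truncated $f_M$ to the plain logarithm via the same three-case analysis, arriving at the identical intermediate bound $\frac{a-r}{1-r}\log\bigl(\frac{a(1-r)}{a-r}\bigr)$. The difference lies in how this quantity is estimated: the paper first discards a favorable $\log(1-r)$ term to reach $-2(a-r)\log(1-r/a)$ and then splits into the two regimes $r\leq a/2$ and $r>a/2$, invoking the bounds $\sup_{x\in(0,1/2]}(-\log(1-x)/x)$ and $\sup_{x\in(0,1]}(-x\log x)$ separately. Your rewriting $\frac{a(1-r)}{a-r}=1+\frac{r(1-a)}{a-r}$ followed by $\log(1+x)\leq x$ handles both regimes at once and yields the explicit constant $C=2$, which is sharper and avoids the case distinction entirely.
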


\begin{proof}
Let $a \in (0,1)$ and $r \in (0,a \wedge (1/2))$. Distinguishing the three cases $a<e^{-M}$, $\frac{a-r}{1-r} \leq e^{-M} \leq a$, and $e^{-M}<\frac{a-r}{1-r}$ we get that in any case 
\begin{align}
\frac{a-r}{1-r} \left (f_{M} \left ( \frac{a-r}{1-r} \right )-f_{M}(a) \right ) \leq -\frac{a-r}{1-r} \log \left (\frac{a-r}{a(1-r)} \right ) \leq -2(a-r) \log \left (1-\frac{r}{a} \right ) =:u(a,r). \label{majoincrementofdelta1}
\end{align}
Let $C_1:=\sup_{x \in (0,1/2]} -\log(1-x)/x$ and $C_2:=\sup_{x \in (0,1]} -x\log(x)$. If $r \in (0,a/2]$ we have $u(a,r) \leq 2C_1r$. If $r \in (a/2,a)$ we have $u(a,r) \leq 2(a-r) \times C_2/(1-\frac{r}{a}) = 2 C_2 a \leq 4 C_2 r$. Setting $C:=\max \{ 2C_1, 4C_2 \}$ we get that, in both cases $r \in (a/2,a]$ and $r \in (a/2,a)$ we have $u(a,r) \leq Cr$. Combining with \eqref{majoincrementofdelta1} we get \eqref{majoincrementofdelta2}. 
\end{proof}

\begin{lemma} \label{finiteexpectoflog}
For any $y_1, y_2 \in Q$ with $y_1 \neq y_2$ and $k \geq 0$ we have 
\begin{align*}
& \mathbb{P} ( \exists t \in (T_k(y_1,y_2),T_{k+1}(y_1,y_2)) \ \text{s.t.} \ Y_{0,t}(y_1)=Y_{0,t}(y_2) | Y_{0,T_k(y_1,y_2)}(y_1)\neq Y_{0,T_k(y_1,y_2)}(y_2))=0, \\
& \mathbb{P} ( \exists t \in (T_k(y_1,y_2),T_{k+1}(y_1,y_2)] \ \text{s.t.} \ Y_{0,t-}(y_1)=Y_{0,t-}(y_2) | Y_{0,T_k(y_1,y_2)}(y_1)\neq Y_{0,T_k(y_1,y_2)}(y_2))=0. 
\end{align*} 
\end{lemma}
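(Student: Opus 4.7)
My plan is to split the argument into two parts. Throughout, assume without loss of generality $y_1 < y_2$, and set $g_t := Y_{0,t}(y_1)$, $h_t := Y_{0,t}(y_2)$, and $V_t := h_t - g_t \geq 0$, using Proposition \ref{propexistence}(iii).

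For the first statement, I would argue via a coalescence-class analysis. For $y \in [0,1]$, set $C_t(y) := \{z \in [0,1] : Y_{0,t}(z) = Y_{0,t}(y)\}$. Applying $\mr_{r,u}$ to both sides of $Y_{0,s-}(z) = Y_{0,s-}(y_1)$ gives $C_{s-}(y_1) \subset C_s(y_1)$ at each atom $(s, r, u) \in N$, so $t \mapsto C_t(y_1)$ is non-decreasing. Moreover, $C_t(y_1)$ can only grow at times $s \in N_{0,\infty}(y_1)$: if $z$ joins $C_s(y_1)$ at $(s, r, u)$, then $\mr_{r,u}$ identifies the previously distinct $Y_{0,s-}(z)$ and $Y_{0,s-}(y_1)$, which by the piecewise description of $\mr_{r,u}$ (strictly increasing on $[0, u(1-r))$ and on $(u(1-r)+r, 1]$ with disjoint images $[0, u)$ and $(u, 1]$, and constantly equal to $u$ on $I_{r,u}$) forces both to lie in $I_{r,u}$; in particular, $Y_{0,s-}(y_1) \in I_{r,u}$, so $s \in N_{0,\infty}(y_1)$. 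Since by construction $N_{0,\infty}(y_1) \cap (T_k(y_1,y_2), T_{k+1}(y_1,y_2)) = \emptyset$, we obtain $C_t(y_1) = C_{T_k(y_1,y_2)}(y_1)$ for all $t \in (T_k(y_1,y_2), T_{k+1}(y_1,y_2))$; the conditioning $Y_{0,T_k(y_1,y_2)}(y_1) \neq Y_{0,T_k(y_1,y_2)}(y_2)$ translates to $y_2 \notin C_{T_k(y_1,y_2)}(y_1)$, which yields the first statement.

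For the second statement, this immediately gives $V_t > 0$ for $t \in (T_k(y_1,y_2), T_{k+1}(y_1,y_2))$, but does not rule out $V_{t-} = 0$ via an accumulation of jumps driving $V$ toward zero. To exclude this, I would bound the total variation of $\log V$ on $(T_k(y_1,y_2), T_{k+1}(y_1,y_2))$. On this interval, every atom $(s, r, u) \in N$ satisfies $r < 1/2$ and $g_{s-}, h_{s-} \notin I_{r,u}$; three sub-cases arise: (A) both below $u(1-r)$ or (B) both above $u(1-r)+r$, in which $V_s = V_{s-}/(1-r)$ and $|\Delta \log V_s| = -\log(1-r) \leq 2r$; and (C) the straddling case, in which $V_s = (V_{s-}-r)/(1-r)$ and $|\Delta \log V_s| \leq r/(V_{s-} - r)$ (using $-\log(1-x) \leq x/(1-x)$). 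Integrating over $u$, the indicator lengths are at most $1/(1-r)$, $1/(1-r)$, and $(V_{s-}-r)/(1-r)$ respectively; combining with the $|\Delta \log V|$-bounds, the key cancellation $r/(V_{s-}-r) \cdot (V_{s-}-r)/(1-r) = r/(1-r)$ (in Case~C) yields a pointwise bound $C r^{-1} \Lambda(dr)$ for some universal constant $C$. The compensation formula then gives, for every $M > 0$,
\begin{align*}
\mathbb{E}\Big[ \sum_{s \in (T_k(y_1,y_2),\, T_{k+1}(y_1,y_2) \wedge (T_k(y_1,y_2) + M))} |\Delta \log V_s| \Big] \leq C M H(\Lambda),
\end{align*}
finite by \eqref{integassumption}. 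Hence a.s.~the total variation of $\log V$ is finite on every bounded sub-interval of $(T_k(y_1,y_2), T_{k+1}(y_1,y_2)]$, so $\log V_{t-}$ exists and is finite throughout, giving $V_{t-} > 0$.

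The main obstacle is the accumulation control in the second part: under only \eqref{integassumption}, infinitely many Case~(C) jumps can pile up in $(T_k(y_1,y_2), T_{k+1}(y_1,y_2))$, each pulling $V$ downward. What makes the compensator bound work is the cancellation in Case~(C) between the rate factor $(V_{s-}-r)/(1-r)$ (vanishing at $r = V_{s-}$) and the size bound $r/(V_{s-}-r)$ (diverging there), their product remaining of order $1$.
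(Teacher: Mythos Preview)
Your Part 1 argument is circular. You assert that $C_t(y_1)$ can only grow at times $s\in N_{0,\infty}(y_1)$, but this presupposes precisely what the lemma (and ultimately Proposition~\ref{nonzerodiff}) is meant to establish: that trajectories cannot merge by accumulation of infinitely many small jumps. Under \eqref{integassumption} with $\int r^{-2}\Lambda(dr)=\infty$, atom times of $N$ are dense in $(T_k,T_{k+1})$, and nothing you have written rules out the scenario where $Y_{0,t}(y_2)\neq Y_{0,t}(y_1)$ for all $t<t^*$ yet $Y_{0,t^*}(y_2)=Y_{0,t^*}(y_1)$ at some $t^*\in(T_k,T_{k+1})$ that is not an atom time (equivalently, a point where the left limit of $V$ vanishes). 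Your analysis of $\mr_{r,u}$ only shows that \emph{if} a merger happens instantaneously at a single atom $(s,r,u)$ then $(s,r,u)\in N_{0,\infty}(y_1)$; it says nothing about mergers via accumulation, which is the actual obstacle.

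Your Part 2 contains the right idea and, done carefully, would prove the whole lemma without any need for Part 1. The cancellation you identify in Case~(C)---between the rate factor $(V_{s-}-r)/(1-r)$ and the size bound $r/(V_{s-}-r)$---is exactly the crux (it is the content of Lemma~\ref{estimfm} in the paper). But as written your compensation-formula bound on $\sum|\Delta\log V_s|$ relies on Part 1 to guarantee $V>0$ on the open interval so that $\log V$ is defined; without Part 1, the argument is again circular. The paper's remedy is to work with the bounded Lipschitz truncation $f_M(x):=-\log(x\vee e^{-M})$: one applies It\^o's formula to $f_M(V_t)$ on $(T_k,T_{k+1}\wedge T)$ (valid since $f_M$ is Lipschitz and the process is a pure Poisson integral), observes that the jumps of $f_M(V)$ in Cases~(A)/(B) are non-positive, bounds the Case-(C) contribution by your cancellation to obtain $\sup_t f_M(V_t)\leq f_M(V_{T_k})+B_M(T)$ with $\mathbb{E}[B_M(T)]\leq CT\int_{(0,1/2)}r^{-1}\Lambda(dr)$, and concludes via Markov's inequality and $M\to\infty$. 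The truncation removes the circularity, and the resulting bound handles both statements of the lemma simultaneously, so a separate Part~1 is unnecessary.
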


\begin{proof}
Fix $y_1,y_2 \in Q$ such that $y_1 \geq y_2$ and $k \geq 1$. The idea is to show that $\Delta_t := Y_{0,t}(y_1) - Y_{0,t}(y_2)$ cannot reach $0$ between two consecutive times $T_k(y_1,y_2)$, by controlling $f_M(\Delta_t) := -\log(\Delta_t \vee e^{-M})$ on this interval. By definition of the times $T_j(y_1,y_2)$, there is no jump $(s,r,u) \in N$ with $s \in (T_k(y_1,y_2),T_{k+1}(y_1,y_2))$ and $Y_{0,s-}(y_i)\in I_{r,u}$ for $i=1$ or $2$. Therefore, distinguishing cases as in Lemma \ref{estimmryu} for each jump $(s,r,u) \in N$ with $s \in (T_k(y_1,y_2),T_{k+1}(y_1,y_2))$, taking $z_1=Y_{0,s-}(y_1)$ and $z_2=Y_{0,s-}(y_2)$, we are always in cases (i) or (iv). Using this and \eqref{defflowybysde} we get that, almost surely, for any $t \in [T_k(y_1,y_2),T_{k+1}(y_1,y_2))$, $\Delta_t - \Delta_{T_k(y_1,y_2)}$ equals 
\begin{align*}
& \int_{(T_k(y_1,y_2),t] \times (0,1/2) \times (0,1)} \left ( \mr_{r,u}(Y_{0,s-}(y_1))-\mr_{r,u}(Y_{0,s-}(y_2))-Y_{0,s-}(y_1)+Y_{0,s-}(y_2) \right ) N(ds,dr,du) \\
= & \int_{(T_k(y_1,y_2),t] \times (0,1/2) \times (0,1)} \left ( \frac{r\Delta_{s-}}{1-r} \mathds{1}_{u \in (0,\frac{Y_{0,s-}(y_2)-r}{1-r}]\cup(\frac{Y_{0,s-}(y_1)}{1-r},1)} \right . \\
& \qquad \qquad \qquad \qquad \qquad \qquad \qquad \qquad \qquad \left . - \frac{r(1-\Delta_{s-})}{1-r} \mathds{1}_{u \in (\frac{Y_{0,s-}(y_2)}{1-r}, \frac{Y_{0,s-}(y_1)-r}{1-r}]} \right ) N(ds,dr,du). 
\end{align*} 
Fix $M, T \in (0,\infty) \cap \mathbb{Q}$ and set $f_{M}(x):=-\log(x\vee e^{-M})$. Since $(\Delta_t)_{t \geq 0}$ is given by an uncompensated Poisson stochastic integral and $f_M$ is Lipschitz, It{\^o}'s formula from \cite[Thm. II.5.1]{ikedawatanabe} extends to $f_M$. It gives that, almost surely, for any $t \in [T_k(y_1,y_2),T_{k+1}(y_1,y_2))$, 
\begin{align*}
& f_{M}(\Delta_t) - f_{M}(\Delta_{T_k(y_1,y_2)}) \\
= & \int_{(T_k(y_1,y_2),t] \times (0,1/2) \times (0,1)} \left ( f_{M} \left (\frac{\Delta_{s-}}{1-r} \right )-f_{M}(\Delta_{s-}) \right ) \mathds{1}_{u \in (0,\frac{Y_{0,s-}(y_2)-r}{1-r}]\cup(\frac{Y_{0,s-}(y_1)}{1-r},1)} N(ds,dr,du) \\
+ & \int_{(T_k(y_1,y_2),t] \times (0,1/2) \times (0,1)} \left ( f_{M} \left (\frac{\Delta_{s-}-r}{1-r} \right )-f_{M}(\Delta_{s-}) \right ) \mathds{1}_{u \in (\frac{Y_{0,s-}(y_2)}{1-r}, \frac{Y_{0,s-}(y_1)-r}{1-r}]} N(ds,dr,du). 
\end{align*} 
For any $a \in [0,1]$ and $r \in (0,1/2)$ we have $\frac{a-r}{1-r} \leq a \leq \frac{a}{1-r}$ so, since $f_{M}$ is non-increasing, we see that the integrand of the first integral is non-positive while the integrand of the second integral is non-negative. We thus get that almost surely, 
\begin{align}
\sup_{t \in [T_k(y_1,y_2),T_{k+1}(y_1,y_2) \wedge T)} f_{M}(\Delta_t) \leq f_{M}(\Delta_{T_k(y_1,y_2)}) + B_M(T), \label{majofmdeltat}
\end{align} 
where 
\begin{align*}
B_M(T) := \int_{(0,T] \times (0,1/2) \times (0,1)} \left ( f_{M} \left (\frac{\Delta_{s-}-r}{1-r} \right )-f_{M}(\Delta_{s-}) \right ) \mathds{1}_{u \in (\frac{Y_{0,s-}(y_2)}{1-r}, \frac{Y_{0,s-}(y_1)-r}{1-r}]} N(ds,dr,du). 
\end{align*} 
By the compensation formula and Fubini's theorem we get that $\mathbb{E}[B_M(T)]$ equals 
\begin{align}
& \int_{(0,T]} \int_{(0,1/2)} \mathbb{E} \left [ \int_{(0,1)} \left ( f_{M} \left (\frac{\Delta_{s-}-r}{1-r} \right ) -f_{M}(\Delta_{s-}) \right ) \mathds{1}_{u \in (\frac{Y_{0,s-}(y_2)}{1-r}, \frac{Y_{0,s-}(y_1)-r}{1-r}]} du \right ] r^{-2} \Lambda(dr) ds \nonumber \\
= & \int_{(0,T]} \int_{(0,1/2)} \mathbb{E} \left [ \frac{\Delta_{s-}-r}{1-r} \left ( f_{M} \left (\frac{\Delta_{s-}-r}{1-r} \right ) -f_{M}(\Delta_{s-}) \right ) \mathds{1}_{r<\Delta_{s-}} \right ] r^{-2} \Lambda(dr) ds \nonumber \\
\leq & CT \int_{(0,1/2)} r^{-1} \Lambda(dr) <\infty, \label{espbmt}
\end{align}
where the last inequality comes from Lemma \ref{estimfm} and the finiteness from \eqref{integassumption}. Let us denote by $\mathcal{E}_k$ the event $\{Y_{0,T_k(y_1,y_2)}(y_1)\neq Y_{0,T_k(y_1,y_2)}(y_2)\}$ and 
\begin{align*}
\mathcal{A}_k(T) := & \{ \exists t \in (T_k(y_1,y_2),T_{k+1}(y_1,y_2)\wedge T) \ \text{s.t.} \ Y_{0,t}(y_1)=Y_{0,t}(y_2) \} \\
\cup & \{ \exists t \in (T_k(y_1,y_2),T_{k+1}(y_1,y_2)\wedge T] \ \text{s.t.} \ Y_{0,t-}(y_1)=Y_{0,t-}(y_2) \}. 
\end{align*} 
Using the definitions of $\Delta_t$ and $f_{M}(\cdot)$, \eqref{majofmdeltat}, Markov inequality, and \eqref{espbmt} we get that $\mathbb{P} ( \mathcal{A}_k(T) | \mathcal{E}_k)$ is smaller than 
\begin{align*}
& \mathbb{P} \left ( \sup_{t \in [T_k(y_1,y_2),T_{k+1}(y_1,y_2) \wedge T)} f_{M}(\Delta_t) \geq M | \mathcal{E}_k \right ) \leq \mathds{1}_{f_{M}(\Delta_{T_k(y_1,y_2)}) \geq M/2} + \frac{\mathbb{P} ( B_M(T) \geq M/2 )}{\mathbb{P}(\mathcal{E}_k)} \\
\leq & \mathds{1}_{\Delta_{T_k(y_1,y_2)} \leq e^{-M/2}} + \frac{2\mathbb{E}[B_M(T)]}{M \times \mathbb{P}(\mathcal{E}_k)} \leq \mathds{1}_{\Delta_{T_k(y_1,y_2)} \leq e^{-M/2}} + \frac{2CT \int_{(0,1/2)} r^{-1} \Lambda(dr)}{M \times \mathbb{P}(\mathcal{E}_k)}. 
\end{align*}
Letting $M \to \infty$, we get that $\mathbb{P} ( \mathcal{A}_k(T) | \mathcal{E}_k) \to 0$, since $\mathds{1}_{\Delta_{T_k(y_1,y_2)} \leq e^{-M/2}} \to 0$ almost surely on $\mathcal{E}_k$ and the second term converges to $0$ as $M \to \infty$. Letting $T \to \infty$ then gives the result.
\end{proof}

The following lemma is a consequence of Lemmas \ref{ntxfinite} and \ref{finiteexpectoflog}, and establishes Proposition \ref{nonzerodiff} for trajectories starting from $Q$.
\begin{lemma} \label{nonzerodiffrationals}
We have 
\begin{align*}
& \mathbb{P} \left ( \forall t>0, \forall y_1 \neq y_2 \in Q, \ Y_{0,t}(y_1)\neq Y_{0,t}(y_2) \ \text{or} \ \exists (s,r,u) \in N \ \text{s.t.} \ s\leq t, \ y_1,y_2 \in Y_{0,s-}^{-1}(I_{r,u}) \right ) = 1. 
\end{align*}
The same property holds with $Y_{0,t}(\cdot)$ replaced by $Y_{0,t-}(\cdot)$ and $s\leq t$ replaced by $s<t$. 
\end{lemma}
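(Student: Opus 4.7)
The plan is to combine Lemma \ref{finiteexpectoflog}, which handles open intervals between jump times, with a purely deterministic analysis of the map $\mr_{r,u}$ at each jump, and then take a countable union over pairs $y_1\neq y_2\in Q$. First I would work on the almost sure event from the first item of Lemma \ref{ntxfinite} on which, for every such pair, the sequence $(T_k(y_1,y_2))_{k\geq 0}$ is well defined, strictly increasing, and tends to $+\infty$. A countable union of Lemma \ref{finiteexpectoflog} then gives a further almost sure event on which, for every such pair and every $k\geq 0$, whenever $Y_{0,T_k}(y_1)\neq Y_{0,T_k}(y_2)$ the trajectories remain distinct on $(T_k,T_{k+1})$ and the left-limits remain distinct on $(T_k,T_{k+1}]$.

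The core of the argument is then the deterministic step at the jumping times. A direct case analysis based on the definitions $\mr_{r,u}(z)=\mathsf{Median}\{(z-r)/(1-r),z/(1-r),u\}$ and $I_{r,u}=[u(1-r),u(1-r)+r]$ shows that $\mr_{r,u}$ maps $[0,u(1-r))$ bijectively onto $[0,u)$ via $z\mapsto z/(1-r)$, maps $I_{r,u}$ to the single point $u$, and maps $(u(1-r)+r,1]$ bijectively onto $(u,1]$ via $z\mapsto (z-r)/(1-r)$. In particular, for distinct $z_1,z_2\in[0,1]$, the equality $\mr_{r,u}(z_1)=\mr_{r,u}(z_2)$ forces $z_1,z_2\in I_{r,u}$, with both images equal to $u$.

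With these ingredients I would conclude by induction on $k\geq 0$ that on the above probability-$1$ event the conclusion of the lemma holds for all $t\leq T_k$, together with its left-continuous analogue. The base case $k=0$ is immediate since $Y_{0,0}(y_i)=y_i$. For the inductive step at $t\in (T_k,T_{k+1}]$, if $Y_{0,T_k}(y_1)=Y_{0,T_k}(y_2)$ the induction hypothesis at $T_k$ already supplies a witness $(s,r,u)$ with $s\leq T_k<t$, which satisfies both $s\leq t$ and $s<t$. Otherwise the almost sure controls of the first paragraph cover $t\in (T_k,T_{k+1})$ in the c\`ad-l\`ag sense and $t\in (T_k,T_{k+1}]$ in the left-continuous sense, leaving only the c\`ad-l\`ag statement at $t=T_{k+1}$: if $Y_{0,T_{k+1}}(y_1)=Y_{0,T_{k+1}}(y_2)$ then the deterministic step forces both $Y_{0,T_{k+1}-}(y_i)$ to lie in $I_{r,u}$ for the jump $(T_{k+1},r,u)\in N$, producing the required witness with $s=T_{k+1}\leq t$. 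The main point of care is this last matching between the c\`ad-l\`ag and left-continuous statements: the deterministic step is needed only for a fresh merger at $t=T_{k+1}$ in the c\`ad-l\`ag sense, since the left-limit version at the same $t$ is already controlled by Lemma \ref{finiteexpectoflog}.
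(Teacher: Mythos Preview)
Your proof is correct and follows essentially the same approach as the paper's: both combine Lemma~\ref{ntxfinite}, Lemma~\ref{finiteexpectoflog}, and the deterministic observation that $\mr_{r,u}(z_1)=\mr_{r,u}(z_2)$ with $z_1\neq z_2$ forces $z_1,z_2\in I_{r,u}$. The only organizational difference is that you run an induction on $k$ over the intervals $(T_k,T_{k+1}]$, whereas the paper defines the first merging time $T(y_1,y_2)$, observes via Lemma~\ref{finiteexpectoflog} that it must lie in $\{T_k(y_1,y_2):k\geq 1\}\cup\{\infty\}$ with distinct left-limits there, and applies the deterministic step once at that time; this is slightly more concise but logically identical to your argument.
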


\begin{proof}
Let $y_1, y_2 \in Q$ with $y_1 \neq y_2$ and $T(y_1, y_2) := \inf \{ t \geq 0, \ Y_{0,t}(y_1)=Y_{0,t}(y_2) \ \text{or} \ Y_{0,t-}(y_1)=Y_{0,t-}(y_2) \}$. By Lemma \ref{ntxfinite} the sequence $(T_k(y_1,y_2))_{k \geq 1}$ is well-defined almost surely and by Lemma \ref{finiteexpectoflog} we have almost surely $T(y_1, y_2) \in \{ T_k(y_1,y_2), k \geq 1 \} \cup \{ \infty \}$ and $Y_{0,T(y_1, y_2)-}(y_1)\neq Y_{0,T(y_1, y_2)-}(y_2)$ on $\{T(y_1, y_2)<\infty\}$. On $\{T(y_1, y_2)<\infty\}$, let us denote by $(s,r,u) \in N$ the jump at time $T(y_1, y_2)$ (then $s=T(y_1, y_2)$). Since $Y_{0,s-}(y_1)\neq Y_{0,s-}(y_2)$ and $Y_{0,s}(y_1)=Y_{0,s}(y_2)$, we see from \eqref{defflowybysde} that we have necessarily $Y_{0,s-}(y_1), Y_{0,s-}(y_2) \in I_{r,u}$. Since $Q$ is countable the result follows. 
\end{proof}

We can now conclude the proof of Proposition \ref{nonzerodiff}. 
\begin{proof}[Proof of Proposition \ref{nonzerodiff}]
Assume we are on the probability one event where the claims from Lemmas \ref{nonzerodiffrationals} and \ref{ntxfinite} hold true. By contradiction, we assume that there are $t>0$ and $y_1,y_2 \in [0,1]$ with $y_1<y_2$ such that $Y_{0,t}(y_1)=Y_{0,t}(y_2)$ (resp. $Y_{0,t-}(y_1)=Y_{0,t-}(y_2)$) and such that there is no jump $(s,r,u) \in N$ with $s\leq t$ (resp. $s<t$) for which both $y_1$ and $y_2$ belong to $Y_{0,s-}^{-1}(I_{r,u})$. Then, $Y_{0,t}([y_1,y_2])$ (resp. $Y_{0,t-}([y_1,y_2])$) is a singleton. 

We now construct by induction an infinite sequence of distinct jumps $(s_j, r_j, u_j)_{j \geq 1}$ in $N$ and a rational $q$ such that $q\in Y_{0,s_j-}^{-1}(I_{r_j,u_j})$ for all $j \geq 1$, contradicting the finiteness of $N_{0,t}(q)$ given by Lemma 
\ref{ntxfinite}. Fix $n_1> m_0:=3/(y_2-y_1)$ and choose $q_1, \tilde q_1 \in Q$ such that $q_1 \in (y_1,y_1+1/n_1)$ and $\tilde q_1 \in (y_2-1/n_1,y_2)$. By Lemma \ref{nonzerodiffrationals}, there is $(s_1,r_1,u_1) \in N$ such that $s_1\leq t$ (resp. $s_1<t$) and $q_1, \tilde q_1 \in Y_{0,s_1-}^{-1}(I_{r_1,u_1})$. We can choose a rational $q \in (y_1+1/m_0,y_2-1/m_0) \subset [q_1, \tilde q_1] \subset Y_{0,s_1-}^{-1}(I_{r_1,u_1})$. By assumption, it is not possible that both $y_1$ and $y_2$ belong to the interval $Y_{0,s_1-}^{-1}(I_{r_1,u_1})$ so there is $n_2>n_1$ such that $y_1+1/n_2< \inf Y_{0,s_1-}^{-1}(I_{r_1,u_1})$ or $y_2-1/n_2>\sup Y_{0,s_1-}^{-1}(I_{r_1,u_1})$. We can choose $q_2, \tilde q_2 \in Q$ such that $q_2 \in (y_1,y_1+1/n_2)$ and $\tilde q_2 \in (y_2-1/n_2,y_2)$ and proceed as before. Iterating this procedure (but with always the same $q$) we obtain the sought sequence $(s_j,r_j,u_j)_{j\geq 1}$. This contradicts the assumption that we are on the probability one event given by Lemma \ref{ntxfinite}. The result follows. 
\end{proof}

\begin{remark} \label{nonzerodiffrmk1}
Proposition \ref{nonzerodiff} and \eqref{collage} yield that, almost surely, if $Y_{0,t_2}(y_1)=Y_{0,t_2}(y_2)$ (resp. $Y_{0,t_2-}(y_1)=Y_{0,t_2-}(y_2)$) and $Y_{0,t_1}(y_1)\neq Y_{0,t_1}(y_2)$ for some $t_2>t_1\geq 0$ and $y_1,y_2 \in [0,1]$, then there is $(s,r,u) \in N$ such that $s\in (t_1,t_2]$ (resp. $s\in (t_1,t_2)$) and $y_1,y_2 \in Y_{0,s-}^{-1}(I_{r,u})$. 
\end{remark}

\section{From the flow to the $\Lambda$-coalescent: auxiliary results} \label{coaltrajy}

In this appendix we assume that \eqref{integassumption} holds true, take as given the results established in Appendix \ref{propflows}, and establish two sets of auxiliary results used in the construction of the $\Lambda$-coalescent from the flow $Y$. Section \ref{coaltrajysec} proves Lemma \ref{lawofpartbyblg1and20}, which identifies the partition process $(\pi^Y_t)_{t \geq 0}$ as a $\Lambda$-coalescent, and Lemma \ref{probspeevt}, which provides the lower bound on $\mathbb{P}(\mathcal{E}(t,t+w,k,\epsilon,\alpha))$ used in Section \ref{longtimelowerboundspe}. Section \ref{poisrep1ststepappend} establishes the auxiliary lemmas used in 
the proofs of Proposition \ref{poissrepstep1} and Theorem \ref{represwktviamut}.

\subsection{The partition process $(\pi^Y_t)_{t\geq 0}$ is a $\Lambda$-coalescent} \label{coaltrajysec}

In this subsection, we prove Lemmas \ref{lawofpartbyblg1and20} and \ref{probspeevt}. Both proofs rely on the structure of the process $(\pi^N_t)_{t\geq 0}$, a process of random partitions of $\mathbb{N}$ where two individuals $i$ and $j$ are declared equivalent at time $t$ if and only if there exists a jump of $N$ before time $t$ that explicitly caused their ancestral lines to merge. Formally, let $(U_i)_{i\geq 1}$ be as in the definition of $(\pi^Y_t)_{t\geq 0}$ (see Section \ref{model2}), we define $(\pi^N_t)_{t\geq 0}$ by the equivalence relation $i \sim_{\pi^N_t} j \Leftrightarrow \exists (s,r,u) \in N$ such that $s \in (0,t]$ and $U_i, U_j \in Y_{0,s-}^{-1}(I_{r,u})$, where $I_{r,u}$ is defined in \eqref{defiru}. For two partitions $P_1$ and $P_2$ of $\mathbb{N}$, we say that $P_2$ is a coagulation of $P_1$ and note $P_1 \leq P_2$ if each block of $P_2$ is a union of blocks from $P_1$. 
\begin{lemma} \label{nocontmerging}
We have almost surely that $\pi^Y_t=\pi^N_t$ for all $t\geq 0$. 
\end{lemma}
\begin{proof}
The property \eqref{collage} yields that we have almost surely $\pi^N_{t} \leq \pi^Y_{t}$ for all $t \geq 0$, and Proposition \ref{nonzerodiff} yields that we have almost surely $\pi^Y_{t} \leq \pi^N_{t}$ for all $t\geq 0$. 
\end{proof}
By Corollary \ref{calczonecollage}, almost surely, for all $t \geq 0$ there are infinitely many $i \geq 1$ such that $U_i \in C_t^c$, which implies that $\pi^Y_t$ (and therefore $\pi^N_t$) has infinitely many singleton blocks. We note that Lemma \ref{lawofpartbyblg1and20} is not used in the proof of Corollary \ref{calczonecollage} (which belongs to Section \ref{calcmeas}), so there is no circularity in using the fact of the infiniteness of blocks in the following proof. 
\begin{proof}[Proof of Lemma \ref{lawofpartbyblg1and20}]
Thanks to Lemma  \ref{nocontmerging}, we only need to prove the statement for $(\pi^N_t)_{t\geq 0}$. 
For any $t\geq 0$ we denote by $A^1_t, A^2_t,...$ the blocks of $\pi^N_t$ ordered by their lowest elements. We see from Lemma  \ref{nocontmerging} that $j_1 \sim_{\pi^N_t} j_2 \Leftrightarrow Y_{0,t}(U_{j_1})=Y_{0,t}(U_{j_2})$. For $i \geq 1$ we can thus set $U_i(t):=Y_{0,t}(U_j)$ where $j$ is any element of $A^i_t$. For any $(s,r,u)\in N$ we set $Z_{s,i}:=\mathds{1}_{U_i(s-)\in I_{r,u}}$. We see from the definition of $(\pi^N_t)_{t\geq 0}$ that merging events in that coalescent process only occur at jumping times $s \in J_N$ and that, for any $s \in J_N$, the non-empty blocks $A^i_{s-}$ involved in the merging are exactly those for which $Z_{s,i}=1$. We show below that the random set $\{ (s,r,(Z_{s,i})_{i\geq 1}) \}$ defines a Poisson random measure on $(0,\infty) \times (0,1) \times (\{0,1\}^{\mathbb{N}})$ with intensity measure $m(ds,dr,dz):=ds \otimes (\Ber(r)^{\otimes \mathbb{N}}(dz)) r^{-2} \Lambda(dr)$. By \cite[Cor. 3]{pitman1999} it will follow that $(\pi^N_t)_{t\geq 0}$ is a $\Lambda$-coalescent. 

Let us fix some $\eta\in(0,1/2)$ and let $(s^{\eta}_i,r^{\eta}_i,u^{\eta}_i)_{i\geq 1}$ be the enumeration of $\{(s,r,u) \in N, r>\eta\}$ such that $s^{\eta}_1<s^{\eta}_2<\ldots$ and, for convenience, we set $s^{\eta}_0:=0$. Let $\tilde Y^\eta_{0,\cdot}(\cdot)$ be the flow which retains only jumps of $N$ with $r$-component smaller or equal to $\eta$. By the discussion after Proposition \ref{propexistence}, for each $t$, the generalized inverse of $\tilde Y^\eta_{0,t}(\cdot)$ is a bridge. The law of $Y_{0,s^\eta_1-}(\cdot) = \tilde Y^{\eta}_{0,s^\eta_1}(\cdot)$ 
is the mixture $\int_0^\infty \mathcal{L}(\tilde Y^\eta_{0,t}(\cdot)) \Lambda((\eta,1)) e^{-\Lambda((\eta,1))t} dt$, where $\mathcal{L}$ denotes 
the law. By the definition of a bridge (see for example \cite[Sec. 2.1]{BLGI}), any mixture of laws of bridges is again the law of a bridge. We thus get that the generalized inverse of $Y_{0,s^\eta_1-}(\cdot)$ is a bridge $B$, even after conditioning with respect to $(s^{\eta}_1,r^{\eta}_1,u^{\eta}_1)$. We see that $B$, $\pi^N_{s^{\eta}_1-}$ and $(U_j(s^{\eta}_1-))_{j\geq 1}$ are as the bridge, the partition and the sequence considered in \cite[Lem. 2]{BLGI}. By that lemma we get that, conditionally on $(s^{\eta}_1,r^{\eta}_1,u^{\eta}_1)$, $(U_j(s^{\eta}_1-))_{j\geq 1} \sim \mathcal{U}([0,1])^{\otimes \mathbb{N}}$ so $(Z_{s^{\eta}_1,j})_{j \geq 1}\sim\Ber(r^{\eta}_1)^{\otimes \mathbb{N}}$. 
Then, conditionally on $(s^{\eta}_1,r^{\eta}_1)$, the generalized inverse of $\mr_{r^{\eta}_1,u^{\eta}_1}(\cdot)$ is a bridge $\tilde B$ independent of $(U_j(s^{\eta}_1-))_{j\geq 1}$. Let us define a partition $\pi$ of $\mathbb{N}$ by the equivalence relation $i \sim_{\pi} j \Leftrightarrow \mr_{r^{\eta}_1,u^{\eta}_1}(U_i(s^{\eta}_1-))=\mr_{r^{\eta}_1,u^{\eta}_1}(U_j(s^{\eta}_1-))$. We note from \eqref{collage} that $\{ j \geq 1, Z_{s^{\eta}_1,j}=1\}$ is a block of the partition $\pi$ and that, for all $j \geq 1$ such that $Z_{s^{\eta}_1,j}=0$, $\{j\}$ is a block of the partition $\pi$. In particular, $(Z_{s^{\eta}_1,j})_{j \geq 1}$ is a function of the partition $\pi$. We see that $\tilde B$, $\pi$ and $(U_j(s^{\eta}_1))_{j\geq 1}$ are as the bridge, the partition and the sequence considered in \cite[Lem. 2]{BLGI}. By that lemma we get that, conditionally on $(s^{\eta}_1,r^{\eta}_1)$,  the sequences $(U_j(s^{\eta}_1))_{j\geq 1}$ and $(Z_{s^{\eta}_1,j})_{j \geq 1}$ are independent and that $(U_j(s^{\eta}_1))_{j\geq 1} \sim \mathcal{U}([0,1])^{\otimes \mathbb{N}}$. Then, iterating the above arguments applied to $Y_{s^{\eta}_{i},s^{\eta}_{i+1}-}(\cdot)$ and $(U_j(s^{\eta}_i))_{j\geq 1}$ instead of $Y_{0,s^{\eta}_1-}(\cdot)$ and $(U_j)_{j\geq 1}$, and then to $\mr_{r^{\eta}_{i+1},u^{\eta}_{i+1}}(\cdot)$ and $(U_j(s^{\eta}_{i+1}-))_{j\geq 1}$ instead of $\mr_{r^{\eta}_1,u^{\eta}_1}(\cdot)$ and $(U_j(s^{\eta}_{1}-))_{j\geq 1}$, we get that, conditionally on $(s^{\eta}_i,r^{\eta}_i)_{i \geq 1}$, the sequences $(Z_{s^{\eta}_i,j})_{j \geq 1}$ are mutually independent, with $(Z_{s^{\eta}_i,j})_{j \geq 1}\sim\Ber(r^{\eta}_i)^{\otimes \mathbb{N}}$ for each $i \geq 1$. Combined with the fact that $(s^\eta_i, r^\eta_i)_{i \geq 1}$ are the atoms of a Poisson random measure with intensity $ds \otimes r^{-2} \Lambda(dr)$ restricted to $(0,\infty) \times (\eta,1)$, this shows that $\{(s,r,(Z_{s,i})_{i\geq 1})\} \cap \mathcal{D}_\eta$ is a Poisson random measure on $\mathcal{D}_{\eta}:=(0,\infty) \times (\eta,1) \times (\{0,1\}^{\mathbb{N}})$ with intensity $m(\mathcal{D}_\eta \cap \cdot)$. Since this holds for any choice of $\eta$, we conclude that  $\{ (s,r,(Z_{s,i})_{i\geq 1}) \}$ defines a Poisson random measure on $(0,\infty) \times (0,1) \times (\{0,1\}^{\mathbb{N}})$ with intensity measure $m(ds,dr,dz)$. 
\end{proof}

\begin{remark} \label{poispint}
As a byproduct of the above proof, the $\Lambda$-coalescent process $(\pi^N_t)_{t\geq 0}$ only has transitions at times $s \in J_N$ and, if $p$ is the $r$-component of the jump at time $s$, each block takes part in the merging event independently with probability $p$.
\end{remark}

\begin{proof}[Proof of Lemma \ref{probspeevt}]
Let $w>0$, and let $(U_i)_{i\geq 1}$ and $(\pi^N_t)_{t\geq 0}$ be as above. We consider the event $A$ where: 
\begin{enumerate}[(1)]
\item there are exactly $k$ jumps $(s_i,r_i,u_i)\in N$ with $r_i>\epsilon$ and $s_i \in (0,w]$, ordered as $s_1<\ldots<s_k$;
\item for each $i \in \{1,\ldots,k\}$, the block of $(\pi^N_t)_{t\geq 0}$ containing $i$ takes part in the merger event at time $s_i$ but is not involved in any other merger on $(0,w]\setminus\{s_i\}$.
\end{enumerate}
By properties of poisson point processes and Remark \ref{poispint}, we have $\mathbb{P}(A)>0$. On $A$, the blocks of $(\pi^N_t)_{t\geq 0}$ containing $1,\ldots,k$ are all distinct at time $w$. By Lemma  \ref{nocontmerging}, $\pi^Y_w = \pi^N_w$ almost surely, so the $Y_{0,w}(U_i)$ are distinct for $i \in \{1,\ldots,k\}$. Moreover, by \eqref{collage}, $Y_{0,w}(U_i) = Y_{s_i,w}(u_i)$ for each $i$. Therefore $A \subset \mathcal{E}(0,w,k,\epsilon,\infty)$, which shows that $\mathbb{P}(\mathcal{E}(0,w,k,\epsilon,\infty)) \geq \mathbb{P}(A) > 0$.

Since $\mathbb{P}(S_w > \alpha) \to 0$ as $\alpha \to \infty$, we can choose $\alpha > 0$ large enough so that $\mathbb{P}(S_w > \alpha) < \mathbb{P}(A)$, which gives 
\begin{align*}
\mathbb{P}(\mathcal{E}(0,w,k,\epsilon,\alpha)) \geq \mathbb{P}(\mathcal{E}(0,w,k,\epsilon,\infty)) - \mathbb{P}(S_w > \alpha) \geq \mathbb{P}(A) - \mathbb{P}(S_w>\alpha) > 0.
\end{align*}
Finally, since $\mathcal{E}(t,t+w,k,\epsilon,\alpha)$ is independent of $\mathcal{F}_t$ by definition \eqref{defspeevinter}, and since $N$ has stationary increments, we have for all $t \geq 0$,
\begin{align*}
\mathbb{P}(\mathcal{E}(t,t+w,k,\epsilon,\alpha)|\mathcal{F}_t) = 
\mathbb{P}(\mathcal{E}(0,w,k,\epsilon,\alpha)) =: c(w,k,\epsilon,\alpha) > 0,
\end{align*}
which concludes the proof.
\end{proof}

\subsection{Auxiliary lemmas used in Sections \ref{poisrep} and \ref{itoform}} \label{poisrep1ststepappend}

In this subsection, we establish four auxiliary lemmas used in Sections \ref{poisrep} and \ref{itoform}. Lemmas \ref{avoidboundaries} and \ref{avoidboundaries2} show that, almost surely, significant trajectories of the flow $Y$ never land on the boundary of the intervals $I_{r,u}$ (associated to atoms of $N$), and that the preimage of the interior of $I_{r,u}$ under $Y_{0,s-}(\cdot)$ coincides with the interior of the preimage of $I_{r,u}$. Lemma \ref{diffcountable} is a consequence of Lemmas \ref{avoidboundaries} and \ref{avoidboundaries2}. Lemma \ref{piecewisecte} describes the structure of an interval-valued process $(J_t(y))_{t \geq 0}$ of intervals on which the flow $Y$ is constant. Recall that $Q:=[0,1] \cap \mathbb{Q}$. 

\begin{lemma} \label{avoidboundaries}
Almost surely, for all $(s_1,r_1,u_1), (s_2,r_2,u_2) \in N$ with $s_1<s_2$ we have $Y_{s_1,s_2-}(u_1) \notin I_{r_2,u_2}\setminus I_{r_2,u_2}^{\mathrm{o}}$, and for all $(s,r,u) \in N$ and $y \in Q$, $Y_{0,s-}(y) \notin I_{r,u}\setminus I_{r,u}^{\mathrm{o}}$. 
\end{lemma}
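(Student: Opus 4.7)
The plan is to apply the compensation formula for the Poisson random measure $N$ to show that, in each case, the expected number of bad configurations is zero. The key observation is that the boundary $I_{r,u}\setminus I_{r,u}^{\mathrm{o}}=\{u(1-r),\,u(1-r)+r\}$ consists of exactly two points, so for any fixed $r\in(0,1)$ and target value $y\in[0,1]$, the set of $u\in(0,1)$ satisfying $y\in I_{r,u}\setminus I_{r,u}^{\mathrm{o}}$ has at most two elements, hence is Lebesgue-null.

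For the first statement I would study
\[
S_1:=\sum_{(s_2,r_2,u_2)\in N}\;\sum_{(s_1,r_1,u_1)\in N,\,s_1<s_2}\mathds{1}_{Y_{s_1,s_2-}(u_1)\in\{u_2(1-r_2),u_2(1-r_2)+r_2\}}.
\]
For each fixed $(s_2,r_2,u_2)$ the inner sum is $\mathcal{F}_{s_2-}$-measurable, because $Y_{s_1,s_2-}(u_1)$ is built from $N$ restricted to $(0,s_2)\times(0,1)^2$ via the shifted-flow construction (Proposition \ref{propcomposition} and the composition property \eqref{compprop}), and almost surely only countably many jumps occur before $s_2$. The integrand is thus predictable in $s_2$, and the compensation formula yields
\[
\mathbb{E}[S_1]=\int_0^{\infty}\!ds_2\int_{(0,1)}\!r_2^{-2}\Lambda(dr_2)\,\mathbb{E}\!\left[\int_{(0,1)}\sum_{(s_1,r_1,u_1)\in N,\,s_1<s_2}\mathds{1}_{Y_{s_1,s_2-}(u_1)\in\{u_2(1-r_2),u_2(1-r_2)+r_2\}}du_2\right].
\]
Swapping the sum and the $u_2$-integral, each summand contributes via the two-point set $\bigl\{Y_{s_1,s_2-}(u_1)/(1-r_2),\,(Y_{s_1,s_2-}(u_1)-r_2)/(1-r_2)\bigr\}$; the (countable) union of such sets over $(s_1,r_1,u_1)\in N$ with $s_1<s_2$ is still Lebesgue-null in $(0,1)$, so the inner integral vanishes a.s., giving $\mathbb{E}[S_1]=0$ and hence $S_1=0$ almost surely.

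For the second statement I would fix $y\in Q$ and apply the same compensation argument to
\[
S_2(y):=\sum_{(s,r,u)\in N}\mathds{1}_{Y_{0,s-}(y)\in\{u(1-r),u(1-r)+r\}},
\]
whose integrand is predictable in $s$ since $Y_{0,s-}(y)$ is $\mathcal{F}_{s-}$-measurable. The compensation formula reduces $\mathbb{E}[S_2(y)]$ to an integral against $du$ of an indicator supported on at most two $u$-values, so $\mathbb{E}[S_2(y)]=0$ and $S_2(y)=0$ a.s.; intersecting over the countable set $Q$ concludes. The main subtlety I anticipate is the predictability verification inside the compensation formula, namely that $Y_{s_1,s_2-}(u_1)$ is $\mathcal{F}_{s_2-}$-measurable for each fixed $u_1$ and each jump time $s_1<s_2$; this should follow directly from the SDE \eqref{defflowybysde} satisfied by the shifted flow together with its c\`ad-l\`ag regularity from Proposition \ref{propexistence}.
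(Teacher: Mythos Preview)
Your proposal is correct and relies on the same key observation as the paper: the $u$-component of a jump is uniform on $(0,1)$ and independent of the past, so the event that a left-limit value of the flow hits the two-point boundary $\{u(1-r),u(1-r)+r\}$ has zero probability after integrating out $u$. The paper packages this slightly differently: instead of invoking the compensation formula, it rebuilds $N$ as a marked point process $\{(s,r,V_{n(s,r)}):(s,r)\in\tilde N\}$ with $\tilde N$ a Poisson process on $(0,\infty)\times(0,1)$ and $(V_i)_{i\ge1}$ i.i.d.\ uniforms, then conditions on $\tilde N$ and uses directly that $V_{n(s_2,r_2)}$ is independent of $Y_{s_1,s_2-}(V_{n(s_1,r_1)})$. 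Your compensation-formula route is the standard abstract version of the same independence, and arguably more transparent; the paper's explicit-marking construction is more elementary and sidesteps any predictability verification.
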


\begin{proof}
The idea is to condition $N$ with respect to its first two coordinates, so that hitting a boundary reduces to a null probability event by independence. Formally, let $\tilde N$ be a Poisson point process on $(0,\infty)\times (0,1)$ with intensity measure $dt \otimes r^{-2} \Lambda(dr)$ and let $(V_i)_{i \geq 1} \sim \mathcal{U}([0,1])^{\otimes \mathbb{N}}$. We set a deterministic total order on $(0,\infty)\times (0,1)$ such that, almost surely, the elements of $\tilde N$ can be enumerated in a sequence that respects that order. We denote by $n(s,r)$ the position of the element $(s,r)\in \tilde N$. Note that $\{ (s,r,V_{n(s,r)}), (s,r) \in \tilde N \}$ is equal in law to $N$ so, in this proof, we assume that $N$ is built in this way. We work conditionally on $\tilde N$ and pick $(s_1,r_1), (s_2,r_2) \in \tilde N$ with $s_1<s_2$. Then 
\begin{align*} 
& \left \{ Y_{s_1,s_2-}(V_{n(s_1,r_1)}) \in I_{r_2,V_{n(s_2,r_2)}}\setminus I_{r_2,V_{n(s_2,r_2)}}^{\mathrm{o}} \right \} \\
& \qquad \qquad \qquad = \left \{ Y_{s_1,s_2-}(V_{n(s_1,r_1)}) \in \{(1-r_2)V_{n(s_2,r_2)}, (1-r_2)V_{n(s_2,r_2)}+r_2\} \right \}. 
\end{align*}
Since $V_{n(s_2,r_2)}$ is independent of $(Y_{s_1,s_2-}(V_{n(s_1,r_1)}),r_2)$, the above event has null probability. Intersecting over all pairs $(s_1,r_1),(s_2,r_2) \in \tilde N$ with $s_1 < s_2$ preserves the null probability, and integrating over the law of $\tilde N$ yields the first statement.

For the second statement we work conditionally on $\tilde N$, fix $(s,r) \in \tilde N$, and set $u:=V_{n(s,r)}$. The condition $Y_{0,s-}(y) \in I_{r,u} \setminus I_{r,u}^{\mathrm{o}}$ for $y \in Q$ is equivalent to $u \in \{ Y_{0,s-}(y)/(1-r), y \in Q \} \cup \{ (Y_{0,s-}(y)-r)/(1-r), y \in Q \}$. Since $u = V_{n(s,r)}$ is independent of $(Y_{0,s-}(y))_{y \in Q}$ and $Q$ is countable, this event has null probability. Intersecting over all $(s,r) \in \tilde N$ and integrating over $\tilde N$ yields the second statement.
\end{proof}

\begin{lemma} \label{avoidboundaries2}
Almost surely, for all $(s,r,u) \in N$ we have $Y_{0,s-}^{-1}(I_{r,u}^{\mathrm{o}})=(Y_{0,s-}^{-1}(I_{r,u}))^{\mathrm{o}}$. 
\end{lemma}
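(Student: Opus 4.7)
The plan is to prove the two inclusions separately, leveraging the continuity and monotonicity of $Y_{0,s-}(\cdot)$ from Proposition \ref{propexistence} together with the boundary-avoidance statement of Lemma \ref{avoidboundaries}. Throughout, I would work on the intersection of the probability-one events on which the conclusions of Proposition \ref{propexistence} and of Lemma \ref{avoidboundaries} hold simultaneously. This intersection has full probability and, crucially, Lemma \ref{avoidboundaries} gives boundary-avoidance for \emph{all} $(s,r,u) \in N$ and \emph{all} $y \in Q$ at once, which is what will let the argument be applied countably often.

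For the inclusion $Y_{0,s-}^{-1}(I_{r,u}^{\mathrm{o}}) \subset (Y_{0,s-}^{-1}(I_{r,u}))^{\mathrm{o}}$, the argument is purely topological: $I_{r,u}^{\mathrm{o}}$ is open, and since $Y_{0,s-}(\cdot)$ is continuous on the event in question, the preimage $Y_{0,s-}^{-1}(I_{r,u}^{\mathrm{o}})$ is an open subset of $Y_{0,s-}^{-1}(I_{r,u})$, hence contained in its interior.

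For the reverse inclusion, I would fix $(s,r,u) \in N$ and $y \in (Y_{0,s-}^{-1}(I_{r,u}))^{\mathrm{o}}$, and aim to show $Y_{0,s-}(y) \in I_{r,u}^{\mathrm{o}}$. First I would observe that $I_{r,u} \subset (0,1)$ (since $u,r \in (0,1)$), together with $Y_{0,s-}(0)=0$ and $Y_{0,s-}(1)=1$, forces $0,1 \notin Y_{0,s-}^{-1}(I_{r,u})$, so $y \in (0,1)$ and there exist rationals $q_1, q_2$ with $q_1 \leq y \leq q_2$ and $[q_1,q_2] \subset Y_{0,s-}^{-1}(I_{r,u})$. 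In particular $Y_{0,s-}(q_i) \in I_{r,u}$ for $i=1,2$, and Lemma \ref{avoidboundaries} yields $Y_{0,s-}(q_i) \notin I_{r,u} \setminus I_{r,u}^{\mathrm{o}}$, so $Y_{0,s-}(q_i) \in I_{r,u}^{\mathrm{o}}$. Monotonicity of $Y_{0,s-}(\cdot)$ then gives $Y_{0,s-}(q_1) \leq Y_{0,s-}(y) \leq Y_{0,s-}(q_2)$, which places $Y_{0,s-}(y)$ in the open interval $I_{r,u}^{\mathrm{o}}$.

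I do not anticipate a genuine obstacle beyond bookkeeping: the key subtlety is that one must combine the \emph{simultaneous} rational boundary-avoidance in Lemma \ref{avoidboundaries} with the continuity/monotonicity of the flow so that, for each jump $(s,r,u)$, the rational sandwich argument can be carried out on the same almost-sure event. Handling the endpoints $0$ and $1$ of $[0,1]$ is the only point where care is needed, and it is dispatched immediately by noting that $I_{r,u}$ is bounded away from $\{0,1\}$.
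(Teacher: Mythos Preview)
Your proof is correct and takes essentially the same approach as the paper's: both rely on Lemma \ref{avoidboundaries} applied at rational points together with the continuity and monotonicity of $Y_{0,s-}(\cdot)$. The paper argues by contradiction (if the two sets differ, their difference contains a nonempty open interval, and any rational in it would violate Lemma \ref{avoidboundaries}), whereas you give the equivalent direct sandwich argument; the content is the same.
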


\begin{proof}
Let us assume that we are on the probability one event provided by Lemma \ref{avoidboundaries}, and assume that there is $(s,r,u) \in N$ such that $Y_{0,s-}^{-1}(I_{r,u}^{\mathrm{o}})\neq (Y_{0,s-}^{-1}(I_{r,u}))^{\mathrm{o}}$. By the continuity of $Y_{0,s-}(\cdot)$ (Proposition \ref{propexistence}(iii)) and the fact that $I_{r,u}^{\mathrm{o}}$ is open, we have $Y_{0,s-}^{-1}(I_{r,u}^{\mathrm{o}}) \subset (Y_{0,s-}^{-1}(I_{r,u}))^{\mathrm{o}}$. Therefore, there exists a nonempty open interval $J \subset (Y_{0,s-}^{-1}(I_{r,u}))^{\mathrm{o}} \setminus Y_{0,s-}^{-1}(I_{r,u}^{\mathrm{o}})$. We have $Y_{0,s-}(J) \subset I_{r,u} \setminus I_{r,u}^{\mathrm{o}}$. Then, for $y \in J \cap \mathbb{Q}$ we have $Y_{0,s-}(y) \in I_{r,u} \setminus I_{r,u}^{\mathrm{o}}$, which contradicts the assumption that we are one the probability one event provided by Lemma \ref{avoidboundaries}. 
\end{proof}

\begin{lemma} \label{diffcountable}
Almost surely, the set $\mathcal{S} := \cup_{(s,r,u) \in N} (Y_{0,s-}^{-1}(I_{r,u}) \setminus Y_{0,s-}^{-1}(I_{r,u}^{\mathrm{o}}))$ is countable and $\mathbb{Q} \cap \mathcal{S} = \emptyset$. 
\end{lemma}

\begin{proof}
Let us assume that we are on the probability one event provided by Lemmas \ref{avoidboundaries} and \ref{avoidboundaries2}. Since each set $Y_{0,s-}^{-1}(I_{r,u})$ is a closed interval, Lemma \ref{avoidboundaries2} shows that each set $Y_{0,s-}^{-1}(I_{r,u}) \setminus Y_{0,s-}^{-1}(I_{r,u}^{\mathrm{o}})$ has exactly two elements, yielding the first claim. Let $y \in Q$, then $y \in \mathcal{S} \Rightarrow \exists (s,r,u) \in N \ \text{s.t.} \ Y_{0,s-}(y) \in I_{r,u} \setminus I_{r,u}^{\mathrm{o}}$. 
By Lemma \ref{avoidboundaries}, the latter does not occur. This proves the second claim. 
\end{proof}

For any $t \geq 0$ and $y \in [0,1]$ we define the interval 
\begin{align*}
J_{t}(y):=Y_{0,t}^{-1}(\{Y_{0,t}(y)\})=\{ z \in [0,1] \ \text{s.t.} \ Y_{0,t}(z)=Y_{0,t}(y)\}, 
\end{align*}
which represents the set of all individuals that share the same ancestor as $y$ at time $t$. By Proposition \ref{propexistence}(iii), $Y_{0,t}(\cdot)$ is non-decreasing and continuous, so $J_t(y)$ is a closed interval for all $t \geq 0$ and 
$y \in [0,1]$, possibly reduced to the singleton $\{y\}$. We note that for each $y \in C_t$, $J_{t}(y)^{\mathrm{o}}$ is the open connected component of $C_t$ containing $y$ so, in particular, $J_{t}(y)^{\mathrm{o}} \in \{ \mathcal{O}_k(t), k \geq 1 \}$ and $|J_{s}(y)|=|J_{s}(y)^{\mathrm{o}}| \in \{ W_k(s), k\geq 1\}$. 
\begin{lemma} \label{piecewisecte}
Almost surely, for every $y \in Q$, the interval-valued process $(J_{t}(y))_{t \geq 0}$ is non-decreasing, piecewise constant and right-continuous, increase times $s$ of $(J_{t}(y))_{t \geq 0}$ are exactly the time components of jumps $(s,r,u) \in N$ such that $y \in Y^{-1}_{0,s-}(I_{r,u}^{\mathrm{o}})$. This claim is also true when "for every $y \in Q$" is replaced by "for almost every $y \in [0,1]$". 
\end{lemma}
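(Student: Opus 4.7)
The plan is to work on the almost-sure event where the conclusions of Proposition~\ref{nonzerodiff}, Lemma~\ref{ntxfinite}, Lemma~\ref{avoidboundaries}, and Lemma~\ref{diffcountable} all hold. The four claims (monotonicity, right-continuity, piecewise constancy, and the precise description of the increase times) will be established for a fixed $y\in Q$, and the extension to almost every $y\in[0,1]$ will then follow from the same arguments after removing a Lebesgue-null set.

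For monotonicity I would use that, by uniqueness of the SDE \eqref{defflowybysde} applied after a time $t_1$, the equality $Y_{0,t_1}(z)=Y_{0,t_1}(z')$ propagates: $Y_{0,t}(z)=Y_{0,t}(z')$ for every $t\geq t_1$. This gives $J_{t_1}(y)\subseteq J_t(y)$ whenever $t\geq t_1$. Right-continuity is then immediate from monotonicity together with Proposition~\ref{propexistence}(ii): for $z\in J_{t+}(y):=\bigcap_{s>t}J_s(y)$ one has $Y_{0,s}(z)=Y_{0,s}(y)$ for all $s>t$, and passing to the right limit at $t$ yields $z\in J_t(y)$. A key consequence that will be reused is the identity $J_{s-}(y)=\{z\in[0,1]\ :\ Y_{0,s-}(z)=Y_{0,s-}(y)\}$: the inclusion $\subseteq$ follows by letting $t\uparrow s$ in $J_t(y)$, while for $\supseteq$, if $Y_{0,s-}(z)=Y_{0,s-}(y)$ and $z\notin J_t(y)$ for every $t<s$, then Remark~\ref{nonzerodiffrmk1} in its left-limit form yields a jump $(s',r,u)\in N$ with $s'<s$ and $y,z\in Y_{0,s'-}^{-1}(I_{r,u})$, whence $z\in J_{s'}(y)\subseteq J_{s-}(y)$, a contradiction.

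For the precise characterization of the increase times I set $N(y):=\{(s,r,u)\in N\ :\ y\in Y_{0,s-}^{-1}(I_{r,u}^{\mathrm{o}})\}$; by Lemma~\ref{avoidboundaries}, for $y\in Q$ this coincides with $\{(s,r,u)\in N\ :\ y\in Y_{0,s-}^{-1}(I_{r,u})\}$, and by Lemma~\ref{ntxfinite} its projection on the time-axis is locally finite, already yielding the piecewise-constant property. For the ``$\subseteq$'' direction, at an increase time $s$ one picks $z\in J_s(y)\setminus J_{s-}(y)$; the identity above on $J_{s-}(y)$ gives $Y_{0,s-}(z)\neq Y_{0,s-}(y)$ while $Y_{0,s}(z)=Y_{0,s}(y)$, so \eqref{defflowybysde} forces $s\in J_N$ with some jump $(s,r,u)\in N$, and an explicit computation of $\mr_{r,u}$ shows that the common value $Y_{0,s}(z)=Y_{0,s}(y)$ equals $u$ and that $y,z\in Y_{0,s-}^{-1}(I_{r,u})$, whence $(s,r,u)\in N(y)$ via Lemma~\ref{avoidboundaries} applied to $y$. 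For the ``$\supseteq$'' direction, given $(s,r,u)\in N(y)$, the same explicit computation of $\mr_{r,u}$ gives $J_s(y)=Y_{0,s-}^{-1}(I_{r,u})$, whereas $J_{s-}(y)=\{z\ :\ Y_{0,s-}(z)=Y_{0,s-}(y)\}$; strict inclusion follows from the existence of some $z\in Y_{0,s-}^{-1}(I_{r,u})$ with $Y_{0,s-}(z)\neq Y_{0,s-}(y)$, which I would produce from the fact that the dust (of positive mass $e^{-S_{s-}}>0$ under \eqref{integassumption}) forces $Y_{0,s-}$ to be strictly increasing on a dense subset of $[0,1]$, and hence to take infinitely many distinct values inside $I_{r,u}^{\mathrm{o}}$.

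The main obstacle is this last point, namely ruling out the degenerate possibility that $Y_{0,s-}$ is constant equal to $Y_{0,s-}(y)$ on all of $Y_{0,s-}^{-1}(I_{r,u})$, which would destroy the ``$\supseteq$'' direction of the characterization; this is precisely where the dust assumption \eqref{integassumption} enters essentially. The extension to almost every $y\in[0,1]$ (instead of $y\in Q$) then relies on Lemma~\ref{diffcountable} to discard the Lebesgue-null set of $y$ that might lie on a boundary $Y_{0,s-}^{-1}(I_{r,u})\setminus Y_{0,s-}^{-1}(I_{r,u}^{\mathrm{o}})$, combined with the second part of Lemma~\ref{ntxfinite} to handle the a.e.\ local finiteness of the number of affecting jumps; once these two null sets are excluded, the argument for $y\in Q$ applies verbatim.
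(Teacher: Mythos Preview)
Your approach is essentially the same as the paper's: both rely on Proposition~\ref{nonzerodiff}/Remark~\ref{nonzerodiffrmk1} to detect that an increase forces a jump affecting $y$, on Lemma~\ref{ntxfinite} for local finiteness of such jumps, and on Lemma~\ref{avoidboundaries}/Lemma~\ref{diffcountable} to pass from $I_{r,u}$ to $I_{r,u}^{\mathrm{o}}$. Two differences are worth noting. First, for monotonicity you appeal to SDE uniqueness, while the paper uses Proposition~\ref{nonzerodiff} together with \eqref{collage}; both are valid, but the paper's route avoids the (minor) issue of invoking pathwise uniqueness from a random initial condition. Second, you go further than the paper's own proof by also arguing the ``$\supseteq$'' direction of ``exactly'' (the paper only shows that $J_{\cdot}(y)$ is constant on each $[T_i,T_{i+1})$, leaving the converse implicit). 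Your concern that this direction is the ``main obstacle'' and requires the dust assumption is overblown, however: once you know $J_s(y)=Y_{0,s-}^{-1}(I_{r,u})$ and $J_{s-}(y)=Y_{0,s-}^{-1}(\{Y_{0,s-}(y)\})$, strict inclusion follows immediately from the fact that $Y_{0,s-}$ is continuous with $Y_{0,s-}(0)=0$, $Y_{0,s-}(1)=1$ (Proposition~\ref{propexistence}(iii), which passes to left limits via Remark~\ref{contfrom01tocadlag}), so its range is all of $[0,1]$ and in particular contains more than one point of $I_{r,u}^{\mathrm{o}}$. No appeal to the positivity of the dust is needed here.
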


\begin{proof}
We assume that we are on the probability one events from Proposition \ref{nonzerodiff}, \eqref{collage}, Remark \ref{nonzerodiffrmk1}, Lemma \ref{ntxfinite} and Lemma \ref{diffcountable}. Let $\mathcal{U}\subset [0,1]$ be the set of measure one from Lemma \ref{ntxfinite}. 

\textit{Step 1: non-decreasing.} Let $t_2>t_1 \geq 0$ and $y \in [0,1]$. If $z \in J_{t_1}(y)$ then $Y_{0,t_1}(z)=Y_{0,t_1}(y)$ so, by Proposition \ref{nonzerodiff}, either $z=y$, in which case $Y_{0,t_2}(z)=Y_{0,t_2}(y)$, or $\exists (s,r,u) \in N$ such that $s\leq t_1$ and $y,z \in Y_{0,s-}^{-1}(I_{r,u})$. In this case, we get from \eqref{collage} that $Y_{0,t_2}(z)=Y_{0,t_2}(y)$. Therefore $z \in J_{t_2}(y)$. This proves that $(J_{t}(y))_{t \geq 0}$ is non-decreasing for all $y \in [0,1]$. 

\textit{Step 2: piecewise constant with increase times determined by $N$.} Let us show that, for $y \in Q$ (resp. $y \in \mathcal{U} \cap (\mathcal{S})^c$, where $\mathcal{S}$ is defined in Lemma \ref{diffcountable}), $J_{t_1}(y) \neq J_{t_2}(y)$ implies that $N_{t_1,t_2}(y)$ is non-empty. If $J_{t_1}(y) \neq J_{t_2}(y)$ for such a choice of $y$ and some $t_2>t_1\geq 0$, then let $z \in J_{t_2}(y) \setminus J_{t_1}(y)$. By definition of $J_{\cdot}(y)$ we have $Y_{0,t_1}(z)\neq Y_{0,t_1}(y)$ and $Y_{0,t_2}(z)=Y_{0,t_2}(y)$. By Remark \ref{nonzerodiffrmk1} we get that there is $(s,r,u) \in N$ with $s \in (t_1,t_2]$ and $y \in Y_{0,s-}^{-1}(I_{r,u})$ (so $(s,r,u) \in N_{t_1,t_2}(y)$) and, by Lemma \ref{diffcountable}, we even have $y \in Y^{-1}_{0,s-}(I_{r,u}^{\mathrm{o}})$. Thus $N_{t_1,t_2}(y)$ is non-empty. By Lemma \ref{ntxfinite}, the sets $N_{t_1,t_2}(y)$ are finite for all $0\leq t_1<t_2$ so let $T_1<T_2<\dots$ be the ordered sequence of time components of elements $(s,r,u) \in N_{0,\infty}(y)$ (and set $T_0:=0$ for convenience). We thus get that $(J_{t}(y))_{t \geq 0}$ is constant on intervals $[T_i,T_{i+1})$, concluding the proof. 
\end{proof}

\section{Proof of Remark \ref{propnlambd}} \label{proofestnlbd}

We prove the two bounds stated in Remark \ref{propnlambd}. It has been justified after \eqref{defnlambda} that $N(\Lambda)>2$. For $k \geq 3$, using the definition of $\lambda_k(\Lambda)$ and the strict Bernoulli inequality $(1-r)^{k-1} > 1-(k-1)r$, which holds for all $r \in (0,1)$,
\begin{align}
\lambda_k(\Lambda) & = \int_{(0,1)} [1-(1-r)^{k-1} (1+(k-1)r)] r^{-2} \Lambda(dr) \nonumber \\
& < \int_{(0,1)} [1-(1-(k-1)r) (1+(k-1)r)] r^{-2} \Lambda(dr) = (k-1)^2 \lambda_2(\Lambda), \label{majodk}
\end{align}
where the last equality uses $\lambda_2(\Lambda) = \Lambda((0,1))$. We now show that $N(\Lambda) > M(\Lambda)$, where 
$M(\Lambda):=\sqrt{\phi_S(1)/\lambda_2(\Lambda)}$. Set $K := \lfloor 1+M(\Lambda) \rfloor$. Then $(K-1)^2 \lambda_2(\Lambda)\leq \phi_S(1)$ so, by \eqref{majodk}, $\lambda_K(\Lambda)<\phi_S(1)$. By the definition of $N(\Lambda)$ in \eqref{defnlambda} the latter implies $K < N(\Lambda)$. Since $K>M(\Lambda)$ we get $N(\Lambda) > M(\Lambda)$, which concludes the proof. 

\subsection*{Acknowledgment}
This work was supported by Beijing Natural Science Foundation, project number IS24067. 

\bibliographystyle{plain}
\bibliography{thbiblio}

\end{document}